\newtheorem{dfn}{Definition}[section]
\newtheorem{thm}[dfn]{Theorem}
\newtheorem{pro}[dfn]{Proposition}
\newtheorem{lem}[dfn]{Lemma}
\newtheorem{cro}[dfn]{Corollary}
\newtheorem{mar}[dfn]{Remark}
\newtheorem{ble}[dfn]{Problem}
\title[finite abelian groups of K3 surfaces]{finite abelian groups of K3 surfaces with smooth quotient}
\author{Taro Hayashi}
\address{
	(Taro Hayashi)
	Faculty of Agriculture,
	Kindai University,
	Nakamaticho 3327-204, Nara, Nara 631-8505, Japan
}
\email{haya4taro@gmail.com}
\date{\today}
\begin{document}
	
\maketitle
\begin{abstract}
The quotient space of a $K3$ surface by a finite group is an Enriques surface or a rational surface if it is smooth. 
Finite groups where the quotient space are Enriques surfaces are known.
In this paper, by analyzing effective divisors on smooth rational surfaces, we will study finite groups which act faithfully on $K3$ surfaces such that the quotient space are smooth. 
In particular, we will completely determine effective divisors on Hirzebruch surfaces such that there is a finite Abelian cover from a $K3$ surface to a Hirzebrunch surface such that the branch divisor is that effective divisor.  
Furthermore, we will decide the Galois group and give the way to construct that Abelian cover from an effective divisor on a Hirzebruch surface.
Subsequently, we study the same theme for Enriques surfaces.
\end{abstract}
Keywords: K3 surface; finite Abelian group; Abelian cover of a smooth rational surface.

MSC2010: Primary 14J28; Secondary 14J50.
\section{Introduction}
In this paper, we work over ${\mathbb C}$. A $K3$ surface $X$ is a smooth surface with $h^{1}({\mathcal O}_{X})=0$, and ${\mathcal O}_{X}(K_{X})\cong {\mathcal O}_{X}$, where $K_{X}$ is the canonical divisor of $X$. In particular, a $K3$ surface is simply connected.
Finite groups acting faithfully on $K3$ surfaces are well studied.
Let $\omega$ be a non-degenerated two holomorphic form. 
An automorphism $f$ of a $K3$ surface is called symplectic if $f^{\ast}\omega=\omega$. A finite subgroup $G$ of automorphisms of a $K3$ surface is called symplectic if $G$ is generated by symplectic automorphisms. 
The minimal resolution $X_{m}$ of the quotient space $X/G$ is one of a $K3$ surface, an Enriques surface, and a rational surface.
The surface $X_{m}$ is a $K3$ surface if and only if $G$ is a symplectic group. 
Symplectic groups are classified (see [\ref{bio:13},\ref{bio:12},\ref{bio:17}]).
If the quotient space of $X/G$ is smooth, then it is an Enriques surface or a rational surface.
The quotient space $X/G$ is an Enriques surface if and only if $G$ is isomorphic to ${\mathbb Z}/2{\mathbb Z}$ as a group and the fixed locus of $G$ is an empty set.
It is not well-known what kind of rational surface is realized as the quotient space of a $K3$ surface by a finite subgroup of Aut$(X)$.
In this paper, we will consider the case where $X/G$ is a smooth rational surface. 
The minimal model of smooth rational surfaces is the projective plane ${\mathbb P}^{2}$ or a Hirzebruch surfaces ${\mathbb F}_{n}$ where $n\not=1$, and ${\mathbb F}_{1}$ is isomorphic to ${\mathbb P}^2$ blow-up at a point.
In other words, all smooth rational surfaces which are not minimal are ${\mathbb F}_{1}$ or given by blowups of ${\mathbb F}_{n}$ for $0\leq n$.
Therefore, if $X/G$ is not ${\mathbb P}^{2}$, then
 there is a birational morphism $f:X/G\rightarrow {\mathbb F}_{n}$.
Our first main results are to analyze the quotient space $X/G$ and $G$ when $X/G$ is smooth.
\begin{thm}\label{thm:42}
Let $X$ be a $K3$ surface and $G$ be a finite subgroup of Aut$(X)$ such that $X/G$ is smooth.
For a birational morphism $f:X/G\rightarrow{\mathbb F}_{n}$ from the quotient space $X/G$ to a Hirzebruch surface ${\mathbb F}_{n}$, we get that $n=0,1,2,3,4,6,8$, or $12$.
Furthermore, if $n=6,8,12$, then $f$ is an isomorphism.
\end{thm}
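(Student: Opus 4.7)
Let $\sigma = f\circ\pi\colon X\to\mathbb{F}_n$ be the composition of the quotient map with the birational morphism, and write $C_0$ for the negative section of $\mathbb{F}_n$ (so $C_0^{2}=-n$) and $F$ for a ruling fiber. The plan is to extract the bound on $n$ from the pulled-back divisor $\sigma^{\ast}C_0$ on the $K3$ surface $X$. Since $\sigma$ is generically finite of degree $|G|$, the projection formula gives $(\sigma^{\ast}C_0)^{2}=|G|\cdot C_0^{2}=-n|G|$. Decomposing $\sigma^{\ast}C_0=\sum_i a_i D_i$ into irreducible components with multiplicities $a_i\geq 1$, and using that on a $K3$ surface every irreducible curve satisfies $D_i^{2}\geq -2$ while $D_i\cdot D_j\geq 0$ for $i\neq j$, expansion gives
\[
 -n|G| \;=\; \sum_i a_i^{2}D_i^{2}+2\sum_{i<j}a_i a_j(D_i\cdot D_j) \;\geq\; -2\sum_i a_i^{2},
\]
so $\sum_i a_i^{2}\geq n|G|/2$.

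Next I would use the ruling to bound the $a_i$ from above. The composition $\phi=p\circ\sigma\colon X\to\mathbb{P}^{1}$ has general fiber $\widetilde F=\sigma^{\ast}F$ with $\widetilde F^{2}=0$, so by $K_X=0$ and adjunction $\phi$ is (after Stein factorization, if needed) a genus-one fibration. Every horizontal component $D_i$ of $\sigma^{\ast}C_0$ dominates $C_0$ and so $D_i\cdot\widetilde F\geq 1$, with $\sum_i a_i(D_i\cdot\widetilde F)=|G|$, while vertical components arise only from the exceptional divisors of $f$. Combining this with the Galois structure of $\pi\colon X\to X/G$ (common ramification index $e$, residue degree $r$, and component count $g$ over the strict transform $C_0'$ of $C_0$, with $|G|=erg$ and $(C_0')^{2}\leq -n$) yields a key inequality of the form $n\leq 2e/r$. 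Feeding this back through the integrality conditions coming from $\sigma^{\ast}(-K_{\mathbb{F}_n})=2\sigma^{\ast}C_0+(n+2)\sigma^{\ast}F$ matching the ramification divisor of $\sigma$ (so that $K_X=0$) leaves only finitely many admissible branch classes $\alpha C_0+\beta F\in\operatorname{Pic}(\mathbb{F}_n)$.

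A direct enumeration then rules out $n\in\{5,7,9,10,11\}$ and all $n>12$, leaving the list $n\in\{0,1,2,3,4,6,8,12\}$. For $n\in\{6,8,12\}$, the intersection number $(2C_0+(n+2)F)\cdot C_0=2-n$ is negative enough that the ramification contribution over $C_0$ already saturates the $(-2)$-curve bound above; introducing any exceptional $(-1)$-curve through a nontrivial blowup in $f$ would then force a component of $\sigma^{\ast}C_0$ with self-intersection strictly less than $-2$, which is impossible on $X$. Hence $f$ must be an isomorphism in these cases. The main obstacle I foresee is the bookkeeping in the enumeration: simultaneously tracking horizontal and vertical components of $\sigma^{\ast}C_0$ and matching them against the global divisibility conditions on $2C_0+(n+2)F$ imposed by $K_X=0$. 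The distinction between the admissible values $n\in\{6,8,12\}$ and the excluded values $n\in\{5,7,9,10,11\}$ comes down to whether $n+2$ admits a factorization compatible with the structure of a finite $K3$-cover of $\mathbb{F}_n$.
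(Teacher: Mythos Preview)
Your general strategy---pulling back the negative section $C_0$ to $X$ and exploiting that every irreducible curve on a $K3$ surface has self-intersection $\geq -2$---does appear in the paper's exclusion of $n=5,7,9$ (Theorem~\ref{thm:44}), so that part of your outline is on the right track. However, two of your key steps are not justified, and one is incorrect as stated.

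First, the inequality ``$n\leq 2e/r$'' is asserted without derivation, and the subsequent ``direct enumeration'' is entirely suppressed. In the paper this enumeration is the bulk of the work: one first uses the canonical bundle formula $0=K_{X/G}+\sum\frac{b_i-1}{b_i}B_i$ together with $K_{\mathbb{F}_n}=-2C-(n+2)F$ to obtain the two equations $2=\sum\frac{b_i-1}{b_i}c_i$ and $n+2=\sum\frac{b_i-1}{b_i}d_i$ (Proposition~\ref{thm:8} and Proposition~\ref{thm:17}), which immediately give $n\leq 12$, and then for each of $n=5,7,9$ one lists the finitely many admissible branch classes and rules them out one by one via careful counts of components of $p^{\ast}C$ and $p^{\ast}B_i$. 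Your proposal skips this entirely.

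Second, your argument for the isomorphism claim when $n\in\{6,8,12\}$ does not work. You write that a nontrivial blowup ``would force a component of $\sigma^{\ast}C_0$ with self-intersection strictly less than $-2$,'' but this is impossible on a $K3$ surface regardless---every irreducible curve there has $D^2\geq -2$, so no configuration on $X/G$ can ever produce such a component on $X$. The contradiction you need is not of this form. The paper's argument (Corollary~\ref{thm:46}) is quite different and much cleaner: it uses elementary transformations. Blowing up $\mathbb{F}_n$ at a point on $C_{-n}$ gives a blowup of $\mathbb{F}_{n+1}$, while blowing up at a point off $C_{-n}$ gives a blowup of $\mathbb{F}_{n-1}$. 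Hence any nontrivial blowup of $\mathbb{F}_6$ is simultaneously a blowup of $\mathbb{F}_5$ or $\mathbb{F}_7$; of $\mathbb{F}_8$, a blowup of $\mathbb{F}_7$ or $\mathbb{F}_9$; of $\mathbb{F}_{12}$, a blowup of $\mathbb{F}_{11}$ or $\mathbb{F}_{13}$. Since $5,7,9,11$ are already excluded and $13>12$, the isomorphism claim follows immediately from the exclusion step. You should replace your saturation argument with this elementary-transformation reduction.
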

Let $X$ be a $K3$ surface, and $\omega$ be a non-degenerated holomorphic two form of $X$.
For a finite group $G$ of Aut$(X)$, 
We write $G_s$ as a set of symplectic automorphisms of $G$. 
Then there is a short exact sequence: $1\rightarrow G_s\rightarrow G\rightarrow^{\varphi} C_n\rightarrow1$, where $C_n$ is a cyclic group of order $n$, and $\varphi(g):=\xi_g\in \mathbb C^{\ast}$ such that $g^{\ast}\omega=\xi_g\omega$ in H$^{2,0}(X)$ for $g\in G$.
\begin{thm}\label{thm:45}
Let $X$ be a $K3$ surface, $G$ be a finite subgroup of Aut$(X)$ such that $X/G$ is smooth.
Then the above exact sequence is split, i.e. there is a purely non-symplectic automorphism $g\in G$ such that $G$ is the semidirect product $G_{s}\rtimes\langle g \rangle$
of $G_{s}$ and $\langle g \rangle$. 
\end{thm}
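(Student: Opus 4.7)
The plan is to exploit two structural consequences of $X/G$ being smooth: first, that $G$ is generated by \emph{pseudo-reflections}, i.e.\ elements of $G$ fixing a curve in $X$; second, that every such pseudo-reflection is automatically purely non-symplectic, with order equal to the order of its image in $C_n$. These two facts together will furnish an element of $G$ of order exactly $n$ whose image generates $C_n$, yielding the splitting. I would first dispose of the Enriques case separately: here $G \cong \mathbb{Z}/2\mathbb{Z}$ acts freely and symplectically, so $G_s = G$, $n = 1$, and the sequence is trivial. So I assume $X/G$ is a smooth projective rational surface, hence simply connected.

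For the first fact, let $N \triangleleft G$ be the normal subgroup generated by all pseudo-reflections (normality is clear since a conjugate of a pseudo-reflection is a pseudo-reflection). At each $p \in X$ the stabilizer $G_p$ acts linearly on $T_pX \cong \mathbb{C}^2$ (Cartan linearization), and since $X/G$ is smooth at the image of $p$, Chevalley--Shephard--Todd forces $G_p$ to be generated by pseudo-reflections of this linear action. Each such linear pseudo-reflection, having a $1$-dimensional fixed eigenspace in $T_pX$, extends to an element of $G$ fixing a smooth curve through $p$, hence lies in $N$. Thus $G_p \subseteq N$ for every $p$, and in particular $N_p = G_p$. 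It follows that $X/N$ is smooth (same CST criterion applied to $N_p$), that $G/N$ acts freely on $X/N$, and that $X/N \to X/G$ is an étale Galois cover with group $G/N$. Simple-connectedness of $X/G$ then forces $G/N = 1$, so $G$ is generated by pseudo-reflections.

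For the second fact, if $s \in G$ fixes a curve $C_s \subset X$, then at any $p \in C_s$ the action of $s$ on $T_pX$ has eigenvalues $(1, \zeta_s)$, giving $s^*\omega = \zeta_s \omega$ at $p$ and hence globally. Consequently $\varphi(s) = \zeta_s$ and $\mathrm{ord}(s) = \mathrm{ord}(\zeta_s) = \mathrm{ord}(\varphi(s))$, i.e., $s$ is purely non-symplectic. Now with $G = \langle s_1, \ldots, s_k \rangle$ a pseudo-reflection generating set, the images $\varphi(s_i)$ generate $C_n$, so $\mathrm{lcm}_i\,\mathrm{ord}(\varphi(s_i)) = n$. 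For each prime $p \mid n$ pick $s_{i_p}$ with $p^{v_p(n)} \mid \mathrm{ord}(\varphi(s_{i_p})) = \mathrm{ord}(s_{i_p})$ and set $t_p := s_{i_p}^{\mathrm{ord}(s_{i_p})/p^{v_p(n)}}$, which has order $p^{v_p(n)}$ with image of the same order. In the abelian setting that the paper works in, $g := \prod_{p \mid n} t_p$ then has order $n$ with $\mathrm{ord}(\varphi(g)) = n$, so $\langle g \rangle \cap G_s = \{1\}$ and $G = G_s \rtimes \langle g \rangle$.

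The main obstacle is the final assembly step without an a priori abelian assumption: the $t_p$'s for distinct primes dividing $n$ need not commute in $G$, so one would have to argue (for example by Sylow compatibility in the preimage of each $p$-part of $C_n$, or by induction on $n$ using Theorem~\ref{thm:42}'s list $n \in \{0,1,2,3,4,6,8,12\}$) that they can be chosen so that their product has the required order. A secondary subtlety is that the étale-cover step uses $\pi_1(X/G) = 1$, which fails for Enriques $X/G$; this is why that case is treated at the outset.
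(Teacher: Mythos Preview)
Your first two steps---generation of $G$ by curve-fixing elements (via Chevalley--Shephard--Todd and simple connectedness of the rational quotient) and the pure non-symplecticity of each such element, with $\mathrm{ord}(s)=\mathrm{ord}(\varphi(s))$---are correct and more conceptual than the paper's treatment via Theorem~\ref{thm:5}. One small slip: the Enriques involution is \emph{non}-symplectic (otherwise $\omega$ would descend and $K_E$ would be trivial), so in that case $G_s=\{1\}$, $n=2$, and the involution itself is the required $g$; the conclusion is unaffected.

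The assembly step, however, is a real gap. Theorem~\ref{thm:45} is stated for arbitrary finite $G$, not only abelian $G$, and your proposed fix via Theorem~\ref{thm:42} conflates two unrelated integers: the $n$ there indexes the Hirzebruch surface $\mathbb{F}_n$, not $|G/G_s|$. The paper (in Theorem~\ref{thm:43}) closes the gap not by abstract group theory but by an exhaustive case analysis on the numerical class of the pushforward $f_\ast B$ of the branch divisor to a minimal model $S\in\{\mathbb{P}^2,\mathbb{F}_n\}$. In almost every class some single ramification index $b_i$ already equals $b=\mathrm{lcm}(b_1,\dots,b_k)=|G/G_s|$, and a generator of $G_{B_i}$ is the desired $g$. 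In each remaining class the paper locates two branch components $B_i,B_j$ with $\mathrm{lcm}(b_i,b_j)=b$ meeting transversally at a point $y$ lying on no other component; the stabilizer of any $x\in p^{-1}(y)$ is then $\mathbb{Z}/b_i\mathbb{Z}\oplus\mathbb{Z}/b_j\mathbb{Z}$ by the local normal form, and this \emph{abelian} group contains a purely non-symplectic element of order $b$. In effect the paper supplies exactly the geometric input your argument lacks: it finds all the $t_p$'s inside a single abelian point-stabilizer, where they automatically commute.
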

Next, we will classify finite abelian groups which act faithfully on $K3$ surfaces and the quotient space is smooth.
\begin{dfn}
We will use the following notations.\\
${\mathcal AG}:=$
$
\left\{
\begin{aligned}
&{\mathbb Z}/2{\mathbb Z}^{\oplus a},\ {\mathbb Z}/3{\mathbb Z}^{\oplus b},\ {\mathbb Z}/4{\mathbb Z}^{\oplus c},{\mathbb Z}/2{\mathbb Z}^{\oplus d}\oplus{\mathbb Z}/3{\mathbb Z}^{\oplus e},{\mathbb Z}/2{\mathbb Z}^{\oplus f}\oplus{\mathbb Z}/4{\mathbb Z}^{\oplus g},\\
&{\mathbb Z}/2{\mathbb Z}\oplus{\mathbb Z}/3{\mathbb Z}^{\oplus h}\oplus{\mathbb Z}/4{\mathbb Z},{\mathbb Z}/2{\mathbb Z}\oplus{\mathbb Z}/4{\mathbb Z}\oplus{\mathbb Z}/8{\mathbb Z}\\
&:1\leq a\leq 5,\ 1\leq b,c\leq3,\\
&(d,e)=(1,1),(1,2),(1,3),(2,1),(2,2),(3,1)(3,2),\\
&(f,g)=(1,1),(1,2),(2,1),(3,1),\ h=1,2 
\end{aligned}
\right\}
$\\
${\mathcal AG}_{\infty}:=$
$
\left\{
\begin{aligned}
&{\mathbb Z}/2{\mathbb Z}^{\oplus a},\ {\mathbb Z}/4{\mathbb Z}^{\oplus c},\ {\mathbb Z}/2{\mathbb Z}^{\oplus d}\oplus{\mathbb Z}/3{\mathbb Z}^{e},\ {\mathbb Z}/2{\mathbb Z}^{\oplus 2}\oplus{\mathbb Z}/4{\mathbb Z}\\
&:a=1,2,3,4,5,\ c=1\ {\rm or}\ 3,\ (d,e)=(1,1),(1,2),\ {\rm or}\ (3,2)
\end{aligned}
\right\}
$\\
${\mathcal AG}_{0}:=$
$
\left\{
\begin{aligned}
&{\mathbb Z}/2{\mathbb Z}^{\oplus a},\ {\mathbb Z}/3{\mathbb Z}^{\oplus b},\ {\mathbb Z}/2{\mathbb Z}^{\oplus f}\oplus{\mathbb Z}/4{\mathbb Z}^{\oplus g}\\
&:a=1,2,3,4,5,\ b=1,2,3,\ (f,g)=(1,1),(1,2),(2,1),(3,1)
\end{aligned}
\right\}
$\\
${\mathcal AG}_{1}:=$
$
\left\{
\begin{aligned}
&{\mathbb Z}/2{\mathbb Z}^{\oplus a},\ {\mathbb Z}/4{\mathbb Z}^{\oplus 2},\ {\mathbb Z}/2{\mathbb Z}\oplus{\mathbb Z}/3{\mathbb Z}^{\oplus e},\ {\mathbb Z}/2{\mathbb Z}^{\oplus f}\oplus{\mathbb Z}/4{\mathbb Z},\\
&{\mathbb Z}/2{\mathbb Z}\oplus{\mathbb Z}/3{\mathbb Z}^{\oplus 2}\oplus{\mathbb Z}/4{\mathbb Z},\ {\mathbb Z}/2{\mathbb Z}\oplus{\mathbb Z}/4{\mathbb Z}\oplus{\mathbb Z}/8{\mathbb Z}\\
&:a=1,2,3,4,5,\ e=1,2,3,\ f=1,2,3
\end{aligned}
\right\}
$\\
${\mathcal AG}_{2}:=$
$
\left\{
\begin{aligned}
&{\mathbb Z}/2{\mathbb Z}^{\oplus a},\ {\mathbb Z}/3{\mathbb Z}^{b},\ {\mathbb Z}/2{\mathbb Z}^{2}\oplus{\mathbb Z}/3{\mathbb Z}^{\oplus 2},\ {\mathbb Z}/2{\mathbb Z}^{\oplus f}\oplus{\mathbb Z}/4{\mathbb Z}^{\oplus g}\\
&:a=1,2,3,4,\ b=1,2,3,\ (f,g)=(1,1),(1,2),(2,1),(3,1)
\end{aligned}
\right\}
$\\
${\mathcal AG}_{3}:=$
$
\left\{
\begin{aligned}
&{\mathbb Z}/2{\mathbb Z}^{\oplus d}\oplus{\mathbb Z}/3{\mathbb Z}^{\oplus e},\ {\mathbb Z}/2{\mathbb Z}\oplus{\mathbb Z}/3{\mathbb Z}\oplus{\mathbb Z}/4{\mathbb Z}\\
&:(d,e)=(1,1),(1,2),(3,1)
\end{aligned}
\right\}
$\\
${\mathcal AG}_4:=$
$
\left\{
\begin{aligned}
&{\mathbb Z}/2{\mathbb Z}^{\oplus a},\ {\mathbb Z}/4{\mathbb Z},\ {\mathbb Z}/2{\mathbb Z}\oplus{\mathbb Z}/3{\mathbb Z}^{\oplus 2},\ {\mathbb Z}/2{\mathbb Z}^{\oplus f}\oplus{\mathbb Z}/4{\mathbb Z}\\
&:a=1,2,3,\ f=1,2
\end{aligned}
\right\}
$\\
${\mathcal AG}_6:=$
$
\left\{
\begin{aligned}
{\mathbb Z}/3{\mathbb Z}^{\oplus b},\ {\mathbb Z}/2{\mathbb Z}^{\oplus 2}\oplus{\mathbb Z}/3{\mathbb Z}:b=1,2
\end{aligned}
\right\}
$\\
${\mathcal AG}_{8}:=$
$
\left\{
\begin{aligned}
{\mathbb Z}/2{\mathbb Z}\oplus{\mathbb Z}/4{\mathbb Z}
\end{aligned}
\right\}
$\\
${\mathcal AG}_{12}:=$
$
\left\{
\begin{aligned}
{\mathbb Z}/2{\mathbb Z}\oplus{\mathbb Z}/3{\mathbb Z}
\end{aligned}
\right\}
$
\end{dfn}
Notice that ${\mathcal AG}=\bigcup_{n=0,1,2,3,4,6,8,12,\infty}{\mathcal AG}_{n}$.
In [\ref{bio:1}],  Uluda$\breve{{\rm g}}$ classified finite abelian groups for the case $X/G$ is ${\mathbb P}^{2}$.
Furthermore, he gave the way to construct the pair $(X,G)$ where $X$ is a $K3$ surface and $G$ is a finite subgroup of Aut$(X)$ such that $X/G\cong{\mathbb P}^{2}$.
We have the following.
\begin{thm}\label{thm:111}([\ref{bio:1}])
Let $X$ be a $K3$ surface and $G$ be a finite abelian subgroup of ${\rm Aut}(X)$ such that the quotient space $X/G$ is isomorphic to ${\mathbb P}^{2}$. 
Then $G$ is one of ${\mathcal AG}_{\infty}$ as a group.
Conversely, for every $G\in{\mathcal AG}_{\infty}$, there is a $K3$ surface $X'$ and a finite abelian subgroup $G'$ of ${\rm Aut}(X')$ such that $X'/G'\cong{\mathbb P}^{2}$ and $G'\cong G$ as a group. 
\end{thm}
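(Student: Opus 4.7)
The plan is to reproduce the classification of [\ref{bio:1}] through the theory of abelian Galois covers, a framework that becomes especially tractable on $\mathbb{P}^{2}$. A smooth abelian cover $\pi:X\to \mathbb{P}^{2}$ with Galois group $G$ is determined, in the sense of Pardini, by its \emph{building data}: a line bundle $L_{\chi}\in \mathrm{Pic}(\mathbb{P}^{2})$ for each non-trivial character $\chi\in G^{\vee}$, together with an assignment of a cyclic inertia subgroup (with a chosen generator) to each irreducible component of the branch divisor $B\subset \mathbb{P}^{2}$. Since $\mathrm{Pic}(\mathbb{P}^{2})\cong \mathbb{Z}$, each $L_{\chi}$ reduces to a single integer $d_{\chi}$ and each branch component is specified by its degree, so the entire problem collapses to a finite diophantine classification.

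First I would impose the numerical conditions that force $X$ to be a $K3$ surface. The identity $K_{X}=\pi^{\ast}\bigl(K_{\mathbb{P}^{2}}+\sum_{i}(1-1/m_{i})D_{i}\bigr)=0$ yields the branch-degree equation $\sum_{i}(1-1/m_{i})\deg D_{i}=3$, which admits only a short list of solutions for $(m_{i},\deg D_{i})$. The condition $h^{1}(\mathcal{O}_{X})=0$ is automatic from the decomposition $\pi_{\ast}\mathcal{O}_{X}=\bigoplus_{\chi}L_{\chi}^{-1}$, since every line bundle on $\mathbb{P}^{2}$ has vanishing $h^{1}$; one need only exclude disconnected covers, which amounts to a non-triviality requirement on the character-to-branch assignment. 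The standard Pardini relations $L_{\chi}+L_{\chi'}\equiv L_{\chi\chi'}+\sum_{i}\varepsilon_{i}D_{i}$ then tie the integers $d_{\chi}$ to the branch data and severely restrict the possibilities for $G$.

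The delicate step is smoothness of $X$. At a transverse node of $B$ where $D_{i}$ and $D_{j}$ meet, the local model is the cover $\mathrm{Spec}\,\mathbb{C}[u,v]\to \mathrm{Spec}\,\mathbb{C}[u^{m_{i}},v^{m_{j}}]$, which is smooth; but branch tangencies, cusps, or triple intersections produce quotient singularities on $X$ that must be excluded by Chevalley--Shephard--Todd. This forces $B$ to have only ordinary nodes and bounds which cyclic inertia pairs can meet, narrowing the admissible triples $(G,\{m_{i}\},\{d_{\chi}\})$ to exactly those corresponding to $\mathcal{AG}_{\infty}$.

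For the converse, for each $G\in \mathcal{AG}_{\infty}$ I would exhibit explicit smooth plane curves $V(f_{i})\subset \mathbb{P}^{2}$ of the required degrees in general position and construct the cover by equations of the form $y_{j}^{m_{j}}=\prod_{i}f_{i}^{a_{ij}}$, then verify $K_{X}=0$, $h^{1}(\mathcal{O}_{X})=0$, and smoothness for a generic choice of the $f_{i}$. The main obstacle is the smoothness and general-position analysis: ensuring that the chosen branch configuration together with its character data yields no singular points on $X$ requires careful local computation at each branch intersection, and exhibiting suitable smooth configurations for the larger groups in $\mathcal{AG}_{\infty}$ (e.g.\ $\mathbb{Z}/2\mathbb{Z}\oplus \mathbb{Z}/4\mathbb{Z}$ or $\mathbb{Z}/2\mathbb{Z}^{\oplus 2}\oplus \mathbb{Z}/4\mathbb{Z}$) is the non-trivial existence check behind the converse direction.
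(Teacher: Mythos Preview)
The paper does not supply its own proof of this statement: Theorem~\ref{thm:111} is quoted verbatim from Uluda\u{g}~[\ref{bio:1}] and used as input, so there is no argument in the present paper to compare your proposal against. Your outline via Pardini's building data is a legitimate route to the result and is in fact close in spirit to what Uluda\u{g} does, though his paper is organised around explicit case analysis of the branch configurations satisfying $\sum_i(1-1/m_i)\deg D_i=3$ rather than the abstract $L_\chi$ formalism.

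It is worth noting that the methodology this paper develops for the analogous Hirzebruch-surface classification (Section~3) is somewhat different from your proposal. Rather than encoding the cover through character-indexed line bundles, the paper works directly with the branch divisor $B=\sum b_iB_i$, uses Theorem~\ref{thm:5} to read off cyclic subgroups $G_{B_i}\cong\mathbb{Z}/b_i\mathbb{Z}$ generated by purely non-symplectic automorphisms, and then constrains $G$ by appealing to the known structure of fixed loci of non-symplectic automorphisms on $K3$ surfaces (e.g.\ counts of isolated fixed points from [\ref{bio:2},\ref{bio:9},\ref{bio:7}]). Existence is handled by iterated cyclic covers and fibre products rather than by solving the Pardini relations. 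Your approach is more systematic and lattice-theoretic; the paper's is more hands-on and exploits $K3$-specific geometry, which is what allows it to push through the non-minimal rational surfaces in Section~4.

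One small slip: you list $\mathbb{Z}/2\mathbb{Z}\oplus\mathbb{Z}/4\mathbb{Z}$ as an example of a group in $\mathcal{AG}_\infty$, but by the definition in the paper it is not---only $\mathbb{Z}/4\mathbb{Z}$, $\mathbb{Z}/4\mathbb{Z}^{\oplus 3}$, and $\mathbb{Z}/2\mathbb{Z}^{\oplus 2}\oplus\mathbb{Z}/4\mathbb{Z}$ appear among the groups involving a $\mathbb{Z}/4\mathbb{Z}$ factor.
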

By analyzing the irreducible components of the branch locus of the quotient map $p:X\rightarrow X/G$, 
we will study a pair $(X,G)$ consisting of a K3 surface $X$ and a finite abelian subgroup $G$ of Aut$(X)$ such that the quotient space $X/G$ is smooth. 
More precisely, the preimage of the branch locus of $p$ is $\bigcup_{g\in G\backslash\{{\rm id}_{X}\}}$Fix$(g)$ where Fix$(g):=\{x \in X : g(x) =x\}$. 
Recall that for an automorphism $f$ of finite order of a $K3$ surface, if Fix$(f)$ contains a curve, then $f$ is non-symplectic. 
The fixed locus of a non-symplectic automorphism is well-known, e.g. [\ref{bio:2},\ref{bio:9},\ref{bio:7}].
By analyzing the fixed locus of non-symplectic automorphisms of $G$ from the branch divisor of the quotient map,
we will reconstruct $G$ from the branch divisor of the quotient map.
In Section 4, we will investigate the relationship between a branch divisor and exceptional divisors of blowups.
Based on the above results,
we will obtain our second main result.
\begin{thm}\label{thm:1}
Let $X$ be a $K3$ surface and $G$ be a finite abelian subgroup of ${\rm Aut}(X)$ such that the quotient space $X/G$ is smooth. 
Then $G$ is one of ${\mathcal AG}$ as a group.
Conversely, for every $G\in{\mathcal AG}$, there is a $K3$ surface $X'$ and a finite abelian subgroup $G'$ of ${\rm Aut}(X')$ such that $X'/G'$ is smooth and $G'\cong G$ as a group. 
\end{thm}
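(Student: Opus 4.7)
The plan is to split the argument into a necessity direction (every such $G$ must lie in $\mathcal{AG}$) and a sufficiency direction (every $G\in\mathcal{AG}$ is realized), and to organize both directions by the minimal Hirzebruch model of $X/G$. By Theorem \ref{thm:42}, if $X/G$ is smooth and not isomorphic to ${\mathbb P}^2$, then there is a birational morphism $f:X/G\to{\mathbb F}_n$ with $n\in\{0,1,2,3,4,6,8,12\}$, and for $n=6,8,12$ the morphism $f$ is already an isomorphism. The case $X/G\cong{\mathbb P}^2$ is disposed of by Theorem \ref{thm:111}, which identifies exactly $\mathcal{AG}_\infty$. Thus the remaining work is to determine $\mathcal{AG}_n$ for each $n\in\{0,1,2,3,4,6,8,12\}$ in both directions, and to verify that $\mathcal{AG}=\bigcup_n\mathcal{AG}_n$ matches.

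For the necessity direction I would study the branch divisor $B$ of the quotient map $p:X\to X/G$. Since $G$ is abelian, every irreducible component of $B$ is fixed by a cyclic subgroup $\langle h\rangle\subseteq G$, and such an $h$ is necessarily non-symplectic. By Theorem \ref{thm:45}, $G=G_s\rtimes\langle g\rangle$ for a purely non-symplectic $g$; since $G$ is abelian this is a direct product $G\cong G_s\times\langle g\rangle$. The catalogue of fixed loci of non-symplectic automorphisms on K3 surfaces, as in the cited classifications, sharply constrains the order of $\langle g\rangle$ and the number of independent non-symplectic cyclic factors carried by $B$. Pushing $B$ forward along $f$ to $\mathbb{F}_n$, one analyzes its intersection numbers against the section $C_0$ and a ruling $F$; combined with $K_X\sim 0$ (via an abelian-cover adjunction computation) this eliminates every candidate outside $\mathcal{AG}_n$. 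The preparatory material of Section 4 on the relationship between branch divisors and exceptional divisors of the blowups $f$ is what makes this bookkeeping tractable.

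For the converse direction the strategy is explicit construction. For each $G\in\mathcal{AG}_n$ I would exhibit a suitable effective divisor $D$ on $\mathbb{F}_n$ (or on an appropriate blowup, when $n\leq 4$), together with the line-bundle data making $D$ the branch locus of an abelian cover with Galois group $G$, and verify that the resulting cover is a K3. Concretely, each elementary abelian or cyclic factor of $G$ corresponds to a cover branched along a prescribed smooth (or mild-singularity) curve in a linear system of type $|aC_0+bF|$; iterated fibered products followed by normalization and canonical resolution produce the full $G$-cover $X'\to X'/G'\to\mathbb{F}_n$. One then checks triviality of $K_{X'}$ and vanishing of $h^1(\mathcal{O}_{X'})$ directly from the branch data, and confirms that $X'/G'$ is smooth by construction. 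The construction parallels Uluda\u{g}'s treatment of the ${\mathbb P}^2$ case but is adapted to the ruled geometry of $\mathbb{F}_n$ and, when necessary, to the exceptional divisors produced by $f$.

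The hard part will be the necessity direction for the small values $n\in\{0,1,2,3,4\}$, where $f$ need not be an isomorphism. There one must keep track of how the exceptional locus of $f$ interacts with $B$: an exceptional component of $f$ may or may not belong to the branch locus, its image on $\mathbb{F}_n$ may coincide with singularities of the pushed-forward branch divisor, and the chosen blowups must be compatible with the covering data so that the total space is still a K3. Disentangling all combinatorial possibilities for a given $n$, and showing that no abelian group outside $\mathcal{AG}_n$ can arise, is the core obstruction; this is precisely why the systematic analysis of branch versus exceptional divisors carried out in Section 4 is indispensable before one can close the case-by-case argument.
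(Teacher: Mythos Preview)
Your proposal is correct and follows essentially the same architecture as the paper: reduce to a minimal model via Theorem~\ref{thm:42}, invoke Theorem~\ref{thm:111} for $\mathbb{P}^2$, and for the Hirzebruch cases study the (pushforward of the) branch divisor against the canonical-bundle constraint, with explicit abelian-cover constructions supplying the converse. The one point of emphasis where the paper differs is that it does not lean on the decomposition $G\cong G_s\times\langle g\rangle$ from Theorem~\ref{thm:45}; instead the workhorse throughout is Theorem~\ref{thm:5}, which says $G$ is generated by the cyclic stabilizers $G_{B_i}\cong\mathbb{Z}/b_i\mathbb{Z}$ of the branch components, and the bulk of the necessity argument is an exhaustive elimination over the finite list of admissible numerical classes of $f_\ast B$ (Section~6) using that generation statement together with fixed-locus constraints.
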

Furthermore, in Section 3, for a Hirzebruch surface ${\mathbb F}_{n}$ and an effective divisor $B$ on ${\mathbb F}_{n}$, we will give a necessary and sufficient condition for the existence of a finite Abelian cover $f:X\rightarrow {\mathbb F}_{n}$ such that $X$ is a $K3$ surface and the branch divisor of $f$ is $B$. 
In other words, we will solve a part of the Fenchel's problem for Hirzebruch surfaces.
In addition, we will decide the Galois group and give the way to construct $f:X\rightarrow {\mathbb F}_{n}$ from the pair ${\mathbb F}_{n}$ and $B$.
\begin{thm}\label{thm:2} 
Let $X$ be a $K3$ surface and $G$ be a finite abelian subgroup of ${\rm Aut}(X)$ such that the quotient space $X/G$ is isomorphic to ${\mathbb F}_{n}$.
Then $G$ is one of ${\mathcal AG}_{n}$ as a group. 
Conversely, for every $G\in{\mathcal AG}_{n}$, there is a $K3$ surface $X'$ and a finite abelian subgroup $G'$ of ${\rm Aut}(X')$ such that 
$X'/G'$ is isomorphic to $\mathbb F_n$ and $G'\cong G$ as a group. 
\end{thm}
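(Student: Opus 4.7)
The plan is to combine the earlier structural results with an $n$-by-$n$ analysis of the branch divisor of the quotient map $p\colon X\to X/G=\mathbb{F}_n$. Theorem \ref{thm:1} already gives $G\in\mathcal{AG}$ and Theorem \ref{thm:42} restricts $n$ to $\{0,1,2,3,4,6,8,12\}$, so the task for the forward direction is to split $\mathcal{AG}$ according to $n$. By Theorem \ref{thm:45} I write $G=G_{s}\rtimes\langle g\rangle$ with $g$ purely non-symplectic; the map $X\to X/G_{s}$ is ramified only at isolated points, so the branch divisor of $p$ equals the union of the fixed curves of the non-symplectic elements of $G$, whose topological type (number and genera of curves) is controlled by the Nikulin / Artebani--Sarti classification cited in the introduction.

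I would then impose the ramification identity
$$-K_{\mathbb{F}_{n}}\;\equiv\;\sum_{i}\tfrac{e_{i}-1}{e_{i}}B_{i}$$
(where $B=\sum B_{i}$ is the branch divisor and $e_{i}$ is the order of the inertia along $B_{i}$) together with the building data of Pardini-style abelian covers and the $K3$ conditions $K_{X}=0$, $h^{1}(\mathcal{O}_{X})=0$. For $n=6,8,12$, Theorem \ref{thm:42} forces $X/G$ itself to equal $\mathbb{F}_{n}$ with no blowup, so these constraints are extremely restrictive and cut $\mathcal{AG}$ down to the singletons listed in $\mathcal{AG}_{6}$, $\mathcal{AG}_{8}$, $\mathcal{AG}_{12}$. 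For $n=0,1,2,3,4$ one must also keep track of how a birational morphism $f\colon X/G\to\mathbb{F}_{n}$ contributes exceptional components; the results of Section 4 relating the branch divisor to the exceptional divisors of the blowups pin down exactly the groups listed in $\mathcal{AG}_{n}$.

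For the converse I would argue by explicit construction: for each $G\in\mathcal{AG}_{n}$ I write down an effective divisor $B$ on $\mathbb{F}_{n}$ satisfying the ramification identity above, together with an admissible character assignment, and invoke the construction of Section 3 to obtain an abelian cover $X'\to\mathbb{F}_{n}$ with Galois group $G$; the $K3$ conditions then follow directly from the building data. The main obstacle will be the forward direction for $n=0,1,2$, where $\mathcal{AG}_{n}$ is largest and several a priori plausible groups in $\mathcal{AG}$ must be excluded. This requires the finer inertia/character-theoretic bookkeeping of the abelian cover and a careful verification that each non-symplectic element's fixed locus is compatible with the admissible blowup patterns on $X/G$.
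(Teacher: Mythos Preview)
Your proposal has a serious logical circularity. You invoke Theorem~\ref{thm:1}, Theorem~\ref{thm:42}, and Theorem~\ref{thm:45} as inputs, but in the paper's architecture all three of these are proved in Section~4 \emph{after} Theorem~\ref{thm:2}, and their proofs explicitly rely on the Hirzebruch-surface case established in Section~3. In particular, the proof of Theorem~\ref{thm:66} (the forward direction of Theorem~\ref{thm:1}) begins: ``By Section~3, if $X/G\cong\mathbb{F}_n$, then $G$ is isomorphic to one of $\mathcal{AG}$ as a group.'' So you cannot use $G\in\mathcal{AG}$ as a starting point here; that is exactly what Theorem~\ref{thm:2} is meant to supply.

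There is also a conceptual confusion in your treatment of the forward direction. You write that for $n=0,1,2,3,4$ ``one must also keep track of how a birational morphism $f\colon X/G\to\mathbb{F}_n$ contributes exceptional components'' and appeal to the Section~4 machinery on exceptional divisors. But in Theorem~\ref{thm:2} the hypothesis is $X/G\cong\mathbb{F}_n$, so $f$ is an isomorphism and there are no exceptional divisors at all. The blowup analysis of Section~4 is for the more general situation of Theorem~\ref{thm:1}, not for Theorem~\ref{thm:2}.

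The paper's actual proof is much more hands-on. For the forward direction it enumerates, for each $n$, every numerical class $\sum b_i(\alpha_iC+\beta_iF)$ on $\mathbb{F}_n$ compatible with $-K_{\mathbb{F}_n}=\sum\frac{b_i-1}{b_i}B_i$ (the full list in Section~6), and then for each class either (a) determines $G$ via Theorem~\ref{thm:5} (generation by inertia subgroups, intersection constraints) together with concrete facts about fixed loci of non-symplectic automorphisms of small order, or (b) excludes the class by a contradiction drawn from Lemmas~\ref{thm:11}--\ref{thm:41} (restrictions coming from the induced abelian covers of $\mathbb{P}^1$ via Theorem~\ref{thm:10}, counts of fixed points of symplectic involutions, parity of self-intersections, etc.). For the converse, the paper gives explicit branch divisors and builds the covers by iterated cyclic covers and fibre products (Propositions~\ref{pro:1}--\ref{pro:8} and their corollaries). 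Your high-level plan for the converse is compatible with this, but the forward direction needs to be redone without assuming the later theorems and without the blowup discussion.
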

Subsequently, we will get a similar result for Enriques surfaces.
\begin{dfn}
We use the following notations.\\
${\mathcal AG}(E):=$
$
\left\{
\begin{aligned}
&{\mathbb Z}/2{\mathbb Z}^{\oplus a},\ {\mathbb Z}/4{\mathbb Z}^{\oplus 2},\ {\mathbb Z}/2{\mathbb Z}^{\oplus f}\oplus{\mathbb Z}/4{\mathbb Z},\ {\mathbb Z}/4{\mathbb Z}\oplus{\mathbb Z}/8{\mathbb Z}\\
&:a=2,3,4\ f=1,2 
\end{aligned}
\right\}
$\\
${\mathcal AG}_{\infty}(E):=$
$
\left\{
\begin{aligned}
{\mathbb Z}/2{\mathbb Z}^{\oplus a}:\ a=2,3,4
\end{aligned}
\right\}
$\\
${\mathcal AG}_{0}(E):=$
$
\left\{
\begin{aligned}
&{\mathbb Z}/2{\mathbb Z}^{\oplus a},\ {\mathbb Z}/4{\mathbb Z}^{\oplus 2},\ {\mathbb Z}/2{\mathbb Z}^{\oplus f}\oplus{\mathbb Z}/4{\mathbb Z}\\
&:a=2,3,4,\ f=1,2 
\end{aligned}
\right\}
$\\
${\mathcal AG}_{1}(E):=$
$
\left\{
\begin{aligned}
&{\mathbb Z}/2{\mathbb Z}^{\oplus a},\ {\mathbb Z}/2{\mathbb Z}^{\oplus f}\oplus{\mathbb Z}/4{\mathbb Z},\ {\mathbb Z}/4{\mathbb Z}\oplus{\mathbb Z}/8{\mathbb Z}\\
&:a=2,3,4,\ f=1,2
\end{aligned}
\right\}
$\\
${\mathcal AG}_{2}(E):=$
$
\left\{
\begin{aligned}
{\mathbb Z}/2{\mathbb Z}^{\oplus a},\ {\mathbb Z}/4{\mathbb Z}^{\oplus 2},\ {\mathbb Z}/2{\mathbb Z}^{\oplus 2}\oplus{\mathbb Z}/4{\mathbb Z}:\ a=2,3
\end{aligned}
\right\}
$\\
${\mathcal AG}_{4}(E):=$
$
\left\{
\begin{aligned}
{\mathbb Z}/2{\mathbb Z}\oplus{\mathbb Z}/4{\mathbb Z}
\end{aligned}
\right\}
$
\end{dfn}
Then ${\mathcal AG}(E)=\bigcup_{n=0,1,2,4,\infty}{\mathcal AG}_{n}(E)$.
Let $E$ be an Enriques surface $E$ and $H$ be a finite abelian subgroup of ${\rm Aut}(E)$ such that $E/H$ is smooth. 
Let $X$ be the $K3$-cover of $E$, and $G:=\{ s\in{\rm Aut}(X):s\ {\rm is\ a\ lift\ of\ some}\ h\in H\}$. Then $G$ is a finite abelian subgroup of Aut$(X)$, $G$ has a non-symplectic involution whose fixed locus is empty, and $X/G=E/H$. 
The case of $E/H\cong{\mathbb P}^{2}$ was studied in [\ref{bio:6}].
By analyzing the groups of Theorem \ref{thm:1}, we get the following theorems:
\begin{thm}\label{thm:21}
Let $E$ be an Enriques surface and $H$ be a finite subgroup of Aut$(E)$ such that the quotient space $E/H$ is smooth.
If  there is a birational morphism from $E/H$ to a Hirzebruch surface ${\mathbb F}_{n}$, then $0\leq n\leq 4$. 	
In particular, if the quotient space $E/H$ is a Hirzebruch surface ${\mathbb F}_{n}$,  
then $n=0,1,2,4$.
\end{thm}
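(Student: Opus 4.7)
The plan is to pass to the $K3$ cover and then invoke the main theorems of the paper. Let $\pi\colon X\to E$ denote the canonical $K3$ double cover with covering involution $\iota$, and let $G$ be the finite abelian subgroup of ${\rm Aut}(X)$ defined in the paragraph preceding the statement. By construction $\iota\in G$ is a non-symplectic involution with ${\rm Fix}(\iota)=\emptyset$, and $X/G\cong E/H$; in particular $X/G$ is smooth. Any birational morphism $E/H\to{\mathbb F}_n$ therefore gives a birational morphism $X/G\to{\mathbb F}_n$, so Theorem \ref{thm:42} restricts $n$ to $\{0,1,2,3,4,6,8,12\}$ and forces this morphism to be an isomorphism whenever $n\in\{6,8,12\}$.

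To establish the birational bound $n\leq 4$ it thus suffices to rule out $n\in\{6,8,12\}$, and for the ``in particular'' clause one must additionally exclude $E/H\cong{\mathbb F}_3$. In each of these remaining situations $X/G$ is isomorphic to ${\mathbb F}_n$, so Theorem \ref{thm:2} pins $G$ down to the short list ${\mathcal AG}_n$. The pure $3$-power groups ${\mathbb Z}/3{\mathbb Z}^{\oplus b}\in{\mathcal AG}_6$ are immediately excluded since they contain no involution at all and hence cannot contain $\iota$. For the remaining candidates, namely $({\mathbb Z}/2{\mathbb Z})^{\oplus 2}\oplus{\mathbb Z}/3{\mathbb Z}\in{\mathcal AG}_6$, ${\mathbb Z}/2{\mathbb Z}\oplus{\mathbb Z}/4{\mathbb Z}\in{\mathcal AG}_8$, ${\mathbb Z}/2{\mathbb Z}\oplus{\mathbb Z}/3{\mathbb Z}\in{\mathcal AG}_{12}$, and each $G\in{\mathcal AG}_3$, I would appeal to the explicit Galois abelian cover construction of Section 3, which realises $(X,G)\to{\mathbb F}_n$ from a prescribed branch divisor $B\subset{\mathbb F}_n$, and read off the fixed locus of every order-$2$ element of $G$ directly from $B$. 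In each case the defining data of the cover force every non-symplectic involution in $G$ to fix a positive-dimensional component of the pull-back of $B$, contradicting ${\rm Fix}(\iota)=\emptyset$.

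The main obstacle is precisely this last case-by-case verification: for each of the few relevant pairs $(G,n)$ one has to exhibit the covering data explicitly and check that no involution in $G$ acts freely on $X$. The bookkeeping is light, but it genuinely uses the structural description of the abelian covers and their branch divisors developed in Section 3, rather than any purely formal group-theoretic invariant. Once this check is in hand, both the birational claim $0\leq n\leq 4$ and the isomorphism claim $n\in\{0,1,2,4\}$ follow at once.
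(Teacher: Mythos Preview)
Your approach is valid for abelian $H$ but takes a different route from the paper, and there is one genuine gap.

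The paper's argument (Section 5) is far more direct and does not pass through the classification Theorem \ref{thm:2} at all. It simply observes that since $G$ contains the fixed-point-free covering involution, the cited result of Mukai--Ohashi forces every non-symplectic element of $G$ to have even order; by Theorem \ref{thm:5}(ii) this makes every ramification index $b_i$ even. One then scans the list of admissible numerical classes of $f_\ast B$ on $\mathbb F_n$ in Section 6 and reads off $n\le 4$. This parity argument is elementary, avoids the heavy machinery of Theorem \ref{thm:2}, and---importantly---does not require $H$ or $G$ to be abelian. Your route, by contrast, invokes Theorem \ref{thm:2}, which is proved only for abelian $G$; since Theorem \ref{thm:21} is stated for an arbitrary finite subgroup $H$, your argument does not cover the theorem as written. (You followed the abelian setup from the preceding paragraph, but the theorem itself drops that hypothesis.)

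For the ``in particular'' clause ruling out $n=3$, both your approach and the paper's (via Theorem \ref{thm:25}) come down to a case analysis of the possible branch divisors and checking that no non-symplectic involution in the resulting $G$ acts freely, so there the two are aligned. What your approach buys is a uniform framework: once one accepts Theorems \ref{thm:42} and \ref{thm:2}, the remaining check is purely group-theoretic on the small lists $\mathcal{AG}_n$. What the paper's approach buys is economy and greater generality in the first part: one never needs to know $G$, only that the $b_i$ are all even.
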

\begin{thm}\label{thm:24} 
Let $E$ be an Enriques surface and $H$ be a finite abelian subgroup of Aut$(E)$ such that the quotient space $E/H$ is isomorphic to ${\mathbb F}_{n}$.
Then $H$ is one of ${\mathcal AG}_{n}(E)$ as a group. 
Conversely, for every $H'\in{\mathcal AG}_{n}(E)$, there is an Enriques surface $E'$ and a finite abelian subgroup $H'$ of ${\rm Aut}(E')$ such that $E'/H'$ is smooth and $H'\cong H$ as a group. 
\end{thm}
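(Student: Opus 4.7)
The plan is to reduce Theorem \ref{thm:24} to Theorem \ref{thm:2} via the K3-cover construction set up just before the statement. Given a pair $(E,H)$ with $E/H\cong\mathbb{F}_n$, let $\pi:X\to E$ be the K3-cover and $\iota$ its deck transformation, which is a fixed-point-free non-symplectic involution. As already noted in the excerpt, the lifted group $G$ is a finite abelian subgroup of $\mathrm{Aut}(X)$ with $\iota\in G$ central, and $X/G=E/H\cong\mathbb{F}_n$. Theorem \ref{thm:2} then forces $G\in\mathcal{AG}_n$ and $H\cong G/\langle\iota\rangle$. So the forward direction reduces to a group-theoretic bookkeeping problem: for each $G\in\mathcal{AG}_n$, enumerate the central order-two elements $\iota\in G$ that can be realized by a fixed-point-free involution on the K3 surface associated to $G$, and record the resulting quotients $G/\langle\iota\rangle$.

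I would carry out this enumeration group-by-group through $\mathcal{AG}_n$. For each candidate involution $\iota$, the test is whether $\iota$ can act freely on the explicit branched abelian cover of $\mathbb{F}_n$ constructed in Section 3. By Nikulin's theorem a symplectic involution on a K3 surface has exactly eight fixed points, so a free involution must be non-symplectic; geometrically this means $\iota$ lies in the image of $\varphi$ but contributes no branch curve over $\mathbb{F}_n$. Comparing the collection of admissible quotients $G/\langle\iota\rangle$ against the defining list of $\mathcal{AG}_n(E)$ should produce an exact match, and for $n=6,8,12$ one checks that no admissible $\iota$ exists, which is consistent with Theorem \ref{thm:21}.

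For the converse, given $H\in\mathcal{AG}_n(E)$ I would choose a compatible $G\in\mathcal{AG}_n$ realizing $H$ as a quotient $G/\langle\iota\rangle$ by a central involution of the desired type, and apply the constructive half of Theorem \ref{thm:2} to obtain $(X',G')$ with $X'/G'\cong\mathbb{F}_n$. It then remains to verify, using the explicit branch data from Section 3, that the preferred $\iota$ is fixed-point-free on $X'$; once this is done the Enriques surface $E':=X'/\langle\iota\rangle$ carries a faithful $H$-action with $E'/H\cong\mathbb{F}_n$, as required.

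The main obstacle is precisely the fixed-point analysis appearing in both directions. Deciding whether a given central involution $\iota\in G$ admits a free realization requires translating the group-theoretic question into the branch-divisor language of Section 3 and inspecting, case by case, whether the branch contribution of $\iota$ can be made empty while the rest of the branch divisor remains compatible with Theorem \ref{thm:2}. This is a finite but delicate computation spread across the eight isomorphism classes in $\mathcal{AG}_n$ and their central involutions; it is the exact match between the output of this computation and the list defining $\mathcal{AG}_n(E)$ that constitutes the substantive content of the theorem, while the reduction to Theorem \ref{thm:2} and the passage from $(X',G')$ to $(E',H)$ are then formal.
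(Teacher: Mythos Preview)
Your outline is correct and matches the paper's strategy: lift to the K3-cover, apply Theorem~\ref{thm:2}, and then do a fixed-point analysis for the distinguished involution. The paper carries this out as Theorem~\ref{thm:25}, but organizes the case analysis differently from what you propose, and the difference is worth noting.

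You propose to loop over abstract groups $G\in\mathcal{AG}_n$ and, for each, test which central involutions can be made free. The paper instead loops over \emph{numerical classes of branch divisors} $B$ on $\mathbb{F}_n$. This is more efficient for two reasons. First, since $G$ contains a non-symplectic involution with empty fixed locus, \cite{bio:10} forces every ramification index $b_i$ to be even; this single observation (which your outline omits) immediately discards most of the list in Section~6 and, combined with the requirement that $G$ have rank at least~3 in its 2-part, leaves only about thirty numerical classes to examine. Second, for a fixed numerical class the generators $G_{B_i}$ and their fixed curves are already pinned down by the results of Section~3, so the question ``does $G$ contain a free non-symplectic involution?'' becomes a concrete computation with the explicit curves $p^{-1}(B_i)$ rather than an abstract group-theoretic search. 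The same abstract group (e.g.\ $\mathbb{Z}/2\mathbb{Z}^{\oplus 3}$) can appear for several numerical classes, some admitting a free involution (class~(\ref{53})) and some not (classes~(\ref{33}),~(\ref{31}),~(\ref{32})), so organizing by group alone would not suffice without eventually unpacking the branch data anyway.

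The negative cases in the paper's analysis are more delicate than your sketch suggests: they use intersection-number counts against the eight fixed points of a symplectic involution, divisibility obstructions in $\mathrm{Pic}(\mathbb{F}_n)$ for intermediate double covers, and in one case (class~(\ref{208})) a nontrivial Picard-group argument on a tower of quotients. So while your reduction is sound, the ``finite but delicate computation'' you flag is indeed where all the work lies, and it is most naturally indexed by the branch divisor rather than the group.
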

\begin{thm}\label{thm:23}
Let $E$ be an Enriques surface and $H$ be a finite abelian subgroup of Aut$(E)$ such that the quotient space $E/H$ is smooth. 
Then $H$ is one of ${\mathcal AG}(E)$ as a group.
Conversely, for every $H\in{\mathcal AG}(E)$, there is an Enriques surface $E'$ and a finite abelian subgroup $H'$ of ${\rm Aut}(E')$ such that $E'/H'$ is smooth and $H'\cong H$ as a group. 
\end{thm}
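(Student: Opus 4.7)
The plan is to reduce Theorem \ref{thm:23} to the classification already obtained for $K3$ surfaces in Theorem \ref{thm:1}. Given a pair $(E,H)$ with $E/H$ smooth, I would lift to the $K3$-cover $\pi:X\to E$ and form $G=\{s\in{\rm Aut}(X):s\text{ lifts some }h\in H\}$, as in the paragraph preceding Theorem \ref{thm:21}. Since $H$ is finite abelian and the covering Enriques involution $\sigma$ is central in $G$, the group $G$ is again finite abelian; moreover $X/G\cong E/H$ is smooth, so Theorem \ref{thm:1} places $G$ in ${\mathcal AG}$ and identifies $H=G/\langle\sigma\rangle$.

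The forward direction then becomes: for each $G\in{\mathcal AG}$ decide whether $G$ contains a fixed-point-free non-symplectic involution $\sigma$, and read off the resulting quotient $G/\langle\sigma\rangle$. Using the classification of fixed loci of non-symplectic involutions on $K3$ surfaces cited in the text, combined with the abelian structure of $G$ and the branch-divisor analysis of Sections 3 and 4, I can in each case pick out all involutions of $G$ whose fixed locus is empty. Matching the resulting quotients to the minimal model of $E/H$ (which by Theorem \ref{thm:21} is ${\mathbb P}^{2}$ or ${\mathbb F}_{n}$ with $n\in\{0,1,2,4\}$) should yield exactly the sets ${\mathcal AG}_{n}(E)$, and hence their union ${\mathcal AG}(E)$.

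For the converse I would argue stratum by stratum: for each $H\in{\mathcal AG}_{n}(E)$ with $n\in\{0,1,2,4\}$, Theorem \ref{thm:24} already supplies an Enriques surface $E'$ and a subgroup $H'\cong H$ with $E'/H'\cong{\mathbb F}_{n}$, while for $H\in{\mathcal AG}_{\infty}(E)$ the construction of [\ref{bio:6}] handles the case $E'/H'\cong{\mathbb P}^{2}$. Concatenating these realisations covers every $H\in{\mathcal AG}(E)$.

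The main obstacle is the enumeration in the forward direction: the list ${\mathcal AG}$ is long, and several $G\in{\mathcal AG}$ carry more than one non-symplectic involution, so each candidate $\sigma$ must be tested separately for freeness of its action, using the fixed-locus information from the non-symplectic classification and the branch-divisor bookkeeping from Sections 3 and 4. Careful case checking will be needed to confirm that no group outside ${\mathcal AG}(E)$ slips in and none inside is missed, and that the stratification by the minimal model of $E/H$ matches the partition $\bigcup_{n\in\{0,1,2,4,\infty\}}{\mathcal AG}_{n}(E)$.
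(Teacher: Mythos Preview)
Your plan is essentially the paper's own proof: lift to the $K3$-cover, apply Theorem~\ref{thm:1} to $G$, locate the free non-symplectic involution $\sigma$, and read off $H=G/\langle\sigma\rangle$; for the converse invoke Theorem~\ref{thm:24} (and the ${\mathbb P}^{2}$ case). One efficiency you are missing: the paper does not run through all of ${\mathcal AG}$. It first invokes [\ref{bio:10}, Proposition~4.5] to conclude that $G$ contains no non-symplectic automorphism of odd order; combined with Theorem~\ref{thm:5} (the generators $G_{B_i}$ are purely non-symplectic) this forces $G$ to be a $2$-group in ${\mathcal AG}$, and the further requirement that $\sigma$ be an ``extra'' involution beyond those fixing branch curves yields $s+t+u\geq 3$ in $G\cong(\mathbb Z/2)^{s}\oplus(\mathbb Z/4)^{t}\oplus(\mathbb Z/8)^{u}$. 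This cuts the list to a handful of groups, after which only $(\mathbb Z/4)^{3}$ and $\mathbb Z/2\oplus\mathbb Z/4\oplus\mathbb Z/8$ need individual treatment (the first is excluded by a fixed-locus argument, the second forces $X/G\cong\mathbb F_{1}$ and falls back on Theorem~\ref{thm:25}). Incorporating this shortcut turns your proposed long enumeration into a short one.

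A small remark on your justification that $G$ is abelian: centrality of $\sigma$ plus $H$ abelian only gives that $G$ is nilpotent of class $\leq 2$. The missing step is that every commutator lies in $\langle\sigma\rangle$ and also in $\ker\chi$ for the character $\chi:G\to\mathbb C^{*}$ on $H^{2,0}(X)$; since $\chi(\sigma)=-1$, the commutator must be trivial.
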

Section $2$ is preliminaries.
In Section $3.1$, we will give examples for pairs $(X',G')$ described in Theorem \ref{thm:1}. 
In other words, we will show that for each $G\in{\mathcal AG}_{n}$ where $n=0,1,2,3,4,6,8,12$,
there is a pair $(X',G')$ where $X'$ is a K3 surface and $G'$ is a finite abelian subgroup of ${\rm Aut}(X')$ such that $G\cong G'$ as a group and  $X'/G'\cong{\mathbb F}_{n}$. 
Furthermore, we will give the way to construct $(X',G')$, and 
we will show that the way to construct $(X',G')$ is uniquely determined up to isomorphism from the branch divisor of the quotient map $p:X'\rightarrow X'/G'$.
In Section 3.2, we will describe branch divisors and abelian groups for the case where the quotient space is a Hirzebruch surface.
In Section 4, first, we will show Theorem \ref{thm:42} and \ref{thm:45}. 
Next, we will show that for a pair $(X,G)$ where $X$ is a $K3$ surface and $G$ is a finite abelian subgroup, if $X/G$ is smooth, then $G$ is isomorphic to one of ${\mathcal AG}$ as a group.
In Section 5, we will show Theorem \ref{thm:21}, \ref{thm:24}, and \ref{thm:23}.
In Section 6, based on $[\ref{bio:5}]$, we will describe the existence of a $K3$ surface $X$  and a finite group $G$ which is not necessarily an abelian group such that $X/G$ is smooth, and $X/G$ is neither $\mathbb P^2$ nor an Enriques surface.
\section{Preliminaries} 
We recall the properties of the Galois cover. 
\begin{dfn}
Let $f:X\rightarrow M$ be a branched covering, where $M$ is a complex manifold and $X$ is a normal complex space. 
We call $f:X\rightarrow M$ the Galois cover if there is a subgroup $G$ of Aut$(X)$ such that $X/G \cong M$ and  $f:X\rightarrow M$ is isomorphic to the quotient map $p:X\rightarrow X/G\cong M$.  
We call $G$ the Galois group of $f:X\rightarrow M$. 
Furthermore, if $G$ is an abelian group, then we call $f:X\rightarrow M$ the Abelian cover.	
\end{dfn}
\begin{dfn}
Let $f:X\rightarrow M$ be a finite branched covering, where $M$ is a complex manifold and $X$ is a normal complex space and $\varDelta$ be the branch locus of $f$.
Let $B_{1},\ldots ,B_{s}$ be irreducible hypersurfaces of $M$ and positive integers $b_{1},\ldots,b_{s}$, where $b_{i}\geq 2$ for $i=1,\ldots,s$. 
If $\varDelta=B_{1}\cup\ldots\cup B_{s}$ and for every j and for any irreducible component $D$ of $f^{-1}(B_{j})$ the ramification index at $D$ is $b_{j}$,  
then  we call an effective divisor $B:=\sum_{i=1}^{s}b_{i}B_{i}$ the branch divisor of $f$. 
\end{dfn}
Let $X$ be a normal projective variety and $G$ be a finite subgroup of ${\rm Aut}(X)$.  
Let $Y:=X/G$ be the quotient space and $p:X\rightarrow Y$ be the quotient map. 
The branch locus, denoted by $\varDelta$ is a subset of $Y$ given by $\varDelta:=\{y\in Y|\ |p^{-1}(y)|<|G|\}$.
It is known that $\varDelta$ is an algebraic subset of dimension ${\rm dim}\,(X)-1$ if $Y$ is smooth $[\ref{bio:15}]$. 
Let $\{B_{i}\}_{i=1}^{r}$ be the irreducible components of $\varDelta$ whose dimension is $1$.
Let $D$ be an irreducible component of $D$ of $p^{-1}(B_{j})$ and $G_{D}:=\{g\in G:g_{|D}={\rm id}_{D}\}$. Then 
the ramification index at $D$ is $b_{j}:=|G_{D}|$, and the positive integer $b_{j}$ is independent of an irreducible component of $p^{-1}(B_{j})$.
Then $b_{1}B_{1}+\cdots+b_{r}B_{r}$ is the branch divisor of $G$.
We state the facts (Theorem \ref{thm:3} and \ref{thm:4}) of the Galois cover theory which we need.
\begin{thm}\label{thm:3}
([\ref{bio:8}])
For a complex manifold $M$ and an effective divisor $B$ on $M$,
if there is a branched covering map $f:X\rightarrow M$ where $X$ is a simply connected complex manifold $X$ and the branch divisor of $f$ is $B$, then there is a subgroup $G$ of Aut$(X)$ such that $X/G \cong M$ and  $f:X\rightarrow M$ is isomorphic to the quotient map $p:X\rightarrow X/G\cong M$. 
Furthermore, a pair $(X,G)$ is a unique up to isomorphism.
\end{thm}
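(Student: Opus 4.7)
The plan is to realize $f\colon X\to M$ as the universal cover of the orbifold $(M, B)$; its deck-transformation group will be the desired $G$. Set $M^*:=M\setminus|B|$ and $X^*:=f^{-1}(M^*)$, so that $f|_{X^*}\colon X^*\to M^*$ is a finite connected \'etale cover, corresponding by classical covering theory to a finite-index subgroup $H:=f_*(\pi_1(X^*))\subseteq\pi_1(M^*)$ of index $\deg f$.

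First, I would analyze the local structure of $f$ near a smooth point of an irreducible component $B_i\subseteq|B|$. By the hypothesis on the ramification index, in suitable local coordinates $f$ has the form $(w_1,\dots,w_n)\mapsto(w_1^{b_i},w_2,\dots,w_n)$, so a meridian of any component of $f^{-1}(B_i)$ inside $X^*$ pushes forward under $f_*$ to a conjugate of $\gamma_i^{b_i}$, where $\gamma_i\in\pi_1(M^*)$ is a small meridian around $B_i$. Varying the basepoint by lifting paths in $M^*$, we conclude that $H$ contains every $\pi_1(M^*)$-conjugate of each $\gamma_i^{b_i}$, so $f|_{X^*}$ factors through the orbifold fundamental group $\pi_1^{\rm orb}(M,B):=\pi_1(M^*)/\langle\!\langle\gamma_1^{b_1},\dots,\gamma_r^{b_r}\rangle\!\rangle$.

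Second, I would exploit the simple-connectedness of $X$. Because $X$ is smooth and $f^{-1}(|B|)$ is a hypersurface, the surjection $\pi_1(X^*)\twoheadrightarrow\pi_1(X)=1$ has kernel normally generated (inside $\pi_1(X^*)$) by meridians of the components of $f^{-1}(|B|)$. Pushing this presentation through $f_*$ and combining with the previous step identifies $H$ with the full $\pi_1(M^*)$-normal closure of $\{\gamma_i^{b_i}\}_{i=1}^r$. In particular $H$ is normal, so $f|_{X^*}$ is Galois with deck group $G:=\pi_1(M^*)/H\cong\pi_1^{\rm orb}(M,B)$; this action extends holomorphically across $f^{-1}(|B|)$ by the normality of $X$ (Riemann extension) to give $G\hookrightarrow{\rm Aut}(X)$ with $X/G\cong M$ and quotient map identified with $f$.

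For uniqueness, any second pair $(X',G')$ satisfying the hypotheses produces the orbifold universal cover by the same construction, yielding a canonical equivariant isomorphism $X\cong X'$ that intertwines the $G$- and $G'$-actions. The main obstacle I anticipate is the bookkeeping in the second step: verifying that the $\pi_1(X^*)$-normal closure of the lifted meridians, after pushforward by $f_*$, exhausts the ambient $\pi_1(M^*)$-normal closure of $\{\gamma_i^{b_i}\}$ and not merely a smaller $H$-normal subgroup. Once this is resolved by a careful path-lifting argument tracking how basepoint changes conjugate meridians, the remaining assertions are formal consequences of standard orbifold covering space theory.
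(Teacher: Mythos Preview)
The paper does not supply its own proof of this theorem: it is quoted verbatim from Namba's book (reference~[\ref{bio:8}]) and used as a black box throughout. So there is no ``paper's proof'' against which to compare your proposal.

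That said, your outline is the standard argument one finds in the literature on branched coverings (including Namba), and it is essentially correct. The identification of $H$ with the full normal closure of the $\gamma_i^{b_i}$ is indeed the crux, and your sketch of why this holds---that $\pi_1(X^*)$ is normally generated by meridians of $f^{-1}(|B|)$ because $X$ is smooth and simply connected, and that these meridians push forward to conjugates of the $\gamma_i^{b_i}$---is the right idea. The one point worth tightening is the claim that the $\pi_1(X^*)$-normal closure pushes forward to the full $\pi_1(M^*)$-normal closure rather than just an $H$-normal subgroup: this follows because every $\pi_1(M^*)$-conjugate $\alpha\gamma_i^{b_i}\alpha^{-1}$ already lies in $H$ (it is itself a meridian power, hence in the image of some meridian upstairs), so the $H$-normal closure and the $\pi_1(M^*)$-normal closure of $\{\gamma_i^{b_i}\}$ coincide. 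Once you make that explicit, the argument is complete.
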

\begin{thm}\label{thm:4}
([\ref{bio:8}])
For a complex manifold $M$ and an effective divisor $B:=\sum_{i=1}^{n}b_{i}B{i}$ on $M$, where $B_{i}$ is an irreducible hypersurface for $i=1,\ldots,n$. 
Let $f:X\rightarrow M$ be a branched cover whose branch divisor is $B$ and where $X$ is a simply connected complex manifold.
Then for a branched cover $g:Y\rightarrow M$ whose branch divisor is $\sum_{j=1}^{m}b'_{j}B_{j}$ and $b'_{j}$ is divisible by $b_{i}$ and $m\leq n$, there is a branched cover $h:X\rightarrow Y$ such that $f=g\circ h$.
\end{thm}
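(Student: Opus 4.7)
My plan is to reduce the problem to classical covering space theory over the complement of the branch locus, and then extend the resulting holomorphic map across the ramification divisors using normality. Throughout I interpret the divisibility condition as $b'_j \mid b_j$ (writing $b'_j = 1$ for $j > m$), which is the only interpretation consistent with $f = g \circ h$: at a point over $B_j$, the ramification indices multiply, so ramification of $h$ is $b_j / b'_j$.

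\smallskip

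First, I set $U := M \setminus \bigcup_{i=1}^n B_i$ and restrict both maps to get unramified finite covers $f_0 : X_0 \to U$ and $g_0 : Y_0 \to U$, where $X_0 = f^{-1}(U)$ and $Y_0 = g^{-1}(U)$. Pick a basepoint in $U$ and let $H_X, H_Y \subseteq \pi_1(U)$ be the subgroups classifying these connected covers (after passing to components, we may assume $Y_0$ is connected, or argue component by component). Around each $B_j$ fix a meridian $\gamma_j \in \pi_1(U)$.

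\smallskip

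Next, I compute $H_X$ and $H_Y$ explicitly in terms of the $\gamma_j$. Because $f$ has ramification exactly $b_j$ along $B_j$, a standard Van Kampen computation on $X = X_0 \cup (\text{local models } z \mapsto z^{b_j})$ gives that $\pi_1(X)$ is obtained from $\pi_1(X_0)$ by killing loops that project to $\gamma_j^{b_j}$ (and conjugates). Since $\pi_1(X) = 1$ by hypothesis, the image of $\pi_1(X_0) \to \pi_1(U)$, which is $H_X$, is exactly the normal closure $\langle\!\langle \gamma_j^{b_j} \rangle\!\rangle$ in $\pi_1(U)$; equivalently, the orbifold fundamental group $\pi_1^{\mathrm{orb}}(M,B)$ vanishes. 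For $g$, any meridian around a component of $g^{-1}(B_j)$ projects to some conjugate of $\gamma_j^{b'_j}$ lying in $H_Y$, and since the ramification over every component of $g^{-1}(B_j)$ is the same $b'_j$, every conjugate of $\gamma_j^{b'_j}$ belongs to $H_Y$. Using $b'_j \mid b_j$, each such conjugate raised to $b_j/b'_j$ still lies in $H_Y$, so the generators of $H_X$ all lie in $H_Y$, giving $H_X \subseteq H_Y$. By the classification of covering spaces, this inclusion produces a covering map $h_0 : X_0 \to Y_0$ with $g_0 \circ h_0 = f_0$.

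\smallskip

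Finally, I extend $h_0$ to a branched cover $h : X \to Y$. Here I invoke the Riemann extension theorem for holomorphic maps into normal complex spaces: $X \setminus X_0 = f^{-1}(\bigcup B_i)$ is a nowhere dense analytic subset of the normal space $X$, and $h_0$ is locally bounded since $g : Y \to M$ is a finite branched cover. Given $x \in X \setminus X_0$, the preimage $g^{-1}(f(x))$ is a finite set; local irreducibility of the normal space $X$ at $x$ forces $h_0$ to have a unique limit among these finitely many points along any sequence approaching $x$, and the bounded holomorphic extension theorem of Grauert--Remmert then supplies a holomorphic extension $h : X \to Y$. The identity $f = g \circ h$ holds on the dense set $X_0$ and hence everywhere by continuity.

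\smallskip

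The genuine obstacle is the second step: $Y$ is not assumed Galois, so $H_Y$ need not be normal, and I have to argue carefully that \emph{every} conjugate of $\gamma_j^{b'_j}$ lies in $H_Y$ (not merely one representative per $B_j$). This is where the hypothesis that the ramification index is the same positive integer $b'_j$ along every irreducible component of $g^{-1}(B_j)$—built into the definition of branch divisor in the preliminaries—is essential. The extension step is then a standard application of normality once local boundedness is in hand.
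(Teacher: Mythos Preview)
The paper does not prove this theorem; it is quoted from Namba's monograph with only the citation [\ref{bio:8}] given, so there is no in-paper argument to compare against. Your approach---reduce to the unramified locus, compare monodromy subgroups of $\pi_1(U)$, then extend by Grauert--Remmert---is exactly the standard proof one finds in Namba and is correct.

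Two small remarks. First, your reading of the divisibility condition as $b'_j \mid b_j$ is the right one; the printed statement has a typographical slip (index $i$ for $j$, and the phrasing ``$b'_j$ is divisible by $b_i$'' is garbled), but multiplicativity of ramification along $f = g\circ h$ forces $b'_j \mid b_j$, as you note. Second, in Step~2 you do not actually need $H_X$ to equal the full normal closure $\langle\!\langle \gamma_j^{b_j}\rangle\!\rangle$; it suffices that $H_X$ is \emph{generated} by conjugates of the $\gamma_j^{b_j}$, which follows directly from $\pi_1(X)=1$ and Van Kampen. Your observation that the uniform-ramification hypothesis on $g$ (built into the paper's definition of branch divisor) is precisely what guarantees \emph{every} $\pi_1(U)$-conjugate of $\gamma_j^{b'_j}$ lies in $H_Y$---via the orbit-size description of ramification indices over $B_j$---is the key point, and you have it right.
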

Let $X$ be a $K3$ surface and $G$ be a finite subgroup of Aut$(X)$ such that $X/G$ is smooth.
Since $K3$ surfaces are simply connected, $G$ is determined by the branch divisor of the quotient map $p:X\rightarrow X/G$ from Theorem \ref{thm:3}. 
In order to classify finite abelin groups $G$ which acts on $K3$ surfaces and the quotient space is smooth, 
we will search a smooth rational surface $S$ and an effective divisor $B$ on $S$ such that there is a $K3$ surface and a finite subgroup $G$ of Aut$(X)$ such that $X/G\cong S$ and the branch divisor of the quotient map $p:X\rightarrow X/G$ is $B$.
There is the problem which is called Fenchel's problem.
\begin{ble}
Let $M$ be a projective manifold.
Give a necessary and sufficient condition on an effective divisor $D$ on $M$ for the existence of a finite Galois (resp. Abelian) cover $\pi:X\rightarrow M$ whose branch divisor is $D$.
\end{ble}
The Fenchel's problem was originally for compact Riemann surfaces and was answered by Bundgaard-Nielsen [\ref{bio:3}] and Fox [\ref{bio:4}].  
\begin{thm}\label{thm:10}
([\ref{bio:3}],[\ref{bio:4}]) 
Let $k\geq 1$ and let $D:=\sum_{i=1}^km_ix_i$ be a divisor on a compact Riemann surface $M$ where $x_i\in M$ and $m_i\in\mathbb Z$ for $i=1,\ldots,k$. 
Then there is a finite Galois cover $p:X\rightarrow M$ such that the branch divisor of $p$ is $D$ except for\\
i) $M={\mathbb P}^{1}$ and $k=1$, and\\
ii) $M={\mathbb P}^{1}$, $k=2$, and $m_{1}\not=m_{2}$.

Furthermore, for the case $M={\mathbb P}^{1}$ there exists a finite Abelian cover ${\mathbb P}^{1}\rightarrow{\mathbb P}^{1}$ whose branch divisor is $D$ if and only if \\
i) $k=2$ and $m_{1}=m_{2}$ or \\
ii) $k=3$ and $m_{1}=m_{2}=m_{3}=2$.
\end{thm}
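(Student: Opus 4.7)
The plan is to translate the existence of a branched Galois (resp. Abelian) cover with prescribed branch divisor into a group-theoretic question about the orbifold fundamental group, and then to dispose of the cases by hand. For a compact Riemann surface $M$ of genus $g$ with marked points $x_1,\ldots,x_k$ of multiplicities $m_1,\ldots,m_k$, the orbifold fundamental group is
$$\Gamma=\left\langle a_1,b_1,\ldots,a_g,b_g,c_1,\ldots,c_k\ \middle|\ \prod_{i=1}^{g}[a_i,b_i]\prod_{j=1}^{k}c_j=1,\ c_j^{m_j}=1\right\rangle,$$
and a finite Galois (resp. Abelian) cover $p:X\to M$ with branch divisor $\sum m_j x_j$ corresponds to a surjection $\rho:\Gamma\twoheadrightarrow G$ onto a finite (resp. finite abelian) group $G$ for which $\rho(c_j)$ has order exactly $m_j$ for every $j$. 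With this dictionary the two non-existence statements are immediate: when $M=\mathbb{P}^1$ and $k=1$ the single relation forces $c_1=1$, so $\rho(c_1)$ cannot have order $m_1\geq 2$; and when $M=\mathbb{P}^1$, $k=2$ with $m_1\neq m_2$ the relations collapse $\Gamma$ to $\mathbb{Z}/\gcd(m_1,m_2)\mathbb{Z}$, in which one of the $c_j$ fails to have order $m_j$.

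For the existence half I would invoke the classical theory of $2$-orbifolds. Computing $\chi^{\mathrm{orb}}=2-2g-\sum_j(1-1/m_j)$ classifies the orbifold as spherical, Euclidean, or hyperbolic according to its sign; outside the two excluded bad orbifolds above, $\Gamma$ admits a faithful discrete representation into $\mathrm{Isom}(\widetilde{M})$ for $\widetilde{M}\in\{S^2,\mathbb{E}^2,\mathbb{H}^2\}$, and Selberg's lemma (or explicit inspection of the finite subgroups of $SO(3)$ in the spherical case) yields a torsion-free normal subgroup of finite index. The associated finite cover is a smooth compact Riemann surface branched over $M$ with branch divisor exactly $\sum m_j x_j$. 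Alternatively one may cite Bundgaard-Nielsen's direct amalgamated-product construction or Fox's completion theorem, both of which realize the required finite quotient of $\Gamma$ without passing through uniformization.

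Finally, for the Abelian-cover statement with $M=\mathbb{P}^1$ and $X=\mathbb{P}^1$, the Galois group $G$ embeds as a finite abelian subgroup of $\mathrm{Aut}(\mathbb{P}^1)=PGL_2(\mathbb{C})$, and such subgroups are exactly the cyclic groups together with the Klein four group $V_4$. A cyclic action $z\mapsto\zeta z$ has only the fixed points $0,\infty$ and gives $k=2$, $m_1=m_2=|G|$; the action of $V_4=\langle z\mapsto -z,\ z\mapsto 1/z\rangle$ has three fixed-point orbits $\{0,\infty\},\{1,-1\},\{i,-i\}$ and gives $k=3$, $m_1=m_2=m_3=2$. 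Riemann-Hurwitz confirms both and excludes any other possibility. The main obstacle in the whole proof is the existence step in the middle paragraph: the combinatorial dictionary is routine, but producing a finite quotient of $\Gamma$ in which each $c_j$ has exact order $m_j$ requires the nontrivial uniformization or Selberg-type input that is precisely the content of Bundgaard-Nielsen and Fox.
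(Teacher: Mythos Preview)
The paper does not prove this theorem; it is quoted as a classical result of Bundgaard--Nielsen and Fox and is used as a black box throughout. So there is no ``paper's own proof'' to compare against.

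Your sketch is the standard approach and is correct in outline. The orbifold-fundamental-group dictionary is exactly the right translation, and your disposal of the two bad genus-$0$ cases is clean. For the existence step you correctly identify that the real content lies in producing a torsion-free finite-index normal subgroup of $\Gamma$; invoking uniformization plus Selberg (with the spherical case handled separately, since there $\Gamma$ is already finite and one simply takes $X=S^2$) is a valid route, as is citing Bundgaard--Nielsen and Fox directly. Your treatment of the abelian $\mathbb{P}^1\to\mathbb{P}^1$ case via the classification of finite abelian subgroups of $PGL_2(\mathbb{C})$ is also correct; one small point worth making explicit is that Riemann--Hurwitz forces the source curve to have genus $0$ precisely in the two listed cases, which is why no other abelian $\Gamma$-quotient can yield $\mathbb{P}^1$ upstairs.
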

In order to study the cover of the Galois cover $X\rightarrow X/G$, the following theorem is useful.
\begin{thm}\label{thm:6}
Let $X$ be a smooth projective variety, $G$ be a finite subgroup of ${\rm Aut}(X)$ such that $X/G$ is smooth.
Let $p:X\rightarrow X/G$ be the quotient map, and $B:=b_{1}B_{1}+\ldots+b_{r}B_{r}$ be the branch divisor of $p$.
Then 
\[ K_{X}=p^{\ast}K_{X/G}+\sum_{i=1}^{r}\frac{b_{i}-1}{b_{i}}p^{\ast}B_{i}\]
where $K_{X}$ $($resp. $K_{X/G})$ is the canonical divisor of $X$ $($resp. $X/G)$.
\end{thm}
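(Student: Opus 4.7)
The plan is to deduce the formula from the general Riemann–Hurwitz formula, then identify the ramification divisor explicitly using the hypothesis that both $X$ and $X/G$ are smooth. Since $p:X\to X/G$ is a finite surjective morphism between smooth projective varieties of the same dimension, $p$ is flat (by miracle flatness: finite, source Cohen--Macaulay, target regular, with all fibers of the same dimension $0$). Therefore $p$ admits a well-defined relative dualizing sheaf $\omega_{X/(X/G)}$, and one has
\[
K_X = p^{\ast}K_{X/G} + R,
\]
where $R$ is the ramification divisor, supported on $p^{-1}(\varDelta)$ and with multiplicity at each prime divisor $D\subset X$ equal to $\mathrm{length}_{\mathcal O_{X,D}}(\Omega^{1}_{X/(X/G)})_{D}$. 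So the task reduces to computing $R$ locally around each irreducible component $D$ of $p^{-1}(B_i)$.

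Next I would work étale-locally (or in the completion) at the generic point of such a $D$. Let $G_D:=\{g\in G: g|_D=\mathrm{id}_D\}$ be the inertia group of $D$, which has order $b_i$ by the definition of the ramification index. Because $X/G$ is smooth at the generic point of $B_i$, Chevalley--Shephard--Todd (applied to the local ring at a generic point of $D$) tells us that $G_D$ acts on the completed local ring of $X$ at a generic point of $D$ as a pseudo-reflection group. In codimension one this forces $G_D$ to be cyclic of order $b_i$ and, in suitable formal coordinates $(x,y_1,\dots,y_{n-1})$ with $D=\{x=0\}$, to act by $x\mapsto \zeta x$ with $\zeta$ a primitive $b_i$-th root of unity, fixing the $y_j$. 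The local ring of $X/G$ at the generic point of $B_i$ is then the invariant ring, generated by $t:=x^{b_i}$, so $p$ has the form $x\mapsto x^{b_i}$ transversally to $D$.

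From this local model, $p^{\ast}B_i = b_i\sum_{D\subset p^{-1}(B_i)} D$, and $\Omega^1_{X/(X/G)}$ is supported at each such $D$ with length $b_i-1$ (since $dt = b_i x^{b_i-1}dx$). Hence the multiplicity of $R$ along $D$ equals $b_i-1$, and summing over all components gives
\[
R \;=\; \sum_{i=1}^{r}(b_i-1)\!\!\sum_{D\subset p^{-1}(B_i)}\!\! D \;=\; \sum_{i=1}^{r}\frac{b_i-1}{b_i}\,p^{\ast}B_i.
\]
Substituting this into the Riemann--Hurwitz formula yields the desired identity.

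The main obstacle is the local linearization step: one must verify that the inertia group $G_D$ really is cyclic and acts by a character on the transversal direction. That is where the smoothness of the quotient $X/G$ is essential, via Chevalley--Shephard--Todd; without it one would only get a pseudo-reflection group on some ambient space and the computation of $R$ would be more delicate. Once that local normal form is in hand, the rest is bookkeeping between $p^{\ast}B_i$ and $\sum D$.
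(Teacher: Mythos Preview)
Your argument is correct and is the standard derivation of this Riemann--Hurwitz formula. Note, however, that the paper does not actually prove this statement: it is recorded in the preliminaries as a known result and used as a tool throughout, so there is no ``paper's own proof'' to compare against. Your route via miracle flatness plus a local computation of the ramification divisor is exactly how one establishes such a formula; the only remark I would make is that invoking Chevalley--Shephard--Todd is slightly heavier than necessary. At the generic point of a component $D\subset p^{-1}(B_i)$ you are in codimension one, so after linearizing the action of the inertia group $G_D$ on the completed local ring, $G_D$ acts faithfully on the one-dimensional transversal direction and is therefore automatically a finite subgroup of $\mathrm{GL}_1(\mathbb{C})\cong\mathbb{C}^\ast$, hence cyclic, without any appeal to pseudo-reflection groups. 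The smoothness of $X/G$ is still needed globally (for flatness of $p$ and for $K_{X/G}$ to be a Cartier divisor so that $p^\ast K_{X/G}$ makes sense), but the local normal form $x\mapsto x^{b_i}$ in codimension one is available for any finite quotient of a smooth variety. With that simplification the proof is clean and complete.
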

Let $X$ be a $K3$ surface and $G$ be a finite subgroup of Aut$(X)$ such that $X/G$ is smooth, and $B$ be the branch divisor of the quotient map
$p:X\rightarrow X/G$.
The canonical line bundle of a $K3$ surface is trivial. 
By Theorem \ref{thm:6}, the branch divisor is restricted in the Picard group of the smooth rational surface $X/G$, i.e. $B$ must satisfy \[K_{X/G}+\sum_{i=1}^{r}\frac{b_{i}-1}{b_{i}}B_{i}=0\ {\rm in\ Pic}_{\mathbb Q}(X/G).\]
In Section 3.1, we will show that for a Hirzebruch surface ${\mathbb F}_{n}$, if ${\mathbb F}_{n}$ has an effective divisor $B=\sum_{i=1}^{k}b_{i}B_{i}$ where $B_{i}$ is an irreducible curve and $b_{i}\geq 2$ for $i=1,\ldots,k$ such that $\sum_{i=1}^{k}\frac{b_{i}-1}{b_{i}}B_{i}+K_{S}=0$ in Pic$_{\mathbb Q}({\mathbb F}_{n})$, then $0\leq n\leq 12$.
In Section 4,  we will show Theorem \ref{thm:42} by using Theorem \ref{thm:6}.

The following theorem is important for checking the structure of $G$ from the branch divisor.
\begin{thm}\label{thm:5}(See [\ref{bio:16}])
For a $K3$ surface $X$ and a finite subgroup $G$ of Aut$(X)$ such that $X/G$ is smooth. 
Let $B:=\sum_{i=1}^{k}b_{i}B_{i}$ be the branch divisor of the quotient map $p:X\rightarrow X/G$. 
We put $p^{\ast}B_{i}=\Sigma_{j=1}^{l}b_{i}C_{i,j}$ where $C_{i,j}$ is an irreducible curve for $j=1,\ldots,l$.
Let $G_{C_{i,j}}:=\{g\in G:\ g_{|C_{i,j}}={\rm id}_{C_{i,j}}\}$, $G_{i}$ be a subgroup of $G$, which is generated by $G_{C_{i,1}},\ldots, G_{C_{i,l}}$, and $I\subset\{1,\ldots,k\}$ be a subset.
Then, the following holds.\\
$i)$ If $(X/G)\backslash\cup_{i\in I}B_{i}$ is simply connected, then $G$ is generated by $\{G_{j}\}_{j\in\{1,\ldots,k\}\backslash I}$.\\
$ii)$ $G_{C_{i,j}}\cong{\mathbb Z}/b_{i}{\mathbb Z}$ and $G_{C_{i,j}}$ is generated by a purely non-symplectic automorphism of order $b_{i}$.\\
$iii)$ If $G$ is abelian, then there is an automorphism $g\in G$ such that $\cup_{j=1}^{l}C_{i,j}\subset{\rm fix}(g)$, and hence $C_{i,j}$ are pairwise disjoint.\\
$iv)$ If the self intersection number $(B_{i}\cdot B_{i})$ of $B_{i}$ is positive, then $l=1$, and hence $G_{i}$ is generated by a purely non-symplectic automorphism of order $b_{i}$.
\end{thm}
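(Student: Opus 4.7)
The strategy is to handle the four parts in turn, using the Galois correspondence for branched covers for (i), local linearization around a fixed curve for (ii), commutativity plus the known structure of fixed loci of non-symplectic automorphisms on $K3$ surfaces for (iii), and Hodge index on $\mathrm{Pic}(X)$ for (iv).

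For (i), set $V := (X/G) \setminus \bigcup_{i \in I} B_i$ and $V^\circ := V \setminus \bigcup_{j \notin I} B_j$. The restriction $p : p^{-1}(V^\circ) \to V^\circ$ is an unramified Galois cover with group $G$, giving a surjection $\pi_1(V^\circ) \twoheadrightarrow G$. Since $V$ is simply connected, $\pi_1(V^\circ)$ is normally generated by meridian loops around the $B_j$ with $j \notin I$, and the monodromy of such a meridian around $B_j$ lies in the inertia subgroup $G_{C_{j,k}} \subset G_j$ of whichever sheet is approached. Therefore $G$ is generated by $\{G_j\}_{j \notin I}$.

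For (ii), work locally near a generic point of $C_{i,j}$: by smoothness of $X$ and $X/G$, one linearizes the action of each $g \in G_{C_{i,j}}$ in coordinates $(z,w)$ with $C_{i,j} = \{w=0\}$ as $(z,w) \mapsto (z, \chi(g) w)$; the character $\chi : G_{C_{i,j}} \hookrightarrow \mathbb{C}^\ast$ is faithful, giving cyclicity, and its order matches the ramification index $b_i$. Writing $\omega = f(z,w)\, dz \wedge dw$ locally with $f(z,0) \neq 0$, a direct computation gives $g^\ast \omega = \chi(g)\, \omega$, so the generator is purely non-symplectic of order $b_i$. For (iii), fix a generator $g$ of $G_{C_{i,1}}$ and, for each $k$, an $h_k \in G$ with $h_k(C_{i,1}) = C_{i,k}$ (using transitivity of $G$ on the components of $p^{-1}(B_i)$); commutativity gives $g = h_k g h_k^{-1}$, which fixes $C_{i,k}$ pointwise, so $\bigcup_k C_{i,k} \subset \mathrm{fix}(g)$, and disjointness follows from the standard fact that the fixed locus of a finite-order non-symplectic automorphism on a $K3$ surface is a disjoint union of smooth curves and isolated points. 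For (iv), (iii) gives pairwise disjointness of the $C_{i,k}$ (in the abelian case, which is what is needed downstream), while $(p^\ast B_i)^2 = |G|(B_i \cdot B_i) > 0$ together with $G$-conjugacy of the $C_{i,k}$ forces each $C_{i,k}^2 > 0$. But two disjoint smooth curves of positive self-intersection on a $K3$ surface cannot coexist: linear independence would produce a positive-definite $2 \times 2$ Gram submatrix inside a lattice of signature $(1,\rho-1)$, contradicting Hodge index, while linear equivalence would place them in a base-point-free pencil and force the self-intersection to be zero. Hence $l = 1$, and (ii) then yields the final assertion.

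The main obstacle I anticipate is the $\pi_1$-input used in (i): one needs a clean presentation of the fundamental group of the complement of a reducible (and possibly singular) curve in a simply connected smooth rational surface as being normally generated by meridians around its irreducible components, a statement that is folklore but requires care when the components are not in general position. Beyond this, the arguments for (ii)--(iv) are fairly mechanical once one commits to the abelian setting and invokes the well-catalogued fixed-locus structure of non-symplectic automorphisms on $K3$ surfaces.
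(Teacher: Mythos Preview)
Your argument is correct and, for parts (ii)--(iv), essentially identical to the paper's: local linearization for the cyclic inertia and the non-symplectic character, the conjugation trick $h_k g h_k^{-1}=g$ for (iii), and Hodge index for (iv) (the paper, like you, proves (iii) and (iv) together and implicitly uses the disjointness from the abelian hypothesis to get $C_{i,k}^2>0$).

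The one genuine difference is in (i). You argue via the surjection $\pi_1(V^\circ)\twoheadrightarrow G$ and the normal generation of $\pi_1(V^\circ)$ by meridians, which---as you yourself flag---requires a presentation result for complements of possibly non-generic curve configurations. The paper sidesteps this entirely: it lets $H\subset G$ be the subgroup generated by the $G_j$ with $j\notin I$ (this is normal, since conjugation permutes the inertia groups $G_{C_{j,k}}$ among themselves), forms $Y:=X_0/H$ with $X_0:=X\setminus\bigcup_{i\in I}p^{-1}(B_i)$, and observes that the induced cover $Y\to V$ by $G/H$ has branch locus of codimension $\geq 2$ (because $H$ already contains all the divisorial inertia over $V$). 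Purity of the branch locus then forces the cover to be \'etale, and simple connectedness of $V$ gives $G=H$. This is strictly cleaner: it replaces your meridian-generation input with Zariski--Nagata purity, which needs only that $V$ is smooth, and it works uniformly without any general-position hypothesis on the $B_j$. Your approach buys nothing extra here, so you may as well adopt the paper's.
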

\begin{proof}
We will show $i)$.	
We assume that $(X/G)\backslash\cup_{i\in I}B_{i}$ is simply connected.
Let $H$ be the subgroup of $G$ which is generated by $\{G_{j}\}_{j\in\{1,\ldots,k\}\backslash I}$, and $X_{0}:=X\backslash \cup_{i\in I}p^{-1}(B_{i})$.
Then $G$ and $H$ act on $X_{0}$.
We assume that $G\not=H$.
Let $Y:=X_{0}/H$ be the quotient space, and $G':=G/H$.
Then $G'$ acts faithfully on $Y$, $Y/G'\cong (X/G)\backslash\cup_{i\in I}B_{i}$, and the branch locus of $Y\rightarrow Y/G'$ is a finite set. 
Since $(X/G)\backslash\cup_{i\in I}B_{i}$ is smooth and simply connected, this is a contradiction.
Therefore,  $G$ is generated by $\{G_{j}\}_{j\in\{1,\ldots,k\}\backslash I}$. 
	
Since $X$ is a $K3$ surface, an automorphism whose fixed locus contains a curve can only be purely non-symplectic. 
Therefore, by the definition of the ramification index $b_{i}$, we get ii).
	
We will show $iii)$ and $iv)$.	
Since $B_{i}$ is contained in the branch locus, we get $p^{-1}(B_{i})=\bigcup_{j=1}^{l}C_{i,j}\subset\bigcup_{g\in G}{\rm fix}(g)$. 
Since $G$ is finite, for each $j$, there is $s_{j}\in G$ such that $C_{i,j}\subset{\rm fix}(s_{j})$. 
Since $B_{i}$ is irreducible, we get that $p(C_{i,j})=p(C_{i,k})$ for $1\leq j<k\leq l$. 
Therefore, there is $t\in G$ such that $t(C_{i,j})=C_{i,k}$. 
Since $C_{i,j}\subset{\rm fix}(s_{j})$ and $t(C_{i,j})=C_{i,k}$, we obtain that $C_{i,k}\subset {\rm fix}(t\circ s_{j}\circ t^{-1})$. 
Since $G$ is abelian, we have $s_{j}=t\circ s_{j}\circ t^{-1}$. 
We get $iii)$.  
If the self intersection number $(B_{i}\cdot B_{i})$ of $B_{i}$ is positive, then by Hodge index theorem, we get $l=1$.
By $ii)$,  $G_{i}\cong{\mathbb Z}/b_{i}{\mathbb Z}$ is generated by a purely non-symplectic automorphism of order $b_{i}$.
\end{proof}
Let $X$ be a $K3$ surface and $G$ be a finite abelian subgroup of ${\rm Aut}(X)$ such that $X/G$ is smooth and $B:=\sum_{i=1}^{k}b_{i}B_{i}$ be the branch divisor of the quotient map $p:X\rightarrow X/G$. 
If $k=1$, then by Theorem \ref{thm:5} $G=G_{B_{1}}\cong{\mathbb Z}/b_{1}{\mathbb Z}$. 
We assume that $k=2$. By Theorem \ref{thm:5}, $G$ is generated by $G_{B_{1}}\cong\mathbb Z/b_{1}\mathbb Z$ and $G_{B_{2}}\cong\mathbb Z/b_{2}\mathbb Z$.
Moreover, we assume that the intersection $B_{1}\cap B_{2}$ of $B_{1}$ and $B_{2}$ is not an empty set.
Since $B_{1}\cap B_{2}\not=\emptyset$, $p^{-1}(B_{1})\cap p^{-1}(B_{2})\not=\emptyset$. 
Since the fixed locus of an automorphism is a pairwise disjoint set of points and curves, we get $G_{B_{1}}\cap G_{B_{2}}=\{{\rm id}_{X}\}$. 
Therefore, $G=G_{B_{1}}\oplus G_{B_{2}}$, but in the case of $k\geq3$ it is not necessarily $G=\bigoplus_{i=1}^{k}G_{B_{i}}$ even if $B_{i}\cap B_{j}\not=\emptyset$ for $1\leq i<j\leq k$.

For an irreducible component $B_{i}$ of $B$ we write $p^{\ast}B_{i}=\sum_{j=1}^{l}b_{i}C_{j}$ where $C_{j}$ is a smooth curve for $j=1,\ldots,l$.
Since the degree of $p$ is $|G|$,
by $iv)$ of Theorem \ref{thm:5}, we get that $|G|(B_{i}\cdot B_{i})=b^{2}_{i}l(C_{j}\cdot C_{j})$ for $j=1,\ldots, l$.
If the self intersection number $(B_{i})^{2}$ of $B_{i}$ is positive, then by $iv)$ of Theorem \ref{thm:5}, we get that $l=1$ and the genus of $C_{1}$ is $2$ or more.
If $(B_{i})^{2}$ is zero, then $C_{1},\ldots,C_{l}$ are elliptic curves.
If $(B_{i})^{2}$  is negative, then $C_{1},\ldots,C_{l}$ are rational curves.
Recall that there is $g\in G$ such that $g$ is a non-symplectic automorphism of order $b_{i}$ and $C_{1},\ldots,C_{l}$ are contained in Fix$(g)$. 
There are many results on the number of curves, the genus of curves, and the number of isolated points of the fixed locus of a non-symplectic automorphism.  
We use them to search $B$ such that there is a Galois cover $f:X\rightarrow S$ such that $X$ is a $K3$ surface and the branch divisor of $f$ is $B$ and we use them to restore $G$ from $B$. 
Here $S$ is a smooth rational surface and $B$ is an effective divisor on $S$.
\section{Abelian groups of K3 surfaces with Hirzebruch surfaces} 
Here, we give the list of a numerical class of an effective divisor $B=\sum_{i=1}^{k}b_{i}B_{i}$ on ${\mathbb F}_{n}$ such that $B_{i}$ is a smooth curve for each $i=1,\ldots,k$ and $K_{{\mathbb F}_{n}}+\sum_{i=1}^{k}\frac{b_{i}-1}{b_{i}}B_{i}=0$ in Pic$_{\mathbb Q}({\mathbb F}_{n})$. 
\begin{dfn}
For a Hirzebruch surface ${\mathbb F}_{n}$ where $n\in{\mathbb Z}_{\geq 0}$, 
we take two irreducible curves $C$ and $F$ such that ${\rm Pic}({\mathbb F}_{n})={\mathbb Z}C\oplus{\mathbb Z}F$, $(C\cdot F)=1$,
$(F\cdot F)=0$, $(C\cdot C)=-n$, and $K_{{\mathbb F}_{n}}=-2C-(n+2)F$ in ${\rm Pic}({\mathbb F}_{n})={\mathbb Z}C\oplus{\mathbb Z}F$.
Notice that for $n=0$, $C=pr_{1}^{\ast}{\mathcal O}_{{\mathbb P}^{1}}(1)$ and $F=pr_{2}^{\ast}{\mathcal O}_{{\mathbb P}^{1}}(1)$, and 
for $n\geq1$, $C$ is the unique curve on $\mathbb F_{n}$ such that the self intersection number is negative, and $F$ is the fibre class of the conic bundle of $\mathbb F_{n}$.	
\end{dfn}
\begin{lem}\label{thm:7}
Let ${\mathbb F}_{n}$ be a Hirzebruch surface where $n\not=0$, and $C'\subset{\mathbb F}_{n}$ be an irreducible curve. 
Then one of the following holds:\\
1) $C'=C$.\\
2) $C'=F$\ in\  ${\rm Pic}({\mathbb F}_{n})$.\\
3) $C'=aC+bF$ where $a\geq1$ and $b\geq na$.
\end{lem}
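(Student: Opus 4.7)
The plan is to write $C'=aC+bF$ in $\mathrm{Pic}(\mathbb{F}_n)$ and derive constraints on the coefficients from intersection numbers together with the adjunction formula. The relevant numerics are immediate from the definition of $C$ and $F$: one has $(C'\cdot F) = a$ and $(C'\cdot C) = -na+b$.

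First I would establish $a\geq 0$. Since the pencil $|F|$ has infinitely many members, I can choose a specific fiber $F_0\sim F$ with $F_0\neq C'$ as curves; then $a = (C'\cdot F_0) \geq 0$, because distinct irreducible curves on a smooth surface meet non-negatively.

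Next I would dispose of the case $a=0$. Then $C'$ has class $bF$, and adjunction gives
\[
2p_a(C')-2 \;=\; (C')^2 + (C'\cdot K_{\mathbb{F}_n}) \;=\; 0 + bF\cdot(-2C-(n+2)F) \;=\; -2b,
\]
so $p_a(C') = 1-b$. Since $C'$ is irreducible and reduced, $p_a(C')\geq 0$, forcing $b\leq 1$; as $C'$ is a nonzero effective divisor, $b=1$, which places $C'$ in case (2).

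Finally, suppose $a\geq 1$. If $C'=C$ as curves, we are in case (1). Otherwise $C$ and $C'$ are distinct irreducible curves, hence $(C'\cdot C) = -na+b\geq 0$, which rearranges to $b\geq na$ and puts $C'$ in case (3). The only mildly subtle step is the case $a=0$: one must rule out irreducible curves of class $bF$ with $b\geq 2$. The adjunction computation above handles this cleanly; alternatively, one can note that every effective divisor in $|bF|$ is a sum of $b$ fibers of the ruling, hence reducible for $b\geq 2$.
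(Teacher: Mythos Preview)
Your proof is correct. The paper states this lemma without proof, treating it as a standard fact about the effective cone of Hirzebruch surfaces, so there is no argument in the paper to compare against; your approach via intersection with a general fiber and with the negative section $C$, together with the adjunction bound to handle the $a=0$ case, is exactly the standard proof.
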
	
\begin{dfn}
Let $X$ be a $K3$ surface and $G$ be a finite subgroup of ${\rm Aut}(X)$ such that $X/G\cong{\mathbb F}_{n}$. 
Let $B:=\sum_{i=1}^{l}b_{i}B_{i}$ be the branch divisor of the quotient map $p:X\rightarrow X/G$.  
For each $B_{i}$, there are integers $\alpha_{i},\beta_{i}$ such that
$B_{i}=\alpha_{i}C+\beta_{i}F$ in ${\rm Pic}({\mathbb F}_{n})$.
We call 
\[ \sum_{i=1}^lb_i(\alpha_iC+\beta_{i}F)\]
as the numerical class of $B$.
\end{dfn}
\begin{pro}\label{thm:8}
Let $X$ be a $K3$ surface and $G$ be a finite subgroup of Aut$(X)$ such that $X/G\cong{\mathbb F}_{n}$.  
Then $0\leq n\leq 12$. 
\end{pro}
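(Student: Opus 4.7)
The plan is to apply Theorem~\ref{thm:6} to the quotient map $p\colon X\to X/G\cong \mathbb{F}_n$. Since $K_X=0$, it produces
\[
\sum_{i=1}^{k}\frac{b_i-1}{b_i}B_i \;=\; -K_{\mathbb{F}_n} \;=\; 2C+(n+2)F \quad \text{in } \mathrm{Pic}_{\mathbb Q}(\mathbb F_n).
\]
The case $n=0$ is trivial, so assume $n\geq 1$ and invoke Lemma~\ref{thm:7}. Writing each branch component as $B_i=\alpha_i C+\beta_i F$ and matching in the basis $\{C,F\}$ gives the two numerical identities
\[
\sum_i \frac{b_i-1}{b_i}\alpha_i \;=\; 2, \qquad \sum_i \frac{b_i-1}{b_i}\beta_i \;=\; n+2.
\]

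By Lemma~\ref{thm:7} each $B_i$ is either $C$ (with $(\alpha_i,\beta_i)=(1,0)$), a fibre in $|F|$ (with $(\alpha_i,\beta_i)=(0,1)$), or satisfies $\alpha_i\geq 1$ and $\beta_i\geq n\alpha_i$. I would split on whether $C$ is a branch component. If $C$ is not, every $B_i$ with $\alpha_i\geq 1$ has $\beta_i\geq n\alpha_i$, so multiplying the first identity by $n$ and comparing with the second (dropping the non-negative fibre contribution) gives $n+2\geq 2n$, hence $n\leq 2$.

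If $C$ is a branch component with ramification $b_C\geq 2$, subtracting its contribution from the first identity yields
\[
\sum_{B_i\neq C,\ \alpha_i\geq 1}\frac{b_i-1}{b_i}\alpha_i \;=\; 1+\frac{1}{b_C},
\]
and the same comparison produces $n+2\geq n(1+1/b_C)$, i.e., $n\leq 2b_C$. To finish I would bound $b_C$ by analysing this diophantine identity. Setting $A=\sum\alpha_i$ and $T=\sum\alpha_i/b_i$ over the non-$C$ components with $\alpha_i\geq 1$, we have $A-T=1+1/b_C$ with $T\leq A/2$, so $A\leq 2+2/b_C$; in particular $A\in\{2,3\}$, with $A=3$ possible only when $b_C=2$. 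A short enumeration then covers the finitely many sub-cases: $A=3$ forces $b_C=2$; $A=2$ with one doubled curve, $2(b-1)/b=1+1/b_C$, gives $b_C\in\{2,3\}$; and $A=2$ with two simple curves reduces to the classical equation $\tfrac{1}{b_1}+\tfrac{1}{b_2}+\tfrac{1}{b_C}=1$, whose only integer solutions $\{2,3,6\}$, $\{2,4,4\}$, $\{3,3,3\}$ force $b_C\in\{2,3,4,6\}$. Thus $b_C\leq 6$ and $n\leq 2b_C\leq 12$.

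The main obstacle is organising the diophantine enumeration in the final step cleanly enough to cover every sub-configuration (including the $A=3$ branch and single-curve variants); everything else is a direct unpacking of Theorem~\ref{thm:6} and Lemma~\ref{thm:7}.
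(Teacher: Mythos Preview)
Your proof is correct and follows essentially the same route as the paper: apply Theorem~\ref{thm:6} to obtain the two numerical identities in the $C,F$ basis, split on whether $C$ appears as a branch component, and in the latter case bound $b_C\leq 6$ via the diophantine enumeration of solutions to $\sum \tfrac{b_i-1}{b_i}c_i=2$, concluding $n\leq 2b_C\leq 12$. The only cosmetic difference is that the paper lists all admissible tuples $(b_1,\dots,b_k;c_1,\dots,c_k)$ upfront and then reads off $b_C\leq 6$, whereas you organise the same enumeration around bounding $b_C$ directly.
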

\begin{proof}
We assume that $X/G\cong{\mathbb F}_{n}$ where $n\geq1$.
Let $B$ be the branch divisor of the quotient map $p:X\rightarrow X/G$.
We write $B:=\sum_{i=1}^{k}b_{i}B_{i}+\sum_{j=1}^{l}b'_{j}B'_{j}$ such that $B_{i}\not=F$ and $B'_{j}=F$ in Pic$({\mathbb F}_{n})$ for $i=1,\ldots,k$ and $j=1,\ldots,l$. 
Since the canonical line bundle of a $K3$ surface is trivial and Pic$(\mathbb F_{n})$ is torsion free, 
by Theorem \ref{thm:6}, we get that 
\[ 0=K_{\mathbb F_n}+\sum_{i=1}^{k}\frac{b_{i}-1}{b_{i}}B_{i}+\sum_{j=1}^{l}\frac{b'_{j}-1}{b'_{j}}B'_{j}\ \ {\rm in\ \ Pic}(\mathbb F_{n}). \]
Since $B_{i}$ is an irreducible curve for $i=1,\ldots,k$, 
there are integers $c_{i},d_{i}$ such that
$B_{i}=c_{i}C+d_{i}F$ in Pic$({\mathbb F}_{n})$ and $(c_{i},d_{i})=(1,0)$ or $d_{i}\geq nc_{i}>0$. 
By $K_{{\mathbb F}_{n}}=-2C-(n+2)F$ in Pic$({\mathbb F}_{n})={\mathbb Z}C\oplus{\mathbb Z}F$, we get that 
\[ \begin{cases}
\; 2=\sum_{i=1}^{k}\frac{b_{i}-1}{b_{i}}c_{i} \\
\; n+2=\sum_{i=1}^{k}\frac{b_{i}-1}{b_{i}}d_{i}+\sum_{j=1}^{l}\frac{b'_{j}-1}{b'_{j}}.
\end{cases} \]
Since $b_{i}\geq2$, $\frac{1}{2}\leq\frac{b_{i}-1}{b_{i}}<1$.
Since $2=\sum_{i=1}^{k}\frac{b_{i}-1}{b_{i}}c_{i}$, $\sum_{i=1}^{k}c_{i}=3$ or $4$.
By a simple calculation, we get that i) $\sum_{i=1}^{k}c_{i}=4$ if and only if $b_{1}=\cdots=b_{k}=2$, and ii) if $\sum_{i=1}^{k}c_{i}=3$, 
then $(b_{1},\ldots,b_{k};c_{1},\ldots,c_{k})$ where $c_{1}\leq\cdots \leq c_{k}$ is one of $(3;3)$, $(2,4;1,2)$, $(3,3;1,2)$, $(2,3,6;1,1,1)$, $(2,4,4;1,1,1)$, and $(3,3,3;1,1,1)$.

We assume that $(c_{i},d_{i})\not=(1,0)$ for $i=1,\ldots,k$, i.e. $C$ is not an irreducible component of $B$. 
Since $d_{i}\geq nc_{i}$ for $i=1,\ldots,k$, by $2=\sum_{i=1}^{k}\frac{b_{i}-1}{b_{i}}c_{i}$ and $n+2=\sum_{i=1}^{k}\frac{b_{i}-1}{b_{i}}d_{i}+\sum_{j=1}^{l}\frac{b'_{j}-1}{b'_{j}}$,
we get that $n+2\geq 2n+\sum_{j=1}^{l}\frac{b'_{j}-1}{b_{j}'}$. Since $\frac{b'_{i}-1}{b'_{i}}\geq 0$, we get $0\leq n\leq2$.
	
We assume that $(c_{i},d_{i})=(1,0)$ for some $1\leq i\leq k$, i.e. $C$ is an irreducible component of $B$.
For simplify, we assume that $i=1$. 
In the same way as above, we get that $n+2\geq n(2-\frac{b_{1}-1}{b_{1}})$. 
Since  $2\leq b_{1}\leq6$, we obtain $0\leq12\leq n$.
\end{proof}
Notice that by simple calculations, there are not a $K3$ surface $X$ and a finite subgroup $G$ of Aut$(X)$ such that $X/G\cong{\mathbb F}_{l}$ for $l=10,11$.

In section 6, we will give the list of a numerical class of an effective divisor $B=\sum_{i=1}^{k}b_{i}B_{i}$ on ${\mathbb F}_{n}$ such that $B_{i}$ is a smooth curve for each $i=1,\ldots,k$ and $K_{{\mathbb F}_{n}}+\sum_{i=1}^{k}\frac{b_{i}-1}{b_{i}}B_{i}=0$ in Pic$({\mathbb F}_{n})$. 
\subsection{Abelian covers of a Hirzebruch surface by a K3 surface} 
Let $X$ be a $K3$ surface, $G$ be a finite abelian subgroup of Aut$(X)$ such that $X/G$ is a Hirzebruch surface ${\mathbb F}_{n}$, and $B$ be the branch divisor of the quotient map $p:X\rightarrow X/G$. 
In this section, we will decide the numerical class of $B$. 
Notice that since $G$ is abelian and the quotient space $X/G$ is smooth, the support of $B$ and that of $p^{\ast}B$ are simple normal crossing. 

Furthermore, we will show that the structure as a group of $G$ depends only on the numerical class of $B$ by Theorem \ref{thm:5},
and we will give the way to construct $X$ and $G$ which depends only on the numerical class of $B$ by Theorem \ref{thm:3} and the cyclic cover.
As a result the following will follow.
For each $G\in{\mathcal AG}_{n}$ where $n=0,1,2,3,4,6,8,12$,
there is a pair $(X,G')$ where $X$ is a $K3$ surface and $G'$ is a finite abelian subgroup of ${\rm Aut}(X)$ such that $G\cong G'$ as a group and  $X/G'\cong{\mathbb F}_{n}$.
In $[\ref{bio:18}]$, the case where $G\cong\mathbb Z/2\mathbb Z$ is studied.
\begin{thm}\label{thm:12}
(see [\ref{bio:0}, Chapter I, Section 17]) 
Let $M$ be a smooth projective variety, and $D$ be a smooth effective divisor on $M$. 
Then if the class ${\mathcal O}_{M}(D)/n\in$Pic$(M)$, then there is the Galois  cover $f:X\rightarrow M$ whose branch divisor is $nD$ and the Galois group is isomorphic to ${\mathbb Z}/n{\mathbb Z}$ as a group. 
\end{thm}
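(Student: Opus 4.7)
The plan is to build the cover explicitly as a cyclic (Kummer) cover inside the total space of an $n$-th root line bundle of $\mathcal{O}_M(D)$. Write $L\in\mathrm{Pic}(M)$ for a line bundle with $L^{\otimes n}\cong \mathcal{O}_M(D)$, which exists by hypothesis. Let $s\in H^0(M,\mathcal{O}_M(D))$ be the section cutting out $D$, and transport it via the chosen isomorphism to a section $\sigma \in H^0(M,L^{\otimes n})$ whose zero divisor is $D$.

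Next, I would form the geometric total space $\pi:\mathbb{V}(L)\to M$ of $L$, which carries a tautological section $t$ of $\pi^{\ast}L$. Inside $\mathbb{V}(L)$, define $X$ as the zero locus of $t^{n}-\pi^{\ast}\sigma$, a section of $\pi^{\ast}L^{\otimes n}$. Locally, if $U\subset M$ trivializes $L$ so that $\sigma$ becomes a regular function $\sigma_U$ on $U$, then $X\cap\pi^{-1}(U)$ is cut out in $U\times\mathbb{A}^{1}$ by $t^{n}=\sigma_U$. Since $D$ is smooth, $\sigma_U$ has simple zeros along $D$, so this local equation defines a normal variety (smooth away from $D$, and with standard $A_{n-1}$-like local models over $D$; in fact smooth because $D$ is smooth). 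The projection $f:=\pi|_X : X\to M$ is finite flat of degree $n$.

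The group $\mathbb{Z}/n\mathbb{Z}$ acts on $\mathbb{V}(L)$ fiberwise by $t\mapsto \zeta_n t$, preserves the equation $t^{n}=\pi^{\ast}\sigma$, and hence acts on $X$ over $M$. The quotient map factors through $f$, and since $f$ has degree $n=|\mathbb{Z}/n\mathbb{Z}|$, the induced map $X/(\mathbb{Z}/n\mathbb{Z})\to M$ is an isomorphism, so $f$ is Galois with group $\mathbb{Z}/n\mathbb{Z}$. Computing locally, over a point of $D$ the fiber is $\mathrm{Spec}\,\mathbb{C}[t]/(t^{n})$, so the ramification index along the (unique) component of $f^{-1}(D)$ is $n$, while over $M\setminus D$ the equation $t^n=\sigma_U$ has $n$ distinct roots and $f$ is étale. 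Hence the branch locus is precisely $D$, with ramification index $n$, so the branch divisor is $nD$.

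The only nontrivial check is the local geometry along $D$: that $X$ is normal and that the ramification index is exactly $n$ along each component of $f^{-1}(D)$. That step uses smoothness of $D$ (so $\sigma_U$ extends to part of a local coordinate system on $M$) and is where the hypothesis that $D$ be a smooth divisor enters essentially. Once this is settled the rest is formal, and I would cite \cite{bio:0} for the detailed local computations rather than reproduce them.
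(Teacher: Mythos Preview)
Your construction is the standard cyclic (Kummer) cover, and it is correct. Note that the paper does not actually prove this statement: it is recorded as a known result with a reference to Barth--Hulek--Peters--Van de Ven, \emph{Compact Complex Surfaces}, Chapter~I, Section~17, and the construction you give is precisely the one found there. So your proposal and the cited source agree in method, and there is nothing further to compare.
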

For $n\geq 0$, a Hirzebruch surface ${\mathbb F}_{n}$ is isomorphic to a variety ${\mathcal F}_{n}$ in ${\mathbb P}^{1}\times{\mathbb P}^{2}$
\[ {\mathcal F}_{n}:=\{([X_{0}:X_{1}],[Y_{0}:Y_{1}:Y_{2}])\in{\mathbb P}^{1}\times{\mathbb P}^{2}:\ X^{n}_{0}Y_{0}=X_{1}^{n}Y_{1}\}.\]
From here, we assume that ${\mathbb F}_{n}={\mathcal F}_{n}$.  
The first projection gives the fibre space structure $f:{\mathbb F}_{n}\rightarrow{\mathbb P}^{1}$ such that the numerical class of the fibre of $f$ is $F$, and 
\[ C= \{([X_{0}:X_{1}],[Y_{0}:Y_{1}:Y_{2}])\in{\mathbb F}_{n}:\ Y_{0}=Y_{1}=0\}, \]
is the unique irreducible curve on $\mathbb F_{n}$ such that the self intersection number is negative.
Let $a$ and $b$ be positive integers such that $b\geq na$.
Furthermore,
we put
\[ F(X_{0},X_{1},Y_{0},Y_{1},Y_{2}):=\sum_{0\leq i\leq b-na,0\leq j,k\leq a,j+k\leq a}t_{i,j,k}X^{i}_{0}X^{b-na-i}_{1}Y^{j}_{0}Y^{k}_{1}Y^{a-j-k}_{2}  \]
where $t_{i,j,k}\in{\mathbb C}$, and
\[ B_{F}:=\{([X_{0}:X_{1}],[Y_{0}:Y_{1}:Y_{2}])\in{\mathbb F}_{n}:\ F(X_{0},X_{1},Y_{0},Y_{1},Y_{2})=0\}.  \]
If $B_{F}$ is an irreducible curve of ${\mathbb F}_{n}$, then $B_{F}=aC+bF$ in Pic$({\mathbb F}_{n})$.

Let $g_{1}$ and $g_{m}$ be automorphisms of ${\mathbb P}^{1}$ which are induced by matrixes
\[ g_{1}:=\begin{pmatrix}
0 & 1 \\
1 & 0 
\end{pmatrix},\ 
g_{m}:=\begin{pmatrix}
1 & 0 \\
0 & \zeta_{m} 
\end{pmatrix}, \] 
where $\zeta_{m}$ is an $m$-th root of unity $m\geq2$.
Then  $\langle g_{1},g_{2}\rangle\cong{\mathbb Z}/2{\mathbb Z}^{\oplus 2}$, and $\langle g_{m}\rangle\cong{\mathbb Z}/m{\mathbb Z}$ for $m\geq 2$. 
Here for a subset $S$ of group $G$, $\langle S \rangle$ is the subgroup of $G$ which is generated by $S$.
Then
\[{\mathbb P}^{1}\cong{\mathbb P}^{1}/\langle g_{1},g_{2}\rangle\ {\rm and}\ {\mathbb P}^{1}\cong{\mathbb P}^{1}/\langle g_{m}\rangle,\] 
and the quotient maps are isomorphic to 
\[ {\mathbb P}^{1}\ni [z_{0}:z_{1}]\mapsto[(z^{2}_{0}+z_{1}^{2})^{2}:(z_{0}^{2}-z^{2}_{1})^{2}]\in{\mathbb P}^{1}\ {\rm and}\ {\mathbb P}^{1}\ni [z_{0}:z_{1}]\mapsto[z^{m}_{0}:z^{m}_{1}]\in{\mathbb P}^{1}\]
for $m\geq2$, and the branch divisors are 
\[ 2x_{0}+2x_{1}+2x_{2}\  {\rm and}\ mx_{0}+mx_{1},\]
where $x_0:=[1:0]$, $x_1:=[0:1]$, and $x_2:=[1:1]$.

The above Galois covers ${\mathbb P}^{1}\rightarrow{\mathbb P}^{1}/\langle g_{1},g_{2}\rangle\cong{\mathbb P}^{1}$ and  ${\mathbb P}^{1}\rightarrow{\mathbb P}^{1}/\langle g_{m}\rangle\cong{\mathbb P}^{1}$ naturally induce the Galois covers of ${\mathbb P}^{1}\times{\mathbb P}^{1}$ and ${\mathbb F}_{n}$ whose Galois groups are induced by $g_{m}$ for $m\geq2$.
We will explain in a bit more detail for ${\mathbb F}_{n}$.
For ${\mathbb P}^{1}\rightarrow{\mathbb P}^{1}/\langle g_{1},g_{2}\rangle$,
let ${\mathbb P}^{1}\times_{{\mathbb P}^{1}}{\mathbb F}_{n}$ be the fibre product of ${\mathbb P}^{1}\rightarrow{\mathbb P}^{1}/\langle g_{1},g_{2}\rangle$ and $f:{\mathbb F}_{n}\rightarrow{\mathbb P}^{1}$. 
Let $p:{\mathbb P}^{1}\times_{{\mathbb P}^{1}}{\mathbb F}_{n}\rightarrow {\mathbb F}_{n}$ be the natural projection of the fibre product.
Then 
\[{\mathbb P}^{1}\times_{{\mathbb P}^{1}}{\mathbb F}_{n}\cong{\mathbb F}_{4n}\] 
and  $p:{\mathbb P}^{1}\times_{{\mathbb P}^{1}}{\mathbb F}_{n}\rightarrow {\mathbb F}_{n}$
is the Galois cover such that the branch divisor of $p$ is 
\[ 2F+2F+2F\ {\rm in\ Pic}({\mathbb F}_{n}),\]
 and the Galois group is isomorphic to ${\mathbb Z}/2{\mathbb Z}^{\oplus 2}$ as a group, which is induced by $\langle g_{1},g_{2}\rangle$.
Let $C_{m}$ is the irreducible curve on ${\mathbb F}_{m}$ such that the self intersection number is negative and $F_{m}$ is the numerical class of the fibre ${\mathbb F}_{m}\rightarrow{\mathbb P}^{1}$ for $m\geq1$. 
Then
\[ p^{\ast}C_{n}=C_{4n}\  {\rm and}\ p^{\ast}F_{n}=4F_{4n}\ {\rm in\ Pic}({\mathbb F}_{4n}).\]
For ${\mathbb P}^{1}\rightarrow{\mathbb P}^{1}/\langle g_{m}\rangle$,
let ${\mathbb P}^{1}\times_{{\mathbb P}^{1}}{\mathbb F}_{n}$ be the fibre product of ${\mathbb P}^{1}\rightarrow{\mathbb P}^{1}/\langle g_{m}\rangle$ and $f:{\mathbb F}_{n}\rightarrow{\mathbb P}^{1}$.
Let  $p:{\mathbb P}^{1}\times_{{\mathbb P}^{1}}{\mathbb F}_{n}\rightarrow {\mathbb F}_{n}$ be the natural projection of the fibre product.
Then 
\[{\mathbb P}^{1}\times_{{\mathbb P}^{1}}{\mathbb F}_{n}\cong{\mathbb F}_{mn},\] 
$p:{\mathbb P}^{1}\times_{{\mathbb P}^{1}}{\mathbb F}_{n}\rightarrow {\mathbb F}_{n}$
is the Galois cover such that the branch divisor of $p$ is 
\[ mF+mF\ {\rm in\ Pic}({\mathbb F}_{n}),\]
and the Galois group is isomorphic to ${\mathbb Z}/m{\mathbb Z}$ as a group, which is induced by $\langle g_{m}\rangle$, and 
\[ p^{\ast}C_{n}=C_{mn}\  {\rm and}\ p^{\ast}F_{n}=mF_{mn}\ {\rm in\ Pic}({\mathbb F}_{mn}).\]
\begin{dfn}
From here, we use the notation.
$B^{k}_{i,j}$ (or simply $B_{i,j}$) is a smooth curve on ${\mathbb F}_{n}$ such that  
$B^{k}_{i,j}=iC+jF$ in ${\rm Pic}({\mathbb F}_{n})$ for $n\geq0$, where $k\in{\mathbb N}$. 
\end{dfn}
\begin{pro}\label{pro:1}
For each numerical classes (\ref{1},\ref{14},\ref{50}) of the list in section 6, 
there are a $K3$ surface $X$ and a finite abelian subgroup $G$ of Aut$(X)$ such that $X/G\cong{\mathbb P}^{1}\times{\mathbb P}^{1}$ and the numerical class of the branch divisor $B$ of the quotient map $p:X\rightarrow X/G$ is (\ref{1},\ref{14},\ref{50}).   

Furthermore, for a pair $(X,G)$ of a $K3$ surface $X$ and a finite abelian subgroup $G$ of Aut$(X)$, if $X/G\cong{\mathbb P}^{1}\times{\mathbb P}^{1}$ and the numerical class of the branch divisor $B$ of the quotient map $p:X\rightarrow X/G$ is (\ref{1},\ref{14},\ref{50}), then $G$ is isomorphic to ${\mathbb Z}/3{\mathbb Z}$, ${\mathbb Z}/3{\mathbb Z}^{\oplus2}$, ${\mathbb Z}/3{\mathbb Z}^{\oplus3}$, in order, as a group. 
\end{pro}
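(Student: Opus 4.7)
The proof proceeds in two directions. For the existence of $(X,G')$, the strategy is to realize $X$ as an abelian cover of $\mathbb{F}_0 = \mathbb{P}^1\times\mathbb{P}^1$ built from cyclic $\mathbb{Z}/3\mathbb{Z}$-covers via Theorem \ref{thm:12} and fibre products. In each of the three cases the branch divisor has all multiplicities equal to $3$, and the identity $K_{\mathbb{F}_0} + \sum_i\tfrac{2}{3}B_i = 0$ forces $\sum_i B_i \sim 3C+3F$. For (\ref{1}), a single smooth branch curve of class $3(C+F)$ produces a cyclic $\mathbb{Z}/3\mathbb{Z}$-cover directly from Theorem \ref{thm:12}, since $\mathcal{O}(3C+3F) = \mathcal{O}(C+F)^{\otimes 3}$. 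For (\ref{14}) and (\ref{50}), the abelian cover is constructed as an iterated fibre product of $\mathbb{Z}/3\mathbb{Z}$-covers, after regrouping branch components so that each factor extracts a cube root of the corresponding line bundle. In all three cases Theorem \ref{thm:6} yields $K_X = 0$, and simple connectivity of $X$ follows from the simple connectivity of $\mathbb{F}_0$ together with the branched-covering structure; hence $X$ is a K3 surface with the prescribed Galois group.

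For the classification direction, assume $X/G \cong \mathbb{P}^1 \times \mathbb{P}^1$ with branch divisor $B = \sum_{i=1}^k 3B_i$ of the prescribed numerical class. Theorem \ref{thm:5}(ii) identifies each inertia subgroup $G_{B_i}$ with $\mathbb{Z}/3\mathbb{Z}$, generated by a purely non-symplectic automorphism of order $3$. Since $\mathbb{P}^1\times\mathbb{P}^1$ is simply connected, Theorem \ref{thm:5}(i) applied with $I = \emptyset$ gives $G = \langle G_{B_1},\dots,G_{B_k}\rangle$. The case (\ref{1}) is then immediate. For (\ref{14}) and (\ref{50}), distinct irreducible components $B_i$ and $B_j$ intersect on $\mathbb{F}_0$: with $C^2=F^2=0$, $C\cdot F = 1$, one has $(\alpha_iC+\beta_iF)\cdot(\alpha_jC+\beta_jF) = \alpha_i\beta_j + \alpha_j\beta_i > 0$ in every splitting of $3C+3F$ into admissible irreducible classes. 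Hence by the discussion immediately after Theorem \ref{thm:5}, $G_{B_i}\cap G_{B_j} = \{\mathrm{id}_X\}$ for all $i\neq j$. Together with the abelian hypothesis and the prime order of each $G_{B_i}$, this forces $G\cong\bigoplus_i G_{B_i} \cong (\mathbb{Z}/3\mathbb{Z})^{\oplus k}$ with $k=1,2,3$ in the three cases.

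The main obstacle is the direct-sum conclusion when $k=3$ in (\ref{50}), because pairwise trivial intersection of three cyclic subgroups of prime order does not by itself imply an internal direct sum. The key additional input is that if a nontrivial element of $G_{B_3}$ happened to lie in $\langle G_{B_1},G_{B_2}\rangle$, then some purely non-symplectic automorphism of order $3$ would fix the three pairwise disjoint curves $p^{-1}(B_1)$, $p^{-1}(B_2)$, $p^{-1}(B_3)$ pointwise. This configuration is excluded by the known classification of fixed loci of purely non-symplectic order-$3$ automorphisms on K3 surfaces recalled in Section 2, which bounds both the number of fixed curves and their genera. This fixed-locus obstruction is the one step where the global structure of K3 surfaces, rather than merely the branch-divisor data on $\mathbb{F}_0$, is essential, and it closes the argument by forcing the full direct sum decomposition.
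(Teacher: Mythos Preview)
Your classification argument has a structural error: in case~(\ref{14}) the branch divisor $3B^1_{1,0}+3B^2_{1,0}+3B_{1,3}$ has three irreducible components, and in case~(\ref{50}) the divisor $3B^1_{1,0}+3B^2_{1,0}+3B_{1,1}+3B^1_{0,1}+3B^2_{0,1}$ has five, not two and three as your conclusion $G\cong(\mathbb{Z}/3\mathbb{Z})^{\oplus k}$ with $k=1,2,3$ presupposes. Moreover, the claim that every pair of components meets is false: the two fibres $B^1_{1,0},B^2_{1,0}$ of the first ruling (and likewise $B^1_{0,1},B^2_{0,1}$) have intersection number $0$, so you cannot conclude $G_{B_i}\cap G_{B_j}=\{\mathrm{id}\}$ for those pairs. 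In fact the inertia groups over two fibres of the same ruling typically coincide, which is exactly why $|G|$ is $9$ and $27$ rather than $27$ and $3^5$. Your fixed-locus discussion in the last paragraph is thus aimed at the wrong obstruction.

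The paper circumvents all of this. For~(\ref{14}) it builds the cover as a tower: first the $\mathbb{Z}/3\mathbb{Z}$-cover $p:\mathbb{P}^1\times\mathbb{P}^1\to\mathbb{P}^1\times\mathbb{P}^1$ branched along $3B^1_{1,0}+3B^2_{1,0}$ (induced by $[z_0:z_1]\mapsto[z_0^3:z_1^3]$ on one factor), then a further $\mathbb{Z}/3\mathbb{Z}$-cover branched along $3p^{\ast}B_{1,3}$, yielding a degree-$9$ branched cover with the correct branch divisor. Having one K3 cover in hand, the paper invokes Theorem~\ref{thm:3}: since K3 surfaces are simply connected, any other K3 Galois cover with the same branch divisor is isomorphic to the constructed one, forcing $G\cong(\mathbb{Z}/3\mathbb{Z})^{\oplus2}$. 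The same tower-plus-uniqueness method handles~(\ref{50}). On the existence side, note also that the total space of the cyclic cover from Theorem~\ref{thm:12} is not simply connected for free; the paper shows $K_X\equiv 0$ and then explicitly rules out the bi-elliptic, abelian, and Enriques possibilities to conclude that $X$ is K3.
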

\begin{proof}
Let $B_{3,3}$ be a smooth curve on ${\mathbb P}^{1}\times{\mathbb P}^{1}$. 
Then the numerical class of $3B_{3,3}$ is (\ref{1}).
By Theorem \ref{thm:12}, there is the Galois cover $p:X\rightarrow{\mathbb P}^{1}\times{\mathbb P}^{1}$ such that the branch divisor is $3B_{3,3}$ and the Galois group is ${\mathbb Z}/3{\mathbb Z}$ as a group.
By Theorem \ref{thm:6}, the canonical divisor of $X$ is a numerically trivial. 
By [\ref{bio:11}], $X$ is not a bi-ellitptic surface.
 By [\ref{bio:6}], $X$ is not an abelian surface.  
If $X$ is an Enriques surface, then there is the Galois cover $q:X'\rightarrow{\mathbb P}^{1}\times{\mathbb P}^{1}$ such that $X'$ is a $K3$ surface, the Galois group is ${\mathbb Z}/2{\mathbb Z}\oplus{\mathbb Z}/3{\mathbb Z}$ as a group, and the branch divisor is $3B_{3,3}$. 
By Theorem \ref{thm:5}, this is a contradiction. 
Therefore, $X$ is a $K3$ surface.

In addition, let $(X',G')$ be a pair of a $K3$ surface $X'$ and a finite abelian subgroup $G'$ of Aut$(X')$ such that $X'/G'\cong{\mathbb P}^{1}\times{\mathbb P}^{1}$ and the numerical class of the branch divisor $B'$ of the quotient map $p':X'\rightarrow X'/G'$ is (\ref{1}).
By Theorem \ref{thm:5}, $G'\cong{\mathbb Z}/3{\mathbb Z}$ as a group.
Since the support of $B'$ is smooth, there is a smooth curve $B'_{3,3}$ such that  $B'=3B'_{3,3}$.
Then by the above discussion, there is the Galois cover $f:X\rightarrow{\mathbb P}^{1}\times{\mathbb P}^{1}$ such that $X$ is a $K3$ surface, 
the branch divisor is $B'$, and the Galois group $G$ is ${\mathbb Z}/3{\mathbb Z}$ as a group.
Since a $K3$ surface is simply connected, by Theorem \ref{thm:3}, the pair $(X',G')$ is isomorphic to the pair $(X,G)$.\\

Let $B^{1}_{1,0}$, $B^{2}_{1,0}$, and $B_{1,3}$ be smooth curves on ${\mathbb P}^{1}\times{\mathbb P}^{1}$ such that $B^{1}_{1,0}+B^{2}_{1,0}+B_{1,3}$ is simple normal crossing. 
Then the numerical class of $3B^{1}_{1,0}+3B^{2}_{1,0}+3B_{1,3}$ is (\ref{14}).
Let $p:{\mathbb P}^{1}\times{\mathbb P}^{1}\rightarrow{\mathbb P}^{1}\times{\mathbb P}^{1}$ be the Galois cover such that the branch divisor is $3B^{1}_{1,0}+3B^{2}_{1,0}$, and the Galois group is ${\mathbb Z}/3{\mathbb Z}$ as a group, which is induced by the Galois cover ${\mathbb P}^{1}\ni [z_{0}:z_{1}]\mapsto[z^{3}_{0}:z^{3}_{1}]\in{\mathbb P}^{1}$.
Since $B^{1}_{1,0}+B^{2}_{1,0}+B_{1,3}$ is simple normal crossing, 
$p^{\ast}B_{1,3}$ is a reduced divisor on ${\mathbb P}^{1}\times{\mathbb P}^{1}$ such that whose support is a union of pairwise disjoint smooth curves, and  $p^{\ast}B_{1,3}=(3,3)$ in Pic$({\mathbb P}^{1}\times{\mathbb P}^{1})$.
As for the case of (\ref{1}), 
there is the Galois cover $q:X\rightarrow{\mathbb P}^{1}\times{\mathbb P}^{1}$ such that $X$ is a $K3$ surface, the Galois group is ${\mathbb Z}/3{\mathbb Z}$ as a group, and the branch divisor is $3p^{\ast}B_{1,3}$.
Then the branched cover $p\circ q:X\rightarrow{\mathbb P}^{1}\times{\mathbb P}^{1}$ has $3B^{1}_{1,0}+3B^{2}_{1,0}+3B_{1,3}$ as the branch divisor.
Since $X$ is simply connected, by Theorem \ref{thm:3}, $p\circ q$ is the Galois cover. 
Since the degree of $p\circ q$ is 9, by Theorem \ref{thm:5}, the Galois group of $p\circ q$ is ${\mathbb Z}/3{\mathbb Z}^{\oplus 2}$ as a group.

Conversely, for a $K3$ surface $X$ and a finite abelian subgroup $G$ of Aut$(X)$ such that $X/G\cong{\mathbb P}^{1}\times{\mathbb P}^{1}$ and the numerical class of the branch divisor $B$ of the quotient map $p:X\rightarrow X/G$ is (\ref{14}). 
By the above discussion, $G$  isomorphic to ${\mathbb Z}/3{\mathbb Z}^{\oplus 2}$ as a group, and $X\rightarrow X/G$ is given by
the composition of the Galois cover $X\rightarrow{\mathbb P}^{1}\times{\mathbb P}^{1}$ whose numerical class of the branch divisor is (\ref{1}) and the Galois cover $p:{\mathbb P}^{1}\times{\mathbb P}^{1}\rightarrow{\mathbb P}^{1}\times{\mathbb P}^{1}$ which is induced by the Galois cover ${\mathbb P}^{1}\ni [z_{0}:z_{1}]\mapsto[z^{3}_{0}:z^{3}_{1}]\in{\mathbb P}^{1}$.\\

As for the case of (\ref{14}), we get the claim for (\ref{50}). 
In this case, the Galois group is ${\mathbb Z}/3{\mathbb Z}^{\oplus 3}$ as a group. 
Furthermore, let $X$ be a $K3$ surface and $G$ be a finite abelian subgroup of Aut$(X)$ such that $X/G\cong{\mathbb P}^{1}\times{\mathbb P}^{1}$ and the numerical class of the branch divisor $B$ of $G$ is (\ref{50}).
As for the case of (\ref{14}),
$X\rightarrow X/G$ is given by
the composition of the Galois cover $X\rightarrow{\mathbb P}^{1}\times{\mathbb P}^{1}$ whose numerical class of the branch divisor is (\ref{1}) and the Galois cover $p:{\mathbb P}^{1}\times{\mathbb P}^{1}\rightarrow{\mathbb P}^{1}\times{\mathbb P}^{1}$ which is isomorphic to the Galois cover $p:{\mathbb P}^{1}\times{\mathbb P}^{1}\ni ([z_{0}:z_{1}],[w_{0}:w_{1}])\mapsto([z^{3}_{0}:z^{3}_{1}],[w^{3}_{0}:w^{3}_{1}])\in{\mathbb P}^{1}\times{\mathbb P}^{1}$.\\

For (\ref{1}), we obtain an example if we use a curve $B_{3,3}$ in $\mathbb P^{1}\times\mathbb P^{1}$ given by the equation
\[ B_{3,3}:z^{3}_{0}w_{0}^{3}+z^{3}_{0}w^{3}_{1}+z^{3}_{1}w_{0}^{3}+2z_{1}^{3}w_{1}^{3}=0.\]
For (\ref{14}), we obtain an example if we use curves $B^{1}_{1,0}, B^{2}_{1,0},B_{1,3}$ in $\mathbb P^{1}\times\mathbb P^{1}$ given by the equations
\[B^{1}_{1,0}:z_{0}=0,\ B^{2}_{1,0}:z_{1}=0,\ B_{1,3}:z_{0}w_{0}^{3}+z_{0}w^{3}_{1}+z_{1}w_{0}^{3}+2z_{1}w_{1}^{3}=0.\]
For (\ref{50}), we obtain an example if we use curves $B^{1}_{1,0},B^{2}_{1,0},B_{1,1},B^{1}_{0,1},B^{2}_{0,1}$  in $\mathbb P^{1}\times\mathbb P^{1}$ given by the equations 
\[B^{1}_{1,0}:z_{0}=0,\ B^{2}_{1,0}:z_{1}=0,\ B_{1,1}:z_{0}w_{0}+z_{0}w_{1}+z_{1}w_{0}+2z_{1}w_{1}=0, \]
\[ B^{1}_{0,1}:w_{0}=0,\ B^{2}_{0,1}:w_{1}=0. \]
\end{proof}
\begin{cro}\label{pro:9}
For each numerical classes (\ref{167},\ref{80}),  
(\ref{256},\ref{217},\ref{170},\ref{77}) of the list in section 6,
there are a $K3$ surface $X$ and a finite abelian subgroup $G$ of Aut$(X)$ such that  $X/G\cong{\mathbb F}_{n}$ and the numerical class of the branch divisor $B$ of the quotient map $p:X\rightarrow X/G$ is 
(\ref{167},\ref{80}), 
(\ref{256},\ref{217},\ref{170},\ref{77}).

Furthermore, for a pair $(X,G)$ of a $K3$ surface $X$ and a finite abelian subgroup $G$ of Aut$(X)$, if $X/G\cong{\mathbb F}_{n}$ and the numerical class of the branch divisor $B$ of the quotient map $p:X\rightarrow X/G$ is (\ref{167},\ref{80}), (\ref{256},\ref{217},\ref{170},\ref{77}), then $G$ is 
${\mathbb Z}/3{\mathbb Z}$, ${\mathbb Z}/2{\mathbb Z}\oplus{\mathbb Z}/3{\mathbb Z}$,  
${\mathbb Z}/3{\mathbb Z}$, ${\mathbb Z}/2{\mathbb Z}\oplus{\mathbb Z}/3{\mathbb Z}$, ${\mathbb Z}/3{\mathbb Z}^{\oplus 2}$, ${\mathbb Z}/2{\mathbb Z}\oplus{\mathbb Z}/3{\mathbb Z}^{\oplus 2}$, in order, as a group. 
\end{cro}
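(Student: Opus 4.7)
The plan is to follow exactly the template of Proposition \ref{pro:1} but now carried out on a Hirzebruch surface $\mathbb{F}_n$ with $n\geq 1$, using the explicit fibre-product Galois covers $\mathbb{F}_{mn}\rightarrow\mathbb{F}_n$ (with group $\mathbb{Z}/m\mathbb{Z}$ or $\mathbb{Z}/2\mathbb{Z}^{\oplus 2}$) constructed immediately before the corollary, together with the cyclic cover construction of Theorem \ref{thm:12}. For each listed numerical class I would split the argument into \emph{existence} (build $(X,G)$ by composing standard Galois covers), \emph{K3-ness} (rule out alternative minimal surfaces of Kodaira dimension $0$), and \emph{structure/uniqueness} (invoke Theorem \ref{thm:5} to read off $G$ from the branch divisor and Theorem \ref{thm:3} to conclude uniqueness).

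For the classes whose declared Galois group is $\mathbb{Z}/3\mathbb{Z}$, I would pick a smooth curve $B$ on $\mathbb{F}_n$ in the prescribed numerical class and apply Theorem \ref{thm:12} directly to the class $\mathcal{O}_{\mathbb{F}_n}(B)$ (after verifying $3$-divisibility of $\mathcal{O}_{\mathbb{F}_n}(3B)$ in $\mathrm{Pic}(\mathbb{F}_n)$, which is immediate on $\mathbb{F}_n$). This produces a Galois triple cover $p\colon X\rightarrow\mathbb{F}_n$; Theorem \ref{thm:6} gives $K_X\equiv 0$, so $X$ is K3, Enriques, abelian, or bi-elliptic. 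Bi-elliptic and abelian are excluded exactly as in the proof of Proposition \ref{pro:1} via the references cited there, and Enriques is excluded by noting that the induced double $K3$-cover would yield a Galois cover with group $\mathbb{Z}/2\mathbb{Z}\oplus\mathbb{Z}/3\mathbb{Z}$ and branch divisor $3B$, contradicting Theorem \ref{thm:5} (the branch divisor of an Enriques double cover is empty).

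For the classes whose declared Galois group contains a $\mathbb{Z}/2\mathbb{Z}$ or an extra $\mathbb{Z}/3\mathbb{Z}$ factor, I would compose the triple cover above with one of the base-change Galois covers $\mathbb{F}_{mn}\rightarrow\mathbb{F}_n$ of degree $m=2$ or $3$ induced from $\mathbb{P}^1\rightarrow\mathbb{P}^1/\langle g_m\rangle$, and (where a second factor of $\mathbb{Z}/3\mathbb{Z}$ is needed) iterate the cyclic-cover step on the upstairs Hirzebruch surface. The pullback formulas $p^{\ast}C_n=C_{mn}$ and $p^{\ast}F_n=mF_{mn}$ recorded before the corollary ensure the branch-divisor numerical class matches the one prescribed after composition. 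Since $X$ is simply connected, Theorem \ref{thm:3} promotes each composed map $X\rightarrow\mathbb{F}_n$ to a Galois cover, and the degree count combined with parts (ii)--(iv) of Theorem \ref{thm:5} pins down $G$ as the listed abelian group.

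The converse (that any $(X,G)$ with the stated branch numerical class has $G$ as announced) runs by the same Theorem \ref{thm:5} bookkeeping: part (ii) identifies each inertia group $G_{C_{i,j}}$ as $\mathbb{Z}/b_i\mathbb{Z}$ generated by a purely non-symplectic automorphism, part (iii) forces commuting generators coming from disjoint fixed curves, and part (i) applied with $I$ chosen to remove the components supported on a simply connected open of $\mathbb{F}_n$ eliminates over-generation. Uniqueness of $(X,G)$ up to isomorphism then follows from Theorem \ref{thm:3}. The main obstacle, and the place requiring the most care, is checking that $X$ is K3 rather than Enriques in the cases where a $\mathbb{Z}/2\mathbb{Z}$ factor is present: one must verify that the non-symplectic involution in $G$ has non-empty fixed locus (guaranteed here because the branch divisor of its quotient contribution is nonzero in the given numerical classes), so $X/\langle \iota\rangle$ cannot be Enriques; once this is secured, the rest of the argument is the routine translation of Proposition \ref{pro:1} from $\mathbb{F}_0$ to $\mathbb{F}_n$.
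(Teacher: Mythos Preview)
Your proposal is correct and follows essentially the same approach as the paper: build the base cases (\ref{167}) and (\ref{256}) directly via Theorem \ref{thm:12}, obtain the remaining classes by composing with the fibre-product covers $\mathbb{F}_{mn}\rightarrow\mathbb{F}_n$, and read off $G$ and uniqueness from Theorems \ref{thm:5} and \ref{thm:3}. Two minor points of imprecision: in case (\ref{256}) the branch support is the disjoint union $C\cup B_{2,12}$ of two smooth curves rather than a single smooth curve (Theorem \ref{thm:12} still applies since $C+B_{2,12}=3(C+4F)$), and the ``main obstacle'' you flag does not actually arise, since in your own construction $X$ is already established as K3 at the triple-cover stage before any $\mathbb{Z}/2\mathbb{Z}$ factor is introduced.
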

\begin{proof}
In the same way as Proposition \ref{pro:1}, we get this corollary.
More specifically,
let $X$ be a $K3$ surface, $G$ be a finite abelian subgroup of Aut$(X)$ such that $X/G\cong{\mathbb F}_{n}$, and $B$ be the branch divisor of the quotient map $p:X\rightarrow X/G$.
Then we get the following.

i) If the numerical class of $B$ is one of (\ref{167},\ref{256}), then $X\rightarrow X/G$ is given by Theorem \ref{thm:12}.

ii) If the numerical class of $B$ is (\ref{80}), then  $X\rightarrow X/G$ is given by 
the composition of the Galois cover $X'\rightarrow{\mathbb F}_{2} $ whose numerical class of the branch divisor is (\ref{167}) and the Galois cover ${\mathbb F}_{2} \rightarrow{\mathbb F}_{1}$ which is induced by the Galois cover ${\mathbb P}^{1}\rightarrow{\mathbb P}^{1}$ of degree $2$.

iii) If the numerical class of $B$ is one of (\ref{217},\ref{170},\ref{77}), then $X\rightarrow X/G$ is given by 
the composition of the Galois cover $X'\rightarrow{\mathbb F}_{6}$ whose numerical class of the branch divisor is (\ref{256}) and the Galois cover ${\mathbb F}_{6} \rightarrow{\mathbb F}_{m}$ which is induced by the Galois cover ${\mathbb P}^{1}\rightarrow{\mathbb P}^{1}$ of degree $\frac{6}{m}$.\\

For (\ref{167}), we obtain an example if we use a curve $B_{3,6}$ in $\mathbb F_{2}$ given by the equation
\[ B_{3,6}:Y^3_{0}+Y^3_{1}+Y^3_{2}=0.\]
For (\ref{80}), we obtain an example if we  use curves $B_{3,3},B^{1}_{0,1},B^{2}_{0,1}$ in $\mathbb F_{1}$ given by the equations
\[ B_{3,3}:Y^3_{0}+Y^3_{1}+Y^3_{2}=0,\ B^{1}_{0,1}:X_{0}=0,\ B^{2}_{0,1}:X_{1}=0.\]
For (\ref{256}), we obtain an example if we use a section $C$ and a curve $B_{2,12}$ in $\mathbb F_{6}$ given by the equation
\[ B_{2,12}:Y^{2}_{0}+Y^{2}_{1}+Y^{2}_{2}=0.\]
For (\ref{217}), we obtain an example if we use a section $C$ and curves $B_{2,6},B^{1}_{0,1},B^{2}_{0,1}$ in $\mathbb F_{3}$ given by the equations
\[ B_{2,6}:Y^{2}_{0}+Y^{2}_{1}+Y^{2}_{2}=0,\ B^{1}_{0,1}:X_{0}=0,\ B^{2}_{0,1}:X_{1}=0.\]
For (\ref{170}), we obtain an example if we use a section $C$ and curves $B_{2,4},B^{1}_{0,1},B^{2}_{0,1}$ in $\mathbb F_{2}$ given by the equations
\[ B_{2,4}:Y^{2}_{0}+Y^{2}_{1}+Y^{2}_{2}=0,\ B^{1}_{0,1}:X_{0}=0,\ B^{2}_{0,1}:X_{1}=0.\]
For (\ref{77}), we obtain an example if we use a section $C$ and curves $B_{2,2},B^{1}_{0,1},B^{2}_{0,1}$ in $\mathbb F_{1}$ given by the equations
\[ B_{2,2}:Y^{2}_{0}+Y^{2}_{1}+Y^{2}_{2}=0,\ B^{1}_{0,1}:X_{0}=0,\ B^{2}_{0,1}:X_{1}=0.\]
\end{proof}
\begin{pro}\label{pro:2}
For each numerical classes (\ref{2},\ref{16},\ref{53},\ref{15},\ref{51},\ref{52},\ref{33},\ref{66},\ref{63},\ref{62}) of the list in section 6, 
there are a $K3$ surface $X$ and a finite abelian subgroup $G$ of Aut$(X)$ such that  $X/G\cong{\mathbb F}_{n}$ and the numerical class of the branch divisor $B$ of the quotient map $p:X\rightarrow X/G$ is 
(\ref{2},\ref{16},\ref{53},\ref{15},\ref{51},\ref{52},\ref{33},\ref{66},\ref{63},\ref{62}).

Furthermore, for a pair $(X,G)$ of a $K3$ surface $X$ and a finite abelian subgroup $G$ of Aut$(X)$, if $X/G\cong{\mathbb F}_{n}$ and the numerical class of the branch divisor $B$ of the quotient map $p:X\rightarrow X/G$ is (\ref{2},\ref{16},\ref{53},\ref{15},\ref{51},\ref{52},\ref{33},\ref{66},\ref{63},\ref{62}), then $G$ is ${\mathbb Z}/2{\mathbb Z}$, ${\mathbb Z}/2{\mathbb Z}^{\oplus2}$, ${\mathbb Z}/2{\mathbb Z}^{\oplus3}$, ${\mathbb Z}/2{\mathbb Z}\oplus{\mathbb Z}/4{\mathbb Z}$, ${\mathbb Z}/2{\mathbb Z}\oplus{\mathbb Z}/4{\mathbb Z}^{\oplus 2}$, ${\mathbb Z}/2{\mathbb Z}^{\oplus2}\oplus{\mathbb Z}/4{\mathbb Z}$, ${\mathbb Z}/2{\mathbb Z}^{\oplus3}$, ${\mathbb Z}/2{\mathbb Z}^{\oplus 5}$, ${\mathbb Z}/2{\mathbb Z}^{\oplus 4}$, ${\mathbb Z}/2{\mathbb Z}^{3}\oplus{\mathbb Z}/4{\mathbb Z}$, in order, as a group. 
\end{pro}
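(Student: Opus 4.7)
The plan is to follow the same template established in Proposition \ref{pro:1} and Corollary \ref{pro:9}. For each of the ten numerical classes, I would first exhibit an explicit abelian cover $X \to \mathbb F_n$ obtained as a tower whose building blocks are (a) cyclic covers of order $2$ or $4$ produced by Theorem \ref{thm:12} from smooth divisors of the appropriate numerical class, and (b) the base-change covers $\mathbb F_{mn} \to \mathbb F_n$ with Galois group $\mathbb Z/m\mathbb Z$ or $\mathbb Z/2\mathbb Z^{\oplus 2}$ induced from the covers $\mathbb P^1 \to \mathbb P^1/\langle g_m\rangle$ and $\mathbb P^1 \to \mathbb P^1/\langle g_1,g_2\rangle$ already constructed in this subsection. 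Since the listed classes involve only ramification indices $2$ and $4$, the cyclic building blocks are double covers branched on smooth divisors in a class of the form $2D$ and $\mathbb Z/4\mathbb Z$-covers branched on $4D$.

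Second, I would verify that the total space is a $K3$ surface. By Theorem \ref{thm:6} the canonical divisor of $X$ is numerically trivial, so $X$ is abelian, bi-elliptic, Enriques, or K3. The abelian and bi-elliptic cases are excluded by the cited results \ref{bio:6} and \ref{bio:11}, exactly as in Proposition \ref{pro:1}. The Enriques case is ruled out by the same argument as before: an Enriques quotient would lift to a further $\mathbb Z/2\mathbb Z$-cover $X' \to \mathbb F_n$ of K3 type with the same branch divisor, and the extra involution would force an additional $G_{B_i}$-factor contradicting Theorem \ref{thm:5} ii).

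Third, I would identify the Galois group. Theorem \ref{thm:5} ii) gives a cyclic subgroup $G_{B_i} \cong \mathbb Z/b_i\mathbb Z$ for each irreducible component; Theorem \ref{thm:5} i) shows that these together generate $G$ since the complement of the branch divisor in $\mathbb F_n$ is simply connected in each case (one can remove enough components of class $F$ or of $C$-type to expose an affine open). The group structure is then determined by the intersection combinatorics of the components: whenever $B_i \cap B_j \neq \emptyset$ one has $G_{B_i} \cap G_{B_j} = \{\mathrm{id}\}$, and the total order $|G|$ is read off from $(p^{*}B_i \cdot p^{*}B_j)/(B_i \cdot B_j)$. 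For the converse, Theorem \ref{thm:3} yields uniqueness of the cover of a simply connected manifold with given branch divisor, so any pair $(X,G)$ with the prescribed numerical class must be isomorphic to the one just constructed; explicit defining equations in the style of Proposition \ref{pro:1} provide concrete witnesses.

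The main obstacle is the handling of the $\mathbb Z/4\mathbb Z$-factors appearing in classes (\ref{15}), (\ref{51}), (\ref{52}), (\ref{63}), (\ref{62}): one must ensure that the order-$4$ ramification is realized by a genuine $\mathbb Z/4\mathbb Z$-cyclic cover (so that $G_{B_i} \cong \mathbb Z/4\mathbb Z$ rather than $\mathbb Z/2\mathbb Z^{\oplus 2}$), which requires the divisor class carrying that ramification to be divisible by $4$ in $\mathrm{Pic}(\mathbb F_n)$ and compatible with the other cyclic factors in the tower. The class (\ref{66}) with group $\mathbb Z/2\mathbb Z^{\oplus 5}$ is the most delicate for the converse direction, since one must verify that the five involutions generating $G$ are independent; here I would build the cover as an iterated fibre product of double covers, check component by component that each new involution is not a product of the previous ones by comparing fixed loci, and only then exhibit the explicit equation. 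The remaining classes (\ref{2}), (\ref{16}), (\ref{53}), (\ref{33}) are direct analogues of the cases treated in Proposition \ref{pro:1} and Corollary \ref{pro:9} and should proceed with only minor bookkeeping changes.
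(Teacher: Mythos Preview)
Your plan is correct in outline and matches the paper's template, but the paper's actual argument is more uniform than what you sketch, and this uniformity dissolves the ``main obstacle'' you identify.

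The paper does not build a separate tower for each of the ten classes. Instead it observes that \emph{every} case beyond (\ref{2}) is obtained as the composition of a single double cover $X\to\mathbb P^1\times\mathbb P^1$ branched on a smooth $(4,4)$ curve (this is case~(\ref{2}) itself) with a Galois cover $\mathbb P^1\times\mathbb P^1\to\mathbb P^1\times\mathbb P^1$ induced from the covers $\mathbb P^1\to\mathbb P^1/\langle g_m\rangle$ or $\mathbb P^1\to\mathbb P^1/\langle g_1,g_2\rangle$ in one or both factors. The point is that pulling back a smooth curve of class $(a,b)$ under such a base-change of degree $m$ in the first factor yields a smooth curve of class $(ma,b)$; choosing the right $(a,b)$ and $m$ (and similarly in the second factor) produces a $(4,4)$ curve upstairs, reducing everything to case~(\ref{2}). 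In particular, \emph{every} $\mathbb Z/4\mathbb Z$ factor in the Galois groups for (\ref{15}), (\ref{51}), (\ref{52}), (\ref{62}) comes from the degree-$4$ base-change $\mathbb P^1\to\mathbb P^1$, never from a $\mathbb Z/4\mathbb Z$-cyclic cover of the surface via Theorem~\ref{thm:12}. So your worry about $4$-divisibility in $\mathrm{Pic}(\mathbb F_n)$ does not arise. (Also, a small slip: class~(\ref{63}) has group $\mathbb Z/2\mathbb Z^{\oplus 4}$ and contains no $\mathbb Z/4\mathbb Z$ factor.)

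For~(\ref{66}) with $G\cong\mathbb Z/2\mathbb Z^{\oplus 5}$ the paper again uses the same uniform scheme: take the $\mathbb Z/2\mathbb Z^{\oplus 2}$ cover $\mathbb P^1\to\mathbb P^1/\langle g_1,g_2\rangle$ in each factor, giving a $\mathbb Z/2\mathbb Z^{\oplus 4}$ base-change; the pullback of a suitable $(1,1)$ curve is $(4,4)$, and the final double cover contributes the fifth $\mathbb Z/2\mathbb Z$. Your iterated fibre-product approach would also work, but the paper's reduction makes the independence of the five involutions automatic: the degree of the composite is $32$, Theorem~\ref{thm:3} makes it Galois, and the group is read off from Theorem~\ref{thm:5} exactly as you describe.
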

\begin{proof}
In the same way as Proposition \ref{pro:1}, we get this proposition.
More specifically,
let $X$ be a $K3$ surface, $G$ be a finite abelian subgroup of Aut$(X)$ such that $X/G\cong{\mathbb F}_{n}$, and $B$ be the branch divisor of the quotient map $p:X\rightarrow X/G$. 
Then we get the following.

i) If the numerical class of $B$ is (\ref{2}), then $X\rightarrow X/G$ is given by Theorem \ref{thm:12}.

ii) If the numerical class of $B$ is one of (\ref{16},\ref{53},\ref{15},\ref{51},\ref{52},\ref{33},\ref{66},\ref{63},\ref{62}), then  $X\rightarrow X/G$ is given by 
the composition of the Galois cover $X\rightarrow{\mathbb P}^{1}\times{\mathbb P}^{1} $ whose numerical class of the branch divisor is (\ref{2}) and the Galois cover ${\mathbb P}^{1}\times{\mathbb P}^{1}\rightarrow{\mathbb P}^{1}\times{\mathbb P}^{1}$ which is induced by the Galois cover ${\mathbb P}^{1}\rightarrow{\mathbb P}^{1}$.\\

For (\ref{2}), we obtain an example if we use a curve $B_{4,4}$ in $\mathbb P^{1}\times\mathbb P^{1}$ given by the equation
\[ B_{4,4}:(z^{4}_{0}+z^{4}_{1})(w_{0}^{4}+w^{4}_{1})+2z_{0}^{2}z_{1}^{2}w_{0}^{2}w_{1}^{2}=0.\]
For (\ref{16}), we obtain an example if we use  curves $B^{1}_{1,0},B^{2}_{1,0},B_{2,4}$ in $\mathbb P^{1}\times\mathbb P^{1}$ given by the equations
\[ B^{1}_{1,0}:z_{0}=0,\ B^{2}_{1,0}:z_{1}=0,\ B_{2,4}:(z^{2}_{0}+z^{2}_{1})(w_{0}^{4}+w^{4}_{1})+2z_{0}z_{1}w_{0}^{2}w_{1}^{2}=0.\]
For (\ref{53}), we obtain an example if we use curves $B^{1}_{1,0},B^{2}_{1,0},B_{2,2},B^{1}_{0,1},B^{2}_{0,1}$ in $\mathbb P^{1}\times\mathbb P^{1}$ given by the equations
\[ B^{1}_{1,0}:z_{0}=0,\ B^{2}_{1,0}:z_{1}=0,\ B_{2,2}:(z^{2}_{0}+z^{2}_{1})(w_{0}^{2}+w^{2}_{1})+2z_{0}z_{1}w_{0}w_{1}=0,\]
\[ B^{1}_{0,1}:w_{0}=0,\ B^{2}_{0,1}:w_{1}=0.\]
For (\ref{15}), we obtain an example if we use curves $B^{1}_{1,0},B^{2}_{1,0},B_{2,4}$ in $\mathbb P^{1}\times\mathbb P^{1}$ given by the equations
\[ B^{1}_{1,0}:z_{0}=0,\ B^{2}_{1,0}:z_{1}=0,\ B_{2,4}:(z_{0}+z_{1})(w_{0}^{4}+w^{4}_{1})+(z_{0}-z_{1})w_{0}^{2}w_{1}^{2}=0.\]
For (\ref{51}), we obtain an example if we use curves $B^{1}_{1,0},B^{2}_{1,0},B_{1,1},B^{1}_{0,1},B^{2}_{0,1}$ in $\mathbb P^{1}\times\mathbb P^{1}$ given by the equations
\[ B^{1}_{1,0}:z_{0}=0,\ B^{2}_{1,0}:z_{1}=0,\ B_{1,1}:(z_{0}+z_{1})(w_{0}+w_{1})+2(z_{0}-z_{1})(w_{0}-w_{1})=0, \]
\[ B^{1}_{0,1}:w_{0}=0,\ B^{2}_{0,1}:w_{1}=0.\]
For (\ref{52}), we obtain an example if we use curves $B^{1}_{1,0},B^{2}_{1,0},B_{1,2},B^{1}_{0,1},B^{2}_{0,1}$ in $\mathbb P^{1}\times\mathbb P^{1}$ given by the equations
\[ B^{1}_{1,0}:z_{0}=0,\ B^{2}_{1,0}:z_{1}=0,\ B_{1,2}(z_{0}+z_{1})(w^{2}_{0}+w^{2}_{1})+(z_{0}-z_{1})w_{0}w_{1},\]
\[ B^{1}_{0,1}:w_{0}=0,\ B^{2}_{0,1}:w_{1}=0.\]
For (\ref{33}), we obtain an example if we use curves $B^{1}_{1,0},B^{2}_{1,0},B^{3}_{1,0},B_{1,4}$ in $\mathbb P^{1}\times\mathbb P^{1}$ given by the equations
\[ B^{1}_{1,0}:z_{0}=0,\ B^{2}_{1,0}:z_{1}=0,\ B^{3}_{1,0}:z_{0}-z_{1}=0,\] 
\[ B_{1,4}:(z_{0}+z_{1})(w^{4}_{0}+w^{4}_{1})+2(z_{0}-z_{1})(w^{4}_{0}-w^{4}_{1})=0.\]
For (\ref{66}), we obtain an example if we use curves $B^{1}_{1,0},B^{2}_{1,0},B^{3}_{1,0},B_{1,1},B^{1}_{0,1},B^{2}_{0,1},B^{3}_{0,1}$ in $\mathbb P^{1}\times\mathbb P^{1}$ given by the equations
\[ B^{1}_{1,0}:z_{0}=0,\ B^{2}_{1,0}:z_{1}=0,\ B^{3}_{1,0}:z_{0}-z_{1}=0,\] 
\[ B_{1,1}:(z_{0}-2z_{1})w_{0}+(2z_{0}+z_{1})w_{1}=0,\]
\[ B^{1}_{0,1}:w_{0}=0,\ B^{2}_{0,1}:w_{1}=0,\ B^{3}_{0,1}:w_{0}-w_{1}=0,\] 
For (\ref{63}), we obtain an example if we use curves $B^{1}_{1,0},B^{2}_{1,0},B^{3}_{1,0},B_{1,2},B^{1}_{0,1},B^{2}_{0,1}$ in $\mathbb P^{1}\times\mathbb P^{1}$ given by the equations
\[ B^{1}_{1,0}:z_{0}=0,\ B^{2}_{1,0}:z_{1}=0,\ B^{3}_{1,0}:z_{0}-z_{1}=0,\] 
\[ B_{1,2}:(z_{0}-2z_{1})w^{2}_{0}+(2z_{0}+z_{1})w^{2}_{1}=0,\ B^{1}_{0,1}:w_{0}=0,\ B^{2}_{0,1}:w_{1}=0.\]
For (\ref{62}), we obtain an example if we use curves $B^{1}_{1,0},B^{2}_{1,0},B^{3}_{1,0},B_{1,1},B^{1}_{0,1},B^{2}_{0,1}$ in $\mathbb P^{1}\times\mathbb P^{1}$ given by the equations
\[ B^{1}_{1,0}:z_{0}=0,\ B^{2}_{1,0}:z_{1}=0,\ B^{3}_{1,0}:z_{0}-z_{1}=0,\] 
\[ B_{1,2}:(z_{0}-2z_{1})w_{0}+(2z_{0}+z_{1})w_{1}=0,\ B^{1}_{0,1}:w_{0}=0,\ B^{2}_{0,1}:w_{1}=0.\]
\end{proof}
\begin{cro}\label{pro:10}
For each numerical classes (\ref{74}),
(\ref{169},\ref{81}),
(\ref{236},\ref{171},\ref{79},\ref{78}) of the list in section 6, 
there are a $K3$ surface $X$ and a finite abelian subgroup $G$ of Aut$(X)$ such that  $X/G\cong{\mathbb F}_{n}$ and the numerical class of the branch divisor $B$ of the quotient map $p:X\rightarrow X/G$ is 
(\ref{74}), (\ref{169},\ref{81}),
(\ref{236},\ref{171},\ref{79},\ref{78}).
 
Furthermore, for a pair $(X,G)$ of a $K3$ surface $X$ and a finite abelian subgroup $G$ of Aut$(X)$, if $X/G\cong{\mathbb F}_{n}$ and the numerical class of the branch divisor $B$ of the quotient map $p:X\rightarrow X/G$ is (\ref{74}),
(\ref{169},\ref{81}),
(\ref{236},\ref{171},\ref{79},\ref{78}), then $G$ is ${\mathbb Z}/2{\mathbb Z}$, 
${\mathbb Z}/2{\mathbb Z}$, ${\mathbb Z}/2{\mathbb Z}^{\oplus 2}$,
${\mathbb Z}/2{\mathbb Z}$, ${\mathbb Z}/2{\mathbb Z}^{\oplus 2}$, ${\mathbb Z}/2{\mathbb Z}\oplus {\mathbb Z}/4{\mathbb Z}$, ${\mathbb Z}/2{\mathbb Z}^{\oplus 3}$, in order, as a group. 
\end{cro}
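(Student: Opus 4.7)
The plan is to follow exactly the pattern of Proposition \ref{pro:2} and Corollary \ref{pro:9}: among the seven listed numerical classes, some are \emph{base cases} produced directly from Theorem \ref{thm:12}, while the others are \emph{derived cases} obtained by composing a base-case cover with an induced cover of Hirzebruch surfaces coming from a Galois cover $\mathbb{P}^1\to\mathbb{P}^1$. Based on the sizes of the expected Galois groups, I would treat (\ref{74}), (\ref{169}), and (\ref{236}) as base cases and (\ref{81}), (\ref{171}), (\ref{79}), (\ref{78}) as derived.

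For the base cases, the numerical class has the shape $2D$ for a single smooth curve $D$ with $\mathcal{O}_{\mathbb{F}_n}(D)$ a $2$-divisible class in $\mathrm{Pic}(\mathbb{F}_n)$. Theorem \ref{thm:12} then yields a $\mathbb{Z}/2\mathbb{Z}$-Galois cover $p\colon X\to\mathbb{F}_n$ with the prescribed branch divisor. To certify $X$ is K3 I would first invoke Theorem \ref{thm:6} to see $K_X$ is numerically trivial; then rule out the abelian surface case via [\ref{bio:6}], the bielliptic case via [\ref{bio:11}], and the Enriques case by the same argument as in Proposition \ref{pro:1}: its K3-cover would give (by Theorem \ref{thm:3}) a simply connected Galois cover with group $\mathbb{Z}/2\mathbb{Z}\oplus\mathbb{Z}/2\mathbb{Z}$ and the same branch divisor, contradicting Theorem \ref{thm:5}.

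For the derived cases I would factor $X\to\mathbb{F}_n$ as $X\xrightarrow{q}\mathbb{F}_{n'}\xrightarrow{r}\mathbb{F}_n$, where $q$ is a base-case cover constructed above and $r$ is induced from an appropriate Galois cover of $\mathbb{P}^1$: either $\langle g_m\rangle\cong\mathbb{Z}/m\mathbb{Z}$ or $\langle g_1,g_2\rangle\cong\mathbb{Z}/2\mathbb{Z}^{\oplus 2}$. Concretely (\ref{81}) should be the composition of (\ref{169}) with a degree-$2$ cover, while (\ref{171}), (\ref{79}), (\ref{78}) come from (\ref{236}) composed with covers of degree $2$, degree $4$ cyclic, and degree $4$ Klein four, respectively. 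Because $X$ is simply connected, Theorem \ref{thm:3} guarantees $r\circ q$ is Galois; the branch divisor of the composition equals $r^{\ast}(\text{branch of }q)+(\text{branch of }r)$, which (after checking the pullback formulas $p^{\ast}C_n=C_{mn}$, $p^{\ast}F_n=mF_{mn}$ used earlier) matches the required numerical class. For the converse direction, given any $(X,G)$ realising the specified numerical class, Theorem \ref{thm:5}(ii) identifies the inertia subgroups from the ramification indices, Theorem \ref{thm:5}(i) shows they generate $G$, and Theorem \ref{thm:3} identifies $(X,G)$ with our construction up to isomorphism, forcing the stated group.

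The main obstacle I anticipate is case (\ref{79}), where the group is $\mathbb{Z}/2\mathbb{Z}\oplus\mathbb{Z}/4\mathbb{Z}$ rather than $\mathbb{Z}/2\mathbb{Z}^{\oplus 3}$. The auxiliary cover $r$ must be \emph{cyclic} of order $4$ (via $\langle g_4\rangle$) instead of a Klein-four cover, and one has to verify that the $\mathbb{Z}/4\mathbb{Z}$-inertia along the two preimage fibres genuinely persists and does not decompose further when combined with the $\mathbb{Z}/2\mathbb{Z}$-inertia from $q$; this is where Theorem \ref{thm:5}(iii) and a careful tracking of the simple-normal-crossing branch configuration is essential. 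As in Proposition \ref{pro:2}, the final step would be to furnish explicit defining equations (sections of $C$, copies of $F$, and one curve of class $aC+bF$ with generic coefficients) to confirm that each constructed family admits smooth, transverse representatives.
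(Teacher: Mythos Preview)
Your proposal is correct and follows essentially the same approach as the paper: (\ref{74}), (\ref{169}), (\ref{236}) are handled directly by Theorem \ref{thm:12}, while (\ref{81}) is obtained from (\ref{169}) via the induced degree-$2$ cover $\mathbb{F}_2\to\mathbb{F}_1$, and (\ref{171}), (\ref{79}), (\ref{78}) from (\ref{236}) via covers $\mathbb{F}_4\to\mathbb{F}_m$ of degree $2$, cyclic degree $4$, and Klein-four degree $4$ respectively. Your distinction between the cyclic and Klein-four degree-$4$ covers for (\ref{79}) versus (\ref{78}) is exactly the point, and the concern you flag about the $\mathbb{Z}/4\mathbb{Z}$ inertia in (\ref{79}) is resolved simply because the base cover $q$ for (\ref{236}) is unramified over the two fibres in question, so the inertia there comes entirely from $r$.
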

\begin{proof}
In the same way as Proposition \ref{pro:1}, we get this corollary.
More specifically,
let $X$ be a $K3$ surface, $G$ be a finite abelian subgroup of Aut$(X)$ such that $X/G\cong{\mathbb F}_{n}$, and $B$ be the branch divisor of the quotient map $p:X\rightarrow X/G$.  
Then  we get the following.

i) If the numerical class of $B$ is one of (\ref{74},\ref{169},\ref{236}), then $X\rightarrow X/G$ is given by Theorem \ref{thm:12}.

ii) If the numerical class of $B$ is one of (\ref{81}), then  $X\rightarrow X/G$ is given by 
the composition of the Galois cover $X\rightarrow{\mathbb F}_{2}$ whose numerical class of the branch divisor is (\ref{169}) and the Galois cover ${\mathbb F}_{2}\rightarrow{\mathbb F}_{1}$ which is induced by the Galois cover ${\mathbb P}^{1}\rightarrow{\mathbb P}^{1}$ of degree 2.

iii) If the numerical class of $B$ is one of (\ref{171},\ref{79},\ref{78}), then $X\rightarrow X/G$ is given by 
the composition of the Galois cover $X\rightarrow{\mathbb F}_{4}$ whose numerical class of the branch divisor is (\ref{236})
and the Galois cover ${\mathbb F}_{4} \rightarrow{\mathbb F}_{m}$ which is induced by the Galois cover ${\mathbb P}^{1}\rightarrow{\mathbb P}^{1}$ of degree $\frac{4}{m}$.\\

For (\ref{74}), we obtain an example if we use a curve $B_{4,6}$ in $\mathbb F_{1}$ given by the equation
\[ B_{4,6}:X^{2}_{0}Y^{4}_{1}+X^{2}_{1}Y^{4}_{0}+X_{0}X_{1}Y^{4}_{2}=0.\]
For (\ref{169}), we obtain an example if we use a curve $B_{4,8}$ in $\mathbb F_{2}$ given by the equation
\[ B_{4,8}:Y^4_{0}+Y^4_{1}+Y^4_2=0.\]
For (\ref{81}), we obtain an example if we use curves $B_{4,4},B^{1}_{0,1},B^{2}_{0,1}$ in $\mathbb F_{1}$ given by the equations
\[ B_{4,4}:Y^4_0+Y^4_1+Y^4_2=0,\ B^{1}_{0,1}:X_{0}=0,\ B^{2}_{0,1}:X_{1}=0.\]
For (\ref{236}), we obtain an example if we use a section $C$ and a curve $B_{3,12}$ in $\mathbb F_{4}$ given by the equation
\[ B_{3,12}:Y^{3}_{0}+Y^{3}_{1}+Y^{3}_{2}=0.\]
For (\ref{171}), we obtain an example if we use a section $C$ and curves $B_{3,6},B^{1}_{0,1},B^{2}_{0,1}$ in $\mathbb F_{2}$ given by the equations 
\[ B_{3,6}:Y^{3}_{0}+Y^{3}_{1}+Y^{3}_{2}=0,\ B^{1}_{0,1}:X_{0}=0,\ B^{2}_{0,1}:X_{1}=0.\]
For (\ref{79}), we obtain an example if we use a section $C$ and curves $B_{3,3},B^{1}_{0,1},B^{2}_{0,1}$ in $\mathbb F_{1}$ given by the equations 
\[ B_{3,3}:Y^{3}_{0}+Y^{3}_{1}+Y^{3}_{2}=0,\ B^{1}_{0,1}:X_{0}=0,\ B^{2}_{0,1}:X_{1}=0.\]
For (\ref{78}), we obtain an example if we use a section $C$ and curves $B_{3,3},B^{1}_{0,1},B^{2}_{0,1},B^{3}_{0,1}$ in $\mathbb F_{1}$ given by the equations
\[ B_{3,3}:Y^{3}_{0}+Y^{3}_{1}+Y^{3}_{2}=0,\ B^{1}_{0,1}:X_{0}=0,\ B^{2}_{0,1}:X_{1}=0,\ B^{3}_{0,1}:X_{0}-X_{1}=0.\]
\end{proof}
\begin{pro}\label{pro:3}
For each numerical classes (\ref{3},\ref{34},\ref{64}) of the list in section 6, 
there are a $K3$ surface $X$ and a finite abelian subgroup $G$ of Aut$(X)$ such that  $X/G\cong{\mathbb F}_{n}$ and the numerical class of the branch divisor $B$ of the quotient map $p:X\rightarrow X/G$ is (\ref{3},\ref{34},\ref{64}).

Furthermore, for a pair $(X,G)$ of a $K3$ surface $X$ and a finite abelian subgroup $G$ of Aut$(X)$, if $X/G\cong{\mathbb F}_{n}$ and the numerical class of the branch divisor $B$ of the quotient map $p:X\rightarrow X/G$ is 
(\ref{3},\ref{34},\ref{64}), then
$G$ is
${\mathbb Z}/2{\mathbb Z}^{\oplus 2}$, ${\mathbb Z}/2{\mathbb Z}^{\oplus 3}$, ${\mathbb Z}/2{\mathbb Z}^{\oplus 4}$, 
in order, as a group. 	
\end{pro}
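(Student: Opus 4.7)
The plan is to follow the template established in Propositions \ref{pro:1} and \ref{pro:2}: assemble the Abelian cover as a composition of elementary double covers furnished by Theorem \ref{thm:12} with base-change Galois covers ${\mathbb F}_{2m}\rightarrow{\mathbb F}_{m}$ induced by ${\mathbb P}^{1}\ni[z_{0}:z_{1}]\mapsto[z_{0}^{2}:z_{1}^{2}]\in{\mathbb P}^{1}$, and then pin down $G$ by appealing to Theorems \ref{thm:3} and \ref{thm:5}. Since the three prescribed Galois groups $({\mathbb Z}/2{\mathbb Z})^{\oplus 2}$, $({\mathbb Z}/2{\mathbb Z})^{\oplus 3}$, $({\mathbb Z}/2{\mathbb Z})^{\oplus 4}$ are elementary abelian $2$-groups, every irreducible component of the branch divisor should occur with ramification index exactly $2$, which is compatible with $ii)$ of Theorem \ref{thm:5}.

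For the base numerical class (\ref{3}), I would write the branch divisor as $2D_{1}+2D_{2}$ for two smooth effective divisors $D_{1}$ and $D_{2}$ whose classes are each divisible by $2$ in ${\rm Pic}({\mathbb F}_{n})$ and whose supports meet transversally. Theorem \ref{thm:12} then provides two independent ${\mathbb Z}/2{\mathbb Z}$-covers branched over $2D_{1}$ and $2D_{2}$ respectively, whose fibre product over ${\mathbb F}_{n}$ is a $({\mathbb Z}/2{\mathbb Z})^{\oplus 2}$-cover of ${\mathbb F}_{n}$ with branch divisor of the required numerical class. The total space $X$ is smooth with numerically trivial canonical divisor by Theorem \ref{thm:6}; to conclude that $X$ is a $K3$ surface, I would exclude the abelian, bi-elliptic, and Enriques alternatives exactly as in the proof of Proposition \ref{pro:1} (the last using Theorem \ref{thm:5} to rule out an additional ${\mathbb Z}/2{\mathbb Z}$-extension).

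For the numerical classes (\ref{34}) and (\ref{64}), I would then pull back the cover realizing (\ref{3}) along the base-change cover ${\mathbb F}_{2n}\rightarrow{\mathbb F}_{n}$ (respectively along a further iterate ${\mathbb F}_{4n}\rightarrow{\mathbb F}_{n}$) described just before Proposition \ref{pro:2}. Each such base change introduces two extra fibre components into the branch divisor with ramification index $2$ and enlarges the Galois group by one additional ${\mathbb Z}/2{\mathbb Z}$-factor, matching the target groups $({\mathbb Z}/2{\mathbb Z})^{\oplus 3}$ and $({\mathbb Z}/2{\mathbb Z})^{\oplus 4}$. The resulting total space is still a $K3$ surface by the same exclusion argument as above. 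For the converse, any pair $(X',G')$ with the prescribed numerical class factors through these explicit covers by Theorems \ref{thm:3} and \ref{thm:4}, so $G'$ is isomorphic to our constructed Galois group.

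The main obstacle will be careful bookkeeping: one must choose explicit defining equations, analogous to those displayed in the proofs of Propositions \ref{pro:1} and \ref{pro:2}, so that the smooth curves realize the precise numerical classes (\ref{3}), (\ref{34}), (\ref{64}), form a simple normal crossing divisor, and generate independent double covers (so that the ranks of the $2$-elementary Galois groups are exactly $2$, $3$, $4$ and no accidental identification collapses them). Once such equations are written down, Theorem \ref{thm:5} $i)$--$iv)$ identifies the Galois group uniquely and confirms that no supplementary ${\mathbb Z}/2{\mathbb Z}$-extension exists, thereby completing both the existence and the converse parts of the statement.
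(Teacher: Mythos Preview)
Your approach for (\ref{3}) and (\ref{34}) matches the paper's: build the $({\mathbb Z}/2{\mathbb Z})^{2}$-cover for (\ref{3}) as a fibre product of two double covers (Theorem \ref{thm:12}), then obtain (\ref{34}) by composing with the degree-$2$ base change ${\mathbb P}^{1}\times{\mathbb P}^{1}\to{\mathbb P}^{1}\times{\mathbb P}^{1}$ induced by $[z_{0}:z_{1}]\mapsto[z_{0}^{2}:z_{1}^{2}]$ in one ruling.

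There is a concrete error in your treatment of (\ref{64}). You propose a ``further iterate ${\mathbb F}_{4n}\to{\mathbb F}_{n}$'', i.e.\ a degree-$4$ cyclic base change in a single ruling. That cover has branch divisor $4F+4F$, so the composite would acquire two fibre components with ramification index $4$, not the configuration $2C+2C+2(C+F)+2(C+F)+2F+2F$ of (\ref{64}), and the Galois group would pick up a ${\mathbb Z}/4{\mathbb Z}$ factor rather than two extra ${\mathbb Z}/2{\mathbb Z}$ factors. The correct base change, as the paper uses (compare the treatment of (\ref{50}) in Proposition \ref{pro:1}), is the $({\mathbb Z}/2{\mathbb Z})^{2}$-cover
\[
([z_{0}:z_{1}],[w_{0}:w_{1}])\longmapsto([z_{0}^{2}:z_{1}^{2}],[w_{0}^{2}:w_{1}^{2}])
\]
acting in \emph{both} rulings. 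Its branch divisor is $2C+2C+2F+2F$, and the pullback of a smooth $(1,1)$-curve has class $(2,2)$; composing with the $({\mathbb Z}/2{\mathbb Z})^{2}$-cover of type (\ref{3}) over the source then yields a Galois cover of degree $16$ with branch divisor of class (\ref{64}) and group $({\mathbb Z}/2{\mathbb Z})^{4}$. Once you make this correction, your outline goes through exactly as in the paper. Note also that the total space $X$ in cases (\ref{34}) and (\ref{64}) is the \emph{same} $K3$ surface already produced for (\ref{3}); only the target and the group change, so no separate exclusion argument is needed there.
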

\begin{proof}
Let $B^{1}_{2,2},B^{2}_{2,2}$ be smooth curves on ${\mathbb P}^{1}\times{\mathbb P}^{1}$ such that $B^{1}_{2,2}+B^{2}_{2,2}$ is simple normal crossing. 
Then the numerical class of $2B^{1}_{2,2}+2B^{2}_{2,2}$ is (\ref{3}).
Since $B^{i}_{2,2}=(2C+2F)$ in Pic$(\mathbb P^{1}\times\mathbb P^{1})$, by Theorem \ref{thm:12}, there are the Galois covers $p_{i}:X_{i}\rightarrow {\mathbb P}^{1}\times{\mathbb P}^{1}$ such that the branch divisor of $p_{i}$ is $2B^{i}_{2,2}$ for $i=1,2$ and the Galois group of $p_{i}$ is isomorphic to ${\mathbb Z}/2{\mathbb Z}$ as a group for $i=1,2$. 
Since $B^{1}_{2,2}+B^{2}_{2,2}$ is simple normal crossing, the fibre product $X:=X_{1}\times_{{\mathbb P}^{1}\times{\mathbb P}^{1}}X_{2}$ 
of $p_{1}$ and $p_{2}$ is smooth. 
Therefore, there is the Galois cover $p:X\rightarrow{\mathbb P}^{1}\times{\mathbb P}^{1}$ such that $X$ is a $K3$ surface, the Galois group is isomorphic to ${\mathbb Z}/2{\mathbb Z}^{\oplus 2}$ as a group, and the branch divisor is $2B^{1}_{2,2}+2B^{2}_{2,2}$. 
The rest of this proposition is proved in the same way as Proposition \ref{pro:1}.
More specifically,
let $X$ be a $K3$ surface, $G$ be a finite abelian subgroup of Aut$(X)$ such that $X/G\cong{\mathbb F}_{n}$, and $B$ be the branch divisor of $G$.  Then we get the following.

i) If the numerical class of $B$ is (\ref{3}), then $X\rightarrow X/G$ is given by Theorem \ref{thm:12} and the fibre product.

ii) If the numerical class of $B$ is one of (\ref{34},\ref{64}), then  $X\rightarrow X/G$ is given by 
the composition of the Galois cover $X\rightarrow{\mathbb P}^{1}\times{\mathbb P}^{1}$ whose numerical class of the branch divisor is (\ref{3}) and the Galois cover ${\mathbb P}^{1}\times{\mathbb P}^{1}\rightarrow{\mathbb P}^{1}\times{\mathbb P}^{1}$ which is induced by the Galois cover ${\mathbb P}^{1}\rightarrow{\mathbb P}^{1}$.\\

For (\ref{3}), we obtain an example if we use curves $B^{1}_{2,2},B^{2}_{2,2}$ in $\mathbb P^{1}\times\mathbb P^{1}$ given by the equations
\[ B^{1}_{2,2}:z_{0}^{2}w_{0}^{2}+z_{1}^{2}w_{1}^{2}=0,\ B^{2}_{2,2}:z_{0}^{2}w_{1}^{2}+z_{1}^{2}w_{0}^{2}=0.\]
For (\ref{34}), we obtain an example if we use curves $B^{1}_{1,0},B^{2}_{1,0},B^{1}_{1,2},B^{2}_{1,2}$ in $\mathbb P^{1}\times\mathbb P^{1}$ given by the equations
\[ B^{1}_{1,0}:z_{0}=0,\ B^{2}_{1,0}:z_{1}=0,\ B^{1}_{1,2}:z_{0}w_{0}^{2}+z_{1}w_{1}^{2}=0,\ B^{2}_{1,2}z_{0}w_{1}^{2}+z_{1}w_{0}^{2}=0.\]
For (\ref{64}), we obtain an example if we use curves $B^{1}_{1,0},B^{2}_{1,0},B^{1}_{1,1},B^{2}_{1,1},B^{1}_{0,1},B^{2}_{0,1}$ in $\mathbb P^{1}\times\mathbb P^{1}$ given by the equations
\[ B^{1}_{1,0}:z_{0}=0,\ B^{2}_{1,0}z_{1}=0,\ B^{1}_{1,1}:(z_{0}-2z_{1})w_{0}+(2z_{0}+z_{1})w_{1}=0,\]
\[ B^{2}_{1,1}:z_{0}(w_{0}-2w_{1})+z_{1}(2w_{0}+w_{1})=0,\ B^{1}_{0,1}:w_{0}=0,\ B^{2}_{0,1}:w_{1}=0.\]
\end{proof}
\begin{cro}\label{pro:11}
For each numerical classes 
(\ref{88}),
(\ref{176},\ref{89}),
(\ref{177},\ref{93}),
(\ref{239},\ref{183},
\ref{95},\ref{96})  of the list in section 6, 
there are a $K3$ surface $X$ and a finite abelian subgroup $G$ of Aut$(X)$ such that  $X/G\cong{\mathbb F}_{n}$ and the numerical class of the branch divisor $B$ of the quotient map $p:X\rightarrow X/G$ is
(\ref{88}),
(\ref{176},\ref{89}),
(\ref{177},\ref{93}),
(\ref{239},\ref{183},\ref{95},\ref{96}).

Furthermore, for a pair $(X,G)$ of a $K3$ surface $X$ and a finite abelian subgroup $G$ of Aut$(X)$, if $X/G\cong{\mathbb F}_{n}$ and the numerical class of the branch divisor $B$ of the quotient map $p:X\rightarrow X/G$ is 
(\ref{88}),
(\ref{176},\ref{89}),
(\ref{177},\ref{93}),
(\ref{239},\ref{183},\ref{95},\ref{96}), 
then $G$ is
${\mathbb Z}/2{\mathbb Z}^{\oplus2}$, 
${\mathbb Z}/2{\mathbb Z}^{\oplus 2}$, ${\mathbb Z}/2{\mathbb Z}^{\oplus 3}$, 
${\mathbb Z}/2{\mathbb Z}^{\oplus 2}$, ${\mathbb Z}/2{\mathbb Z}^{\oplus 3}$, 
${\mathbb Z}/2{\mathbb Z}^{\oplus 2}$, ${\mathbb Z}/2{\mathbb Z}^{\oplus 3}$, ${\mathbb Z}/2{\mathbb Z}^{\oplus 2}\oplus{\mathbb Z}/4{\mathbb Z}$, ${\mathbb Z}/2{\mathbb Z}^{\oplus 4}$,
in order, as a group. 	
\end{cro}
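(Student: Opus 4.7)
The proof plan is to follow the same strategy used in Propositions \ref{pro:1}--\ref{pro:3} and Corollaries \ref{pro:9}--\ref{pro:10}: identify the ``primitive'' numerical classes that can be realized directly, and obtain the remaining classes by base-change along Galois covers $\mathbb F_n \to \mathbb F_m$ induced by Galois covers $\mathbb P^1 \to \mathbb P^1$. Looking at the branch indices, each listed class has all $b_i = 2$, so only double covers are needed, and the situation is closely parallel to Proposition \ref{pro:3}.

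First, I would treat what appear to be the base cases, namely (\ref{88}) on $\mathbb F_1$ and (\ref{239}) on $\mathbb F_4$ (and possibly (\ref{176}), (\ref{177}) which already live on $\mathbb F_2$). For these I would produce a simple normal crossing decomposition of the branch divisor into two components, each of which is $2$-divisible in $\mathrm{Pic}$. Applying Theorem \ref{thm:12} to each component gives two $\mathbb Z/2\mathbb Z$-Galois covers, and since the supports meet transversally, their fibre product is smooth. The resulting cover has Galois group $(\mathbb Z/2\mathbb Z)^{\oplus 2}$ and trivial canonical divisor by Theorem \ref{thm:6}. To show the cover is a $K3$ surface, I would rule out the alternative minimal surfaces with trivial $K$: bi-elliptic and abelian surfaces are excluded by [\ref{bio:11}] and [\ref{bio:6}] exactly as in the proof of Proposition \ref{pro:1}, while the Enriques case is excluded by combining Theorems \ref{thm:5} and \ref{thm:4} (a further unramified $2$-cover would contradict the branch data). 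Then the remaining numerical classes (\ref{89}), (\ref{93}), (\ref{183}), (\ref{95}), (\ref{96}) are obtained by composing the appropriate base case with the $\mathbb Z/2\mathbb Z$-cover $\mathbb F_{2m}\to\mathbb F_m$ (or the $(\mathbb Z/2\mathbb Z)^{\oplus 2}$-cover $\mathbb F_{4m}\to\mathbb F_m$) induced by a Galois cover $\mathbb P^1\to\mathbb P^1$; a direct check using the pullback formulas $p^\ast C_n = C_{mn}$, $p^\ast F_n = m F_{mn}$ shows that the numerical class of the composite branch divisor matches the one listed.

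For the converse direction, I would argue as in Proposition \ref{pro:1}. Given any pair $(X,G)$ realizing one of these numerical classes, Theorem \ref{thm:5} pins down the group $G$: the support of the branch divisor is simple normal crossing, each component $B_i$ contributes a subgroup $G_{B_i}\cong\mathbb Z/2\mathbb Z$ (or $\mathbb Z/4\mathbb Z$ where the branch index is $4$), and $G$ is generated by these together with the intersection data, using the identity $G_{B_i}\cap G_{B_j}=\{\mathrm{id}\}$ whenever $B_i\cap B_j\neq\emptyset$. The order count $|G|$ forced by the degree of the cover, together with the analysis of which generators commute with which, singles out the claimed isomorphism class in each case; for instance the $\mathbb Z/2\mathbb Z^{\oplus 2}\oplus\mathbb Z/4\mathbb Z$ that appears for (\ref{95}) comes from the presence of a curve pulling back with ramification $4$ in the intermediate cyclic cover. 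Uniqueness of the $K3$-realization then follows from the simply connectedness of $X$ via Theorem \ref{thm:3}.

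The main technical obstacle will be the book-keeping: one must write down, for each of the ten numerical classes, an explicit simple normal crossing representative (as done in the earlier propositions) and verify that the resulting fibre product is smooth, which requires checking that the various branch components intersect transversally in $\mathbb F_n$. The order of the Galois group must be matched with the predicted group structure, and particular care is needed to distinguish $\mathbb Z/2\mathbb Z\oplus\mathbb Z/4\mathbb Z$ from $(\mathbb Z/2\mathbb Z)^{\oplus 3}$ when they have the same order, which relies on tracking whether the ramification index along a component of $p^{-1}(B_i)$ in an intermediate cover genuinely doubles.
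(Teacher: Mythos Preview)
Your approach matches the paper's: the four classes (\ref{88}), (\ref{176}), (\ref{177}), (\ref{239}) are the base cases realized directly as fibre products of two double covers via Theorem~\ref{thm:12}, and the remaining five are obtained by composing with Galois covers $\mathbb F_{mn}\to\mathbb F_n$ induced from $\mathbb P^1\to\mathbb P^1$, with the group structure read off from Theorem~\ref{thm:5}. One small correction: not every $b_i$ equals $2$ --- class (\ref{95}) has two fibres with ramification index $4$ --- though you correctly account for this later when identifying the group $\mathbb Z/2\mathbb Z^{\oplus 2}\oplus\mathbb Z/4\mathbb Z$; also note that for (\ref{177}) and (\ref{239}) the branch divisor has three irreducible components, so your ``two components'' should really be two $2$-divisible \emph{groups} of components (e.g.\ $C+B_{1,2}$ and $B_{2,6}$ for (\ref{177})).
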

\begin{proof}
In the same way as Proposition \ref{pro:1}, we get this corollary.
More specifically, 
let $X$ be a $K3$ surface, $G$ be a finite abelian subgroup of Aut$(X)$ such that $X/G\cong{\mathbb F}_{n}$, and $B$ be the branch divisor of the quotient map $p:X\rightarrow X/G$. 
Then we get the following.

i) If the numerical class of $B$ is one of (\ref{88},\ref{176},\ref{177},\ref{239}), then $X\rightarrow X/G$ is given by Theorem \ref{thm:12} and the fibre product.

ii) If the numerical class of $B$ is (\ref{89}), then  $X\rightarrow X/G$ is given by 
the composition of the Galois cover $X\rightarrow{\mathbb F}_{2}$ whose numerical class of the branch divisor is (\ref{176})
and the Galois cover ${\mathbb F}_{2}\rightarrow{\mathbb F}_{1}$ which is induced by the Galois cover ${\mathbb P}^{1}\rightarrow{\mathbb P}^{1}$ of degree 2.

iii) If the numerical class of $B$ is (\ref{93}), then  $X\rightarrow X/G$ is given by 
the composition of the Galois cover $X\rightarrow{\mathbb F}_{2}$ whose numerical class of the branch divisor is (\ref{177})
and the Galois cover ${\mathbb F}_{2}\rightarrow{\mathbb F}_{1}$ which is induced by the Galois cover ${\mathbb P}^{1}\rightarrow{\mathbb P}^{1}$ of degree 2.

iv) If the numerical class of $B$ is one of (\ref{183},\ref{95},\ref{96}), then  $X\rightarrow X/G$ is given by 
the composition of the Galois cover $X\rightarrow{\mathbb F}_{4}$ whose numerical class of the branch divisor is (\ref{239})
and the Galois cover ${\mathbb F}_{4}\rightarrow{\mathbb F}_{1}$ which is induced by the Galois cover ${\mathbb P}^{1}\rightarrow{\mathbb P}^{1}$ of degree 4.\\

For (\ref{88}), we obtain an example if we use curves $B_{2,4},B_{2,2}$ in $\mathbb F_{1}$ given by the equations
\[ B_{2,4}:X^{2}_{0}Y^{2}_{1}+X^{2}_{1}Y^{2}_{0}+X_{0}X_{1}Y^{2}_{2}=0,\  B_{2,2}:Y^{2}_{0}+Y^{2}_{1}+Y^{2}_{2}=0.\]
For (\ref{176}), we obtain an example if we use curves $B^{1}_{2,4},B^{2}_{2,4}$ in $\mathbb F_{2}$ given by the equations
\[ B^{1}_{2,4}:2Y^{2}_{0}+Y^{2}_{1}+Y^{2}_{2}=0,\ B^{2}_{2,4}:Y^{2}_{0}+Y^{2}_{1}+2Y^{2}_{2}=0.\]
For (\ref{89}), we obtain an example if we use curves $B^{1}_{2,2},B^{2}_{2,2},B^{1}_{0,1},B^{2}_{0,1}$ in $\mathbb F_{1}$ given by the equations
\[ B^{1}_{2,2}:2Y^{2}_{0}+Y^{2}_{1}+Y^{2}_{2}=0,\  B^{2}_{2,2}:Y^{2}_{0}+Y^{2}_{1}+2Y^{2}_{2}=0,\]
\[ B^{1}_{0,1}:X_{0}=0,\ B^{2}_{0,1}:X_{1}=0.\]
For (\ref{177}), we obtain an example if we use a section $C$ and curves $B_{1,2},B_{2,6}$ in $\mathbb F_{2}$ given by the equations
\[ B_{1,2}:Y_{0}+Y_{2}=0,\ B_{2,6}:X^{2}_{0}Y^{2}_{1}+X^{2}_{1}Y^{2}_{0}+(X^{2}_{0}+2X^{2}_{1})Y^{2}_{2}=0.\]
For (\ref{93}), we obtain an example if we use a section $C$ and curves $B_{1,1},B_{2,3},B^{1}_{0,1},B^{2}_{0,1}$ in $\mathbb F_{1}$ given by the equations
\[ B_{1,1}:Y_{0}+Y_{2}=0,\ B_{2,3}:X_{0}Y^{2}_{1}+X_{1}Y^{2}_{0}+(X_{0}+2X_{1})Y^{2}_{2}=0, \]
\[ B^{1}_{0,1}:X_{0}=0,\ B^{2}_{0,1}:X_{1}=0.\]
For (\ref{239}), we obtain an example if we use a section $C$ and curves $B_{1,4},B_{2,8}$ in $\mathbb F_{4}$ given by the equations
\[ B_{1,4}:Y_{0}+Y_{2}=0,\  B_{2,8}:Y^{2}_{0}+Y^{2}_{1}+Y^{2}_{2}=0.\]
For (\ref{183}), we obtain an example if we use a section $C$ and curves $B_{1,2},B_{2,4},B^{1}_{0,1},B^{2}_{0,1}$ in $\mathbb F_{2}$ given by the equations
\[ B_{1,2}:Y_{0}+Y_{2}=0,\  B_{2,4}:Y^{2}_{0}+Y^{2}_{1}+Y^{2}_{2}=0,\]
\[ B^{1}_{0,1}:X_{0}=0,\ B^{2}_{0,1}:X_{1}=0.\]
For (\ref{95}), we obtain an example if we use a section $C$ and curves $B_{1,1},B_{2,2},B^{1}_{0,1},B^{2}_{0,1}$ in $\mathbb F_{1}$ given by the equations
\[ B_{1,1}:Y_{0}+Y_{2}=0,\  B_{2,2}:Y^{2}_{0}+Y^{2}_{1}+Y^{2}_{2}=0,\]
\[ B^{1}_{0,1}:X_{0}=0,\ B^{2}_{0,1}:X_{1}=0.\]
For (\ref{96}), we obtain an example if we use a section $C$ and curves $B_{1,1},B_{2,2},B^{1}_{0,1},B^{2}_{0,1},B^{3}_{0,1}$ in $\mathbb F_{1}$ given by the equations
\[ B_{1,1}:Y_{0}+Y_{2}=0,\  B_{2,2}:Y^{2}_{0}+Y^{2}_{1}+Y^{2}_{2}=0,\]
\[ B^{1}_{0,1}:X_{0}=0,\ B^{2}_{0,1}:X_{1}=0,\  B^{3}:X_{0}-X_{1}=0.\]
\end{proof}
A lattice is a pair $(L,b)$ of a free abelian group $L:= Z^{\oplus n}$ of rank n and a symmetric non degenerate bilinear form $b:L\times L \rightarrow{\mathbb Z}$ taking values in ${\mathbb Z}$. 
The discriminant group of $L$ is $L^{\vee}/L$, where the dual $L^{\vee}:=\{m \in L \otimes{\mathbb Q} |\ b(m, l)\in {\mathbb Z}\ for\ all\ l \in L\}$ (here we denote by $b$ the ${\mathbb Q}$ linear extension of $b$). 
Let $U$ be the hyperbolic lattice, and $A_{n}$ and $E_{n}$ are the negative definite lattices of rank $n$ associated to the corresponding root systems.
\begin{pro}\label{pro:4}
For each classes (\ref{12}),(\ref{13}) of the list in section 6, 
there are a $K3$ surface $X$ and a finite abelian subgroup $G$ of Aut$(X)$ such that $X/G\cong{\mathbb F}_{n}$ and the numerical class of the branch divisor $B$ of the quotient map $p:X\rightarrow X/G$ is (\ref{12}),(\ref{13}).

Furthermore, for a pair $(X,G)$ of a $K3$ surface $X$ and a finite abelian subgroup $G$ of Aut$(X)$, if $X/G\cong{\mathbb F}_{n}$ and the numerical class of the branch divisor $B$ of the quotient map $p:X\rightarrow X/G$ is (\ref{12}),(\ref{13}), 
then $G$ is ${\mathbb Z}/3{\mathbb Z}^{\oplus 2}$, ${\mathbb Z}/3{\mathbb Z}^{\oplus 2}$,
in order, as a group. 	
\end{pro}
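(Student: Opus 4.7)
The plan is to follow the same pattern that carried Propositions \ref{pro:1}, \ref{pro:2}, and \ref{pro:3}, namely to build the desired $K3$-cover by fibre product of two cyclic covers of degree $3$ and then to pin down the group from the branch divisor via Theorem \ref{thm:5}.

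First I would establish existence. For each of the numerical classes (\ref{12}), (\ref{13}), I expect the branch divisor to split as $3B^1 + 3B^2$ (in some cases with further $F$- or $C$-components) with each $3$-divisible in $\mathrm{Pic}(\mathbb F_n)$ after grouping appropriately. Choose smooth representatives whose union is simple normal crossing; by Theorem \ref{thm:12} there exist cyclic covers $p_i:X_i\to\mathbb F_n$ of degree $3$ branched along $3B^i$. The simple normal crossing hypothesis guarantees that the fibre product $X:=X_1\times_{\mathbb F_n}X_2$ is smooth, and the induced map $p:X\to\mathbb F_n$ is abelian Galois of degree $9$ with branch divisor equal to the prescribed numerical class. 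Theorem \ref{thm:6} then forces $K_X$ to be numerically trivial, so $X$ is minimal with $K_X\equiv 0$. Ruling out the other flat minimal surfaces (abelian, bielliptic, and Enriques) proceeds exactly as in the proof of Proposition \ref{pro:1}: the bielliptic and abelian cases are excluded by $[\ref{bio:11}]$ and $[\ref{bio:6}]$, and an Enriques cover would lift to a $K3$-cover with Galois group $\mathbb Z/2\mathbb Z\oplus(\mathbb Z/3\mathbb Z)^{\oplus 2}$, contradicting Theorem \ref{thm:5}. Hence $X$ is a $K3$ surface.

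Next I would turn to the converse. Let $(X,G)$ be any pair with $X$ a $K3$ surface, $G$ a finite abelian subgroup of $\mathrm{Aut}(X)$, $X/G\cong\mathbb F_n$, and the numerical class of the branch divisor $B$ equal to (\ref{12}) or (\ref{13}). By Theorem \ref{thm:5} (ii) each inertia subgroup $G_{C_{i,j}}$ above an irreducible component with coefficient $3$ is cyclic of order $3$ and generated by a purely non-symplectic automorphism. For two such components meeting transversally, the inertia subgroups intersect trivially by the remark in Section $2$ that $G_{B_1}\cap G_{B_2}=\{\mathrm{id}_X\}$ whenever $B_1\cap B_2\neq\emptyset$. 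Combined with Theorem \ref{thm:5} (i), this produces two commuting order-$3$ subgroups generating $G$, so $G\cong(\mathbb Z/3\mathbb Z)^{\oplus 2}$. Since $X$ is simply connected, Theorem \ref{thm:3} yields uniqueness of $(X,G)$ up to isomorphism once the branch divisor is fixed, so the fibre-product construction above realizes every such pair.

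Finally I would write down explicit equations exhibiting smooth, simple-normal-crossing representatives of the branch divisors in both numerical classes, analogously to the explicit models given at the ends of Propositions \ref{pro:1}--\ref{pro:3}; typical choices are Fermat-type cubics of the form $Y_0^3+Y_1^3+Y_2^3=0$ together with fibre classes $X_i=0$, adjusted to the relevant $\mathbb F_n$.

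The main obstacle I anticipate is bookkeeping: one has to verify that the specific coefficient vectors in classes (\ref{12}), (\ref{13}) are genuinely realized by a simple-normal-crossing sum of smooth curves whose individual summands are $3$-divisible in $\mathrm{Pic}(\mathbb F_n)$. If a summand is not $3$-divisible on its own, one must first pass through an auxiliary cyclic cover of the base induced by a degree-$3$ self-cover of $\mathbb P^1$, exactly as was done in the transition from class (\ref{1}) to class (\ref{14}) in Proposition \ref{pro:1}, so that the pulled-back branch becomes $3$-divisible on the new Hirzebruch surface. Once this divisibility is arranged, the rest of the argument is mechanical.
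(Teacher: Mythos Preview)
Your existence argument has a genuine gap. For class (\ref{12}) the branch divisor is $3B^1_{1,1}+3B^2_{1,1}+3B^3_{1,1}$ with each $B^i_{1,1}$ of class $(1,1)$ on $\mathbb P^1\times\mathbb P^1$; for class (\ref{13}) it is $3B_{1,0}+3B_{1,1}+3B_{1,2}$. None of these individual components, and no proper sub-sum of them, is $3$-divisible in $\mathrm{Pic}(\mathbb P^1\times\mathbb P^1)$, so Theorem~\ref{thm:12} does not produce the cyclic triple covers $p_i:X_i\to\mathbb P^1\times\mathbb P^1$ you need for the fibre product. Your proposed remedy via a degree-$3$ self-cover of $\mathbb P^1$ only applies when two of the branch components are \emph{fibres} of a ruling (that is how the passage from (\ref{1}) to (\ref{14}) worked in Proposition~\ref{pro:1}); in (\ref{12}) no component is a fibre, and in (\ref{13}) only one is, so the trick cannot be set up. Thus there is no smooth $(\mathbb Z/3\mathbb Z)^{\oplus2}$-cover obtainable as a fibre product here.

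The paper proceeds quite differently. Since the total support has class $(3,3)$, one \emph{can} take a single cyclic triple cover $p':X'\to\mathbb P^1\times\mathbb P^1$ branched along $3(B^1+B^2+B^3)$, but $X'$ is singular: it acquires six $A_2$-points over the six pairwise intersection points of the $B^i$. The minimal resolution $X'_m$ is shown to be a $K3$ surface carrying a non-symplectic order-$3$ automorphism. The key step is a lattice computation: the primitive sublattice $M\subset\mathrm{NS}(X'_m)$ spanned by the twelve exceptional $(-2)$-curves has rank $12$ and discriminant group $(\mathbb Z/3\mathbb Z)^{\oplus4}$, which by Garbagnati's criterion $[\ref{bio:5},\text{Theorem }5.2]$ is exactly the lattice $M_{\mathbb Z/3\mathbb Z}$ characterising minimal resolutions of $K3/(\mathbb Z/3\mathbb Z)_{\mathrm{sympl}}$. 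Hence there is a $K3$ surface $X$ with a symplectic order-$3$ automorphism $t$ such that $X/\langle t\rangle=X'$, and the composite $X\to X'\to\mathbb P^1\times\mathbb P^1$ is the desired $(\mathbb Z/3\mathbb Z)^{\oplus2}$-cover. Your converse argument (that $G$ is generated by two order-$3$ inertia groups meeting trivially, hence $G\cong(\mathbb Z/3\mathbb Z)^{\oplus2}$) is fine once existence is established, but the existence itself requires this lattice-theoretic detour, not a fibre product.
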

\begin{proof}
Let $B^{1}_{1,1}$, $B^{2}_{1,1}$, and $B^{3}_{1,1}$ be smooth curves such that $B^{1}_{1,1}+B^{2}_{1,1}+B^{3}_{1,1}$ is simple normal crossing.  
Since $B^{1}_{1,1}+B^{2}_{1,1}+B^{3}_{1,1}=(3C+3F)$ in Pic$({\mathbb P}^{1}\times{\mathbb P}^{1})$, by Theorem \ref{thm:12},
there is the Galois cover $p':X'\rightarrow {\mathbb P}^{1}\times{\mathbb P}^{1}$ such that the branch divisor is $3B^{1}_{1,1}+3B^{2}_{1,1}+3B^{3}_{1,1}$ and the Galois group is isomorphic to ${\mathbb Z}/3{\mathbb Z}$ as a group.
Since $B^{1}_{1,1}+B^{2}_{1,1}+B^{3}_{1,1}$ is simple normal crossing,  
singular points of $X'$ are rational double points. 
More precisely, the singular locus of $X'$ consists of six $A_2$ points.
Let  $p_{m}:X'_{m}\rightarrow X'$ be the minimal resolution of $X'$.
Then the canonical divisor of $X'_m$ is numerical trivial.
Since $X'_{m}$ has a curve such that the self intersection number is negative,
$X'_m$ is a $K3$ surface or Enriques surface.
Since $X'_{m}$ has an automorphism $s$ of order $3$ such that the curves of Fix$(s)$ are three rational curves $C_{i}$ for $i=1,2,3$, by [\ref{bio:10}],
$X'_{m}$ is a $K3$ surface.
By [\ref{bio:2}, Theorem 2.8 and Proposition 3.2] or [\ref{bio:7}, Table 2], we get that
\[ {\rm Pic}(X'_{m})^{s^{\ast}}:=\{\alpha\in{\rm Pic}(X'_{m}):\ s^{\ast}\alpha=\alpha\}\cong U\oplus E_{6} \oplus A^{3}_{2}. \]
Let $z_1,\ldots,z_6$ be singular points of $X'$, and  $e_{1},\ldots,e_{12}$ be the exceptional divisors of $p_{m}$, where $z_i=p_{m}(e_{2i-1})=p_{m}(e_{2i})$ for $i=1,\ldots,6$. 
Notice that $(e_{2i-1}\cdot e_{2i})=1$, $(e_{2i-1}\cdot e_{2i-1})=-2$, and $(e_{2i}\cdot e_{2i})=-2$.  
Since $C_i\subset$Fix$(s)$ for $i=1,2,3$, we get that $(e_{2i-1}\cup e_{2i})\cap$Fix$(s)$ contains at least 2 points.
Since $s(e_{2i-1}\cup e_{2i})=(e_{2i-1}\cup e_{2i})$ and $e_{2i-1}\cap e_{2i}$ is one point, 
we get that $e_{2i-1}\cap e_{2i}\subset$Fix$(s)$.
Therefore, $s(e_{2i-1})=e_{2i-1}$ and $s(e_{2i})=e_{2i}$, and hence $e_{2i-1},e_{2i}\in$Pic$(X'_{m})^{s^{\ast}}$ for $i=1,\ldots,6$.
Since Pic$(X'_{m})^{s^{\ast}}$ is a primitive sublattice, the minimal primitive sublattice which contains $(p'\circ p_{m})^{\ast}$Pic$({\mathbb P}^{1}\times{\mathbb P}^{1})$ and $e_{1},\ldots,e_{12}$ of Pic$(X'_{m})$ is Pic$(X'_{m})^{s^{\ast}}$. 

Let $f:=p'\circ p_{m}:X'_{m}\rightarrow{\mathbb P}^{1}\times{\mathbb P}^{1}$.
Since $f_{\ast}C_{i}=B^{i}_{1,1}$, we get 
$(C_{i}\cdot f^{\ast}F)=((C+F)\cdot F)=1$ for $i=1,2,3$.
Let 
\[ C'_{1}:=C_{1}+\sum_{i=1}^{6}\frac{(C_{1}\cdot e_{2i-1})}{2}e_{2i-1}+\sum_{i=1}^{6}\frac{(C_{1}\cdot (e_{2i-1}+2e_{2i}))}{6}(e_{2i-1}+2e_{2i}).\]
Then $(C'_{1}\cdot e_{i})=0$ for $i=1,\ldots,12$.
Since $(e_{2i-1}\cdot e_{2i-1})=-2$, $(e_{2i-1}\cdot e_{2i-1}+2e_{2i})=0$, and $(e_{2i-1}+2e_{2i}\cdot e_{2i-1}+2e_{2i})=-6$, we get 
$6C'_{1}\in$Pic$(X'_{m})$.
Therefore, the minimal primitive sublattice $K$ of Pic$(X'_{m})^{s^{\ast}}$, which contains $f^{\ast}C$ and $6C'_{1}$ is a unimodular lattice.
Let $M$ be the minimal primitive sublattice of Pic$(X'_{m})$, which contains the curves $e_{1},\ldots,e_{12}$. 
Then $M\subset U^{\perp}$.
Since $U$ is a unimodular lattice  and $M$ and $U$ are sublattice of Pic$(X'_{m})^{s^{\ast}}$, we get $U\oplus M={\rm Pic}(X'_{m})^{s^{\ast}}$.
Therefore, the rank of $M$ is 12 and $M^{\vee}/M\cong{{\mathbb Z}/3{\mathbb Z}}^{\oplus4}$. Thus, by [\ref{bio:5},Theorem 5.2] there are a $K3$ surface $X$ and a symplectic automorphism $t$ of order $3$ of $X$ such that $X'=X/\langle t\rangle$, and hence there is a finite abelian subgroup $G\subset{\rm Aut}(X)$ such that $X/G\cong{\mathbb P}^{1}\times{\mathbb P}^{1}$, $G\cong{\mathbb Z}/3{\mathbb Z}^{\oplus 2}$, and the  branch divisor is $3B^{1}_{1,1}+3B^{2}_{1,1}+3B^{3}_{1,1}$. 
In the same way, we get the claim for (\ref{13}).

More specifically,  let $X$ be a $K3$ surface $X$, $G$ be a finite abelian subgroup $G$ of Aut$(X)$ such that $X/G\cong{\mathbb P}^{1}\times {\mathbb P}^{1}$, and the numerical class of the branch divisor $B$ of $G$ is (\ref{12}) or (\ref{13}).
By Theorem \ref{thm:12}, there is the Galois cover $p':X'\rightarrow {\mathbb P}^{1}\times{\mathbb P}^{1}$ such that the branch divisor is $B$ and the Galois group is isomorphic to ${\mathbb Z}/3{\mathbb Z}$ as a group.
Then we get that $X$ is the universal cover of $X'$ of degree 3.\\

For (\ref{12}), we obtain an example if we use curves $B^{1}_{1,1}, B^{2}_{1,1},B^{3}_{1,1}$ in $\mathbb P^{1}\times\mathbb P^{1}$ given by the equations
\[ B^{1}_{1,1}:z_{0}w_{0}+z_{1}w_{1}=0,\ B^{2}_{1,1}:z_{0}w_{0}-z_{1}w_{1}=0,\ B^{3}_{1,1}:z_{0}w_{1}+z_{1}w_{0}=0. \]
For (\ref{13}), we obtain an example if we use curves $B_{1,0},B_{1,1},B_{1,2}$ in $\mathbb P^{1}\times\mathbb P^{1}$ given by the equations 
\[ B_{1,0}:z_{0}=0,\ B_{1,1}:z_{0}w_{1}+z_{1}w_{0}=0,\ B_{1,2}:z_{0}w^{2}_{1}+z_{1}w^{2}_{0}+z_{1}w^{2}_{1}=0.\]
\end{proof}
\begin{cro}\label{pro:13}
For each numerical classes 
(\ref{209},\ref{145}),
(\ref{180}),
(\ref{257},\ref{222},
\ref{182},\ref{91}) of the list in section 6, 
there are a $K3$ surface $X$ and a finite abelian subgroup $G$ of Aut$(X)$ such that  $X/G\cong{\mathbb F}_{n}$ and the numerical class $B$ of the branch divisor of the quotient map $p:X\rightarrow X/G$ is 
(\ref{209},\ref{145}),
(\ref{180}),
(\ref{257},\ref{222},\ref{182},\ref{91}).

Furthermore, for a pair $(X,G)$ of a $K3$ surface $X$ and a finite abelian subgroup $G$ of Aut$(X)$, if $X/G\cong{\mathbb F}_{n}$ and the numerical class of the branch divisor $B$ of the quotient map $p:X\rightarrow X/G$ is
(\ref{209},\ref{145}),
(\ref{180}),
(\ref{257},\ref{222},\ref{182},\ref{91}),
then $G$ is
${\mathbb Z}/3{\mathbb Z}^{\oplus 2}$, ${\mathbb Z}/2{\mathbb Z}\oplus{\mathbb Z}/3{\mathbb Z}^{\oplus 2}$, 
${\mathbb Z}/3{\mathbb Z}^{\oplus 2}$,
${\mathbb Z}/3{\mathbb Z}^{\oplus 2}$, ${\mathbb Z}/2{\mathbb Z}\oplus{\mathbb Z}/3{\mathbb Z}^{\oplus 2}$, ${\mathbb Z}/3{\mathbb Z}^{\oplus 3}$, ${\mathbb Z}/2{\mathbb Z}\oplus{\mathbb Z}/3{\mathbb Z}^{\oplus 3}$, 
in order, as a group. 	
\end{cro}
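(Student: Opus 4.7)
The plan is to follow exactly the template set by Corollaries \ref{pro:9}, \ref{pro:10}, and \ref{pro:11}, but using Proposition \ref{pro:4} (rather than Theorem \ref{thm:12}) as the base construction, since the target Galois groups involve $\mathbb Z/3\mathbb Z^{\oplus 2}$ and $\mathbb Z/3\mathbb Z^{\oplus 3}$ factors that are not realized by a single cyclic cover. I would first split the seven numerical classes into ``base cases'' and ``induced cases.'' The three classes (\ref{209}), (\ref{180}), (\ref{257}) should be treated as base cases, realized directly on $\mathbb F_n$ by the lattice-theoretic construction of Proposition \ref{pro:4}: namely, form the cyclic $\mathbb Z/3\mathbb Z$-cover branched along the relevant divisor (which yields a surface with only $A_2$-singularities thanks to the simple normal crossing hypothesis), take the minimal resolution $X'_m$, verify via \cite{bio:10} and the list in \cite{bio:2, bio:7} that $X'_m$ is a $K3$ surface with a non-symplectic order-$3$ automorphism, identify the invariant lattice, and finally apply \cite{bio:5}, Theorem 5.2 to produce the degree-$9$ $K3$ cover whose Galois group is $\mathbb Z/3\mathbb Z^{\oplus 2}$.

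For the remaining four induced classes I would then compose the already-constructed base covers with an appropriate Galois cover $\mathbb F_{mn}\to\mathbb F_n$ coming from a Galois cover $\mathbb P^1\to\mathbb P^1$, exactly in the manner of the three preceding corollaries. Specifically, I would pair (\ref{145}) with the base case (\ref{209}) via a degree-$2$ cover of $\mathbb P^1$; (\ref{222}), (\ref{182}), (\ref{91}) with the base case (\ref{257}) via covers of degree $2$, $3$, $6$ respectively, each composition adding the expected $\mathbb Z/2\mathbb Z$, $\mathbb Z/3\mathbb Z$, or $\mathbb Z/2\mathbb Z\oplus\mathbb Z/3\mathbb Z$ factor. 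The simple normal crossing hypothesis on the branch divisors ensures the fibre products are smooth, and Theorem \ref{thm:6} together with the ramification computation forces the total space to remain a $K3$ surface (ruling out bielliptic, abelian, and Enriques via the same reasoning as in Proposition \ref{pro:1}).

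For the converse part of the statement, I would argue uniqueness by invoking Theorem \ref{thm:3}: since $K3$ surfaces are simply connected, any pair $(X,G)$ with the prescribed branch data is determined up to isomorphism, so the Galois group must be the one computed above. Theorem \ref{thm:5} then pins down the abstract group structure from the multiplicities and the geometry of the irreducible branch components. Finally, to certify that the numerical configurations actually exist, I would record explicit defining equations for each branch divisor on the appropriate $\mathbb F_n$, in the same style as the equations written out at the end of Corollaries \ref{pro:9}, \ref{pro:10}, and \ref{pro:11}.

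The hard part, as in Proposition \ref{pro:4}, will be the base-case verification on the higher Hirzebruch surfaces: one must confidently identify the invariant Picard lattice of the order-$3$ automorphism of the resolution $X'_m$ and show that the lattice $M$ generated by the twelve exceptional curves satisfies $M^\vee/M\cong (\mathbb Z/3\mathbb Z)^{\oplus 4}$, so that the hypothesis of \cite{bio:5}, Theorem 5.2 is met and the expected degree-$3$ $K3$ cover exists. Once that lattice calculation is in place, all other steps are bookkeeping via composition of covers and application of Theorem \ref{thm:5}.
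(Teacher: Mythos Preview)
Your proposal is correct and follows essentially the same route as the paper: the paper's proof of Corollary \ref{pro:13} says precisely ``In the same way as Proposition \ref{pro:4}, we get this corollary,'' then identifies (\ref{209}), (\ref{180}), (\ref{257}) as the base cases (where $X$ is the degree-$3$ universal cover of the $\mathbb Z/3\mathbb Z$-cyclic cover $X'$), and obtains (\ref{145}) from (\ref{209}) via a degree-$2$ base change $\mathbb F_2\to\mathbb F_1$, and (\ref{222}), (\ref{182}), (\ref{91}) from (\ref{257}) via degree-$\frac{6}{m}$ base changes $\mathbb F_6\to\mathbb F_m$, exactly as you describe. Your more detailed account of the lattice step (identifying $M^\vee/M\cong(\mathbb Z/3\mathbb Z)^{\oplus 4}$ to invoke \cite{bio:5}) simply unpacks what the paper abbreviates by the phrase ``in the same way as Proposition \ref{pro:4}.''
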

\begin{proof}
In the same way as Proposition \ref{pro:4}, we get this corollary.
More specifically, 
let $X$ be a $K3$ surface $X$, $G$ be a finite abelian subgroup $G$ of Aut$(X)$ such that $X/G\cong{\mathbb F}_{n}$, and $B$ be the branch divisor of the quotient map $p:X\rightarrow X/G$.
Let  $p':X'\rightarrow {\mathbb P}^{1}\times{\mathbb P}^{1}$ be the Galois cover such that the branch divisor is $B$ and which is given by Theorem \ref{thm:12}.
Then we get the following.

i) If the numerical class of $B$ is one of (\ref{209},\ref{180},\ref{257}), then  $X$ is the universal cover of $X'$ of degree 3.

ii) If the numerical class of $B$ is (\ref{145}), then  $X\rightarrow X/G$ is given by 
the composition of the Galois cover $X'\rightarrow{\mathbb F}_{2}$ whose numerical class of the branch divisor is (\ref{145})
and the Galois cover ${\mathbb F}_{2}\rightarrow{\mathbb F}_{1}$ which is induced by the Galois cover ${\mathbb P}^{1}\rightarrow{\mathbb P}^{1}$ of degree 2.

iii) If the numerical class of $B$ is one of (\ref{222},\ref{182},\ref{91}), then  $X\rightarrow X/G$ is given by 
the composition of the Galois cover $X'\rightarrow{\mathbb F}_{6}$ whose numerical class of the branch divisor is (\ref{257})
and the Galois cover ${\mathbb F}_{6}\rightarrow{\mathbb F}_{m}$ which is induced by the Galois cover ${\mathbb P}^{1}\rightarrow{\mathbb P}^{1}$ of degree $\frac{6}{m}$.\\

For (\ref{209}), we obtain an example if we use curves $B^{1}_{1,2},B^{2}_{1,2},B^{3}_{1,2}$ in $\mathbb F_{2}$ given by the equations
\[ B^{1}_{1,2}:Y_{0}+Y_{2}=0,\ B^{2}_{1,2}:Y_{1}+Y_{2}=0,\  B^{3}_{1,2}:Y_{0}+Y_{1}+Y_{2}=0.\]
For (\ref{145}), we obtain an example if we use curves $B^{1}_{1,1},B^{2}_{1,1},B^{3}_{1,1},B^{1}_{0,1},B^{2}_{0,1}$ in $\mathbb F_{1}$ given by the equations
\[\ B^{1}_{1,1}:Y_{0}+Y_{2}=0,\ B^{2}_{1,1}:Y_{1}+Y_{2}=0,\  B^{3}_{1,1}:Y_{0}+Y_{1}+Y_{2}=0,\]
\[  B^{1}_{0,1}:X_{0}=0,\ B^{2}_{0,1}:X_{1}=0.  \] 
For (\ref{180}), we obtain examples if we use a section $C$ and curves $B^{1}_{1,3},B^{2}_{1,3}$ in $\mathbb F_{2}$ given by the equations
\[ B^{1}_{1,3}:X_{0}Y_{0}+X_{0}Y_{1}+X_{1}Y_{2}=0,\ B^{2}_{1,3}:X_{1}Y_{0}+X_{1}Y_{1}+2X_{0}Y_{2}=0.\]
For (\ref{257}), we obtain examples if we use a section $C$ and curves $B^{1}_{1,6},B^{2}_{1,6}$ in $\mathbb F_{6}$ given by the equations
\[ B^{1}_{1,6}:Y_{0}+2Y_{2}=0,\  B^{2}_{1,6}Y_{1}+2Y_{2}=0.\]
For (\ref{222}), we obtain examples if we use a section $C$ and curves $B^{1}_{1,3},B^{2}_{1,3},B^{1}_{0,1},B^{2}_{0,1}$ in $\mathbb F_{3}$ given by the equations
\[ B^{1}_{1,3}:Y_{0}+2Y_{2}=0,\  B^{2}_{1,3}:Y_{1}+2Y_{2}=0,\]
\[ B^{1}_{0,1}:X_{0}=0,\ B^{2}_{0,1}:X_{1}=0. \]
For (\ref{182}), we obtain examples if we use a section $C$ and curves $B^{1}_{1,2},B^{2}_{1,2},B^{1}_{0,1},B^{2}_{0,1}$ in $\mathbb F_{2}$ given by the equations
\[ B^{1}_{1,2}:Y_{0}+2Y_{2}=0,\  B^{2}_{1,2}:Y_{1}+2Y_{2}=0,\]
\[ B^{1}_{0,1}:X_{0}=0,\ B^{2}_{0,1}:X_{1}=0. \]
For (\ref{91}), we obtain examples if we use a section $C$ and curves $B^{1}_{1,1},B^{2}_{1,1},B^{1}_{0,1},B^{2}_{0,1}$ in $\mathbb F_{1}$ given by the equations
\[ B^{1}_{1,1}:Y_{0}+2Y_{2}=0,\  B^{2}_{1,1}:Y_{1}+2Y_{2}=0,\]
\[ B^{1}_{0,1}:X_{0}=0,\ B^{2}_{0,1}:X_{1}=0. \]
\end{proof}
\begin{pro}\label{pro:5}
For each numerical classes (\ref{31},\ref{32}) of the list in section 6, 
there are a $K3$ surface $X$ and a finite abelian subgroup $G$ of Aut$(X)$ such that  $X/G\cong{\mathbb F}_{n}$ and the numerical class of the branch divisor $B$ of the quotient map $p:X\rightarrow X/G$ is (\ref{31},\ref{32}).

Furthermore, for a pair $(X,G)$ of a $K3$ surface $X$ and a finite abelian subgroup $G$ of Aut$(X)$, if $X/G\cong{\mathbb F}_{n}$ and the numerical class of the branch divisor $B$ of the quotient map $p:X\rightarrow X/G$ is (\ref{31},\ref{32}), 
then $G$ is ${\mathbb Z}/2{\mathbb Z}^{\oplus 3}$, ${\mathbb Z}/2{\mathbb Z}^{\oplus 3}$, 
in order, as a group.
\end{pro}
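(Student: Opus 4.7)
The plan is to follow the template established in Propositions \ref{pro:1}--\ref{pro:3}. The essential tools are Theorem \ref{thm:12} for producing a $\mathbb Z/2\mathbb Z$-cover from a $2$-divisible class in $\operatorname{Pic}(\mathbb F_n)$, the fibre product construction for assembling independent cyclic double covers into a higher rank abelian cover, Theorem \ref{thm:6} to force the canonical class of the total space to be numerically trivial, and Theorem \ref{thm:5} to reconstruct the Galois group from the branch data.

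First, I would decompose the branch divisor $B$ in each of the numerical classes (\ref{31}), (\ref{32}) as $B = 2B_1 + 2B_2 + \cdots$ and partition the components $\{B_i\}$ into three subsets whose sums each give a $2$-divisible class in $\operatorname{Pic}(\mathbb F_n)$. Applying Theorem \ref{thm:12} to each subset yields three $\mathbb Z/2\mathbb Z$-covers $X_i \to \mathbb F_n$; since the support of $B$ is simple normal crossing, the triple fibre product $X := X_1 \times_{\mathbb F_n} X_2 \times_{\mathbb F_n} X_3$ is smooth and carries a natural $(\mathbb Z/2\mathbb Z)^{\oplus 3}$-action whose quotient is $\mathbb F_n$ with branch divisor exactly $B$.

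Next, I would verify that $X$ is a $K3$ surface. By Theorem \ref{thm:6} the canonical divisor of $X$ is numerically trivial, so $X$ is $K3$, Enriques, bielliptic, or abelian. The latter two are excluded by the references [\ref{bio:11}] and [\ref{bio:6}] as in Proposition \ref{pro:1}. To rule out the Enriques case, I would argue that an Enriques quotient would be covered by a $K3$ surface $X'$ with Galois group $\mathbb Z/2\mathbb Z \oplus (\mathbb Z/2\mathbb Z)^{\oplus 3}$ over $\mathbb F_n$ and the same branch divisor $B$; applying Theorem \ref{thm:5} to that cover forces additional $2F$-type components in the branch divisor beyond those allowed by the numerical class, a contradiction.

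For the converse direction, suppose $(X,G)$ is a pair with $X$ a $K3$ surface, $X/G \cong \mathbb F_n$, and the branch divisor of the quotient map lying in the prescribed numerical class. Theorem \ref{thm:5}(ii) implies that each subgroup $G_{B_i}$ is isomorphic to $\mathbb Z/2\mathbb Z$ and is generated by a non-symplectic involution. Since $G$ is abelian and generated by these involutions, $G$ is an elementary abelian $2$-group. A degree count using $p^{\ast}B_i \cdot p^{\ast}B_j = |G|(B_i \cdot B_j)$ combined with the numerical data pins down $|G| = 8$, so $G \cong (\mathbb Z/2\mathbb Z)^{\oplus 3}$. Uniqueness up to isomorphism then follows from Theorem \ref{thm:3} using that $K3$ surfaces are simply connected. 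The main obstacle I anticipate is producing the required partition of the irreducible components of $B$ into three subsets each summing to a $2$-divisible class in $\operatorname{Pic}(\mathbb F_n)$, which depends on the specific numerical classes (\ref{31}) and (\ref{32}) and must be checked case by case; once that is in place, the $K3$-versus-Enriques dichotomy reduces to a direct application of Theorem \ref{thm:5} as above.
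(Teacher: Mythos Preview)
Your existence construction has a genuine gap at exactly the point you flagged. For class (\ref{31}) the four components each have Picard class $C+F$ on $\mathbb P^1\times\mathbb P^1$; any singleton has class $(1,1)$ and any triple has class $(3,3)$, so no partition into three nonempty $2$-divisible subsets exists. The same obstruction occurs for (\ref{32}): the classes $(1,0),(1,1),(1,1),(1,2)$ admit only the two-block grouping $\{(1,0),(1,2)\}$, $\{(1,1),(1,1)\}$ into $2$-divisible sums. Hence a direct triple fibre product over $\mathbb P^1\times\mathbb P^1$ cannot be formed, and this is precisely what separates (\ref{31}), (\ref{32}) from the cases handled in Proposition~\ref{pro:3}.

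The paper's construction is genuinely different. One first blows up $\mathbb P^1\times\mathbb P^1$ at the four intersection points of two chosen pairs of components; on the blowup $Z$ the proper transforms of each pair are disjoint and their sums are $2$-divisible, so Theorem~\ref{thm:12} and a fibre product give a $(\mathbb Z/2\mathbb Z)^{\oplus 2}$-cover $Y\to Z$ with $Y$ a smooth $K3$ surface. The missing third $\mathbb Z/2\mathbb Z$ is \emph{not} a cover of the base: a Picard computation on $Y$ shows that the sum of the pulled-back exceptional divisors is $2$-divisible in $\operatorname{Pic}(Y)$, yielding a further double cover $W\to Y$; contracting the resulting $(-1)$-curves gives the $K3$ surface $X$ carrying a symplectic involution $s$ with $Y$ the minimal resolution of $X/\langle s\rangle$. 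Equivalently, $X$ is the degree-$2$ universal cover of the singular $(\mathbb Z/2\mathbb Z)^{\oplus 2}$-quotient $X'\to\mathbb P^1\times\mathbb P^1$, in the spirit of Proposition~\ref{pro:4}. For the converse the paper likewise appeals to this description (and Theorem~\ref{thm:3}) rather than to a raw degree count.
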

\begin{proof}
Let $B^{i}_{1,1}$ be a smooth curve on ${\mathbb P}^{1}\times{\mathbb P}^{1}$ for $i=1,2,3,4$ such that $\sum_{i=1}^{4}B^{i}_{1,1}$ is simple normal crossing.  
Then the numerical class of $\sum_{i=1}^{4}2B^{i}_{1,1}$ is (\ref{31}).
We set $\{x_{1},x_{2} \}:=B^{1}_{1,1}\cap B^{2}_{1,1}$ and  $\{x_{3},x_{4} \}:=B^{3}_{1,1}\cap B^{3}_{1,1}$. 
Let $Z:={\rm Blow}_{\{x_{1},x_{2},x_{3},x_{4}\}}{\mathbb P}^{1}\times{\mathbb P}^{1}$.  
Let $E_{i}$ be the exceptional divisor for $i=1,2,3,4$. 
Then ${\rm Pic}(Z)={\rm Pic}({\mathbb P}^{1}\times{\mathbb P}^{1})\bigoplus_{i=1}^{4}{\mathbb Z}E_{i}$. 
Let $C_{i}$ be the proper transform of $B^{i}_{1,1}$ for $i=1,2,3,4$. Then  for $i=1,2$ $j=3,4$,
\[ C_{i}=(C+F)-E_{1}-E_{2}\  {\rm and}\   C_{j}=(C+F)-E_{3}-E_{4}\ {\rm in\ Pic}(Z).\] 
By Theorem \ref{thm:12}, there are the Galois covers $p_{1}:Y_{1}\rightarrow Z$ and $p_{2}:Y_{2}\rightarrow Z$ such that 
the branch divisor of $p_{1}$ is $2C_{1}+2C_{2}$, and  that of $p_{2}$ is $2C_{3}+2C_{4}$.
Since $C_{1}\cap C_{2}$ and $C_{3}\cap C_{4}$ are empty sets, $Y_{1}$ and $Y_{2}$ are smooth.
Since $\sum_{i=1}^{4}C^{i}_{1,1}$ is simple normal crossing, $Y:=Y_{1}\times_{Z}Y_{2}$ is smooth and a $K3$ surface. 
Therefore, there is the Galois cover $f:Y\rightarrow Z$ whose branch divisor is $\sum_{i=1}^{4}2C_{i}$ and Galois group is ${\mathbb Z}/2{\mathbb Z}^{\oplus 2}$ as a group. 
Let $C'_{i}$ be a smooth curve on $Y$ such that $f^{\ast}C_{i}=2C'_{i}$ for $i=1,2,3,4$. 
Then
\[ C'_{1}=f^{\ast}((\frac{C}{2},\frac{F}{2})-\frac{1}{2}E_{1}-\frac{1}{2}E_{2})\ {\rm and}\ C'_{3}= f^{\ast}((\frac{C}{2},\frac{F}{2})-\frac{1}{2}E_{3}-\frac{1}{2}E_{4}),\ {\rm in\ Pic}(Y).\]
Thus, we get 
\[ \sum_{i=1}^{4}f^{\ast}E_{i}=2f^{\ast}(C+F)-2C'_{1}-2C'_{2},\ {\rm in\ Pic}(Y).\]
By Theorem \ref{thm:12}, there is the Galois cover $g:W\rightarrow Y$ whose branch divisor is $\sum_{i=1}^{4}2f^{\ast}E_{i}$. 
Let $E'_{i}$ be a smooth curve on $W$ such that $g^{\ast}f^{\ast}E_{i}=2E'_{i}$.
Since $(f^{\ast}E_{i}\cdot f^{\ast}E_{i})=-2$, $(E'_{i}\cdot E'_{i})=-1$ for $i=1,2,3,4$. 
Let $f:W\rightarrow X$ be a contraction of $E'_{1},\ldots,E'_{4}$.
Since $Y$ is a $K3$ surface, $X$ is a $K3$ surface. 
Since $W$ is a double cover of $Y$, there is a symplectic involution $s$ of $X$ such that  $X/\langle s \rangle\rightarrow {\mathbb P}^{1}\times{\mathbb P}^{1}$ is a Galois cover whose branch divisor is $2B^{1}_{1,1}+2B^{2}_{1,1}+2B^{3}_{1,1}+2B^{4}_{1,1}$.
Therefore, there is a finite abelian subgroup $G\subset{\rm Aut}(X)$ such that $X/G\cong{\mathbb P}^{1}\times{\mathbb P}^{1}$, $G\cong{\mathbb Z}/2{\mathbb Z}^{\oplus 3}$, and the branch divisor is $2B^{1}_{1,1}+2B^{2}_{1,1}+2B^{3}_{1,1}+2B^{4}_{1,1}$. 

Next, let $B_{1,0},B_{1,2},B^{1}_{1,1},B^{2}_{1,1}$ be smooth curves on ${\mathbb P}^{1}\times{\mathbb P}^{1}$ such that $B_{1,0}+B_{1,2}+B^{1}_{1,1}+B^{2}_{1,1}$ is simple normal crossing. 
Then the numerical class of $2B_{1,0}+2B_{1,2}+2B^{1}_{1,1}+2B^{2}_{1,1}$ is (\ref{32}). 
We set $\{x_{1},x_{2} \}:=B_{1,0}\cap B_{1,2}$ and  $\{x_{3},x_{4} \}:=B^{1}_{1,1}\cap B^{2}_{1,1}$. 
Let $Z:={\rm Blow}_{\{x_{1},x_{2},x_{3},x_{4}\}}{\mathbb P}^{1}\times{\mathbb P}^{1}$.  
Let $E_{i}$ be the exceptional divisor for $i=1,2,3,4$. 
Then ${\rm Pic}(Z)={\rm Pic}({\mathbb P}^{1}\times{\mathbb P}^{1})\bigoplus_{i=1}^{4}{\mathbb Z}E_{i}$. 
Let $C_{1,0},C_{1,2},C^{1}_{1,1},C^{2}_{1,1}$ be the proper transform of $B_{1,0}$,$B_{1,2}$,$B^{1}_{1,1}$,$B^{2}_{1,1}$ in order. 
Then 
\[ C_{1,0}=C-E_{1}-E_{2}\ {\rm and}\  C_{1,2}=(C+F)-E_{1}-E_{2}\ {\rm in\ Pic}(Z),\]
and 
\[ C^{1}_{1,1}=(C+F)-E_{3}-E_{4}\ {\rm and}\  C^{2}_{1,1}=(C+F)-E_{3}-E_{4}\ {\rm in\ Pic}(Z).\]
Let $p_{1}:Y_{1}\rightarrow Z$ be a cyclic cover whose branch divisor is $2C_{1,0}+2C_{1,2}$, and $p_{2}:Y_{2}\rightarrow Z$ be a cyclic cover whose branch divisor is $2C^{1}_{1,1}+2C^{2}_{1,1}$. 
Then as for the case of (\ref{31}), $Y:=Y_{1}\times_{Z}Y_{2}$ is a $K3$ surface, and there is the Galois cover $f:Y\rightarrow Z$ whose branch divisor is $\sum_{i=1}^{4}2C_{i}$ and Galois group is to ${\mathbb Z}/2{\mathbb Z}^{\oplus 2}$ as a group. 
Since $\frac{f^{\ast}C_{1,0}}{2}\in$Pic$(Y)$ and  $\frac{f^{\ast}C_{1,2}}{2}\in$Pic$(Y)$, 
we get   $\frac{f^{\ast}(C_{1,2}-C_{1,1})}{2}=f^{\ast}(0,\frac{1}{2})\in$Pic$(Y)$.
As for the case of (\ref{31}), we get $\frac{\sum_{i=1}^{4}f^{\ast}E_{i}}{2}\in$Pic$(Y)$, and hence we get the claim for (\ref{32}).

More specifically, 
let $X$ be a $K3$ surface $X$, $G$ be a finite abelian subgroup $G$ of Aut$(X)$ such that $X/G\cong{\mathbb P}^{1}\times {\mathbb P}^{1}$, and the numerical class of the branch divisor $B$ of the quotient map $p:X\rightarrow X/G$ is 
(\ref{31}) or (\ref{32}).
By Theorem \ref{thm:12} and the fibre product, 
there is the Galois cover $p':X'\rightarrow {\mathbb P}^{1}\times{\mathbb P}^{1}$ such that the branch divisor is $B$ and the Galois group is ${\mathbb Z}/2{\mathbb Z}^{\oplus 2}$ as a group.
Then we get that $X$ is the universal cover of $X'$ of degree 2.\\

For (\ref{31}), we obtain an example if we use curves $B^{1}_{1,1},B^{2}_{1,1},B^{3}_{1,1},B^{4}_{1,1}$ in $\mathbb P^{1}\times\mathbb P^{1}$ given by the equations
\[ B^{1}_{1,1}:z_{0}w_{0}+z_{1}w_{1}=0,\ B^{2}_{1,1}:z_{0}w_{0}-z_{1}w_{1}=0,\]
\[ B^{3}_{1,1}:z_{0}w_{1}+z_{1}w_{0}=0,\ B^{4}_{1,1}:z_{0}w_{1}-z_{1}w_{0}=0. \]
For (\ref{32}), we obtain an example if we use curves $B_{1,0},B^{1}_{1,1},B^{2}_{1,1},B_{1,2}$ in $\mathbb P^{1}\times\mathbb P^{1}$ given by the equations
\[ B_{1,0}:z_{0}=0,\ B^{1}_{1,1}:z_{0}w_{0}+z_{1}w_{1}=0,\]
\[ B^{2}_{1,1}:z_{0}w_{1}+z_{1}w_{0}=0,\ B_{1,2}:z_{0}w^{2}_{1}+3z_{1}w^{2}_{0}=0. \]
\end{proof}
\begin{cro}\label{pro:12}
For each numerical classes 
(\ref{144}),
(\ref{163}),
(\ref{()},
\ref{148}),
(\ref{216},
\ref{166}),
(\ref{250},
\ref{212},
\ref{146},
\ref{147}) of the list in section 6, 
there are a $K3$ surface $X$ and a finite abelian subgroup $G$ of Aut$(X)$ such that  $X/G\cong{\mathbb F}_{n}$ and the numerical class of the branch divisor $B$ of the quotient map $p:X\rightarrow X/G$ is
(\ref{144}),
(\ref{163}),
(\ref{()},
\ref{148}),
(\ref{216},
\ref{166}),
(\ref{250},
\ref{212},
\ref{146},
\ref{147}).

Furthermore, for a pair $(X,G)$ of a $K3$ surface $X$ and a finite abelian subgroup $G$ of Aut$(X)$, if $X/G\cong{\mathbb F}_{n}$ and the numerical class of the branch divisor $B$ of the quotient map $p:X\rightarrow X/G$ is
(\ref{144}),
(\ref{163}),
(\ref{()},
\ref{148}),
(\ref{216},
\ref{166}),
(\ref{250},
\ref{212},
\ref{146},
\ref{147}),
then $G$ is  
${\mathbb Z}/2{\mathbb Z}^{\oplus 3}$, 
${\mathbb Z}/2{\mathbb Z}^{\oplus 3}$, 
${\mathbb Z}/2{\mathbb Z}^{\oplus 3}$, ${\mathbb Z}/2{\mathbb Z}^{\oplus 4}$,  
${\mathbb Z}/2{\mathbb Z}^{\oplus 3}$, ${\mathbb Z}/2{\mathbb Z}^{\oplus 4}$,
${\mathbb Z}/2{\mathbb Z}^{\oplus 3}$, ${\mathbb Z}/2{\mathbb Z}^{\oplus 4}$, 
${\mathbb Z}/2{\mathbb Z}^{\oplus 3}\oplus{\mathbb Z}/4{\mathbb Z}$,
${\mathbb Z}/2{\mathbb Z}^{\oplus 5}$, 
in order, as a group. 	
\end{cro}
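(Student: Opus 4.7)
The plan is to follow the same template used in Propositions \ref{pro:3}, \ref{pro:5} and Corollaries \ref{pro:9}--\ref{pro:13}: identify which numerical classes in the list are ``base cases'' that must be built from scratch (using Theorem \ref{thm:12} together with fibre products and possibly a symplectic desingularization à la Proposition \ref{pro:5}), then obtain the remaining classes by composition with the Galois cover $\mathbb F_{n}\to\mathbb F_{m}$ of degree $\frac{n}{m}$ that is induced by a Galois cover $\mathbb P^1\to\mathbb P^1$. Concretely, among the listed numerical classes I expect (\ref{144}), (\ref{163}), (\ref{()}), (\ref{216}), (\ref{250}) to play the role of the seeds on $\mathbb F_{n}$ for the largest $n$ occurring in each grouping, while (\ref{148}), (\ref{166}), (\ref{212}), (\ref{146}), (\ref{147}) will be obtained by descending to a smaller Hirzebruch surface via the standard degree-$2$ or degree-$4$ involution in the fibre direction.

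For each seed class, the construction I would carry out mirrors Proposition \ref{pro:5}. First, resolve the intersection pattern of the proposed branch components by blowing up $\mathbb F_n$ at the base points where two branch curves of the same multiplicity meet, yielding a smooth surface $Z$ with exceptional divisors $E_1,\ldots,E_4$. Next, apply Theorem \ref{thm:12} to the pairs of proper transforms whose divisor classes are $2$-divisible in $\mathrm{Pic}(Z)$, producing two double covers $Y_1\to Z$ and $Y_2\to Z$; their fibre product $Y:=Y_1\times_Z Y_2$ is a smooth $K3$ surface admitting a $(\mathbb Z/2\mathbb Z)^{\oplus 2}$-Galois cover $f:Y\to Z$. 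One then checks (exactly as in the proof of (\ref{31}) and (\ref{32})) that the pullback $\sum_{i=1}^4 f^{\ast}E_i$ is $2$-divisible in $\mathrm{Pic}(Y)$; another application of Theorem \ref{thm:12} produces a cover $g:W\to Y$, and contraction of the resulting $(-1)$-curves above the $f^{\ast}E_i$ gives a $K3$ surface $X$ with a symplectic involution $s$ whose quotient is $Y$. Composing gives the desired pair $(X,G)$ with $G\cong(\mathbb Z/2\mathbb Z)^{\oplus 3}$ (or the relevant group of the list) and branch divisor on $\mathbb F_n$ of the prescribed numerical class.

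For the derived classes, the procedure is purely formal: take the seed cover $X\to\mathbb F_n$ constructed above and compose it with the Galois cover $\mathbb F_n\to\mathbb F_m$ induced by the degree $\frac{n}{m}$ Galois cover of $\mathbb P^1$; the resulting branched cover has branch divisor precisely the advertised numerical class, and by Theorem \ref{thm:3} (simple-connectivity of $X$) it is Galois. Its Galois group is determined by degree considerations together with Theorem \ref{thm:5} $iii)$, and one reads off from the degree that it must be $(\mathbb Z/2\mathbb Z)^{\oplus 4}$ or $(\mathbb Z/2\mathbb Z)^{\oplus 3}\oplus\mathbb Z/4\mathbb Z$ or $(\mathbb Z/2\mathbb Z)^{\oplus 5}$ as stated. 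The converse direction (that any pair $(X,G)$ with the stated quotient and numerical class of branch divisor gives exactly this $G$ up to isomorphism) follows, as in Proposition \ref{pro:5}, from Theorem \ref{thm:3}: $X$ must be the universal cover of the $(\mathbb Z/2\mathbb Z)^{\oplus 2}$-cover of $\mathbb F_n$ given by Theorem \ref{thm:12} and the fibre product.

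The main obstacle I anticipate is the verification that the seed $X$ is genuinely a $K3$ surface (and not Enriques, bi-elliptic, or abelian) and, in the cases patterned on (\ref{31})--(\ref{32}), the divisibility statement $\tfrac12\sum f^{\ast}E_i\in\mathrm{Pic}(Y)$. This divisibility is a linear-equivalence computation in $\mathrm{Pic}(Y)$ that must be checked case by case using the relations $C_i=(C+F)-E_a-E_b$ among the proper transforms on $Z$; the arithmetic will depend on whether $C$ itself appears as a branch component (this changes which classes descend after pullback). Once that is settled and the explicit polynomial equations analogous to those written out for (\ref{31}) and (\ref{32}) are exhibited for each of the listed classes on $\mathbb F_n$, the remainder of the argument is routine application of Theorems \ref{thm:3}, \ref{thm:5}, \ref{thm:6}, and \ref{thm:12}.
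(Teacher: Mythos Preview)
Your proposal is correct and follows essentially the same approach as the paper: the seed classes (\ref{144}), (\ref{163}), (\ref{()}), (\ref{216}), (\ref{250}) are constructed exactly as in Proposition~\ref{pro:5} (fibre product of two cyclic double covers via Theorem~\ref{thm:12}, then passage to the universal degree-$2$ cover of the resulting singular $(\mathbb Z/2\mathbb Z)^{\oplus 2}$-quotient), and the remaining classes (\ref{148}), (\ref{166}), (\ref{212}), (\ref{146}), (\ref{147}) are obtained by composing a seed cover $X\to\mathbb F_n$ with the Galois cover $\mathbb F_n\to\mathbb F_m$ induced by a degree-$\tfrac{n}{m}$ cover of $\mathbb P^1$. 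The paper's proof is terse (``In the same way as Proposition~\ref{pro:5}'') but records precisely this decomposition and gives explicit equations for each case; your anticipated obstacle---the $2$-divisibility of $\sum f^\ast E_i$ in $\mathrm{Pic}(Y)$---is indeed the only nontrivial check, and it is handled case by case exactly as you outline.
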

\begin{proof}
In the same way as Proposition \ref{pro:5}, we get this corollary.
More specifically, 
let $X$ be a $K3$ surface $X$, $G$ be a finite abelian subgroup $G$ of Aut$(X)$ such that $X/G\cong{\mathbb F}_{n}$, and $B$ be the branch divisor of the quotient map $p:X\rightarrow X/G$. 
Then we get following.

i) We assume that the numerical class of $B$ is one of (\ref{144},\ref{163},\ref{()},\ref{216},\ref{250}).
By Theorem \ref{thm:12} and the fibre product, there is the Galois cover $p':X'\rightarrow {\mathbb F}_{n}$ such that the branch divisor is $B$ and the Galois group is $\mathbb Z/2\mathbb Z^{\oplus 2}$ as a group.
Then $X$ is the universal cover of $X'$ of degree 2.

ii) If the numerical class of $B$ is (\ref{148}), then  $X\rightarrow X/G$ is given by 
the composition of the Galois cover $X\rightarrow{\mathbb F}_{2}$ whose numerical class of the branch divisor is (\ref{()})
and the Galois cover ${\mathbb F}_{2}\rightarrow{\mathbb F}_{1}$ which is induced by the Galois cover ${\mathbb P}^{1}\rightarrow{\mathbb P}^{1}$ of degree 2.

iii) If the numerical class of $B$ is (\ref{166}), then  $X\rightarrow X/G$ is given by 
the composition of the Galois cover $X\rightarrow{\mathbb F}_{2}$ whose numerical class of the branch divisor is (\ref{216})
and the Galois cover ${\mathbb F}_{2}\rightarrow{\mathbb F}_{1}$ which is induced by the Galois cover ${\mathbb P}^{1}\rightarrow{\mathbb P}^{1}$ of degree 2.

iv) If the numerical class of $B$ is one of (\ref{212},\ref{147},\ref{146}), then  $X\rightarrow X/G$ is given by 
the composition of the Galois cover $X\rightarrow{\mathbb F}_{4}$ whose numerical class of the branch divisor is (\ref{257})
and the Galois cover ${\mathbb F}_{4}\rightarrow{\mathbb F}_{m}$ which is induced by the Galois cover ${\mathbb P}^{1}\rightarrow{\mathbb P}^{1}$ of degree $\frac{4}{m}$.\\

For (\ref{144}), we obtain an example if we use a section $C$ and curves $B^{1}_{1,2},B^{2}_{1,2},B^{3}_{1,2}$ in $\mathbb F_{1}$ given by the equations 
\[ B^{1}_{1,2}:X_{0}Y_{1}+X_{1}Y_{0}+(X_{0}+X_{1})Y_{2}=0,\ B^{2}_{1,2}:X_{0}Y_{1}+2X_{1}Y_{0}+(2X_{0}+X_{1})Y_{2}=0, \]
\[ B^{3}_{1,2}:2X_{0}Y_{1}+X_{1}Y_{0}+(X_{0}+2X_{1})Y_{2}=0.\]
For (\ref{163}), we obtain an example if we use curves $B_{1,3},B^{1}_{1,1},B^{2}_{1,1},B^{3}_{1,1}$ in $\mathbb F_{1}$ given by the equations 
\[ B_{1,3}:X^{2}_{0}Y_{1}+X^{2}_{1}Y_{0}+X_{0}X_{1}Y_{2}=0,\ B^{1}_{1,1}:Y_{0}+Y_{1}+Y_{2}=0,\]
\[ B^{2}_{1,1}:Y_{0}+2Y_{1}+Y_{2}=0,\ B^{3}_{1,1}:2Y_{0}+Y_{1}+Y_{2}=0.\]
For (\ref{()}), we obtain an example if we use a section $C$ and curves $B_{2,4},B^{1}_{1,2},B^{2}_{1,2}$ in $\mathbb F_{2}$ given by the equations 
\[ B_{2,4}:X^{2}_{0}Y_{1}+(X^{2}_{0}+X^{2}_{1})Y_{2}=0,\ B^{1}_{1,2}:Y_{0}+Y_{2}=0,\ B^{2}_{1,2}:2Y_{0}+2Y_{1}=0. \]
For (\ref{148}), we obtain an example if we use a section $C$ and curves $B_{1,2},B^{1}_{1,1}$,
$B^{2}_{1,1}$,
$B^{1}_{0,1},B^{2}_{0,1}$ in $\mathbb F_{1}$ given by the equations  
\[  B_{1,2}:X_{0}Y_{1}+(X_{0}+X_{1})Y_{2}=0,\ B^{1}_{1,1}:Y_{0}+Y_{2}=0,\ B^{2}_{1,1}:2Y_{0}+2Y_{1}=0, \]
\[ B^{1}_{0,1}:X_{0}=0,\ B^{2}_{0,1}:X_{1}=0. \]
For (\ref{216}), we obtain an example if we use curves $B^{1}_{1,2},B^{2}_{1,2},B^{3}_{1,2},B^{4}_{1,2}$ in $\mathbb F_{2}$ given by the equations  
\[ B^{1}_{1,2}:Y_{0}+2Y_{2}=0,\ B^{2}_{1,2}:Y_{1}+2Y_{2}=0,\] 
\[ B^{3}_{1,2}:3Y_{0}+Y_{1}+Y_{2}=0,\ B^{4}_{1,2}:Y_{0}+Y_{1}+3Y_{2}=0.\]
For (\ref{166}), we obtain an example if we use curves $B^{1}_{1,1}$,$B^{2}_{1,1}$,$B^{3}_{1,1}$,$B^{4}_{1,1}$,$B^{1}_{0,1},B^{2}_{0,1}$ in $\mathbb F_{1}$ given by the equations 
\[ B^{1}_{1,1}:Y_{0}+2Y_{2}=0,\ B^{2}_{1,1}:Y_{1}+2Y_{2}=0,\]
\[ B^{3}_{1,1}:3Y_{0}+Y_{1}+Y_{2}=0,\ B^{4}_{1,1}:Y_{0}+Y_{1}+3Y_{2}=0,\]
\[ B^{1}_{0,1}:X_{0}=0,\ B^{2}_{0,1}:X_{1}=0.\]
For (\ref{250}), we obtain an example if we use a section $C$ and curves $B^{1}_{1,4}$,$B^{2}_{1,4}$,$B^{3}_{1,4}$ in $\mathbb F_{4}$ given by the equations  
\[ B^{1}_{1,4}:Y_{0}+2Y_{2}=0,\ B^{2}_{1,4}:Y_{1}+2Y_{2}=0,\ B^{3}_{1,4}:3Y_{0}+Y_{1}+Y_{2}=0.\]
For (\ref{212}), we obtain examples if we use a section $C$ and curves $B^{1}_{1,2}$,$B^{2}_{1,2}$,$B^{3}_{1,2}$,$B^{1}_{0,1},B^{2}_{0,1}$ in $\mathbb F_{2}$ given by the equations   
\[ B^{1}_{1,2}:Y_{0}+2Y_{2}=0,\ B^{2}_{1,2}:Y_{1}+2Y_{2}=0,\ B^{3}_{1,2}:3Y_{0}+Y_{1}+Y_{2}=0.\]
\[ B^{1}_{0,1}:X_{0}=0,\ B^{2}_{0,1}:X_{1}=0.\]
For (\ref{146}), we obtain examples if we use a section $C$ and curves $B^{1}_{1,1}$,$B^{2}_{1,1}$,$B^{3}_{1,1}$,$B^{1}_{0,1},B^{2}_{0,1}$ in $\mathbb F_{1}$ given by the equations   
\[ B^{1}_{1,1}:Y_{0}+2Y_{2}=0,\ B^{2}_{1,1}:Y_{1}+2Y_{2}=0,\ B^{3}_{1,1}:3Y_{0}+Y_{1}+Y_{2}=0.\]
\[ B^{1}_{0,1}:X_{0}=0,\ B^{2}_{0,1}:X_{1}=0.\]
For (\ref{147}), we obtain an example if we use a section $C$ and curves $B^{1}_{1,1}$,$B^{2}_{1,1}$,$B^{3}_{1,1}$,$B^{1}_{0,1}$,
$B^{2}_{0,1}$,$B^{3}_{0,1}$ in $\mathbb F_{1}$ given by the equations   
\[ B^{1}_{1,1:}Y_{0}+2Y_{2}=0,\ B^{2}_{1,1}:Y_{1}+2Y_{2}=0,\ B^{3}_{1,1}:3Y_{0}+Y_{1}+Y_{2}=0,\ B^{4}_{1,1}:Y_{0}+Y_{1}+3Y_{2}=0,\]
\[ B^{1}_{0,1}:X_{0}=0,\ B^{2}_{0,1}:X_{1}=0,\ B^{3}_{0,1}:X_{0}-X_{1}=0.\]
\end{proof}
\begin{pro}\label{pro:6}
For numerical classes 
(\ref{237},\ref{172},\ref{75},\ref{76}) of the list in section 6, 
there are a $K3$ surface $X$ and a finite abelian subgroup $G$ of Aut$(X)$ such that  $X/G\cong{\mathbb F}_{n}$ and the numerical class of the branch divisor $B$ of the quotient map $p:X\rightarrow X/G$ is
(\ref{237},\ref{172},\ref{75},\ref{76}).

Furthermore, for a pair $(X,G)$ of a $K3$ surface $X$ and a finite abelian subgroup $G$ of Aut$(X)$, if $X/G\cong{\mathbb F}_{n}$ and the numerical class of the branch divisor $B$ of the quotient map $p:X\rightarrow X/G$ is (\ref{237},\ref{172},\ref{75},\ref{76}), 
then $G$ is 
${\mathbb Z}/4{\mathbb Z}$, ${\mathbb Z}/2{\mathbb Z}\oplus{\mathbb Z}/4{\mathbb Z}$, 
${\mathbb Z}/4{\mathbb Z}^{\oplus 2}$,
${\mathbb Z}/2{\mathbb Z}^{2}\oplus{\mathbb Z}/4{\mathbb Z}$,
in order, as a group.	
\end{pro}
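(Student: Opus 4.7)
The plan is to proceed exactly as in Propositions \ref{pro:1}--\ref{pro:5} and their corollaries: construct a base $\mathbb Z/4\mathbb Z$-cyclic cover via Theorem \ref{thm:12} on the Hirzebruch surface of largest index appearing in the list, and then obtain the remaining three numerical classes by composing this cover with the natural Galois covers $\mathbb F_{dn}\to\mathbb F_n$ induced from the cyclic cover $\mathbb P^1\to\mathbb P^1$ of degree $d$. The highest class (\ref{237}) will correspond to a numerical class of the form $4D$ for a smooth divisor $D$ on some $\mathbb F_n$ with $\mathcal O(D)/4\in\mathrm{Pic}(\mathbb F_n)$; Theorem \ref{thm:12} then produces a $\mathbb Z/4\mathbb Z$-Galois cover $p:X\to\mathbb F_n$, and the branch formula Theorem \ref{thm:6} gives $K_X\sim 0$. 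To conclude that $X$ is a K3 surface (and not Enriques, abelian, or bi-elliptic), I will copy the exclusion argument of Proposition \ref{pro:1}: bi-elliptic and abelian surfaces are ruled out by \cite{bio:11} and \cite{bio:6}, and the Enriques possibility is eliminated because it would force, via Theorem \ref{thm:3}, an auxiliary K3-cover with Galois group $\mathbb Z/2\mathbb Z\oplus\mathbb Z/4\mathbb Z$ whose branch data would contradict Theorem \ref{thm:5}.

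Second, for the descended classes (\ref{172}), (\ref{75}), (\ref{76}), I will form the composition $X\to\mathbb F_n\to\mathbb F_m$ where the second morphism is the Galois cover induced from $\mathbb P^1\to\mathbb P^1/\langle g_k\rangle$ (respectively $\mathbb P^1\to\mathbb P^1/\langle g_1,g_2\rangle$) with $k\in\{2,4\}$. Since $X$ is a K3 surface and hence simply connected, Theorem \ref{thm:3} guarantees that the composition is itself the quotient map for some finite subgroup of $\mathrm{Aut}(X)$, whose order equals the product of degrees. The resulting group is identified by Theorem \ref{thm:5}(ii): each inertia subgroup $G_{B_i}$ is cyclic of order equal to the ramification index $b_i$ along $B_i$, and the whole group is generated by these. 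For (\ref{172}) the orders $(2,4)$ of the inertias force $G\cong\mathbb Z/2\mathbb Z\oplus\mathbb Z/4\mathbb Z$, and for (\ref{76}) the orders $(2,2,4)$ give $(\mathbb Z/2\mathbb Z)^{\oplus 2}\oplus\mathbb Z/4\mathbb Z$.

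The delicate case is (\ref{75}): its group is $\mathbb Z/4\mathbb Z^{\oplus 2}$, not $(\mathbb Z/2\mathbb Z)^{\oplus 2}\oplus\mathbb Z/4\mathbb Z$, so I must argue that two independent inertia subgroups are each of order exactly $4$. Concretely, the branch divisor should possess two distinct irreducible components with multiplicity $4$, and both lift to disjoint smooth curves on $X$; then Theorem \ref{thm:5}(ii) applied to each gives two commuting cyclic subgroups of order $4$, and the meeting argument used in Section 2 (after Theorem \ref{thm:5}) shows that their intersection is trivial because the two components of $B$ meet, forcing a direct sum. This is the main obstacle and the step that requires the most care, since it is what separates (\ref{75}) from (\ref{76}) at the level of the group structure even though both Galois covers have order $16$.

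Finally, the converse direction (that any such pair $(X,G)$ is obtained by the above construction) follows from Theorem \ref{thm:3} applied to the branch divisor $B$, since a K3 surface is simply connected and the pair $(X,G)$ is uniquely determined up to isomorphism by $B$; hence it coincides with the pair explicitly constructed above. To finish, I will write down explicit defining equations for the branch components on $\mathbb F_n$ (in the coordinates $([X_0{:}X_1],[Y_0{:}Y_1{:}Y_2])$ introduced in Section 3.1), chosen so that the support is simple normal crossing and the cyclic-cover class $\mathcal O(D)/4$ exists in $\mathrm{Pic}(\mathbb F_n)$, parallel to the explicit equations displayed in Corollaries \ref{pro:10} and \ref{pro:11}.
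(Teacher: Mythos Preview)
Your plan has a genuine gap at the very first step. The numerical class (\ref{237}) on $\mathbb F_4$ is $2C+4(2C+8F)$: it has \emph{two} irreducible components, the negative section $C$ with ramification index $2$ and a smooth curve $B_{2,8}$ in the class $2C+8F$ with ramification index $4$. It is therefore not of the form $4D$ for any effective divisor $D$, and Theorem \ref{thm:12} cannot be applied in one shot to produce a $\mathbb Z/4\mathbb Z$-cover with this mixed branch data. Your whole construction rests on this misreading of the branch divisor.

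The paper instead builds the cover for (\ref{237}) as a tower of two double covers. First take the double cover $p_1:X_1\to\mathbb F_4$ branched along $2B_{2,8}$ (legitimate via Theorem \ref{thm:12} since $B_{2,8}=2(C+4F)$ in $\mathrm{Pic}(\mathbb F_4)$), and let $E_{2,8}$ be the reduced preimage of $B_{2,8}$, so $p_1^*B_{2,8}=2E_{2,8}$. The key observation is the Picard identity
\[
p_1^*C+E_{2,8}=p_1^*(2C+4F)=2\,p_1^*(C+2F)\ \ \text{in }\mathrm{Pic}(X_1),
\]
which permits a second double cover $p_2:X_2\to X_1$ branched along $2(p_1^*C+E_{2,8})$. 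The composite $p_1\circ p_2$ then has branch divisor exactly $2C+4B_{2,8}$: the component $C$ acquires ramification $2$ from $p_2$ alone, while $B_{2,8}$ acquires ramification $2$ from each layer, totalling $4$. Because the second cover ramifies along the ramification curve $E_{2,8}$ of the first, the Galois group of the composite is $\mathbb Z/4\mathbb Z$ rather than $(\mathbb Z/2\mathbb Z)^{2}$. Once this base case is correctly in hand, your strategy for (\ref{172}), (\ref{75}), (\ref{76}) via the fibrewise Galois covers $\mathbb F_{4}\to\mathbb F_m$ is exactly what the paper does, and your identification of the resulting groups through Theorem \ref{thm:5} goes through.
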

\begin{proof}
Let $B_{2,8}$ be a smooth curve on ${\mathbb F}_{4}$. 
Then the numerical class of $2C+4B_{2,8}$ is (\ref{237}). 
Since $B_{2,8}=2C+8F$ in Pic$(\mathbb F_{4})$, by Theorem \ref{thm:12},
there is the Galois cover $p_{1}:X_{1}\rightarrow{\mathbb F}_{4}$ such that the branch divisor is $2B_{2,8}$ and the Galois group is $\mathbb Z/2\mathbb Z$ as a group. 
Let $E_{2,8}$ be a smooth curve on $X_{1}$ such that $p_{1}^{\ast}B_{2,8}=2E_{2,8}$. 
Since $C+B_{2,8}$ is simple normal crossing, $p_{1}^{\ast}C$ is a reduced divisor on $X_{1}$, whose support is a union of pairwise disjoint smooth curves.
Since $p_{1}^{\ast}C+E_{2,8}=p_{1}^{\ast}(2C+4F)=2p_{1}^{\ast}(C+2F)$ in Pic$(X_{1})$, by Theorem \ref{thm:12},
there is a Galois cover $p_{2}:X_{2}\rightarrow X_{1}$ such that the branch divisor is $p_{1}^{\ast}C+E_{2,8}$ and the Galois group is $\mathbb Z/2\mathbb Z$ as a group. 
Then $p:=p_{1}\circ p_{2}:X_{2}\rightarrow{\mathbb F}_{4}$ is the branched cover such that $p$ has $2C+4B_{2,8}$ as the branch divisor.
In the same way of Proposition \ref{pro:1}, $X$ is a $K3$ surface, and $p:X\rightarrow {\mathbb F}_{4}$ is the Galois cover whose Galois group is ${\mathbb Z}/4{\mathbb Z}$ as a group.
In the same way of Proposition \ref{pro:1}, we get the claim for (\ref{172},\ref{75},\ref{76}).

More specifically, 
let $X$ be a $K3$ surface, $G$ be a finite abelian subgroup of Aut$(X)$ such that $X/G\cong{\mathbb F}_{n}$, and $B$ be the branch divisor of the quotient map $p:X\rightarrow X/G$. 
Then we get the following.

i) If the numerical class of $B$ is (\ref{237}), then $X\rightarrow X/G$ is given by the above way.

ii) If the numerical class of $B$ is one of (\ref{172},\ref{75},\ref{76}), then  $X\rightarrow X/G$ is given by 
the composition of the Galois cover $X'\rightarrow{\mathbb F}_{4} $ whose numerical class of the branch divisor is (\ref{237}) and the Galois cover ${\mathbb F}_{4} \rightarrow{\mathbb F}_{m}$ which is induced by the Galois cover ${\mathbb P}^{1}\rightarrow{\mathbb P}^{1}$ of degree $\frac{m}{4}$.\\

For (\ref{237}), we obtain an example if we use a section $C$ and a curve $B_{2,8}$ in $\mathbb F_{4}$ given by the equation  
\[ B_{2,8}:Y^{2}_{0}+Y^{2}_{1}+Y^{2}_{2}=0. \]
For (\ref{172}), we obtain examples if we use a section $C$ and curves $B_{2,4}$,$B^{1}_{0,1}$,$B^{2}_{0,1}$ in $\mathbb F_{2}$ given by the equations  
\[ B_{2,4}:Y^{2}_{0}+Y^{2}_{1}+Y^{2}_{2}=0,\ B^{1}_{0,1}:X_{0}=0,\ B^{2}_{0,1}:X_{1}=0. \]
For (\ref{75}), we obtain examples if we use a section $C$ and curves $B_{2,2}$,$B^{1}_{0,1}$,$B^{2}_{0,1}$ in $\mathbb F_{1}$ given by the equations  
\[ B_{2,2}:Y^{2}_{0}+Y^{2}_{1}+Y^{2}_{2}=0,\ B^{1}_{0,1}:X_{0}=0,\ B^{2}_{0,1}:X_{1}=0. \]
For (\ref{76}), we obtain examples if we use a section $C$ and curves $B_{2,2}$,$B^{1}_{0,1}$,
$B^{2}_{0,1}$,$B^{3}_{0,1}$ in $\mathbb F_{1}$ given by the equations  
\[ B_{2,2}:Y^{2}_{0}+Y^{2}_{1}+Y^{2}_{2}=0,\ B^{1}_{0,1}:X_{0}=0,\ B^{2}_{0,1}:X_{1}=0,\ B^{3}_{0,1}:X_{0}-X_{1}=0. \]
\end{proof}
\begin{pro}\label{pro:7}
For numerical classes (\ref{245},\ref{208}) of the list in section 6, 
there are a $K3$ surface $X$ and a finite abelian subgroup $G$ of Aut$(X)$ such that  $X/G\cong{\mathbb F}_{n}$ and the numerical class of the branch divisor $B$ of the quotient map $p:X\rightarrow X/G$ is
(\ref{245},\ref{208}).

Furthermore, for a pair $(X,G)$ of a $K3$ surface $X$ and a finite abelian subgroup $G$ of Aut$(X)$, if $X/G\cong{\mathbb F}_{n}$ and the numerical class of the branch divisor $B$ of the quotient map $p:X\rightarrow X/G$ is (\ref{245},\ref{208}), 
then $G$ is ${\mathbb Z}/2{\mathbb Z}\oplus{\mathbb Z}/4{\mathbb Z}$, 
${\mathbb Z}/2{\mathbb Z}^{\oplus 2}\oplus{\mathbb Z}/4{\mathbb Z}$,
in order, as a group.
\end{pro}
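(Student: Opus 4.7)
The plan is to follow the same template that has driven Propositions \ref{pro:1}–\ref{pro:6}: build the required Abelian cover as a tower of cyclic covers (applying Theorem \ref{thm:12} at each step, often after a fibre product), verify that the total space is a $K3$ surface via Theorem \ref{thm:6} together with case analysis excluding bi-elliptic, abelian and Enriques alternatives (as in the arguments invoking \ref{bio:11}, \ref{bio:6}, \ref{bio:10}), and then use Theorem \ref{thm:5} to identify the Galois group and Theorem \ref{thm:3} to conclude uniqueness. Since the Galois groups in question, $\mathbb{Z}/2\mathbb{Z}\oplus\mathbb{Z}/4\mathbb{Z}$ and $\mathbb{Z}/2\mathbb{Z}^{\oplus 2}\oplus\mathbb{Z}/4\mathbb{Z}$, have a $\mathbb{Z}/4\mathbb{Z}$-summand, the construction must contain a $\mathbb{Z}/4\mathbb{Z}$-tower analogous to the one used for numerical class (\ref{237}) in Proposition \ref{pro:6}.

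Concretely, for existence in (\ref{245}) I would proceed in two stages. First I reproduce the $\mathbb{Z}/4\mathbb{Z}$-cover of Proposition \ref{pro:6}: take a double cyclic cover $p_{1}:X_{1}\to\mathbb{F}_{n}$ branched over $2B$, then exploit a divisibility by $2$ of the pulled-back class $p_{1}^{\ast}(\,\cdot\,)+E$ in $\mathrm{Pic}(X_{1})$ to build a further $\mathbb{Z}/2\mathbb{Z}$-cover $p_{2}:X_{2}\to X_{1}$, yielding a $\mathbb{Z}/4\mathbb{Z}$-cover $p_{1}\circ p_{2}$. Second, using Theorem \ref{thm:12} and a fibre product with a suitable $\mathbb{Z}/2\mathbb{Z}$-cover (whose branch divisor is the remaining component appearing in numerical class (\ref{245})) I enlarge the Galois group to $\mathbb{Z}/2\mathbb{Z}\oplus\mathbb{Z}/4\mathbb{Z}$; transversality of the two branch loci guarantees smoothness of the fibre product, and the $K3$-ness follows as in Proposition \ref{pro:1}. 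For (\ref{208}) I would then compose the cover realizing (\ref{245}) with one of the standard covers $\mathbb{F}_{n}\to\mathbb{F}_{m}$ induced by $\langle g_{1},g_{2}\rangle$ or $\langle g_{m}\rangle$ from Section $3.1$, which by Theorem \ref{thm:5} promotes the Galois group to $\mathbb{Z}/2\mathbb{Z}^{\oplus 2}\oplus\mathbb{Z}/4\mathbb{Z}$. Explicit defining equations in the spirit of those listed in Propositions \ref{pro:1}–\ref{pro:6} then provide concrete examples.

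For the converse, given any pair $(X,G)$ with $X/G\cong\mathbb{F}_{n}$ and branch divisor of the given numerical class, Theorem \ref{thm:5} prescribes both the ramification indices $b_{i}$ (hence the orders of the cyclic stabilizers $G_{B_{i}}\cong\mathbb{Z}/b_{i}\mathbb{Z}$) and the intersection pattern of the irreducible components. Combined with the remark after Theorem \ref{thm:5} that intersecting components with coprime ramification indices contribute direct summands, this forces $G$ to be isomorphic to the stated group. Finally, since $X$ is simply connected, Theorem \ref{thm:3} furnishes uniqueness of the pair $(X,G)$ up to isomorphism, so it must coincide with the one constructed above.

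The hard part will be the divisibility verification at each stage of the tower: tracking the class of the auxiliary divisor through successive cyclic covers so that Theorem \ref{thm:12} applies requires a Picard-lattice computation of the kind carried out in Proposition \ref{pro:4}, and the analogous lattice-discriminant bookkeeping may be needed to rule out the Enriques possibility for the total space. Once divisibility and smoothness are in place, the remaining arguments are mechanical applications of Theorems \ref{thm:3}, \ref{thm:5}, and \ref{thm:6}.
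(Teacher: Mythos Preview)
Your overall framework—build a tower of cyclic covers, identify the total space as $K3$ via Theorem~\ref{thm:6}, then use Theorems~\ref{thm:5} and~\ref{thm:3} for the group and uniqueness—is correct, and your treatment of (\ref{208}) by composing with the base change $\mathbb{F}_{4}\to\mathbb{F}_{2}$ matches the paper. The gap is in your construction for (\ref{245}).

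You propose to build a $\mathbb{Z}/4\mathbb{Z}$-cover of $\mathbb{F}_{4}$ (branched along the two components of index~$4$, namely $C$ and $B_{1,4}$) and then take a fibre product with an independent $\mathbb{Z}/2\mathbb{Z}$-cover branched along the remaining component $B_{1,6}$. But $B_{1,6}=C+6F$ is \emph{not} divisible by~$2$ in $\mathrm{Pic}(\mathbb{F}_{4})$, so Theorem~\ref{thm:12} does not produce any double cover of $\mathbb{F}_{4}$ branched over $2B_{1,6}$ alone; the auxiliary $\mathbb{Z}/2\mathbb{Z}$-cover you want for the fibre product simply does not exist. (Nor does any other partition of the three components into a $\mathbb{Z}/4\mathbb{Z}$-piece and an independent $\mathbb{Z}/2\mathbb{Z}$-piece work, for the same divisibility reason.)

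The paper circumvents this by an \emph{interleaved} three-step tower rather than a fibre product. First take the double cover $p_{1}:X_{1}\to\mathbb{F}_{4}$ branched over $2C+2B_{1,4}$ (class $2C+4F$ is even). On $X_{1}$ one now has $p_{1}^{\ast}B_{1,6}=2E_{1,4}+2p_{1}^{\ast}F$, so the pullback of $B_{1,6}$ \emph{becomes} divisible by~$2$ only after this first step; take the double cover $p_{2}:X_{2}\to X_{1}$ branched over $2p_{1}^{\ast}B_{1,6}$. Finally, on $X_{2}$ the class $p_{2}^{\ast}(E_{C}+E_{1,4})=p_{2}^{\ast}p_{1}^{\ast}B_{1,6}-4p_{2}^{\ast}p_{1}^{\ast}F$ is divisible by~$2$ (using that $\tfrac{1}{2}p_{2}^{\ast}p_{1}^{\ast}B_{1,6}\in\mathrm{Pic}(X_{2})$), so a third double cover $p_{3}:X\to X_{2}$ branched over $2p_{2}^{\ast}(E_{C}+E_{1,4})$ exists and raises the ramification over $C$ and $B_{1,4}$ to~$4$. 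The composite has branch divisor $4C+2B_{1,6}+4B_{1,4}$ and Galois group $\mathbb{Z}/2\mathbb{Z}\oplus\mathbb{Z}/4\mathbb{Z}$. The point is that the middle cover is sandwiched between the two covers that together give the $\mathbb{Z}/4\mathbb{Z}$; divisibility at each stage depends on what came before, so the construction cannot be decoupled into a fibre product over $\mathbb{F}_{4}$.

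Your remarks about possibly needing lattice computations as in Proposition~\ref{pro:4} are unnecessary here: once the tower is built, $X$ is smooth by simple-normal-crossing considerations, and $K3$-ness follows exactly as in Proposition~\ref{pro:1}.
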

\begin{proof}
Let $B_{1,6}$ and $B_{1,4}$ be smooth curves on ${\mathbb F}_{4}$ such that $C+B_{1,6}+B_{1,4}$ is simple normal crossing.
Then the numerical class of $4C+2B_{1,6}+4B_{1,4}$ is (\ref{245}). 
Since $C+B_{1,4}=2C+2F$ in Pic$(\mathbb F_{8})$, by Theorem \ref{thm:12},
there is the Galois cover $p_{1}:X_{1}\rightarrow{\mathbb F}_{4}$ such that the branch divisor is $2C+2B_{1,4}$ and the Galois group is $\mathbb Z/2\mathbb Z$ as a group. 
Let $E_{C},E_{1,4}$ be two smooth curves on $X_{1}$ such that $p_{1}^{\ast}C=2E_{C}$ and $p_{1}^{\ast}B_{1,4}=2E_{1,4}$. 
Since $C+B_{1,6}+B_{1,4}$ is simple normal crossing, $p_{1}^{\ast}B_{1,6}$ is a reduced divisor on $X_{1}$, whose support is a union of pairwise disjoint smooth curves.
Since $p_{1}^{\ast}B_{1,6}=p_{1}^{\ast}(C+6F)=p_{1}^{\ast}(C+4F)+p_{1}^{\ast}(2F)=2E_{1,4}+2p_{1}^{\ast}F$ in Pic$(X_{1})$, by Theorem \ref{thm:12}, 
there is the Galois cover $p_{2}:X_{2}\rightarrow X_{1}$ such that the branch divisor is $2p_{1}^{\ast}B_{1,6}$ and the Galois group is $\mathbb Z/2\mathbb Z$. 
Notice taht $\frac{p_{2}^{\ast}p_{1}^{\ast}B_{1,6}}{2}\in$Pic$(X_{2})$.
Since $C+B_{1,6}+B_{1,4}$ is simple normal crossing, 
$p_{2}^{\ast}E_{C}$ and $p_{2}^{\ast}E_{1,4}$ are reduced divisors on $X_{2}$, whose support are unions of pairwise disjoint smooth curves.
Since $p_{2}^{\ast}(E_{C}+E_{1,4})=p_{2}^{\ast}p_{1}^{\ast}(C+2F)=p_{2}^{\ast}p_{1}^{\ast}(C+6F)-p_{2}^{\ast}p_{1}^{\ast}4F= p_{2}^{\ast}p_{1}^{\ast}B_{1,6}-4p_{2}^{\ast}p_{1}^{\ast}F$ in Pic$(X_{2})$ and $\frac{p_{2}^{\ast}p_{1}^{\ast}B_{1,6}}{2}\in$Pic$(X_{2})$, by Theorem \ref{thm:12},
there is the Galois cover $p_{3}:X\rightarrow X_{2}$ such that the branch divisor is $p_{2}^{\ast}(E_{C}+E_{1,4})$ and the Galois group is $\mathbb Z/2\mathbb Z$. 
Then $p:=p_{1}\circ p_{2}\circ p_{3}:X\rightarrow{\mathbb F}_{4}$ is the branched cover such that $p$ has $4C+2B_{1,6}+4B_{1,4}$ as 
the branch divisor.
In the same way of Proposition \ref{pro:1}, $X$ is a $K3$ surface, and $p:X\rightarrow {\mathbb F}_{4}$ is the Galois cover whose Galois group is ${\mathbb Z}/2{\mathbb Z}\oplus{\mathbb Z}/4{\mathbb Z}$ as a group.
In the same way of Proposition \ref{pro:1}, we get the claim for (\ref{208}).

More specifically, 
let $X$ be a $K3$ surface, $G$ be a finite abelian subgroup of Aut$(X)$ such that $X/G\cong{\mathbb F}_{n}$, and $B$ be the branch divisor of the quotient map $p:X\rightarrow X/G$. 
Then we get the following.

i) If the numerical class of $B$ is (\ref{245}), then $X\rightarrow X/G$ is given by the above way.

ii) If the numerical class of $B$ is (\ref{208}), then $X\rightarrow X/G$ is given by 
the composition of the Galois cover $X'\rightarrow{\mathbb F}_{4} $ whose numerical class of the branch divisor is (\ref{245}) and the Galois cover ${\mathbb F}_{4} \rightarrow{\mathbb F}_{2}$ which is induced by the Galois cover ${\mathbb P}^{1}\rightarrow{\mathbb P}^{1}$ of degree $2$.\\

For (\ref{245}), we obtain an example if we use a section $C$ and curves $B_{1,6}$,$B_{1,4}$ in $\mathbb F_{4}$ given by the equations  
\[ B_{1,6}:X^{2}_{0}Y_{1}+X^{2}_{1}Y_{0}+(X^{2}_{0}+2X^{2}_{1})Y_{2}=0,\ B_{1,4}:2Y_{0}+Y_{2}=0. \]
For (\ref{208}), we obtain an example if we use a section $C$ and curves $B_{1,3}$,$B_{1,2}$,
$B^{1}_{0,1},B^{2}_{0,1}$ in $\mathbb F_{2}$ given by the equations  
\[ B_{1,3}:X_{0}Y_{1}+X_{1}Y_{0}+(X_{0}+2X_{1})Y_{2}=0,\ B_{1,2}:2Y_{0}+Y_{2}=0, \]
\[ B^{1}_{0,1}:X_{0}=0,\ B^{2}_{0,1}:X_{1}=0. \]
\end{proof}
\begin{cro}\label{thm:48}
For each numerical classes 
(\ref{263},\ref{249},\ref{207},\ref{206},\ref{141}) of the list in section 6, 
there are a $K3$ surface $X$ and a finite abelian subgroup $G$ of Aut$(X)$ such that  $X/G\cong{\mathbb F}_{n}$ and the numerical class of the branch divisor $B$ of the quotient map $p:X\rightarrow X/G$ is 
(\ref{263},\ref{249},\ref{207},\ref{206},\ref{141}).
	
Furthermore, for a pair $(X,G)$ of a $K3$ surface $X$ and a finite abelian subgroup $G$ of Aut$(X)$, if $X/G\cong{\mathbb F}_{n}$ and the numerical class of the branch divisor $B$ of the quotient map $p:X\rightarrow X/G$ is
(\ref{263}\ref{249},\ref{207},\ref{206},\ref{141}),
then $G$ is 
${\mathbb Z}/2{\mathbb Z}\oplus{\mathbb Z}/4{\mathbb Z}$, 
${\mathbb Z}/2{\mathbb Z}^{\oplus 2}\oplus{\mathbb Z}/4{\mathbb Z}$, 
${\mathbb Z}/2{\mathbb Z}^{\oplus 3}\oplus{\mathbb Z}/4{\mathbb Z}$, 
${\mathbb Z}/2{\mathbb Z}\oplus{\mathbb Z}/4{\mathbb Z}^{\oplus 2}$,
${\mathbb Z}/2{\mathbb Z}\oplus{\mathbb Z}/4{\mathbb Z}\oplus{\mathbb Z}/8{\mathbb Z}$, 
in order, as a group. 	
\end{cro}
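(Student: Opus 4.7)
My plan is to follow the template established in Proposition \ref{pro:7} and in the preceding corollaries of this section. First I would construct an explicit Galois cover realizing the ``base'' numerical class (\ref{263}), and then obtain the remaining four classes by composing with a Galois cover $\mathbb{F}_n \to \mathbb{F}_m$ induced by a Galois cover $\mathbb{P}^1 \to \mathbb{P}^1$ of appropriate degree, exactly as was done in Corollaries \ref{pro:9}, \ref{pro:10}, \ref{pro:11}, \ref{pro:12}, \ref{pro:13}.

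For (\ref{263}), the construction proceeds by iterated cyclic double covers, mirroring Proposition \ref{pro:7}. Starting from the Hirzebruch surface supporting the branch divisor, I would apply Theorem \ref{thm:12} three times in sequence, verifying at each stage that the relevant class in the Picard group of the intermediate cover is divisible by two (using the ramification relations produced by the previous step, in the same spirit as the computation $p_2^{\ast}(E_C + E_{1,4}) = p_2^{\ast}p_1^{\ast}B_{1,6} - 4 p_2^{\ast}p_1^{\ast}F$ in Proposition \ref{pro:7}). The resulting smooth cover $X$ has trivial canonical divisor by Theorem \ref{thm:6}, and is shown to be a $K3$ surface by ruling out the bi-elliptic, abelian, and Enriques alternatives as in the earlier propositions. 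Its Galois group is forced to be $\mathbb{Z}/2\mathbb{Z} \oplus \mathbb{Z}/4\mathbb{Z}$ by counting ramification indices via Theorem \ref{thm:5}.

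For the remaining four classes, I would compose the cover $X \to \mathbb{F}_n$ with the pullback of an appropriate cover $\mathbb{P}^1 \to \mathbb{P}^1$: a degree $2$ cyclic cover gives (\ref{249}) with an additional $\mathbb{Z}/2\mathbb{Z}$ factor; the degree $4$ cover via $\langle g_1, g_2 \rangle$ gives (\ref{207}) with an additional $\mathbb{Z}/2\mathbb{Z}^{\oplus 2}$; the degree $4$ cyclic cover via $\langle g_4 \rangle$ gives (\ref{206}) with an additional $\mathbb{Z}/4\mathbb{Z}$; and the degree $8$ cyclic cover via $\langle g_8 \rangle$ gives (\ref{141}) with an additional $\mathbb{Z}/8\mathbb{Z}$. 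In each case the composed cover remains Galois by Theorem \ref{thm:3} (simple connectedness of the $K3$ surface), and the new Galois group is the direct sum because the two sets of ramification data live on disjoint components of the branch divisor, so their isotropy groups have trivial intersection.

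For the uniqueness direction, for any pair $(X,G)$ realizing one of these numerical classes, Theorem \ref{thm:5} forces the isotropy subgroups $G_{B_i}$ from the branch multiplicities, and Theorem \ref{thm:3} then identifies the cover with one of the constructions above. The main obstacle, as in Proposition \ref{pro:7}, is the divisibility bookkeeping in $\mathrm{Pic}$ that makes each successive cyclic cover exist; this is especially delicate for (\ref{263}) itself (which carries the nontrivial $\mathbb{Z}/4\mathbb{Z}$ factor) and, via it, for (\ref{141}) where one must verify that the branch data assembles into an $8$-divisible class at the final stage. Once this is done, explicit defining equations on the relevant $\mathbb{F}_n$ can be written down in the style of Proposition \ref{pro:7} to provide concrete examples.
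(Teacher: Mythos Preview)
Your proposal is correct and follows essentially the same approach as the paper: construct the base case (\ref{263}) on $\mathbb{F}_8$ by three successive cyclic double covers in the style of Proposition~\ref{pro:7}, then obtain (\ref{249}), (\ref{207}), (\ref{206}), (\ref{141}) by composing with the Galois cover $\mathbb{F}_8\to\mathbb{F}_m$ induced by $\mathbb{P}^1\to\mathbb{P}^1$ of degree $8/m$ (respectively via $\langle g_2\rangle$, $\langle g_1,g_2\rangle$, $\langle g_4\rangle$, $\langle g_8\rangle$). Your identification of the resulting Galois groups and the uniqueness argument via Theorems~\ref{thm:3} and~\ref{thm:5} also match the paper.
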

\begin{proof}
In the same way Proposition \ref{pro:7}, we get the claim. 
More specifically, 
let $X$ be a $K3$ surface, $G$ be a finite abelian subgroup of Aut$(X)$ such that $X/G\cong{\mathbb F}_{n}$, and $B$ be the branch divisor of the quotient map $p:X\rightarrow X/G$. 
Then we get the following.

i) If the numerical class of $B$ is (\ref{263}), then $X\rightarrow X/G$ is given by the above way.

ii) If the numerical class of $B$ is one of (\ref{141},\ref{206},\ref{207},\ref{249}), then  $X\rightarrow X/G$ is given by 
the composition of the Galois cover $X\rightarrow{\mathbb F}_{8} $ whose numerical class of the branch divisor is (\ref{263}) and the Galois cover ${\mathbb F}_{8} \rightarrow{\mathbb F}_{m}$ which is induced by the Galois cover ${\mathbb P}^{1}\rightarrow{\mathbb P}^{1}$ of degree $\frac{8}{m}$.\\

For (\ref{263}),  we obtain examples if we use a section $C$ and curves $B^{1}_{1,8}$,$B^{2}_{1,8}$ in $\mathbb F_{8}$ given by the equations  
\[  B^{1}_{1,8}:Y_{0}+Y_{1}+Y_{2}=0,\  B^{2}_{1,8}:Y_{0}+Y_{1}+2Y_{2}=0.\]
For (\ref{249}),  we obtain examples if we use a section $C$ and 
curves $B^{1}_{1,4}$,$B^{2}_{1,4}$,
$B^{1}_{0,1},B^{2}_{0,1}$ in $\mathbb F_{4}$ given by the equations  
\[ B^{1}_{1,4}:Y_{0}+Y_{1}+Y_{2}=0,\ B^{2}_{1,4}:Y_{0}+Y_{1}+2Y_{2}=0,\]
\[ B^{1}_{0,1}:X_{0}=0,\ B^{2}:X_{1}=0.\]
For (\ref{206}),  we obtain examples if we use a section $C$ and 
curves $B^{1}_{1,2}$,$B^{2}_{1,2}$,
$B^{1}_{0,1},B^{2}_{0,1}$ in $\mathbb F_{2}$ given by the equations  
\[ B^{1}_{1,2}:Y_{0}+Y_{1}+Y_{2}=0,\ B^{2}_{1,2}:Y_{0}+Y_{1}+2Y_{2}=0,\]
\[ B^{1}_{0,1}:X_{0}=0,\ B^{2}:X_{1}=0.\]
For (\ref{141}),  we obtain examples if we use a section $C$ and 
curves $B^{1}_{1,1}$,$B^{2}_{1,1}$,
$B^{1}_{0,1},B^{2}_{0,1}$ in $\mathbb F_{1}$ given by the equations  
\[ B^{1}_{1,1}:Y_{0}+Y_{1}+Y_{2}=0,\ B^{2}_{1,1}:Y_{0}+Y_{1}+2Y_{2}=0,\]
\[ B^{1}_{0,1}:X_{0}=0,\ B^{2}:X_{1}=0.\]
For (\ref{207}), we obtain an example if we use a section $C$ and curves $B^{1}_{1,2}$,$B^{2}_{1,2}$,
$B^{1}_{0,1}$,$B^{2}_{0,1}$,$B^{3}_{0,1}$ in $\mathbb F_{2}$ given by the equations  
\[ B^{1}_{1,2}:Y_{0}+Y_{1}+Y_{2}=0,\ B^{2}_{1,2}:Y_{0}+Y_{1}+2Y_{2}=0,\]
\[ B^{1}_{0,1}:X_{0}=0,\ B^{2}_{0,1}:X_{1}=0,\ B^{3}_{0,1}:X_{0}-X_{1}=0.\]
\end{proof}
\begin{pro}\label{pro:8}
For each numerical classes 
(\ref{266},\ref{258},\ref{240},\ref{221},\ref{220},\ref{181},\ref{90}) of the list in section 6, 
there are a $K3$ surface $X$ and a finite abelian subgroup $G$ of Aut$(X)$ such that  $X/G\cong{\mathbb F}_{n}$ and the numerical class of the branch divisor $B$ of the quotient map $p:X\rightarrow X/G$ is
(\ref{266},\ref{258},\ref{240},\ref{221},\ref{220},\ref{181},\ref{90}).

Furthermore, for a pair $(X,G)$ of a $K3$ surface $X$ and a finite abelian subgroup $G$ of Aut$(X)$, if $X/G\cong{\mathbb F}_{n}$ and the numerical class of the branch divisor $B$ of the quotient map $p:X\rightarrow X/G$ is 
(\ref{266},\ref{258},\ref{240},\ref{221},\ref{220},\ref{181},\ref{90}), 
then $G$ is 
${\mathbb Z}/2{\mathbb Z}\oplus {\mathbb Z}/3{\mathbb Z}$, 
${\mathbb Z}/2{\mathbb Z}^{\oplus 2}\oplus {\mathbb Z}/3{\mathbb Z}$,
${\mathbb Z}/2{\mathbb Z}\oplus{\mathbb Z}/3{\mathbb Z}^{\oplus 2}$, 
${\mathbb Z}/2{\mathbb Z}^{\oplus 3}\oplus{\mathbb Z}/3{\mathbb Z}$, 
${\mathbb Z}/2{\mathbb Z}\oplus{\mathbb Z}/3{\mathbb Z}\oplus {\mathbb Z}/4{\mathbb Z}$, 
${\mathbb Z}/2{\mathbb Z}^{\oplus 2}\oplus{\mathbb Z}/3{\mathbb Z}^{\oplus 2}$, 
${\mathbb Z}/2{\mathbb Z}\oplus {\mathbb Z}/3{\mathbb Z}^{\oplus 2}\oplus{\mathbb Z}/4{\mathbb Z}$,
in order, as a group.
\end{pro}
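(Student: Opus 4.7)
The plan is to mimic the strategy used in Propositions \ref{pro:1}--\ref{pro:7} and Corollaries \ref{pro:9}--\ref{thm:48}: first construct the ``richest'' example directly as a tower of cyclic covers by combining a degree--$3$ cover and a degree--$2$ (or degree--$4$) cover via a fibre product, then obtain the remaining numerical classes as compositions with the induced Galois covers $\mathbb{F}_n\to\mathbb{F}_m$ coming from the double/quadruple covers $\mathbb{P}^1\to\mathbb{P}^1$. Finally, for the converse we invoke the uniqueness part of Theorem \ref{thm:3} and Theorem \ref{thm:5}.

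I would start with the largest-index numerical class among the seven, namely (\ref{266}), which lives on some $\mathbb{F}_n$ with the largest $n$ in the list and whose target group is the smallest, $\mathbb{Z}/2\mathbb{Z}\oplus\mathbb{Z}/3\mathbb{Z}$. By Theorem \ref{thm:12} applied to the $3$--divisible part of the branch class (supported on a section plus a curve in the appropriate linear system) there is a triple cyclic cover $p_{3}\colon X_{3}\to\mathbb{F}_{n}$ whose branch divisor is the $3$--part, and by Theorem \ref{thm:12} again applied to the pull-back $2$--part (which is $2$--divisible in $\mathrm{Pic}(X_{3})$ because the original class is) there is a double cover $p_{2}\colon X\to X_{3}$. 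Since the $3$--part and the $2$--part of the branch divisor on $\mathbb{F}_{n}$ are simple normal crossing, $X_{3}$ has at worst rational double points of type $A_{2}$ resolved automatically or $X_{3}$ is smooth, so $X$ is smooth, and the composition $p_{2}\circ p_{3}$ is the required branched cover. By Theorem \ref{thm:6}, $K_{X}$ is trivial; ruling out the abelian, bi-elliptic, Enriques cases as in Proposition \ref{pro:1} (using the presence of a section or irregularity/fixed-locus arguments via \ref{bio:11}, \ref{bio:6}, and Theorem \ref{thm:5}) shows $X$ is a $K3$ surface, and the Galois group is $\mathbb{Z}/2\mathbb{Z}\oplus\mathbb{Z}/3\mathbb{Z}$ because $\gcd(2,3)=1$ makes the tower split.

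For each of the remaining classes (\ref{258}), (\ref{240}), (\ref{221}), (\ref{220}), (\ref{181}), (\ref{90}), the target Hirzebruch surface $\mathbb{F}_{m}$ has smaller $m$; I would produce $X\to\mathbb{F}_{m}$ as the composition of the cover $X'\to\mathbb{F}_{n}$ constructed above (whose numerical class of the branch divisor is (\ref{266})) with the induced Galois cover $\mathbb{F}_{n}\to\mathbb{F}_{m}$ arising from $\mathbb{P}^{1}\to\mathbb{P}^{1}$ of degree $n/m$ (or, where a factor $\mathbb{Z}/4\mathbb{Z}$ appears, the degree--$2$ induced cover together with a further cyclic double cover, exactly as in Propositions \ref{pro:6} and \ref{pro:7}). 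The resulting Galois group is the extension of $\mathbb{Z}/2\mathbb{Z}\oplus\mathbb{Z}/3\mathbb{Z}$ by the Galois group of $\mathbb{F}_{n}\to\mathbb{F}_{m}$, which, because the orders are again coprime or the $2$--Sylow structure is forced by Theorem \ref{thm:5}, is isomorphic to the group stated. The explicit defining equations can be supplied in the pattern of the previous propositions; I do not grind through them here.

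For the converse direction, let $(X,G)$ be any pair with $X/G\cong\mathbb{F}_{n}$ whose branch divisor has one of the listed numerical classes. Theorem \ref{thm:5}(ii) tells us that for each irreducible component $B_{i}$ of the branch divisor, $G_{C_{i,j}}$ is cyclic of order $b_{i}$, generated by a purely non-symplectic automorphism; together with Theorem \ref{thm:5}(i), this forces $G$ to be generated by such cyclic subgroups, and simple intersection considerations for the listed numerical classes pin down the group structure as indicated. Since $K3$ surfaces are simply connected, the uniqueness statement in Theorem \ref{thm:3} then identifies $(X,G)$ with the pair constructed above. The main obstacle I anticipate is verifying in the mixed $2$/$3$ or $2$/$3$/$4$ cases that the natural tower indeed splits as a direct sum rather than producing a non-abelian extension or a larger cyclic factor; this is where the coprimality of $2$ and $3$ is essential, while the $\mathbb{Z}/4\mathbb{Z}$ factor has to be controlled by exhibiting a non-trivial square root in the Picard group of the intermediate cover (as in Proposition \ref{pro:7}).
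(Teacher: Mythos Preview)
Your overall strategy matches the paper's: build the base case (\ref{266}) on $\mathbb{F}_{12}$ as a tower of a double and a triple cyclic cover, then obtain the other six classes by composing with the Galois base changes $\mathbb{F}_{12}\to\mathbb{F}_{m}$ induced from $\mathbb{P}^{1}\to\mathbb{P}^{1}$ of degree $12/m$. Two points deserve correction.

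First, the order of the cyclic covers matters, and your ``triple first, then double'' does not work. On $\mathbb{F}_{12}$ the branch divisor is $6C+2B^{1}_{1,12}+3B^{2}_{1,12}$ with $B^{i}_{1,12}\in|C+12F|$. For a triple cover branched on $3C+3B^{2}_{1,12}$ you would need $C+B^{2}_{1,12}=2C+12F$ to be $3$--divisible in $\mathrm{Pic}(\mathbb{F}_{12})$, which it is not. The paper instead takes the \emph{double} cover $p_{1}\colon X_{1}\to\mathbb{F}_{12}$ branched on $2C+2B^{1}_{1,12}$ first (possible since $C+B^{1}_{1,12}=2(C+6F)$), and only then the triple cover $p_{2}\colon X\to X_{1}$ branched on $3E_{C}+3p_{1}^{\ast}B^{2}_{1,12}$; the point is that after the double cover $p_{1}^{\ast}C=2E_{C}$, so $E_{C}+p_{1}^{\ast}B^{2}_{1,12}=3E_{C}+12p_{1}^{\ast}F$ becomes $3$--divisible upstairs. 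This is exactly the same trick as in Proposition~\ref{pro:6}, but with the roles of $2$ and $3$ determining the order.

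Second, your anticipated obstacle about the $\mathbb{Z}/4\mathbb{Z}$ factor in (\ref{220}) and (\ref{90}) does not arise: no extra cyclic double cover in the style of Proposition~\ref{pro:7} is needed. The $\mathbb{Z}/4\mathbb{Z}$ (resp.\ $\mathbb{Z}/12\mathbb{Z}\cong\mathbb{Z}/3\mathbb{Z}\oplus\mathbb{Z}/4\mathbb{Z}$) appears directly as the Galois group of the base change $\mathbb{F}_{12}\to\mathbb{F}_{3}$ branched on $4F+4F$ (resp.\ $\mathbb{F}_{12}\to\mathbb{F}_{1}$ branched on $12F+12F$). Note also that (\ref{221}) and (\ref{220}) both live on $\mathbb{F}_{3}$ with degree-$4$ base change but with \emph{different} Galois groups of $\mathbb{P}^{1}\to\mathbb{P}^{1}$: the branch divisor $2F+2F+2F$ gives $(\mathbb{Z}/2\mathbb{Z})^{2}$ for (\ref{221}), while $4F+4F$ gives $\mathbb{Z}/4\mathbb{Z}$ for (\ref{220}). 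So ``degree $n/m$'' alone does not determine the cover; you must match the fibre part of the branch divisor.
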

\begin{proof}
Let $B^{i}_{1,12}$ be a smooth curve on ${\mathbb F}_{12}$ for $i=1,2$ such that  $C+B^{1}_{1,12}+B^{2}_{1,12}$ is simple normal crossing. 
Then the numerical class of $6C+2B^{1}_{1,12}+3B^{2}_{1,12}$ is (\ref{266}). 
Since $C+B^{1}_{1,12}=2C+12F$ in Pic$(\mathbb F_{12})$, by Theorem \ref{thm:12}, 
there is the Galois cover $p_{1}:X_{1}\rightarrow{\mathbb F}_{12}$ such that the branch divisor is $2C+2B^{1}_{1,12}$ and the Galois group is ${\mathbb Z}/2\mathbb Z$ as a group. 
Since $C+B^{1}_{1,12}+B^{2}_{1,12}$ is simple normal crossing, 
$p_{1}^{\ast}B^{2}_{1,12}$ is a reduced divisor on $X_{1}$, whose support is a union of pairwise disjoint smooth curves.
Since $C$ and $B^{1}_{1,12}$ are smooth curves,
there are smooth curves $E_{C}$, $E^{1}_{1,12}$ on $X_{1}$ such that $p_{1}^{\ast}C=2E_{C}$ and $p_{1}^{\ast}B^{1}_{1,12}=2E^{1}_{1,12}$. 
Since $E_{C}+p_{1}^{\ast}B^{2}_{1,12}=E_{C}+p_{1}^{\ast}(C+12F)=E_{C}+p_{1}^{\ast}C+12p_{1}^{\ast}F=3E_{C}+12p_{1}^{\ast}F$ in Pic$(X_{1})$,
by Theorem \ref{thm:12}, there is the Galois cover $p_{2}:X\rightarrow X_{1}$ such that the branch divisor is $3E_{C}+3p_{1}^{\ast}B^{2}_{1,12}$ and the Galois group is $\mathbb Z/3\mathbb Z$ as a group.
Then $p:=p_{1}\circ p_{2}:X\rightarrow{\mathbb F}_{12}$ is the branched cover such that $p$ has $6C+2B^{1}_{1,12}+3B^{2}_{1,12}$ as 
the branch divisor.
In the same way as Proposition \ref{pro:1}, $X$ is a $K3$ surface, and $p:X\rightarrow{\mathbb F}_{12}$ is the Galois cover whose Galois group is $G\cong{\mathbb Z}/2{\mathbb Z}\oplus {\mathbb Z}/3{\mathbb Z}$ as a group.

More specifically, 
let $X$ be a $K3$ surface, $G$ be a finite abelian subgroup of Aut$(X)$ such that $X/G\cong{\mathbb F}_{n}$, and $B$ be the branch divisor of the quotient map $p:X\rightarrow X/G$. 
Then we get the following.

i) If the numerical class of $B$ is (\ref{266}), then $X\rightarrow X/G$ is given by the above way.

ii) If the numerical class of $B$ is one of (\ref{258},\ref{240},\ref{221},\ref{220},\ref{181},\ref{90}), 
then $X\rightarrow X/G$ is  given by 
the composition of the Galois cover $X'\rightarrow{\mathbb F}_{12} $ whose numerical class of the branch divisor is (\ref{266}) and the Galois cover ${\mathbb F}_{12} \rightarrow{\mathbb F}_{m}$ which is induced by the Galois cover ${\mathbb P}^{1}\rightarrow{\mathbb P}^{1}$ of degree $\frac{12}{m}$.\\

For (\ref{266}), we obtain an example if we use a section $C$ and curves $B^{1}_{1,12}$,$B^{2}_{1,12}$
in $\mathbb F_{12}$ given by the equations  
\[ B^{1}_{1,12}:Y_{0}+2Y_{2}=0,\ B^{2}_{1,12}:Y_{1}+2Y_{2}=0. \]
For (\ref{258}), we obtain examples if we use a section $C$ and curves $B^{1}_{1,6}$,$B^{2}_{1,6}$,
$B^{1}_{0,1},B^{2}_{0,1}$ in $\mathbb F_{6}$ given by the equations  
\[ B^{1}_{1,6}:Y_{0}+2Y_{2}=0,\ B^{2}_{1,6}:Y_{1}+2Y_{2}=0, \]
\[  B^{1}_{0,1}:X_{0}=0,\ B^{2}_{0,1}:X_{1}=0. \]
For (\ref{240}), we obtain examples if we use a section $C$ and curves $B^{1}_{1,4}$,$B^{2}_{1,4}$,
$B^{1}_{0,1},B^{2}_{0,1}$ in $\mathbb F_{4}$ given by the equations  
\[ B^{1}_{1,4}:Y_{0}+2Y_{2}=0,\ B^{2}_{1,4}:Y_{1}+2Y_{2}=0, \]
\[  B^{1}_{0,1}:X_{0}=0,\ B^{2}_{0,1}:X_{1}=0. \]
For (\ref{220}), we obtain examples if we use a section $C$ and curves $B^{1}_{1,3}$,$B^{2}_{1,3}$,
$B^{1}_{0,1},B^{2}_{0,1}$ in $\mathbb F_{3}$ given by the equations  
\[ B^{1}_{1,3}:Y_{0}+2Y_{2}=0,\ B^{2}_{1,3}:Y_{1}+2Y_{2}=0, \]
\[  B^{1}_{0,1}:X_{0}=0,\ B^{2}_{0,1}:X_{1}=0. \]
For (\ref{181}), we obtain examples if we use a section $C$ and curves $B^{1}_{1,2}$,$B^{2}_{1,2}$,
$B^{1}_{0,1},B^{2}_{0,1}$ in $\mathbb F_{2}$ given by the equations  
\[ B^{1}_{1,2}:Y_{0}+2Y_{2}=0,\ B^{2}_{1,2}:Y_{1}+2Y_{2}=0, \]
\[  B^{1}_{0,1}:X_{0}=0,\ B^{2}_{0,1}:X_{1}=0. \]
For (\ref{90}), we obtain examples if we use a section $C$ and curves $B^{1}_{1,1}$,$B^{2}_{1,1}$,
$B^{1}_{0,1},B^{2}_{0,1}$ in $\mathbb F_{1}$ given by the equations  
\[ B^{1}_{1,1}:Y_{0}+2Y_{2}=0,\ B^{2}_{1,1}:Y_{1}+2Y_{2}=0, \]
\[  B^{1}_{0,1}:X_{0}=0,\ B^{2}_{0,1}:X_{1}=0. \]
For (\ref{221}), we obtain an example if we use a section $C$ and curves $B^{1}_{1,3}$,$B^{2}_{1,3}$,
$B^{1}_{0,1},B^{2}_{0,1},B^{3}_{0,1}$ in $\mathbb F_{3}$ given by the equations 
\[ B^{1}_{1,3}:Y_{0}+2Y_{2}=0,\ B^{2}_{1,3}:Y_{1}+2Y_{2}=0, \]
\[  B^{1}_{0,1}:X_{0}=0,\ B^{2}_{0,1}:X_{1}=0,\ B^{3}_{0,1}:X_{0}-X_{1}=0. \]
\end{proof}
\subsection{Complete proof of Theorem \ref{thm:2}} 
In this section,  we will show that there is no numerical class such that it has an abelian $K3$ cover except the numerical classes which are mentioned in Section 3.1. 
Then by Section 3.1, we will get Theorem \ref{thm:2}. 
From here, we use the notations that\\
i) $X$ is a $K3$ surface,\\
ii) $G$ is a finite abelian subgroup of Aut$(X)$ such that $X/G\cong{\mathbb F}_{n}$,\\
iii) $p:X\rightarrow X/G$ is the quotient map, and\\
iv) $B:=\sum_{i=1}^{k}b_{i}B_{i}$ is the branch divisor of $p$.\\ 
Furthermore, 
we use the notation that  $B^{k}_{i,j}$ (or simply $B_{i,j}$) is a smooth curve on ${\mathbb F}_{n}$ such that 
$B^{k}_{i,j}=iC+jF$ in ${\rm Pic}({\mathbb F}_{n})$ if $n\geq0$ where $k\in{\mathbb N}$.\\
For the branch divisor $B=\sum_{i=1}^{m}\sum_{j=1}^{n(i)}b^{i}_{j}B^{j}_{s_{i},t_{i}}$ where $m,n(i)\in{\mathbb N}$, 
we use the notation that 
\[ G^{j}_{s_{i},t_{i}}:=\{g\in G:g_{|p^{-1}(B^{j}_{s_{i},t_{i}})}={\rm id}_{p^{-1}(B^{j}_{s_{i},t_{i}})}\}.\]
Recall that by Theorem \ref{thm:5}, $G^{j}_{s_{i},t_{i}}$ is a cyclic group of order $b^{i}_{j}$ which is generated by a non-symplectic automorphism of order $b^{i}_{j}$.
Since $G$ is abelian, the support of $B$ and  the support of $p^{\ast}B$ are simple normal crossing.
\begin{lem}\label{thm:11}
We assume that $X/G\cong{\mathbb F}_{n}$ for $n\geq 1$.
If $B=aC+\sum_{i=1}^{k}b_{i}(c_{i}C+nc_{i}F)+\sum_{j=1}^{l}d_{j}F_{j}$ in Pic$({\mathbb F}_{n})$ where $a,b_{i},d_{j}\geq2$ and $c_{i},l\geq1$, then $3\geq l\geq 2$ and $d_{1}=\cdots=d_{l}$.
\end{lem}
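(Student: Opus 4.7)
The plan is to analyze the composition $\pi := f\circ p : X \to \mathbb P^1$, where $f:\mathbb F_n\to\mathbb P^1$ is the ruling of $\mathbb F_n$, via its Stein factorization $X\to Z\to \mathbb P^1$, and apply Theorem \ref{thm:10} (Fenchel's theorem) to the induced abelian Galois cover $Z\to\mathbb P^1$. The conclusion will follow once one shows that $Z\cong\mathbb P^1$ and that the branch of $Z\to\mathbb P^1$ is exactly $\sum_{j=1}^{l} d_j P_j$ with $P_j := f(F_j)$; since $l\geq 1$ forces this cover to be non-trivial, Theorem \ref{thm:10} then forces the branch data to be either $m x_0 + m x_1$ or $2x_0 + 2x_1 + 2x_2$.

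First I would check that the general fiber of $X\to Z$ is a genus $1$ curve. For a general $P\in\mathbb P^1$, let $\tilde F$ be a connected component of $\pi^{-1}(P)=p^{-1}(f^{-1}(P))$ with set-stabilizer $K\le G$. By Theorem \ref{thm:5} ii)--iii) there exist $g_C,g_i\in G$ of orders $a,b_i$ fixing $p^{-1}(C),p^{-1}(B_i)$ pointwise; since $\tilde F$ surjects onto $f^{-1}(P)$, it meets both preimages, so $g_C$ and $g_i$ fix points of $\tilde F$ and hence lie in $K$. Therefore $\tilde F\to f^{-1}(P)$ is a $K$-Galois cover ramified over $C\cap f^{-1}(P)$ with index $a$ and over each of the $c_i$ points of $B_i\cap f^{-1}(P)$ with index $b_i$. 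Riemann--Hurwitz, combined with the identity $\tfrac{a-1}{a}+\sum c_i\tfrac{b_i-1}{b_i}=2$ (which follows from Theorem \ref{thm:6} and $K_X=0$, exactly as in the proof of Proposition \ref{thm:8}), yields $g(\tilde F)=1$.

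Next I would show that $Z\cong\mathbb P^1$ and that $X\to Z$ has no multiple fibers. Any non-trivial connected finite \'etale cover $Z'\to Z$ pulls back under the flat morphism $X\to Z$ to a non-trivial connected finite \'etale cover $X\times_Z Z'\to X$, contradicting simple connectedness of the K3 surface $X$; so $Z\cong\mathbb P^1$. Since $X$ is minimal (being K3), $X\to Z$ is a relatively minimal elliptic fibration; the canonical bundle formula $K_X\sim \pi^\ast(K_{\mathbb P^1}+\mathcal L)+\sum(m_j-1)F_j$, together with $K_X\sim 0$, $\deg\mathcal L=\chi(\mathcal O_X)=2$, and $F_j\equiv\tfrac{1}{m_j}f$ in $N^1(X)_{\mathbb Q}$, gives $\sum (m_j-1)/m_j=0$, so no multiple fibers appear.

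Finally I would read off the ramification of $Z\to\mathbb P^1$. Since $F_j$ is a fiber of $f$, $\pi^\ast P_j = p^\ast F_j = d_j\sum_k\tilde F_j^k$, where the $\tilde F_j^k$ are the pairwise disjoint components of $p^{-1}(F_j)$ (Theorem \ref{thm:5} iii). Each $\tilde F_j^k$ is the reduced fiber of $X\to Z$ over some $\tilde P_j^k\in Z$ above $P_j$; factoring $\pi^\ast P_j$ through $Z$ and using the absence of multiple fibers forces the ramification of $Z\to\mathbb P^1$ at every $\tilde P_j^k$ to be exactly $d_j$. For $P\notin\{P_j\}$, $\pi^\ast P$ is reduced (because $f^{-1}(P)$ is not in the branch of $p$), so $Z\to\mathbb P^1$ is unramified there. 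Hence $Z\to\mathbb P^1$ is a non-trivial abelian Galois cover of $\mathbb P^1$ by $\mathbb P^1$ with Galois group $G/H$ (where $H:=\ker(G\to\mathrm{Aut}(Z))$, abelian as a quotient of $G$) and branch divisor $\sum_j d_j P_j$, and Theorem \ref{thm:10} yields $l\in\{2,3\}$ and $d_1=\cdots=d_l$. The chief obstacle is the no-multiple-fibers step, which uses the canonical bundle formula combined with $K_X\sim 0$ and $h^1(\mathcal O_X)=0$ on the K3 surface $X$.
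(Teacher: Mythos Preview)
Your argument is correct, but it takes a considerably more elaborate route than the paper's. The paper simply restricts $p$ to a single irreducible component $C_i$ of $p^{-1}(C)$: since $(C\cdot C)=-n<0$ one has $(C_i\cdot C_i)<0$, so $C_i\cong\mathbb P^1$; the hypothesis $B_i\in|c_iC+nc_iF|$ forces $C\cdot B_i=0$, so the only branch points of the abelian cover $p_{|C_i}:C_i\to C\cong\mathbb P^1$ are the points $C\cap F_j$, each with ramification index $d_j$ (read off from the simple-normal-crossing local picture). Theorem~\ref{thm:10} then gives the conclusion in one line.

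By contrast, you pass to the Stein factorization of $f\circ p$, prove that the general fibre is elliptic via Riemann--Hurwitz, use simple connectedness of $X$ to get $Z\cong\mathbb P^1$, and invoke the canonical bundle formula to exclude multiple fibres before reading off the branch data of $Z\to\mathbb P^1$. All of these steps are sound (in particular your multiple-fibre argument is fine because $X$, being K3, is minimal and has $\chi(\mathcal O_X)=2$), and the machinery you set up is interesting in its own right---it makes visible the elliptic fibration on $X$ that the situation induces. But for the present lemma the paper's approach is both shorter and more elementary: it needs only the negativity of $(C\cdot C)$ and the local description of an abelian quotient near an SNC branch point, avoiding Stein factorization and Kodaira's formula entirely. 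The key simplification you missed is that the negative curve $C$ already supplies a rational curve upstairs on which to run Theorem~\ref{thm:10} directly.
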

\begin{proof}
By Theorem \ref{thm:5}, there are pairwise disjoint smooth curves $C_{1},\ldots,C_{m}$ such that $p^{\ast}C=\sum_{i=1}^{m}aC_{i}$. 
Since $C_{1},\ldots,C_{m}$ are pairwise disjoint, we get that $(\sum_{i=1}^{m}C_{i}\cdot \sum_{i=1}^{m}C_{i})=\sum_{i=1}^{m}(C_{i}\cdot C_{i})=m(C_{i}\cdot C_{i})$ for $i=1,\ldots,m$.
Since $(C\cdot C)=-n<0$, $(C_{i}\cdot C_{i})<0$ for $i=1,\ldots,m$.
Since $X$ is a $K3$ surface, $C_{i}$ is a smooth rational curve for $i=1,\ldots,m$.
Let $p_{|C_{i}}:C_{i}\rightarrow C$ be the finite map. 
Let $B_{c_{i},nc_{i}}$ be an irreducible curve on ${\mathbb F}_{n}$.
Since $B_{c_{i},nc_{i}}=c_{i}C+nc_{i}F$ in Pic$({\mathbb F}_{n})$, we get that $C\cap B_{c_{i},nc_{i}}$ is an empty set.
Since the support of $B$ is simple normal crossing , $p_{|C_{i}}$ is the Galois covering whose branch divisor is $\sum_{j=1}^{l}d_{j}(C\cap F_{j})$. 
If $d_{i}\not =d_{j}$, then $p_{|C_{i}}$ must be non-trivial.
Since $G$ is an abelian group, $p_{|C_{i}}$ is the Abelian cover, however by Theorem \ref{thm:10}, this is a non-Abelian cover. This is a contradiction. Therefore, $d_{1}=\cdots=d_{l}$.
\end{proof}
By Lemma \ref{thm:11}, the numerical class of $B$ is not one of 
(\ref{268},\ref{269},\ref{267},\ref{270},\ref{279},\ref{271},\\
\ref{272},\ref{283},\ref{284},\ref{281},\ref{280},\ref{282},\ref{273},\ref{274},\ref{275},\ref{276},\ref{278},\ref{277},\ref{285},\ref{286},\ref{287},\ref{288},\ref{289},\ref{293},\ref{292},\ref{290},\\
\ref{291},\ref{298},\ref{299},\ref{300},\ref{295},\ref{294},\ref{301},\ref{302},\ref{303},\ref{304},\ref{305},\ref{306},\ref{296},\ref{307},\ref{297},\ref{308},\ref{309},\ref{310},\ref{311}) of the list in section 6.
\begin{lem}\label{thm:20}
We assume that $X/G\cong{\mathbb F}_{n}$ for $n\geq 1$.
If $B=aC+\sum_{i=1}^{k}b_{i}B_{i}+\sum_{j=1}^{l}d_{j}B^{j}_{0,1}$ where $a,b_{i},d_{j}\geq2$, then $d_{1}=\cdots=d_{l}$, $2\leq\sum_{i=1}^{k}(C\cdot B_{i})+\sum_{j=1}^{l}(C\cdot B^{j}_{0,1})\leq3$, and $b_{i}=d_{1}$ if $(C\cdot B_{i})\not=0$ for $i=1,\ldots,k$.
\end{lem}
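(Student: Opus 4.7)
The plan is to imitate the proof of Lemma \ref{thm:11} but applied to the component $C$ itself, by pulling back $C$ and restricting $p$ to one irreducible component of the preimage.

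First, by parts $(ii)$ and $(iii)$ of Theorem \ref{thm:5} applied to the component $C$ of $B$, I would write $p^{\ast}C=a(C_{1}+\cdots+C_{m})$, where the $C_{i}$ are pairwise disjoint smooth curves all contained in the fixed locus of a single purely non-symplectic automorphism of order $a$. The same intersection-theoretic computation used in the proof of Lemma \ref{thm:11}, combined with the fact that every irreducible curve $D$ on a K3 surface satisfies $D^{2}\geq -2$, forces the $C_{i}$ to be smooth rational curves. Setting $H:=\mathrm{Stab}_{G}(C_{1})$, the restriction $p|_{C_{1}}\colon C_{1}\to C$ is an abelian Galois cover $\mathbb{P}^{1}\to\mathbb{P}^{1}$ with Galois group $H/G_{C_{1}}$.

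Next, I would determine the branch divisor of $p|_{C_{1}}$. Since $G$ is abelian and $X/G$ is smooth, the supports of $B$ and of $p^{\ast}B$ are simple normal crossing, so every point $x\in C\cap B_{i}$ (respectively $x\in C\cap B^{j}_{0,1}$) is a transversal intersection, and at a preimage $\tilde x\in C_{1}$ the stabilizer contains both $G_{C_{1}}$ and the order $b_{i}$ (resp.\ $d_{j}$) cyclic group fixing the component of $p^{-1}(B_{i})$ (resp.\ $p^{-1}(B^{j}_{0,1})$) through $\tilde x$. These two cyclic subgroups act on $T_{\tilde x}X$ through distinct characters, so they intersect trivially, and smoothness of $X/G$ at $x$ forces the stabilizer of $\tilde x$ to be exactly their product. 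Consequently the ramification index of $p|_{C_{1}}$ at $\tilde x$ is exactly $b_{i}$ (resp.\ $d_{j}$). Writing $k':=\sum_{i=1}^{k}(C\cdot B_{i})+\sum_{j=1}^{l}(C\cdot B^{j}_{0,1})$, the branch divisor of $p|_{C_{1}}$ on $C\cong\mathbb{P}^{1}$ is thus supported on $k'$ distinct points, with multiplicities $b_{i}$ at the $(C\cdot B_{i})$ points coming from $B_{i}$ and $d_{j}$ at the $(C\cdot B^{j}_{0,1})$ points coming from $B^{j}_{0,1}$.

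The proof concludes by invoking Theorem \ref{thm:10} for the abelian branched cover $p|_{C_{1}}\colon\mathbb{P}^{1}\to\mathbb{P}^{1}$. Fenchel's condition forces the branch divisor to be of the form $m x_{0}+m x_{1}$ or $2x_{0}+2x_{1}+2x_{2}$, which simultaneously gives the bound $2\le k'\le 3$, the equality $d_{1}=\cdots=d_{l}$, and the identification $b_{i}=d_{1}$ whenever $(C\cdot B_{i})\neq 0$.

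The main obstacle will be to verify nontriviality of $p|_{C_{1}}$ so that Theorem \ref{thm:10} actually rules out $k'\leq 1$: the stabilizer calculation above shows that as soon as $C$ meets any other irreducible component of $B$, the corresponding preimage on $C_{1}$ must be a genuine ramification point of $p|_{C_{1}}$, so $p|_{C_{1}}$ cannot be an isomorphism in the situations at hand; combined with the nonexistence of a Galois cover of $\mathbb{P}^{1}$ branched over a single point, this yields $k'\ge 2$. The second delicate point is the ramification computation at transversal crossings, which rests essentially on the fact that $X/G$ smooth at $x\in B_{i}\cap C$ forces the two local characters coming from $G_{C_{1}}$ and the stabilizer of $p^{-1}(B_{i})$ to be independent.
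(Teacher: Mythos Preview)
Your approach is essentially the same as the paper's: pull back $C$, note that the components $C_i$ are smooth rational curves, observe that $p|_{C_1}:C_1\to C$ is an abelian cover of $\mathbb{P}^1$, compute its branch divisor from the transverse intersections of $C$ with the other components of $B$, and apply Theorem~\ref{thm:10}. The paper compresses all of this into two sentences referring back to Lemma~\ref{thm:11}; your write-up is simply a more detailed and careful version of the same argument, including the local stabilizer analysis that the paper leaves implicit. Your caution about nontriviality is well placed: the lemma should be read with the implicit hypothesis $l\geq 1$ (parallel to the explicit $l\geq 1$ in Lemma~\ref{thm:11}), so that at least one fibre $B^{j}_{0,1}$ meets $C$ and forces $p|_{C_1}$ to be genuinely ramified.
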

\begin{proof}
In the sane way of Lemma \ref{thm:11}, we get that
for $p^{\ast}C=\sum_{i=1}^{m}C_{i}$, the finite map $p_{|C_{i}}:C_{i}\rightarrow C$ is the Abelian cover between ${\mathbb P}^{1}$ whose branch divisor is $\sum_{j=1}^{l}d_{j}(C\cap F_{j})$ and Galois group is $\{g\in G:|\ g(C_{1})=C_{1}\}$.
By Theorem \ref{thm:10}, we get the claim.
\end{proof}
By Lemma \ref{thm:20}, the numerical class of $B$ is not one of 
(\ref{86},\ref{106},\ref{104},\ref{105},\ref{110},\ref{108},\\
\ref{120},\ref{121},
\ref{139},\ref{140},\ref{113},\ref{116},\ref{114},\ref{136},\ref{111},\ref{115},\ref{112},\ref{187},\ref{185},\ref{186},\ref{191},\ref{189},\ref{190},\ref{188},\ref{204},\ref{230},\\
\ref{228},\ref{229},\ref{225},\ref{219},\ref{231},\ref{232},\ref{235},\ref{238},\ref{247},\ref{242},\ref{255},\ref{254},\ref{252},\ref{253},\ref{259},\ref{264},\ref{265}) of the list in section 6.
\begin{lem}\label{thm:22}
If there are irreducible curves $B_{1}$ and $B_{2}$  and positive even integers $b_{1},b_{2}\geq2$
such that  $B=b_{1}B_{1}+b_{2}B_{2}$, and $(B_{1}\cdot B_{2})\not=0$, then  $(B_{1}\cdot B_{2})=8$.
\end{lem}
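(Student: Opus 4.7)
The strategy is to build a symplectic involution from elements of $G_{B_1}$ and $G_{B_2}$ and exploit the fact, due to Nikulin, that a nontrivial symplectic involution on a K3 surface has exactly $8$ isolated fixed points. By Theorem \ref{thm:5}, $G_{B_i}$ is cyclic of order $b_i$ generated by a purely non-symplectic automorphism $g_i$, and since $b_i$ is even, $\sigma_i:=g_i^{b_i/2}$ is a non-symplectic involution satisfying $\sigma_i^\ast\omega=-\omega$. Because $B_1\cap B_2\neq\emptyset$, the paragraph following Theorem \ref{thm:5} gives $G_{B_1}\cap G_{B_2}=\{\mathrm{id}_X\}$ and $G=G_{B_1}\oplus G_{B_2}$; in particular $\sigma_1\neq\sigma_2$, the two involutions commute, and $\tau:=\sigma_1\sigma_2$ is a nontrivial symplectic involution, hence has exactly $8$ isolated fixed points.

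The central step is to show every $\tau$-fixed point lies in some intersection $C_{1,j}\cap C_{2,k}$, where we write $p^\ast B_i=b_i\sum_j C_{i,j}$. Let $y\in\mathrm{Fix}(\tau)$. Then $y\in\bigcup_{g\in G\setminus\{\mathrm{id}_X\}}\mathrm{Fix}(g)=p^{-1}(B_1\cup B_2)$, so $y$ lies on some $C_{1,j_0}$ or $C_{2,k_0}$. Say $y\in C_{1,j_0}$; then $\sigma_1(y)=y$, so $\tau(y)=y$ forces $\sigma_2(y)=y$. At $y$, $\sigma_1$ acts on the tangent space as $(1,-1)$ (identity along $C_{1,j_0}$, and $-1$ on the normal direction since $\sigma_1=g_1^{b_1/2}$), and $\tau$ acts as $(-1,-1)$ because a symplectic involution has determinant $1$ at an isolated fixed point; hence $\sigma_2$ acts as $(-1,1)$. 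Therefore $\mathrm{Fix}(\sigma_2)$ contains a smooth curve $D$ through $y$ transverse to $C_{1,j_0}$. Since $\mathrm{Fix}(\sigma_2)\subseteq p^{-1}(B_1\cup B_2)$, $D$ is a component of some $C_{1,j'}$ or $C_{2,k}$; it cannot be a $C_{1,j'}$, for then $\sigma_2\in G_{C_{1,j'}}\subseteq G_{B_1}$, contradicting $\sigma_2\in G_{B_2}\setminus\{\mathrm{id}_X\}$. Thus $D=C_{2,k_0}$ for some $k_0$, and $y\in C_{1,j_0}\cap C_{2,k_0}$. The case $y\in C_{2,k_0}$ is symmetric.

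Finally, using $|G|=b_1 b_2$ and the projection formula,
\[ b_1 b_2\,(B_1\cdot B_2)=p^\ast B_1\cdot p^\ast B_2=b_1 b_2\sum_{j,k}(C_{1,j}\cdot C_{2,k}), \]
so $(B_1\cdot B_2)=\sum_{j,k}(C_{1,j}\cdot C_{2,k})$. Because $G$ is abelian with smooth quotient, the support of $p^\ast B$ is simple normal crossing, so this sum equals the number of geometric intersection points of $\bigcup_j C_{1,j}$ and $\bigcup_k C_{2,k}$. By the previous paragraph this set equals $\mathrm{Fix}(\tau)$, which has cardinality $8$; hence $(B_1\cdot B_2)=8$.

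The main obstacle is the tangent-action step: one must rule out that $\mathrm{Fix}(\sigma_2)$ provides a fixed curve through $y$ which is not one of the $C_{2,k}$'s. The containment $\mathrm{Fix}(\sigma_2)\subseteq p^{-1}(B_1\cup B_2)$ combined with the rigidity $G_{B_1}\cap G_{B_2}=\{\mathrm{id}_X\}$ closes this gap and pins every one of the $8$ symplectic fixed points onto a genuine intersection $C_{1,j}\cap C_{2,k}$.
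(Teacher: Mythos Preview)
Your proof is correct and follows the same strategy as the paper: build the symplectic involution $\tau=\sigma_1\sigma_2$ from the two non-symplectic halves, identify $\mathrm{Fix}(\tau)$ with $p^{-1}(B_1)\cap p^{-1}(B_2)$, and invoke Nikulin's count of $8$ fixed points together with the simple-normal-crossing intersection count. The only difference is that the paper asserts $\mathrm{Fix}(\tau)=p^{-1}(B_1)\cap p^{-1}(B_2)$ directly from the smoothness of $X/G$ (implicitly using that an abelian stabilizer with smooth quotient is generated by its pseudo-reflections, so $\tau\in G_y$ forces $y$ to lie on curves from both $p^{-1}(B_1)$ and $p^{-1}(B_2)$), whereas you unpack this via an explicit tangent-space computation; both arguments are equivalent. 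One small omission: you only argue $\mathrm{Fix}(\tau)\subseteq\bigcup_{j,k}C_{1,j}\cap C_{2,k}$ and then assert equality, but the reverse inclusion is immediate since $\sigma_1,\sigma_2$ fix every such intersection point.
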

\begin{proof}
By Theorem \ref{thm:5}, $G=G_{B_{1}}\oplus G_{B_{2}}$ and $G_{B_{i}}\cong {\mathbb Z}/b_{i}{\mathbb Z}$ for $i=1,2.$
Let $s_{i}\in G_{B_{i}}$ be a generator for $i=1,2$. 
Since $G$ is abelian, $s_{1}^{\frac{b_{i}}{2}}\circ s_{2}^{\frac{b_{2}}{2}}$ is a symplectic automorphism of order 2.
Since $X/G$ is smooth, Fix$(s_{1}^{\frac{b_{i}}{2}}\circ s_{2}^{\frac{b_{2}}{2}})=p^{-1}(B_{1})\cap p^{-1}(B_{2})$. 
Since the support of $B$ is simple normal crossing and $|G|=b_{1}b_{2}$, we get that $|p^{-1}(B_{1})\cap p^{-1}(B_{2})|=(B_{1}\cdot B_{2})$.
By the fact that the fixed locus of a symplectic automorphism of order $2$ are 8 isolated points, we get that $(B_{1}\cdot B_{2})=8$.
\end{proof}
By Lemma \ref{thm:22}, the numerical class of $B$ is not one of (\ref{4},\ref{8},\ref{9},\ref{11},\ref{97},\ref{82},\ref{83},\\
\ref{85},\ref{184},\ref{201},\ref{175}) of the list in section 6.
\begin{lem}\label{thm:27}
If there are irreducible curves $B_{1}$ and $B_{2}$ such that $B=3B_{1}+3B_{2}$ and $(B_{1}\cdot B_{2})\not=0$, then  $(B_{1}\cdot B_{2})=3$.
\end{lem}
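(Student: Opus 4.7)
I plan to extract $(B_1\cdot B_2)$ from the topological Euler characteristic of $X$, using that $X$ is a K3 surface (so $\chi_{\mathrm{top}}(X)=24$) together with multiplicativity of $\chi_{\mathrm{top}}$ over the stratification of $\mathbb F_n$ induced by the orbit types of $G$. The first step will be to observe that $(B_1\cdot B_2)\neq 0$ forces $B_1\cap B_2\neq\emptyset$, which by the discussion immediately following Theorem~\ref{thm:5} gives $G = G_{B_1}\oplus G_{B_2}\cong(\mathbb{Z}/3\mathbb{Z})^{\oplus 2}$, so $p\colon X\to \mathbb F_n$ has degree $9$.

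Next, I will stratify $\mathbb F_n$ by the stabilizer type. Let $S:=B_1\cap B_2$. The open set $U:=\mathbb F_n\setminus(B_1\cup B_2)$ has trivial stabilizer and fibre size $9$; each stratum $B_i\setminus S$ has stabilizer $G_{B_i}$ of order $3$ and fibre size $3$; and at any preimage of a point of $S$ the stabilizer contains both $G_{B_1}$ and $G_{B_2}$, hence equals $G$, so the fibre is a single point. The simple normal crossing hypothesis gives $|S|=(B_1\cdot B_2)$.

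Writing $g_i:=g(B_i)$, applying multiplicativity of $\chi_{\mathrm{top}}$ on each stratum (each of which is a finite \'etale cover of its image) yields
\[
\chi_{\mathrm{top}}(X) = 9\bigl(2g_1+2g_2+(B_1\cdot B_2)\bigr) + 3\bigl(2-2g_1-(B_1\cdot B_2)\bigr) + 3\bigl(2-2g_2-(B_1\cdot B_2)\bigr) + (B_1\cdot B_2),
\]
which collapses to $12g_1+12g_2+4(B_1\cdot B_2)+12$. Setting this equal to $24$ and dividing by $4$ produces the linear constraint $3g_1+3g_2+(B_1\cdot B_2)=3$. Since $g_1,g_2\geq 0$ and $(B_1\cdot B_2)>0$ by hypothesis, the only solution is $g_1=g_2=0$ and $(B_1\cdot B_2)=3$.

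The one step that needs careful justification is the fibre-size bookkeeping at $S$: it is crucial that $G_{B_1}+G_{B_2}=G$ inside $G$ rather than a proper subgroup. This is exactly the content of the direct sum decomposition $G=G_{B_1}\oplus G_{B_2}$ from the paragraph after Theorem~\ref{thm:5}, combined with the fact that simple normal crossing guarantees a component of $p^{-1}(B_1)$ and a component of $p^{-1}(B_2)$ both pass through every preimage of a point of $S$, so that each such preimage is fixed by all of $G$.
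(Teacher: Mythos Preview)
Your proof is correct and takes a genuinely different route from the paper's. Both arguments begin the same way: from $(B_1\cdot B_2)\neq 0$ and the paragraph after Theorem~\ref{thm:5} one gets $G=G_{B_1}\oplus G_{B_2}\cong(\mathbb Z/3\mathbb Z)^{\oplus 2}$, and the simple normal crossing property (automatic in the abelian setting of Section~3.2) gives $|B_1\cap B_2|=(B_1\cdot B_2)$ and the fibre size $1$ over each such point. From there the two proofs diverge. The paper picks a generator $s_i$ of each $G_{B_i}$, arranges that $s_1\circ s_2$ is non-symplectic of order $3$ with no curve in its fixed locus, and then invokes the Artebani--Sarti/Taki classification to conclude that $\mathrm{Fix}(s_1\circ s_2)$ consists of exactly three isolated points; identifying this set with $p^{-1}(B_1)\cap p^{-1}(B_2)$ yields $(B_1\cdot B_2)=3$. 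Your Euler-characteristic stratification replaces this external classification input by the single numerical fact $\chi_{\mathrm{top}}(X)=24$, and the divisibility constraint $3(g_1+g_2)=3-(B_1\cdot B_2)$ then forces $(B_1\cdot B_2)=3$ among the positive integers. Your approach is more elementary and self-contained, and it yields the bonus information $g(B_1)=g(B_2)=0$; the paper's approach is faster once the fixed-locus tables are already on the shelf, and it fits the pattern used repeatedly elsewhere in Section~3.2 (e.g.\ Lemmas~\ref{thm:22}, \ref{thm:30}, \ref{thm:31}).
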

\begin{proof}
By Theorem \ref{thm:5}, $G=G_{B_{1}}\oplus G_{B_{2}}$ and $G_{B_{i}}\cong {\mathbb Z}/3{\mathbb Z}$ for $i=1,2.$
Let $s_{i}\in G_{B_{i}}$ be a generator for $i=1,2$. 
Since $G$ is abelian, we may assume that $s_1\circ s_2$ is a non-symplectic automorphism of order 3.
By Theorem  \ref{thm:5}, Fix$(s_1\circ s_2)$ does not contain a curve. 
Then by [\ref{bio:2}, Theorem 2.8] or [\ref{bio:7}, Table 2], Fix$(s_1\circ s_2)$ is only three isolated points.   
Since $X/G$ is smooth, Fix$(s_{1}\circ s_{2})=p^{-1}(B_{1})\cap p^{-1}(B_{2})$. 
Since $B_{1}+B_{2}$ is simple normal crossing and  $G=G_{B_{1}}\oplus G_{B_{2}}$, we get that $|p^{-1}(B_{1})\cap p^{-1}(B_{2})|=(B_{1}\cdot B_{2})$.
Therefore, we get  $(B_{1}\cdot B_{2})=3$.
\end{proof}
By Lemma \ref{thm:27}, the numerical class of $B$ is not one of (\ref{5},\ref{6},\ref{200},\ref{168}) of the list in section 6.
\begin{lem}\label{thm:28}
If there are irreducible curves $B_{i}$ and positive integers $b_{i}\geq2$ for $i=1,\ldots,k$ such that 
$B=\sum_{i=1}^{k}b_{i}B_{i}$ and $G=G_{B_{i}}$ for some $i$, then  $(B_{i}\cdot B_{j})=0$ for $j\not=i$.
\end{lem}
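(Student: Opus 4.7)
The plan is to argue by contradiction: assume $(B_i\cdot B_j)\neq 0$ for some $j\neq i$ (so in particular $j$ exists with $b_j\geq 2$), and exhibit a non-trivial element of $G$ with trivial differential at a point of $X$, which is impossible.

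First I would unpack the hypothesis $G=G_{B_i}$. By Theorem \ref{thm:5}(ii), this forces $G\cong\mathbb Z/b_i\mathbb Z$, generated by a purely non-symplectic automorphism of order $b_i$, and by Theorem \ref{thm:5}(iii) every component of $p^{-1}(B_i)$ is contained in the fixed locus of this generator; consequently every element of $G$ acts trivially on $p^{-1}(B_i)$. Next, pick $j\neq i$; by Theorem \ref{thm:5}(ii) applied to $B_j$, the subgroup $G_{B_j}\leq G$ is cyclic of order $b_j\geq 2$, so it contains some non-trivial element $h$, and $h$ fixes every component of $p^{-1}(B_j)$ pointwise.

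Now assume $(B_i\cdot B_j)\neq 0$. Then $B_i\cap B_j\neq\emptyset$, so $p^{-1}(B_i)\cap p^{-1}(B_j)$ is non-empty; choose a point $y$ in this intersection. As noted just before the statement (since $G$ is abelian and $X/G$ is smooth), the support of $p^{\ast}B$ is simple normal crossing, so there is a unique component $C\subset p^{-1}(B_i)$ and a unique component $C'\subset p^{-1}(B_j)$ through $y$, meeting transversally at $y$. Because $h\in G=G_{B_i}$, the automorphism $h$ fixes $C$ pointwise, and because $h\in G_{B_j}$, it also fixes $C'$ pointwise. Hence the differential $dh_y$ fixes the two transversal tangent directions $T_y C$ and $T_y C'$, which span $T_y X$; therefore $dh_y=\mathrm{id}$.

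The hard part is already essentially in hand: since $h\in\mathrm{Aut}(X)$ has finite order, fixes $y$, and has trivial differential there, $h$ acts trivially in a neighborhood of $y$ (one linearizes the $\langle h\rangle$-action at the fixed point in characteristic zero), and then, $X$ being connected, $h=\mathrm{id}_X$. This contradicts the non-triviality of $h$, so $(B_i\cdot B_j)=0$ as required. No delicate estimate is needed; the only subtlety worth checking is that the simple normal crossing property of $p^{\ast}B$ really does give a single pair of transversal components through $y$, which is precisely why the abelian hypothesis on $G$ enters.
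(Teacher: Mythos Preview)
Your argument is correct and follows essentially the same approach as the paper. The paper's proof is slightly more terse: rather than computing $dh_y$ explicitly at a simple normal crossing point, it simply invokes the fact that the fixed locus of a finite-order automorphism of a smooth surface is a disjoint union of smooth curves and isolated points, so two distinct fixed curves (components of $p^{-1}(B_i)$ and of $p^{-1}(B_j)$) cannot meet. Your differential computation is exactly the local reason behind that fact, so the two arguments are the same up to packaging; in particular the simple normal crossing hypothesis you invoke is not strictly needed, since smoothness of the fixed locus already rules out any intersection.
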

\begin{proof}
Recall that by Theorem \ref{thm:5}, $G_{B_{m}}$ is generated by a non-symplectic automorphism of order $b_{m}$ and Fix$(G_{B_{m}})\supset p^{-1}(B_{m})$ for $m=1,\ldots,k$.
If $(B_{i}\cdot B_{j})\not=0$ for $j\not=0$, then $p^{-1}(B_{i})\cap p^{-1}(B_{j})$ is not an empty set.
By the fact that the fixed locus of an automorphism is a pairwise set of points and curves, this is a contradiction.
\end{proof}
By Lemma \ref{thm:28},  the numerical class of $B$ is not one of (\ref{7},\ref{84},\ref{85,1},\ref{173},\ref{174}) of the list in section 6.
\begin{lem}\label{thm:29}
If there are irreducible curves $B_{1}$ and $B_{2}$ such that $B=2B_{1}+2B_{2}$ and $(B_{1}\cdot B_{2})\not=0$, then $\frac{B_{i}}{2}\in$Pic$({\mathbb F}_{n})$ for $i=1,2$.
\end{lem}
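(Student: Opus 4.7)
My plan is to factor the quotient $p : X \to \mathbb{F}_n$ through the intermediate surface $Y := X/\langle s_1\rangle$ and then read off the divisibility of $B_2$ from the resulting cyclic double cover $Y \to \mathbb{F}_n$.

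First, since $(B_1 \cdot B_2) \neq 0$ forces $B_1 \cap B_2 \neq \emptyset$, the discussion immediately following Theorem~\ref{thm:5} yields $G = G_{B_1} \oplus G_{B_2}$ with $G_{B_i} = \langle s_i\rangle \cong \mathbb{Z}/2\mathbb{Z}$, each $s_i$ a purely non-symplectic involution by part~(ii) of Theorem~\ref{thm:5}. A non-symplectic involution on a $K3$ surface has fixed locus consisting only of smooth curves: at an isolated fixed point the differential would act as $-\mathrm{id}$, forcing $s_i^{\ast}\omega = \omega$, contradicting non-symplecticity. Hence $Y := X/\langle s_1\rangle$ is smooth, and the commuting involution $s_2$ descends to $\bar s_2$ on $Y$ with $Y/\langle \bar s_2\rangle = X/G \cong \mathbb{F}_n$; so $q : Y \to \mathbb{F}_n$ is a degree-$2$ Galois cover.

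Next I would identify the branch divisor of $q$ as $2B_2$. Over a generic point of $B_1$ all ramification is absorbed by $X \to Y$ (which is ramified precisely along $p^{-1}(B_1)$), so $q$ is \'etale there; over a generic point of $B_2$ the involution $\bar s_2$ fixes the image of $p^{-1}(B_2)$, so $q$ ramifies with index $2$. A short local calculation near points of $B_1 \cap B_2$, obtained by simultaneously diagonalizing the abelian action of $\langle s_1, s_2\rangle$ in linearizing coordinates and using the SNC hypothesis on $\mathrm{Supp}(B)$, rules out any extra branch components.

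Finally, I would invoke the standard structure theorem for cyclic double covers (the converse of Theorem~\ref{thm:12} in the case $n=2$): any degree-$2$ Galois cover of a smooth base $M$ with reduced branch locus $R$ has the form $\mathrm{Spec}_M(\mathcal{O}_M \oplus L^{-1})$ for some $L \in \mathrm{Pic}(M)$ with $L^{\otimes 2} \cong \mathcal{O}_M(R)$. Applied to $q$ this produces $L \in \mathrm{Pic}(\mathbb{F}_n)$ with $L^{\otimes 2} \cong \mathcal{O}_{\mathbb{F}_n}(B_2)$, and since $\mathrm{Pic}(\mathbb{F}_n) \cong \mathbb{Z}^{\oplus 2}$ is torsion-free this forces $B_2/2 \in \mathrm{Pic}(\mathbb{F}_n)$. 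The symmetric argument, interchanging $s_1$ and $s_2$, yields $B_1/2 \in \mathrm{Pic}(\mathbb{F}_n)$. The main obstacle will be the careful local bookkeeping at $B_1 \cap B_2$ to confirm that $q$ has no branch components beyond $2B_2$; once that is in hand, the conclusion is an immediate application of cyclic cover theory.
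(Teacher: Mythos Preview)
Your argument is correct and follows essentially the same route as the paper: decompose $G=G_{B_1}\oplus G_{B_2}$, use that each generator is a non-symplectic involution so $X/G_{B_i}$ is smooth, identify the induced double cover $X/G_{B_1}\to\mathbb{F}_n$ as branched exactly along $2B_2$, and read off $B_2/2\in\mathrm{Pic}(\mathbb{F}_n)$ from the standard structure of smooth double covers. The local bookkeeping you flag as the ``main obstacle'' is in fact immediate: the only other nontrivial element $s_1s_2$ is symplectic, hence has no fixed curves, so it cannot contribute any codimension-one branch component to $q$; the paper simply asserts the branch divisor is $2B_j$ without further comment for this reason.
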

\begin{proof}
By Theorem \ref{thm:5}, $G=G_{B_{1}}\oplus G_{B_{2}}$ and $G_{B_{i}}\cong{\mathbb Z}/2{\mathbb Z}$ for $i=1,2$. 
Since the fixed locus of a non-symplectic automorphism of order 2 is a set of pairwise set of smooth curves or empty set, $X/G_{B_{i}}$ is smooth. 
Then there is a double cover $X/G_{B_{i}}\rightarrow X/G\cong{\mathbb F}_{n}$ whose branch divisor is $2B_{j}$ for $i,j=1,2$ and $i\not=j$. 
By Theorem \ref{thm:12}, $\frac{B_{i}}{2}\in$Pic$({\mathbb F}_{n})$ for $i=1,2$.
\end{proof}
By Lemma \ref{thm:29}, the numerical class of $B$ is not one of (\ref{10},\ref{99},\ref{98}) of the list in section 6.
\begin{lem}\label{thm:30}
If there are irreducible curves $B_{1},B_{2},B_{3}$ such that $B=2B_{1}+3B_{2}+6B_{3}$ and $(B_{2}\cdot B_{2})\geq1$ and $(B_{i}\cdot B_{j})\not=0$ for $1\leq i<j\leq3$, then $(B_{2}\cdot B_{2})=1$.
\end{lem}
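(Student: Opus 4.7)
The plan is to pin down $|G|$ using Theorem \ref{thm:5}, then combine a Riemann--Hurwitz calculation for the preimage of $B_2$ with a topological Euler characteristic identity to force $(B_2)^2=1$.

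First I would determine $G$. By Theorem \ref{thm:5} (ii), each $G_{B_i}$ is cyclic of order $b_i\in\{2,3,6\}$, generated by a purely non-symplectic automorphism. The hypothesis $(B_i\cdot B_j)\neq 0$ makes $p^{-1}(B_i)$ and $p^{-1}(B_j)$ meet, and since fixed loci of automorphisms are disjoint unions of smooth curves and isolated points, no non-identity element can lie in both $G_{B_i}$ and $G_{B_j}$; hence $G_{B_i}\cap G_{B_j}=\{\mathrm{id}_X\}$ for all pairs. In particular the unique involution $t^3$ of $G_{B_3}$ differs from the generator of $G_{B_1}$, so $G_{B_1}\not\subset\langle G_{B_2},G_{B_3}\rangle$. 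Since $\mathbb F_n$ is simply connected, Theorem \ref{thm:5} (i) applied with $I=\emptyset$ gives $G=\langle G_{B_1},G_{B_2},G_{B_3}\rangle$, and a direct count yields $|G|=36$.

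Next, $(B_2)^2\geq 1>0$ and Theorem \ref{thm:5} (iv) imply that $p^{-1}(B_2)=\Gamma$ is a single smooth irreducible curve with $p^{*}B_2=3\Gamma$; then $36(B_2)^2=(p^{*}B_2)^2=9(\Gamma\cdot\Gamma)$ gives $(\Gamma\cdot\Gamma)=4(B_2)^2$, and adjunction on the K3 surface $X$ yields $g(\Gamma)=2(B_2)^2+1$. A short check of the divisor equation $K_{\mathbb F_n}+\tfrac12 B_1+\tfrac23 B_2+\tfrac56 B_3=0$ in $\mathrm{Pic}_{\mathbb Q}(\mathbb F_n)$, writing $B_i=\alpha_i C+\beta_i F$, forces $\alpha_i=1$ for all $i$; combined with adjunction on $\mathbb F_n$ this yields $g(B_i)=0$.

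I would then apply Riemann--Hurwitz to the abelian Galois cover $p|_\Gamma\colon\Gamma\to B_2\cong\mathbb P^1$ of degree $|G|/b_2=12$ with Galois group $G/G_{B_2}\cong\mathbb Z/2\mathbb Z\oplus\mathbb Z/6\mathbb Z$. A point of $B_2\cap B_1$ has $G$-stabilizer of order $b_1 b_2=6$, giving $36/6=6$ preimages on $\Gamma$ each ramified of index $2$; a point of $B_2\cap B_3$ has stabilizer of order $b_2 b_3=18$, giving $36/18=2$ preimages on $\Gamma$ each ramified of index $6$. Riemann--Hurwitz then yields
\[
2g(\Gamma)-2=12\cdot(-2)+6(B_1\cdot B_2)+10(B_2\cdot B_3),
\]
equivalent to $(B_2)^2=\tfrac12\bigl(3(B_1\cdot B_2)+5(B_2\cdot B_3)\bigr)-6$.

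Finally I would compute $e(X)=24$ by stratifying $\mathbb F_n$ along the branch divisor: each stratum $S$ of stabilizer order $d$ contributes $\tfrac{36}{d}e(S)$. Using $e(\mathbb F_n)=4$, $e(B_i)=2$, and stabilizer orders $b_i$ on the open part of $B_i$ and $b_ib_j$ on $B_i\cap B_j$, one obtains after simplification
\[
12(B_1\cdot B_2)+15(B_1\cdot B_3)+20(B_2\cdot B_3)=24.
\]
Combining this identity with the Riemann--Hurwitz equation above and the constraint $(B_i\cdot B_j)\geq 1$ for all three pairs, the only compatible value is $(B_2)^2=1$. The hard part will be the Diophantine endgame: verifying carefully that no intersection pattern with $(B_2)^2\geq 2$ is simultaneously consistent with both the Euler characteristic identity and the Riemann--Hurwitz equation, and handling the case analysis on $n$ that this forces.
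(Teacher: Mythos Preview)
Your approach is correct and genuinely different from the paper's. The paper argues as follows: once $|G|=36$ and $p^{*}B_{2}=3\Gamma$ with $\Gamma$ irreducible, the generator of $G_{B_{2}}$ is a non-symplectic automorphism of order~$3$ whose fixed locus contains $\Gamma$ as its \emph{only} curve (since $G_{B_{2}}\cap G_{B_{1}}=G_{B_{2}}\cap G_{B_{3}}=\{1\}$). If $(B_{2})^{2}\geq 2$ then $g(\Gamma)\geq 5$, and the paper simply invokes the Artebani--Sarti/Taki classification of non-symplectic order-$3$ automorphisms to declare this impossible. So the paper's proof is a two-line appeal to an external classification, whereas yours is self-contained via Euler characteristic and Riemann--Hurwitz.

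One remark that substantially simplifies your argument: the Euler-characteristic identity you derive,
\[
12\,(B_{1}\cdot B_{2})\;+\;15\,(B_{1}\cdot B_{3})\;+\;20\,(B_{2}\cdot B_{3})\;=\;24,
\]
already finishes the proof by itself. Distinct irreducible curves have non-negative intersection, so the hypothesis $(B_{i}\cdot B_{j})\neq 0$ forces each term to be at least~$1$, giving $12+15+20=47\leq 24$, a contradiction. Thus the hypotheses of the lemma are vacuous and $(B_{2})^{2}=1$ holds trivially; the Riemann--Hurwitz computation for $\Gamma\to B_{2}$ and the ``Diophantine endgame'' you anticipate are not needed at all. (The cases \eqref{17} and \eqref{213} that the lemma is used to exclude both have all three intersection numbers equal to~$2$, consistent with this.) What each route buys: the paper's argument is shorter but leans on the order-$3$ classification; yours is elementary and in fact proves the stronger statement that no such configuration exists.
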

\begin{proof}
Theorem \ref{thm:5}, $G_{B_{1}}\cong{\mathbb Z}/2{\mathbb Z}$, $G_{B_{2}}\cong{\mathbb Z}/3{\mathbb Z}$, $G_{B_{3}}\cong{\mathbb Z}/6{\mathbb Z}$.
Since $(B_{i}\cdot B_{j})\not=0$ for $1\leq i<j\leq 3$, we get $G_{B_{1}}\oplus G_{B_{2}}\cap G_{B_{3}}=\{{\rm id}_{X} \}$. 
Therefore, $G=G_{B_{1}}\oplus G_{B_{2}}\oplus G_{B_{3}}$.
Since $(B_{2}\cdot B_{2})>0$, we get that $p^{\ast}B_{2}=3C_{2}$ and the only curve of Fix$(G_{B_{2}})$ is $C_{2}$.

We assume that $(B_{2}\cdot B_{2})\geq2$.
Since $|G|=36$, $(C^{2}_{1,1}\cdot C^{2}_{1,1})\geq8$, and hence the genus of $C^{2}_{1,1}$ is at least $5$. 
By [\ref{bio:2},\ref{bio:7}] and the only curve of Fix$(G_{B_{2}})$ is $C_{2}$, this is a contradiction. 
\end{proof}
By Lemma \ref{thm:30}, the numerical class of $B$ is not one of (\ref{17},\ref{213}) of the list in section 6.
\begin{lem}\label{thm:31}
If there are irreducible curves $B_{1},B_{2},B_{3}$ such that 
 $B=2B_{1}+4B_{2}+4B_{3}$ and $(B_{i}\cdot B_{j})\not=0$ for $1\leq i<j\leq3$, then $(B_{1}\cdot B_{2})=1$.
\end{lem}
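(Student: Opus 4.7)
The plan is to refine the symplectic-involution argument of Lemma \ref{thm:22}: here the smaller multiplicity $b_1=2$ combined with $b_2=b_3=4$ forces us to compare two commuting symplectic involutions and invoke the fixed-point count for Klein four-group actions on K3 surfaces. By Theorem \ref{thm:5}(ii),(iii), each $G_{B_i}$ is cyclic, generated by a purely non-symplectic automorphism $s_i$ of order $b_i$, and since $\mathbb{F}_n$ is simply connected, Theorem \ref{thm:5}(i) (applied with $I=\emptyset$) yields $G=\langle s_1,s_2,s_3\rangle$.

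First I would pin down the group structure. The non-vanishing of every $(B_i\cdot B_j)$, combined with the fact that the fixed locus of any single automorphism of $X$ is a disjoint union of smooth curves and isolated points, rules out any non-identity common element of $\langle s_i\rangle$ and $\langle s_j\rangle$ (such an element would simultaneously fix intersecting curves from $p^{-1}(B_i)$ and $p^{-1}(B_j)$, which meet because $B_i\cap B_j\ne\emptyset$). A short character check using $\xi_{s_1}=-1$ and $\xi_{s_2},\xi_{s_3}$ primitive fourth roots of unity then eliminates the only remaining hybrid relation $s_1=s_2^{2}s_3^{2}$, whose right-hand side would be symplectic. Hence $G=\langle s_1\rangle\oplus\langle s_2\rangle\oplus\langle s_3\rangle$ has order $32$.

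Next I set $g:=s_1 s_2^{2}$. Since $\xi_g=(-1)(-1)=1$ and $s_1\ne s_2^{2}$, the element $g$ is a non-trivial symplectic involution, so $|{\rm Fix}(g)|=8$ by the standard count on K3 surfaces. Every point of $p^{-1}(B_1)\cap p^{-1}(B_2)$ is fixed by $g$; the simple normal crossing hypothesis (valid since $G$ is abelian and $X/G$ is smooth) gives local stabilizer $\langle s_1\rangle\times\langle s_2\rangle$ of order $8$ at each such point, whence
\[ |p^{-1}(B_1)\cap p^{-1}(B_2)|=\frac{|G|}{8}(B_1\cdot B_2)=4(B_1\cdot B_2).\]
The inclusion $p^{-1}(B_1)\cap p^{-1}(B_2)\subseteq{\rm Fix}(g)$ then forces $(B_1\cdot B_2)\le 2$, so $(B_1\cdot B_2)\in\{1,2\}$.

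The final, most delicate step is to rule out $(B_1\cdot B_2)=2$. In that case the containment is an equality, so ${\rm Fix}(g)=p^{-1}(B_1)\cap p^{-1}(B_2)$. Put $h:=s_1 s_3^{2}$, also a non-trivial symplectic involution by the same reasoning; then $\langle g,h\rangle\cong(\mathbb{Z}/2\mathbb{Z})^{\oplus 2}$ acts symplectically on $X$, and by the Mukai--Nikulin classification of symplectic actions (equivalently, a Lefschetz fixed-point computation on $H^{\ast}(X,\mathbb{Q})$ using the known action on $H^{1,1}$) such a group has a non-empty common fixed locus. On the other hand, classifying the possible isotropy cyclic groups of order $2$ or $4$ that contain $s_3^{2}$, and invoking $\langle s_2\rangle\cap\langle s_3\rangle=\{{\rm id}_X\}$ and $\langle s_1\rangle\cap\langle s_3\rangle=\{{\rm id}_X\}$, yields ${\rm Fix}(s_3^{2})=p^{-1}(B_3)$. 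Hence any $y\in{\rm Fix}(g)\cap{\rm Fix}(h)$ satisfies $s_1(y)=y$ (as $y\in p^{-1}(B_1)$) and therefore $s_3^{2}(y)=y$, placing $y$ in $p^{-1}(B_1\cap B_2\cap B_3)$, which is empty by simple normal crossing on the surface $\mathbb{F}_n$. This contradicts non-emptiness of ${\rm Fix}(\langle g,h\rangle)$ and forces $(B_1\cdot B_2)=1$. I expect the main obstacles to be (a) carefully pinning down $|G|=32$ despite several a priori possible subgroup coincidences such as $s_2^{2}=s_3^{2}$, and (b) invoking the Mukai--Nikulin count in a self-contained way; both are robust but require bookkeeping analogous to Lemmas \ref{thm:22}--\ref{thm:29}.
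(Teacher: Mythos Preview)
Your argument is sound through the bound $(B_1\cdot B_2)\le 2$: the identification $G\cong\mathbb Z/2\mathbb Z\oplus(\mathbb Z/4\mathbb Z)^{\oplus 2}$ is correct, and the symplectic involution $g=s_1s_2^{2}$ indeed gives $4(B_1\cdot B_2)\le 8$.

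The final step, however, does not go through. The assertion that a symplectic $(\mathbb Z/2\mathbb Z)^{\oplus 2}$ action on a $K3$ surface has non-empty common fixed locus is false; in fact the opposite is always true. At any point fixed by the whole group the tangent representation is a faithful two-dimensional representation preserving the symplectic form, hence lands in $SL_2(\mathbb C)$. But the only element of order $2$ in $SL_2(\mathbb C)$ is $-I$, so every finite subgroup of $SL_2(\mathbb C)$ has at most one involution. Thus a faithful symplectic action of $(\mathbb Z/2\mathbb Z)^{\oplus 2}$ can never have a common fixed point, and your computation ${\rm Fix}(g)\cap{\rm Fix}(h)=\emptyset$ is simply consistent with this, not a contradiction. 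Lefschetz-type counts do not help here either: the equivariant Euler characteristic of the group is $0$ for this action.

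The paper's proof avoids this difficulty by choosing a different element. Instead of the symplectic involution $s_1s_2^{2}$, take $s_1s_2$. Since $\xi_{s_1s_2}=(-1)\cdot\zeta_4$ is a primitive fourth root of unity, $s_1s_2$ is purely non-symplectic of order $4$; and because $s_1s_2$ lies in none of the cyclic groups $\langle s_1\rangle,\langle s_2\rangle,\langle s_3\rangle$, its fixed locus contains no curve. By the Artebani--Sarti classification of non-symplectic order-$4$ automorphisms ([\ref{bio:9}, Proposition~1]) such an automorphism has exactly $4$ isolated fixed points. Since $p^{-1}(B_1)\cap p^{-1}(B_2)\subset{\rm Fix}(s_1s_2)$ and, by your own orbit count, $\lvert p^{-1}(B_1)\cap p^{-1}(B_2)\rvert=4(B_1\cdot B_2)$, one gets $(B_1\cdot B_2)\le 1$ in a single stroke.
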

\begin{proof}
Theorem \ref{thm:5}, $G_{B_{1}}\cong{\mathbb Z}/2{\mathbb Z}$ and $G_{B_{i}}\cong{\mathbb Z}/4{\mathbb Z}$ for $i=2,3$.
Since $(B_{i}\cdot B_{j})\not=0$ for $1\leq i<j\leq 3$, we get $G_{B_{1}}\cap (G_{B_{2}}\oplus G_{B_{3}})=\{{\rm id}_{X} \}$. 
Therefore, $G=G_{B_{1}}\oplus G_{B_{2}}\oplus G_{B_{3}}$.
Let $s\in G_{B_{1}}$ and $t\in G_{B_{2}}$ be generators. 
Then $s\circ t$ is a non-symplectic automorphism of order 4 and $p^{-1}(B_{1})\cap p^{-1}(B_{2})\subset{\rm Fix}(s\circ t)$.
By Theorem \ref{thm:5} and $G=G_{B_{1}}\oplus G_{B_{2}}\oplus G_{B_{3}}$, Fix$(s\circ t)$ does not contain a curve. 
By [\ref{bio:9},\ {\rm Proposition 1}], 
the number of isolated points of Fix$(s\circ t)$ is 4.  
If $(B_{1}\cdot B_{2})\geq2$, then $|p^{-1}(B_{1})\cap p^{-1}(B_{2})|\geq8$. 
This is a contradiction.
\end{proof}
By Lemma \ref{thm:31}, the numerical class of $B$ is not (\ref{18},\ref{150},\ref{122},\ref{214}) of the list in section 6.
\begin{lem}\label{thm:32} 
We assume that $X/G\cong{\mathbb P}^{1}\times{\mathbb P}^{1}$.
Then $B\not=a(\{q\}\times{\mathbb P}^{1})+bC_{1}+cC_{2}$ where  $C_{1}$ and $C_{2}$ are smooth curves on ${\mathbb P}^{1}\times{\mathbb P}^{1}$, $C_{1}\cap C_{2}\not=\emptyset$, and $a,b,c$ are even integers. 
\end{lem}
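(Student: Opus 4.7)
The plan is to study the composition $\pi := \mathrm{pr}_1 \circ p \colon X \to \mathbb{P}^1$ induced by the first projection of $\mathbb{P}^1 \times \mathbb{P}^1$, and to derive a contradiction by playing the Riemann--Hurwitz formula on a generic fiber of $\pi$ against the canonical class formula of Theorem \ref{thm:6}.

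First I would identify the generic fibers of $\pi$. Stein-factorize $\pi$ as $X \to C' \to \mathbb{P}^1$. Since $X$ is a K3 surface we have $H^1(X,\mathcal{O}_X)=0$, so the Leray spectral sequence forces $g(C')=0$ and $C'\cong\mathbb{P}^1$; adjunction together with $K_X=0$ then makes a general connected fiber of $X \to C'$ a smooth curve of arithmetic genus $1$. Consequently, for a generic fiber $F_0$ of $\mathrm{pr}_1$ (distinct from $F$) the preimage $p^{-1}(F_0)$ is a disjoint union of smooth elliptic curves.

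Next, pick a component $D \subset p^{-1}(F_0)$ and set $H := \mathrm{Stab}_G(D)$. Since $G$ is abelian, $H$ is the common stabilizer of every component, so $D \to F_0 \cong \mathbb{P}^1$ is an $H$-Galois abelian cover. Writing $C_i = p_i f_1 + q_i f_2$ in $\mathrm{Pic}(\mathbb{P}^1\times\mathbb{P}^1)$ (with $f_1$ the class of $F$), a generic $F_0$ meets $C_i$ in $p_i$ points. Any preimage of a point of $F_0 \cap C_i$ is fixed by the cyclic inertia $G_{C_i}$ of order $b$ (respectively $c$), hence $G_{C_i}$ preserves its ambient component, and by the common-stabilizer argument $G_{C_i} \subset H$ whenever $p_i \geq 1$; the ramification index of $D \to F_0$ at each such branch point is then $b$ (resp.\ $c$). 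Applying Riemann--Hurwitz to the elliptic cover $D \to \mathbb{P}^1$ yields
\[
p_1 \cdot \frac{b-1}{b} + p_2 \cdot \frac{c-1}{c} \;=\; 2.
\]
On the other hand, Theorem \ref{thm:6} applied to $K_X = 0$, together with $-K_{\mathbb{F}_0}=2f_1+2f_2$, gives after matching $f_1$-coefficients
\[
2 \;=\; \frac{a-1}{a} + p_1 \cdot \frac{b-1}{b} + p_2 \cdot \frac{c-1}{c}.
\]
Combining the two identities forces $1+\tfrac{1}{a}=2$, i.e.\ $a=1$, contradicting $a\geq 2$. The degenerate cases $p_1=0$ or $p_2=0$ are handled by dropping the vanishing term in both formulas; $p_1=p_2=0$ is ruled out because then $D\to F_0$ would be \'etale and $D\cong\mathbb{P}^1$ would contradict $g(D)=1$.

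The main obstacle I expect is the equivariant bookkeeping in the second step: verifying cleanly that $H$ is the common stabilizer of every component (relying on $G$ abelian) and that the inclusion $G_{C_i}\subset H$ follows from a single preimage in $D$ of a point of $F_0\cap C_i$ being fixed. Once this local-to-global stabilizer argument is pinned down, the rest is a short numerical comparison; the hypothesis $C_1\cap C_2\neq\emptyset$ only serves to exclude the degenerate configurations in which $C_1$ or $C_2$ forces $B$ into a form already ruled out by the $f_2$-coefficient of the canonical equation.
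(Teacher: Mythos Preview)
Your argument has a genuine gap: the two displayed equations you compare are in fact the \emph{same} equation, so subtracting them yields $0=0$, not $\tfrac{a-1}{a}=0$.

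Concretely, with $f_1=[F]=[\{q\}\times\mathbb P^1]$ the class of a fibre of $\mathrm{pr}_1$ and $C_i=p_if_1+q_if_2$, a generic fibre $F_0\in|f_1|$ meets $C_i$ in $f_1\cdot(p_if_1+q_if_2)=q_i$ points, not $p_i$ as you wrote. Hence Riemann--Hurwitz on the elliptic cover $D\to F_0$ reads
\[
q_1\cdot\frac{b-1}{b}+q_2\cdot\frac{c-1}{c}=2,
\]
which is exactly the $f_2$-coefficient of the canonical formula from Theorem~\ref{thm:6} (the component $\{q\}\times\mathbb P^1$ contributes nothing to the $f_2$-coefficient). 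The $f_1$-coefficient, where $\tfrac{a-1}{a}$ actually appears, is instead recovered by Riemann--Hurwitz on a fibre of $\mathrm{pr}_2$, and again the two coincide. This is not an accident: restricting the canonical bundle formula to a ruling \emph{is} Riemann--Hurwitz on that fibre, so the fibrewise approach can never produce an equation independent of Theorem~\ref{thm:6}. You can check this on case~(\ref{19}): with $a=2$, $C_1\in|2C+2F|$, $b=4$, $C_2\in|F|$, $c=2$, all three equations hold simultaneously and nothing forces $a=1$.

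The paper's proof uses the group structure instead. Since $C_1\cap C_2\neq\emptyset$, Theorem~\ref{thm:5} gives $G=G_{C_1}\oplus G_{C_2}$ with $G_{C_i}$ cyclic of even order, so $G$ has exactly two non-symplectic involutions (the unique involutions $s_i\in G_{C_i}$; their product is symplectic). But $G_{\{q\}\times\mathbb P^1}$ is cyclic of even order $a$, so contains a non-symplectic involution $\sigma$; since $\{q\}\times\mathbb P^1$ meets both $C_i$, the curve $p^{-1}(\{q\}\times\mathbb P^1)\subset\mathrm{Fix}(\sigma)$ meets $p^{-1}(C_i)\subset\mathrm{Fix}(s_i)$, forcing $\sigma\neq s_1,s_2$ (fixed loci of involutions are smooth, hence cannot contain two curves meeting). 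That gives a third non-symplectic involution, a contradiction. The even hypothesis on $a,b,c$ is essential here and is precisely what your approach did not exploit.
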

\begin{proof}
We assume that $B=a(\{q\}\times{\mathbb P}^{1})+bC_{1}+cC_{2}$ where  $C_{1}$ and $C_{2}$ are smooth curves on ${\mathbb P}^{1}\times{\mathbb P}^{1}$, $C_{1}\cap C_{2}\not=\emptyset$, and $a,b,c$ are even integers.
Since $C_{1}\cap C_{2}\not=\emptyset$, by Theorem \ref{thm:5}, $G=G_{C_{1}}\oplus G_{C{2}}$.
Since $b,c$ are even integers, $G_{C_{1}},G_{C_{2}}$ are cyclic groups, and  $G=G_{C_{1}}\oplus G_{C{2}}$, 
the number of non-symplectic involution of $G$ is 2. Since $(B_{1,0}\cdot C_{i})\not=0$ and $a$ is even, $G$ must have at least 3 non-symplectic involutions. 
This is a contradiction.
\end{proof}
By Lemma \ref{thm:32}, the numerical class of $B$ is not one of (\ref{19},\ref{20},\ref{23},\ref{21},\ref{24},\ref{25},\ref{26}) of the list in section 6.
\begin{lem}\label{thm:33}
If there are irreducible curves $B_{i}$ and positive integers $b_{i}\geq2$ for $i=1,\ldots,k$ such that  $B=\sum_{i=1}^{k}b_{i}B_{i}$, 
 $G=G_{B_{1}}\oplus G_{B_{2}}$ and $b_{1}$ and $b_{2}$ are coprime,
then for each  $i=1,2$ $j=3,\ldots,k$, we get that  $b_{i}$ and $b_{j}$ are coprime if $(B_{i}\cdot B_{j})\not=0$.
\end{lem}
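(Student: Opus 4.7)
The plan is to promote the geometric disjointness of fixed loci on a $K3$ surface into the algebraic statement $G_{B_i}\cap G_{B_j}=\{\mathrm{id}_X\}$, and then to convert this, via the coprimality of $b_1$ and $b_2$, into the desired coprimality of $b_i$ and $b_j$.

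First I would record a consequence of Theorem~\ref{thm:5}(iii) in the abelian setting. Since $G$ acts transitively on the set of irreducible components of $p^{-1}(B_m)$, the pointwise stabilizers $G_{C_{m,\ell}}$ are pairwise conjugate, and $G$ being abelian forces them to coincide; together with Theorem~\ref{thm:5}(ii), this gives $G_{B_m}=G_{C_{m,\ell}}\cong\mathbb{Z}/b_m\mathbb{Z}$, and every nontrivial $g\in G_{B_m}$ pointwise-fixes every component of $p^{-1}(B_m)$. Now suppose $(B_i\cdot B_j)\neq 0$ with $i\in\{1,2\}$ and $j\geq 3$. Then $p^{-1}(B_i)\cap p^{-1}(B_j)=p^{-1}(B_i\cap B_j)$ is nonempty, so some components $C_i\subset p^{-1}(B_i)$ and $C_j\subset p^{-1}(B_j)$ meet. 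A nontrivial element of $G_{B_i}\cap G_{B_j}$ would have fixed locus containing the two distinct intersecting curves $C_i$ and $C_j$; but the fixed locus of any finite-order automorphism of a $K3$ surface is a disjoint union of smooth curves and isolated points, which is a contradiction. Hence $G_{B_i}\cap G_{B_j}=\{\mathrm{id}_X\}$.

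To finish, I would exploit $G=G_{B_1}\oplus G_{B_2}$ with $\gcd(b_1,b_2)=1$: by a standard B\'ezout argument, every subgroup $K\leq G_{B_1}\oplus G_{B_2}$ splits as $K=(K\cap G_{B_1})\oplus(K\cap G_{B_2})$. Applying this to $K=G_{B_j}$ and using the trivial intersection just proved yields $G_{B_j}\subseteq G_{B_{3-i}}$, so $b_j\mid b_{3-i}$ and thus $\gcd(b_i,b_j)$ divides $\gcd(b_1,b_2)=1$. The only genuinely delicate point is the first step, namely the identification $G_{B_m}=G_{C_{m,\ell}}$ in the abelian case, which is needed so that \emph{every} nontrivial element of the intersection pointwise-fixes the relevant components; once this is in hand, the fixed-locus incompatibility and the direct-sum splitting combine to give the result.
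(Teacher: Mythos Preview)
Your proof is correct and follows essentially the same strategy as the paper's: both hinge on the fact that when $(B_i\cdot B_j)\neq 0$ the fixed-locus geometry forces $G_{B_i}\cap G_{B_j}=\{\mathrm{id}_X\}$, and both then use the coprimality of $b_1,b_2$ to split $G_{B_j}$ inside $G_{B_1}\oplus G_{B_2}$. The paper argues by contradiction (if $\gcd(b_i,b_j)>1$, a suitable power of a generator of $G_{B_j}$ lands nontrivially in $G_{B_i}$), while you phrase the same step directly via the CRT-type decomposition $G_{B_j}=(G_{B_j}\cap G_{B_1})\oplus(G_{B_j}\cap G_{B_2})$ to conclude $G_{B_j}\subseteq G_{B_{3-i}}$; these are contrapositives of one another, and your version is slightly cleaner in that it makes the identification $G_{B_m}=G_{C_{m,\ell}}$ explicit before invoking the fixed-locus disjointness.
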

\begin{proof}
Let $s\in G_{B_{1}}$ and $t\in G_{B_{2}}$ be generators.  Theorem \ref{thm:5}, the order of $s$ is $b_{1}$ and that of $t$ is $b_{2}$.
Since $G=G_{B_{1}}\oplus G_{B_{2}}$, there are integers $u$ and $v$ such that $G_{B_{j}}$ is generated by $s^{u}\circ t^{v}$.

We assume that $(B_{1}\cdot B_{j})\not=0$ and $b_{1}$ and $b_{j}$ are not coprime.
Since $b_{1}$ and $b_{2}$ are coprime,  there is an integer $l$ such that $(s^{u}\circ t^{v})^{l}\not={\rm id}_{X}$ and $(s^{u}\circ t^{v})^{l}=s^{m}$ or $t^{m}$.
Since $b_{1}$ and $b_{j}$ are not coprime, we assume that $(s^{u}\circ t^{v})^{l}=s^{m}$.
Then $p^{-1}(B_{1})$ and $p^{-1}(B_{j})$ are contained in Fix$(s^{m})$.
By the fact that the fixed locus of an automorphism is a pairwise set of points and curves, this is a contradiction.
\end{proof}
By Theorem \ref{thm:5}, and Lemma \ref{thm:33}, the numerical class of $B$ is not one of (\ref{22},\ref{28},\ref{227},\ref{226},\ref{244},\ref{243},\ref{261},\ref{262}) of the list in section 6.\\

We assume that the numerical class of $B$ is (\ref{27}) of the list in section 6.
We denote $B$ by $3B_{1,0}+3B_{2,2}+3B_{0,1}$.
By Theorem \ref{thm:5}, $G=G_{2,2}$.
Since $G_{2,2}\cong{\mathbb Z}/3{\mathbb Z}$,  $G$ has 1 subgroups generated by a non-symplectic automorphism of order $3$.
Since $(B_{1,0}\cdot B_{2,2})\not=0$, $G$ contains at least 2 such a subgroup  from Theorem \ref{thm:5}. 
This is a contradiction. 
\begin{lem}\label{thm:34}
If there are irreducible curves $B_{1},B_{2},B_{3}$ such that
$B=2B_{1}+2B_{2}+2B_{3}$, and $(B_{i}\cdot B_{j})\not=0$ for $1\leq i<j\leq3$, then we get that $\frac{B_{3}}{2}\in$Pic$({\mathbb F}_{n})$ if $(B_{1}\cdot B_{2})=4$.
\end{lem}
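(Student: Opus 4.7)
The plan is to realise $X/G'$ as a smooth double cover of $\mathbb{F}_n$ branched along $B_3$, for a carefully chosen index-two subgroup $G'\subset G$; standard theory of cyclic double covers of smooth surfaces then forces $\mathcal{O}_{\mathbb{F}_n}(B_3)$ to admit a square root in $\mathrm{Pic}(\mathbb{F}_n)$.

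I would first pin down $G$. By Theorem \ref{thm:5}(ii) each $G_{B_i}$ is cyclic of order two, generated by a non-symplectic involution $s_i$, and because $B_i\cap B_j\ne\emptyset$ the remarks after Theorem \ref{thm:5} give $G_{B_i}\cap G_{B_j}=\{\mathrm{id}_X\}$. The product $s_1s_2$ satisfies $\xi_{s_1s_2}=\xi_{s_1}\xi_{s_2}=1$, hence is symplectic, while $s_3$ is non-symplectic; therefore $s_3\notin\langle s_1,s_2\rangle$. Since $\mathbb{F}_n$ is simply connected, Theorem \ref{thm:5}(i) with $I=\emptyset$ forces $G=\langle s_1,s_2,s_3\rangle\cong(\mathbb{Z}/2\mathbb{Z})^{\oplus 3}$.

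Set $G':=\langle s_1,s_2\rangle$ and factor $X\xrightarrow{q}X/G'\xrightarrow{\pi}X/G\cong\mathbb{F}_n$. An orbit-count identifies the branch of $\pi$: over a generic point of $B_3$ the $G$-orbit has size four and $G'$ acts freely on it, yielding a single preimage in $X/G'$, so $\pi$ is ramified of index two there; over generic points of $B_1$, of $B_2$, and of $B_1\cap B_2$, the $G$-orbit decomposes into two $G'$-orbits, so $\pi$ is unramified at those loci. The intersections $B_1\cap B_3$ and $B_2\cap B_3$ sit inside $B_3$ and remain ramified. Hence the branch divisor of $\pi$ is precisely $2B_3$.

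The crux is to show that $X/G'$ is smooth, and this is where the hypothesis $(B_1\cdot B_2)=4$ enters. Writing $C_i:=p^{-1}(B_i)_{\mathrm{red}}$ and using that the support of $p^{\ast}B$ is simple normal crossing, the transverse intersection count gives $\#(C_1\cap C_2)=\tfrac{|G|}{4}(B_1\cdot B_2)=8$. Every point of $C_1\cap C_2$ is fixed by both $s_1$ and $s_2$, hence by $s_1s_2$, so $C_1\cap C_2\subset\mathrm{Fix}(s_1s_2)$; since the symplectic involution $s_1s_2$ has exactly $8$ isolated fixed points on $X$, the containment becomes the equality $\mathrm{Fix}(s_1s_2)=C_1\cap C_2$. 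Among the non-trivial $G'$-stabilisers $\langle s_1\rangle,\langle s_2\rangle,\langle s_1,s_2\rangle,\langle s_1s_2\rangle$, only $\langle s_1s_2\rangle$ fails to act as a complex reflection group (the symplectic involution $s_1s_2$ acts as $-\mathrm{id}$ on each isolated tangent space, producing an $A_1$ singularity in the quotient). The points of $X$ with $G'$-stabiliser exactly $\langle s_1s_2\rangle$ are precisely those of $\mathrm{Fix}(s_1s_2)\setminus(C_1\cup C_2)$, and the equality just established shows this set is empty. The remaining non-trivial elements $s_3,s_1s_3,s_2s_3,s_1s_2s_3$ all lie in $G\setminus G'$, so they contribute no $G'$-stabiliser. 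Thus $X/G'$ is smooth, $\pi$ is a Galois double cover of smooth surfaces with reduced branch the smooth curve $B_3$, and the eigenspace decomposition of $\pi_{\ast}\mathcal{O}_{X/G'}$ under the deck involution gives $\pi_{\ast}\mathcal{O}_{X/G'}=\mathcal{O}_{\mathbb{F}_n}\oplus L^{-1}$ with $L^{\otimes 2}\cong\mathcal{O}_{\mathbb{F}_n}(B_3)$, i.e.\ $\tfrac{B_3}{2}\in\mathrm{Pic}(\mathbb{F}_n)$. The main obstacle I anticipate is the singularity analysis in this paragraph, since that is precisely where the numerical hypothesis $(B_1\cdot B_2)=4$ is indispensable: if strictly fewer intersection points lifted, isolated fixed points of $s_1s_2$ would escape $C_1\cup C_2$ and introduce $A_1$ singularities on $X/G'$, obstructing the clean double-cover description.
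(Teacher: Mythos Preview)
Your argument is correct and follows essentially the same line as the paper's proof: identify $G\cong(\mathbb{Z}/2\mathbb{Z})^{\oplus 3}$, take the subgroup $G'=\langle s_1,s_2\rangle=G_{B_1}\oplus G_{B_2}$, use $(B_1\cdot B_2)=4$ together with the fact that a symplectic involution has exactly eight fixed points to conclude that $X/G'$ is smooth, and then read off $\tfrac{B_3}{2}\in\mathrm{Pic}(\mathbb{F}_n)$ from the resulting double cover $X/G'\to\mathbb{F}_n$ branched along $2B_3$. Your write-up is more explicit than the paper's (in particular your justification that $s_3\notin\langle s_1,s_2\rangle$ and your stabiliser analysis for smoothness), but the strategy and the key use of the hypothesis are identical.
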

\begin{proof}
By Theorem \ref{thm:5}, $G_{B_{i}}\cong{\mathbb Z}/2{\mathbb Z}$ for $i=1,2,3$. 	
Since $(B_{i}\cdot B_{j})\not=0$ for $1\leq i<j\leq3$, 
by Theorem \ref{thm:5}, $G=G_{B_{1}}\oplus G_{B_{2}}\oplus G_{B_{3}}$. 

We assume that $(B_{1}\cdot B_{2})=4$. Then $p^{-1}(B_{1})\cap p^{-1}(B_{2})$ is a set of 8 points.
Since the fixed locus of a symplectic automorphism of order 2 is a set of 8 isolated points, $X/G_{B_{1}}\oplus G_{B_{2}}$ is smooth. 
Then there is a double cover $X/G_{B_{1}}\oplus G_{B_{2}}\rightarrow X/G\cong{\mathbb F}_{n}$ whose branch divisor is $2B_{3}$. 
Thus, $\frac{B_{3}}{2}\in$Pic$({\mathbb F}_{n})$ for $i=1,2$.
\end{proof}
By Lemma \ref{thm:34}, the numerical class of $B$ is not one of (\ref{29},\ref{151},\ref{153},\ref{215}) of the list in section 6.
\begin{lem}\label{thm:35}
If there are irreducible curves $B_{1},B_{2},B_{3}$ such that
$B=2B_{1}+2B_{2}+2B_{3}$, and $(B_{i}\cdot B_{j})\not=0$ for $1\leq i<j\leq3$, then  $(B_{i}\cdot B_{j})\leq4$ for $1\leq i<j\leq3$.
\end{lem}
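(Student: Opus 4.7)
The plan is to exploit that each $s_i$ generating $G_{B_i}$ is a non-symplectic involution, so any product $s_i s_j$ is a \emph{symplectic} involution, whose fixed locus on a K3 surface is exactly $8$ isolated points. I then translate $(B_i\cdot B_j)$ into a lower bound on $|\mathrm{Fix}(s_is_j)|$ and obtain the inequality by a pigeonhole argument.

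First I would set things up as in Lemmas \ref{thm:33} and \ref{thm:34}: by Theorem \ref{thm:5}, each $G_{B_i}\cong\mathbb Z/2\mathbb Z$ is generated by a non-symplectic involution $s_i$, and since $(B_i\cdot B_j)\neq 0$ for all $i\neq j$ one gets $G_{B_i}\cap G_{B_j}=\{\mathrm{id}_X\}$, hence $G=G_{B_1}\oplus G_{B_2}\oplus G_{B_3}\cong(\mathbb Z/2\mathbb Z)^{\oplus 3}$ has order $8$. For $i\neq j$, $s_is_j$ is an involution with $(s_is_j)^{\ast}\omega=(-1)(-1)\omega=\omega$, so $s_is_j$ is symplectic; by the classical fact on symplectic involutions of K3 surfaces, $|\mathrm{Fix}(s_is_j)|=8$. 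Note also that by definition of $G_{B_i}$, every point of $p^{-1}(B_i)$ is fixed by $s_i$, so
\[ p^{-1}(B_i)\cap p^{-1}(B_j)\subset\mathrm{Fix}(s_i)\cap\mathrm{Fix}(s_j)\subset\mathrm{Fix}(s_is_j). \]

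Next I would count $|p^{-1}(B_i)\cap p^{-1}(B_j)|$. Because $G$ is abelian and $X/G$ is smooth, the support of $B$ is simple normal crossing on the surface $\mathbb F_n$; in particular at most two of the components $B_1,B_2,B_3$ can meet at any point, so $B_1\cap B_2\cap B_3=\emptyset$. Therefore, for each of the $(B_i\cdot B_j)$ transverse intersection points $y\in B_i\cap B_j$, the stabilizer in $G$ of a point in $p^{-1}(y)$ is exactly $G_{B_i}\oplus G_{B_j}$ (order $4$), so $p^{-1}(y)$ consists of $|G|/4=2$ points. This gives
\[ |p^{-1}(B_i)\cap p^{-1}(B_j)|=2(B_i\cdot B_j). \]
Combining with the inclusion above yields $2(B_i\cdot B_j)\leq|\mathrm{Fix}(s_is_j)|=8$, hence $(B_i\cdot B_j)\leq 4$.

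The only potential subtlety is ruling out triple intersection points, since at such a point the preimage counting would change and could in principle allow $(B_i\cdot B_j)$ to be larger; but this is forbidden by the simple normal crossing assumption on a surface. All the remaining pieces—identifying $s_is_j$ as symplectic, invoking the $8$-point theorem, and computing the size of preimages under an abelian Galois cover at an SNC point—are standard consequences of Theorem \ref{thm:5} and the preliminaries, so no further obstacle is expected.
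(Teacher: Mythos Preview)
Your proof is correct and follows essentially the same argument as the paper: both identify $G\cong(\mathbb Z/2\mathbb Z)^{\oplus 3}$, observe that $s_is_j$ is a symplectic involution with exactly $8$ fixed points, and compute $|p^{-1}(B_i)\cap p^{-1}(B_j)|=2(B_i\cdot B_j)$ to conclude. Your version is slightly more detailed in justifying the fiber count via stabilizers and in explicitly ruling out triple intersections, but the underlying idea is identical.
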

\begin{proof}
By Theorem \ref{thm:5}, $G_{B_{i}}\cong{\mathbb Z}/2{\mathbb Z}$ for $i=1,2,3$ and $G=G_{B_{1}}\oplus G_{B_{2}}\oplus G_{B_{3}}$. 
Let $s,t\in G$ be generators of $G_{B_{i}}$ and $G_{B_{j}}$ respectively where $1\leq i<j\leq3$. Then $s\circ t$ is a symplectic automorphism of order 2 and $p^{-1}(B_{i})\cap p^{-1}(B_{j})\subset$Fix$(s\circ t)$.
Since $|G|=8$, we get $2(B_{i}\cdot B_{j})=|p^{-1}(B_{i})\cap p^{-1}(B_{j})|$.
Thus, we have that $(B_{i}\cdot B_{j})\leq4$. 
\end{proof}
By Lemma \ref{thm:35}, the numerical class of $B$ is not one of (\ref{30},\ref{152}) of the list in section 6.
\begin{lem}\label{thm:36}
We assume that $X/G\cong{\mathbb P}^{1}\times{\mathbb P}^{1}$.
Then $B\not=a_{1}(\{q_{1}\}\times{\mathbb P}^{1})+a_{2}(\{q_{2}\}\times{\mathbb P}^{1})+bC'+c({\mathbb P}^{1}\times\{q_{3}\})$ where $C'$ is an irreducible curve, $C'=(nC+mF)$ in Pic$({\mathbb P}^{1}\times{\mathbb P}^{1})$, $n,m>0$, and $a_{1}a_{2},b,c$ are even integers. 
\end{lem}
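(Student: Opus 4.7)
The plan is to show that $G$ contains too few non-symplectic involutions to accommodate all four branch components $F_1,F_2,C',F'$. Assume for contradiction that such a $B$ exists, and let $g_i,s,t$ generate $G_{F_i},G_{C'},G_{F'}$; by Theorem \ref{thm:5}(ii) they are purely non-symplectic of orders $a_1,a_2,b,c$ respectively.

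First I will pin down the structure of $G$. Taking $I=\{F_2,F'\}$ in Theorem \ref{thm:5}(i), the complement $(\mathbb{P}^1\setminus\{q_2\})\times(\mathbb{P}^1\setminus\{q_3\})\cong\mathbb{A}^2$ is simply connected, so $G=\langle G_{F_1},G_{C'}\rangle$; since $F_1\cdot C'=m>0$, we have $G_{F_1}\cap G_{C'}=\{\mathrm{id}\}$ and hence $G=G_{F_1}\oplus G_{C'}\cong\mathbb{Z}/a_1\oplus\mathbb{Z}/b$. Swapping the roles of $F_1$ and $F_2$ gives $|G|=a_2b$, so $a_1=a_2=:a$. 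Since $a^2=a_1a_2$ is even, $a$ is even, and $b,c$ are even by hypothesis.

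Next I will enumerate the non-symplectic involutions of $G$ via the character $\xi\colon G\to\mathbb{C}^*$ on $H^{2,0}(X)$. Purity forces $\xi(g_1)$ to be a primitive $a$-th root of unity and $\xi(s)$ a primitive $b$-th root; evaluating $\xi$ on the three involutions $g_1^{a/2},s^{b/2},g_1^{a/2}s^{b/2}$ of $\mathbb{Z}/a\oplus\mathbb{Z}/b$ yields values $-1,-1,+1$. Thus exactly two involutions of $G$ are non-symplectic: $h_1:=g_1^{a/2}$ and $h_2:=s^{b/2}$.

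The four non-symplectic involutions $g_1^{a/2},g_2^{a/2},s^{b/2},t^{c/2}$ must all lie in $\{h_1,h_2\}$, forcing at least two coincidences. I will check that every pairwise identification other than $g_1^{a/2}=g_2^{a/2}$ produces a single automorphism that pointwise fixes two distinct smooth curves $p^{-1}(B),p^{-1}(B')$ with $B\cdot B'>0$, using that the intersection numbers $F_i\cdot C'=m$, $F_i\cdot F'=1$, and $C'\cdot F'=n$ are all strictly positive. Two distinct smooth fixed curves meeting transversally would force the acting element to fix the entire tangent plane at the crossing and hence act trivially in a neighborhood, contradicting non-triviality. The sole surviving identification is $g_1^{a/2}=g_2^{a/2}=:h$, allowed because $F_1\cap F_2=\emptyset$; but then $t^{c/2}$ must equal either $h$ or $s^{b/2}$, and each of these reinstates a forbidden pair ($F_1\cdot F'=1$ or $C'\cdot F'=n$), closing the proof. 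The main obstacle is the final combinatorial bookkeeping: one must verify that among the six pairwise identifications of the four involutions, exactly one ($g_1^{a/2}=g_2^{a/2}$) fails to immediately produce a forbidden pair, and this exception is precisely neutralized by the presence of $t^{c/2}$ interacting with $F'$.
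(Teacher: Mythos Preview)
Your proof is correct and follows essentially the same strategy as the paper's: show $G=G_{F_1}\oplus G_{C'}\cong\mathbb{Z}/a\oplus\mathbb{Z}/b$ has exactly two non-symplectic involutions, then derive a contradiction from the component $F'=\mathbb{P}^1\times\{q_3\}$ with even ramification index $c$. Your derivation of $a_1=a_2$ by symmetry and the count of non-symplectic involutions via the character $\xi$ are exactly what underlies the paper's terse assertion ``the number of non-symplectic involutions of $G$ is $2$.''

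The only difference is that your endgame is more elaborate than necessary. The paper dispatches the contradiction in one line: the involution $t^{c/2}\in G_{F'}$ is non-symplectic, and since $F'\cdot F_1=1>0$ and $F'\cdot C'=n>0$ we have $t^{c/2}\neq h_1$ and $t^{c/2}\neq h_2$, giving a third non-symplectic involution immediately. You instead set up a pigeonhole among all four elements $g_1^{a/2},g_2^{a/2},s^{b/2},t^{c/2}$ and run through the six pairwise identifications, singling out $g_1^{a/2}=g_2^{a/2}$ as the unique harmless one before returning to $t^{c/2}$. This detour through $g_2^{a/2}$ is valid but superfluous: once you know the only two non-symplectic involutions are $h_1=g_1^{a/2}$ and $h_2=s^{b/2}$, testing $t^{c/2}$ against each of them finishes the argument directly.
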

\begin{proof}
We assume that $B=a_{1}(\{q_{1}\}\times{\mathbb P}^{1})+a_{2}(\{q_{2}\}\times{\mathbb P}^{1})+bC'+c({\mathbb P}^{1}\times\{q_{3}\})$ where $C'$ is an irreducible curve, $C'=(nC+mF)$, $n,m>0$, and $a_{1}a_{2},b,c$ are even integers. 
By Theorem \ref{thm:5}, $G=G^{2}_{1,0}\oplus G_{C'}$. By  $a_{1}a_{2}$ and $b$ are even integers, the number of non-symplectic involution of $G$ is 2. Since $(B_{0,1}\cdot C')\not=0$ and $(B_{0,1}\cdot B^{i}_{1,0})\not=0$ for $i=1,2$ and $c$ is an even integer, $G$ must have at least 3 non-symplectic involutions. This is a contradiction.
\end{proof}
By Lemma \ref{thm:36}, the numerical class of $B$ is not one of (\ref{39},\ref{40}) of the list in section 6.
\begin{lem}\label{thm:37}
We assume that $X/G\cong{\mathbb P}^{1}\times{\mathbb P}^{1}$.
Then $B\not=a_{1}(\{q_{1}\}\times{\mathbb P}^{1})+b_{1}C_{1}+b_{2}C_{2}+a_{2}({\mathbb P}^{1}\times\{q_{2}\})$ where 
$C_{i}$ is an irreducible curve, $C_{i}=(n_{i}C+m_{i}F)$ in Pic$({\mathbb P}^{1}\times{\mathbb P}^{1})$, $n_{i},m_{i}>0$, for $i=1,2$, and $a_{1},a_{2},b_{1}b_{2}$ are even integers. 
\end{lem}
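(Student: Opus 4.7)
The plan is to adapt the involution-counting argument of Lemma \ref{thm:36}. Set $D_{1}:=\{q_{1}\}\times\mathbb{P}^{1}$ and $D_{2}:=\mathbb{P}^{1}\times\{q_{2}\}$. First I observe that the four branch components $D_{1},D_{2},C_{1},C_{2}$ pairwise intersect: $D_{1}\cdot D_{2}=1$, $D_{1}\cdot C_{i}=m_{i}>0$, $D_{2}\cdot C_{i}=n_{i}>0$, and $C_{1}\cdot C_{2}=n_{1}m_{2}+n_{2}m_{1}>0$.

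Next I would apply Theorem \ref{thm:5}(i) with $I=\{1,2\}$, the indices of the two fibers. The complement
\[(\mathbb{P}^{1}\times\mathbb{P}^{1})\setminus(D_{1}\cup D_{2})\cong\mathbb{C}\times\mathbb{C}\]
is simply connected, so $G$ is generated by $G_{C_{1}}$ and $G_{C_{2}}$. Combining this with the disjointness argument preceding Theorem \ref{thm:6} (which, applied to pairwise-intersecting branch components, yields $G_{B_{i}}\cap G_{B_{j}}=\{{\rm id}_{X}\}$), I conclude $G=G_{C_{1}}\oplus G_{C_{2}}$, with cyclic factors of orders $b_{1},b_{2}$ generated by purely non-symplectic automorphisms $g_{1},g_{2}$ in the sense of Theorem \ref{thm:5}(ii).

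Finally I would count non-symplectic involutions. Because $a_{1},a_{2}$ are even, each of $G_{D_{1}},G_{D_{2}}$ contains a (unique) non-symplectic involution; because $b_{1}b_{2}$ is even, so does at least one of $G_{C_{1}},G_{C_{2}}$. The disjointness $G_{B_{i}}\cap G_{B_{j}}=\{{\rm id}_{X}\}$ for pairwise-intersecting components forces these three involutions to be distinct, so $G$ must contain at least three non-symplectic involutions. On the other hand, a direct computation in $G_{C_{1}}\oplus G_{C_{2}}$ using the symplectic character $\varphi(g_{i})=\zeta_{b_{i}}$ (a primitive $b_{i}$-th root of unity) shows: if both $b_{i}$ are even then the involutions are $\{{\rm id},\,g_{1}^{b_{1}/2},\,g_{2}^{b_{2}/2},\,g_{1}^{b_{1}/2}g_{2}^{b_{2}/2}\}$, of which only the two middle ones are non-symplectic (the product has $\varphi=(-1)(-1)=1$); and if only one $b_{i}$ is even there is exactly one non-symplectic involution. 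In either case the bound of at most two contradicts the required three, completing the proof.

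The only non-routine step is the simple-connectivity input, which however reduces to the elementary identification $(\mathbb{P}^{1}\times\mathbb{P}^{1})\setminus(D_{1}\cup D_{2})\cong\mathbb{C}^{2}$; once $G=G_{C_{1}}\oplus G_{C_{2}}$ is in hand, the rest is a straightforward character calculation completely parallel to Lemma \ref{thm:36}.
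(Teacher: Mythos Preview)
Your proof is correct and follows essentially the same approach as the paper: both obtain $G=G_{C_{1}}\oplus G_{C_{2}}$ via Theorem~\ref{thm:5} (using that $C_{1}\cap C_{2}\neq\emptyset$ and that the complement of the two fibers is simply connected), and then derive a contradiction by comparing the at-most-two non-symplectic involutions available in a direct sum of two cyclic groups with the at-least-three forced by the even ramification indices $a_{1},a_{2}$ and $b_{1}b_{2}$. Your version simply makes explicit the simple-connectivity step and the character computation that the paper leaves implicit.
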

\begin{proof}
We assume that $B=a_{1}(\{q_{1}\}\times{\mathbb P}^{1})+b_{1}C_{1}+b_{2}C_{2}+a_{2}({\mathbb P}^{1}\times\{q_{2}\})$ where 
$C_{i}$ is an irreducible curve, $C=(n_{i},m_{i})$ in Pic$({\mathbb P}^{1}\times{\mathbb P}^{1})$, $n_{i},m_{i}>0$, for $i=1,2$, and $a_{1},a_{2},b_{1}b_{2}$ are even integers. 
By Theorem \ref{thm:5}, $G=G_{C_{1}}\oplus G_{C_{2}}$. By  $b_{1}b_{2}$ is an even integer, the number of non-symplectic involutions of $G$ is at most 2. 
Since $(B_{1,0}\cdot C_{i})\not=0$ and $(B_{0,1}\cdot C_{i})\not=0$ for $i=1,2$, and $a_{1}$ and $a_{2}$ are even integers, $G$ must have at least 3 non-symplectic involutions. This is a contradiction.
\end{proof}
By Lemma \ref{thm:37},  the numerical class of $B$ is not one of (\ref{35},\ref{36},\ref{37},\ref{38},\ref{43},\ref{44}) of the list in section 6.
\\

We assume that the numerical class of $B$ is (\ref{45}) of the list in section 6.
We denote $B$ by $3B_{1,0}+2B^{1}_{1,1}+6B^{2}_{1,1}+3B_{0,1}$.
By Theorem \ref{thm:5}, $G^{1}_{1,1}\cong{\mathbb Z}/2{\mathbb Z}$ and  $G^{2}_{1,1}\cong{\mathbb Z}/6{\mathbb Z}$ and $G=G^{1}_{1,1}\oplus G^{2}_{1,1}$. 
Then the number of subgroups of G which is generated by a non-symplectic automorphism of order 3 is 1.  
By Theorem \ref{thm:5} and $(B_{1,0}\cdot B^{2}_{1,1})\not=0$, $G$ must have at least 2 such subgroups. 
This is a contradiction.\\

We assume that the numerical class of $B$ is (\ref{46}) of the list in section 6.
We denote $B$ by $3B_{1,0}+3B^{1}_{1,1}+3B^{2}_{1,1}+3B_{0,1}$.
By Theorem \ref{thm:5}, $G^{i}_{1,1}\cong{\mathbb Z}/3{\mathbb Z}$ for $i=1,2$, and $G=G^{1}_{1,1}\oplus G^{2}_{1,1}$. 
Then the number of subgroups of G which is generated by a non-symplectic automorphism of order 3 is 3.  
By Theorem \ref{thm:5}, $(B_{1,0}\cdot B^{i}_{1,1})\not=0$, and $(B_{1,0}\cdot B_{0,1})\not=0$, 
$G$ must have at least 4 such subgroups. 
This is a contradiction.\\

We assume that the numerical class of $B$ is (\ref{48}) of the list in section 6.
We denote $B$ by $2B^{1}_{1,0}+6B^{2}_{1,0}+3B_{1,2}+3B_{0,1}$.
By Theorem \ref{thm:5}, $G^{1}_{1,0}\cong{\mathbb Z}/2{\mathbb Z}$ and  $G_{1,2}\cong{\mathbb Z}/3{\mathbb Z}$, and $G=G^{1}_{1,0}\oplus G_{1,2}$. 
Then the number of subgroups of G which is generated by a non-symplectic automorphism of order 3 is 1.  
By Theorem \ref{thm:5} and $(B_{0,1}\cdot B_{1,2})\not=0$, $G$ must have at least 2 such subgroups. 
This is a contradiction.\\
	
We assume that the numerical class of $B$ is (\ref{54}) of the list in section 6.
We denote $B$ by $2B_{1,0}+2B^{1}_{1,1}+2B^{2}_{1,1}+2B^{3}_{1,1}+2B_{0,1}$.
By Theorem \ref{thm:5} $G^{i}_{1,1}\cong{\mathbb Z}/2{\mathbb Z}$ for $i=1,2,3$.
Since $(B^{i}_{1,1}\cdot B^{j}_{1,1})\not=0$ for $1\leq i<j\leq3$ and $G^{i}_{1,1}\cong{\mathbb Z}/2{\mathbb Z}$ for $i=1,2,3$,
$G=G^{1}_{1,1}\oplus G^{2}_{1,1}\oplus G^{3}_{1,1}$. 
Then the number of non-symplectic involutions of $G$ is 4.  
Since $(B_{1,0}\cdot B_{0,1})\not=0$, $(B_{0,1}\cdot C_{i})\not=0$, and $(B_{1,0}\cdot C_{i})\not=0$ for $i=1,2,3$, $G$ must have at least 5 non-symplectic involutions. 
This is a contradiction.
\begin{lem}\label{thm:38}
We assume that $X/G\cong{\mathbb P}^{1}\times{\mathbb P}^{1}$.
If $B=\sum_{i=1}^{2}a_{i}(\{p_{i}\}\times{\mathbb P}^{1})+bC'+\sum_{j=1}^{2}c_{j}({\mathbb P}^{1}\times\{q_{j}\})$ 
where $C'$ is an irreducible curve, $\{p_{i}\}\times{\mathbb P}^{1}\cap C'\not=\emptyset$, $C\cap{\mathbb P}^{1}\times\{q_{i}\}\not=\emptyset$ $a_{i},c_{1},c_{2},b\in{\mathbb N}_{\geq2}$, then  $a_{1}=a_{2}$ and $c_{1}=c_{2}$.
\end{lem}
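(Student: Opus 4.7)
The plan is to restrict $p$ to preimage components of the branch fibers, extract a monodromy relation in $G$ from each restriction, and then combine these with the trivial-intersection properties of the cyclic subgroups $G_{F_i}$, $G_{H_j}$, $G_{C'}$ supplied by Theorem \ref{thm:5}. Since $(F_i)^2=0$, each component $D$ of $p^{-1}(F_i)$ is a smooth elliptic curve, and $p|_D\colon D\to F_i\cong\mathbb{P}^1$ is an abelian Galois cover whose branch divisor consists of the point $F_i\cap H_1$ with ramification $c_1$, the point $F_i\cap H_2$ with ramification $c_2$, and the $n:=F_i\cdot C'$ points of $F_i\cap C'$, each with ramification $b$. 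Setting $H:=\mathrm{Stab}(D)/G_{F_i}$, the inclusions $G_{H_j},G_{C'}\subset\mathrm{Stab}(D)$ (which hold because these subgroups fix a point of $D$ and hence stabilize $D$ setwise) identify the images of their generators in $H$ as the local monodromies of $p|_D$ at the corresponding branch points.

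Summing these local monodromies in $H$ and lifting back to $G$ yields, for both $i=1$ and $i=2$, the same element inside $G_{F_i}$, and hence
\[
 g_{H_1}+g_{H_2}+n\,g_{C'}\in G_{F_1}\cap G_{F_2}.
\]
Running the symmetric argument with vertical and horizontal fibers interchanged gives
\[
 g_{F_1}+g_{F_2}+m\,g_{C'}\in G_{H_1}\cap G_{H_2},\qquad m:=H_j\cdot C'.
\]

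The conclusion $a_1=a_2$ and $c_1=c_2$ will then follow by case analysis on the flat Riemann--Hurwitz signatures $(c_1,c_2,b^n)$ and $(a_1,a_2,b^m)$, which are forced to lie in the list $(2,2,2,2),(2,3,6),(2,4,4),(3,3,3)$ since $D$ has genus one and the target is $\mathbb{P}^1$. Assuming for contradiction that $a_1\neq a_2$, in the easy case $\gcd(a_1,a_2)=1$ we have $G_{F_1}\cap G_{F_2}=\{1\}$, so the first relation places $g_{C'}$ inside $\langle g_{H_1},g_{H_2}\rangle$; tracking the order of $g_{H_1}+g_{H_2}$ inside $G_{H_1}\oplus G_{H_2}$ then shows that $G_{C'}$ meets one of $G_{H_j}$ nontrivially, contradicting $C'\cap H_j\neq\emptyset$ via Theorem \ref{thm:5}. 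The remaining cases with $a_1\neq a_2$ (namely $(a_1,a_2)\in\{(2,4),(2,6),(3,6)\}$ up to order) are handled by combining both monodromy relations to force an element of $G_{C'}$ into some $G_{F_i}\oplus G_{H_j}$ in a way that violates one of the declared trivial intersections; the symmetric argument yields $c_1=c_2$. The main obstacle is precisely this final case-by-case bookkeeping, since the flat-signature condition alone permits $a_1\neq a_2$, and the contradiction must be extracted from the joint structural constraints imposed by the two monodromy relations together with the pairwise disjointness of the stabilizer subgroups.
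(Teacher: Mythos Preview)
Your monodromy approach is sound in outline, but you have explicitly left the decisive step—the case analysis for $(a_1,a_2)\in\{(2,4),(2,6),(3,6)\}$—as an unresolved ``main obstacle,'' so the argument is incomplete as written. That bookkeeping is genuinely delicate: for instance when $(a_1,a_2)=(2,4)$ the intersection $G_{F_1}\cap G_{F_2}$ need not be trivial, since $F_1$ and $F_2$ are disjoint parallel fibers and Theorem~\ref{thm:5} gives no constraint on $G_{F_1}\cap G_{F_2}$; you would still have to exclude $G_{F_1}\subset G_{F_2}$ by some further argument before your relation $g_{H_1}+g_{H_2}+n\,g_{C'}\in G_{F_1}\cap G_{F_2}$ becomes usable.

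The paper bypasses all of this with a single preliminary step: quotient first by $G_{C'}$. A component $C_{p_1}$ of $p^{\ast}(\{p_1\}\times\mathbb{P}^1)$ is an elliptic curve (self-intersection $0$), and since $C_{p_1}$ meets $p^{-1}(C')$, which is fixed pointwise by $G_{C'}$, the restriction of $\pi:X\to Y:=X/G_{C'}$ to $C_{p_1}$ is ramified, so its image $C'_{p_1}\subset Y$ is $\mathbb{P}^1$. The residual abelian cover $\pi':Y\to Y/(G/G_{C'})\cong\mathbb{P}^1\times\mathbb{P}^1$ now has branch divisor $\sum a_i(\{p_i\}\times\mathbb{P}^1)+\sum c_j(\mathbb{P}^1\times\{q_j\})$ with \emph{no} contribution from $C'$, and the induced abelian cover $C'_{p_1}\to\{p_1\}\times\mathbb{P}^1\cong\mathbb{P}^1$ is therefore branched at exactly the two points $q_1,q_2$ with indices $c_1,c_2$. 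Theorem~\ref{thm:10} then forces $c_1=c_2$ immediately; $a_1=a_2$ follows by the symmetric argument.

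The insight you missed is that quotienting by $G_{C'}$ simultaneously removes the $n$ extra branch points coming from $C'\cap F_i$ (which are what forced you into the flat-signature list) and drops the source curve from genus $1$ to genus $0$, so the two-branch-point criterion of Theorem~\ref{thm:10} applies directly with no case analysis at all.
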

\begin{proof}
Let $C_{p_{1}}$ be one of irreducible components of $p^{\ast}(\{p_{1}\}\times{\mathbb P}^{1})$. 
Since $(\{p_{1}\}\times{\mathbb P}^{1}\cdot \{p_{1}\}\times{\mathbb P}^{1})=0$, $C_{p_{1}}$ is an elliptic curve.
Let $\pi:X\rightarrow Y:=X/G_{C'}$ be the quotient map, and $G':=G/G_{C'}$ be a finite abelian subgroup of ${\rm Aut}(Y)$.
Since $\{p_{i}\}\times{\mathbb P}^{1}\cap C\not=\emptyset$, the finite map $\pi_{|C_{p_{1}}}:C_{p_{1}}\rightarrow C'_{p_{1}}:=\pi(C_{p_{1}})$ is a branched cover. 
Since $C_{p_{1}}$ is an elliptic curve, $C'_{p_{1}}$  is ${\mathbb P}^{1}$
Since the branch divisor of the quotient map $\pi':Y\rightarrow Y/G'\cong{\mathbb P}^{1}\times{\mathbb P}^{1}$
is  $\sum_{i=1}^{2}a_{i}\{p_{i}\}\times{\mathbb P}^{1}+\Sigma_{j=1}^{2}c_{j}{\mathbb P}^{1}\times\{q_{j}\}$,
the branch divisor of $\pi_{C'_{p_{1}}}:C'_{p_{1}}\rightarrow {p_{1}}\times{\mathbb P}^{1}$ is  is $c_{1}q_{1}+c_{2}q_{2}$. 
By Theorem \ref{thm:10}, we get that $c_{1}=c_{2}$.  
In the same way, we obtain that $a_{1}=a_{2}$.
\end{proof}
By Lemma \ref{thm:38}, the numerical class of $B$ is not one of (\ref{57},\ref{58},\ref{59},\ref{60}) of the list in section 6.
\\

We assume that the numerical class of $B$ is one of (\ref{67},\ref{67,1},\ref{67,2},\ref{68},\ref{68,1},\ref{69},\ref{70},\ref{71},\ref{72},\ref{73}) of the list in section 6.
By Theorem \ref{thm:10}, there are an abelian surface and a finite group $G$ such that $A/G={\mathbb P}^{1}\times{\mathbb P}^{1}$ and the branch divisor is $B$. 
By Theorem \ref{thm:4}, there is a surjective morphism from a $K3$ surface to an abelian surface. This is a contradiction.
\begin{lem}\label{thm:39}
If $X/G\cong{\mathbb F}_{n}$ where $n\geq1$,
then $B\not=aC+bB_{s,t}+cB_{u,v}+dB_{0,1}$ where $a,d\geq0$ are  even integers, $a=0$ or $a\geq2$, and, $b,c>0$ are even integers.
\end{lem}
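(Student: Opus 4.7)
The plan is to derive a contradiction by counting non-symplectic involutions in $G$. Assume for contradiction that $B=aC+bB_{s,t}+cB_{u,v}+dB_{0,1}$ has the stated form. Since $B_{s,t}$ and $B_{u,v}$ are irreducible and distinct from $C$ and any fiber, Lemma \ref{thm:7} forces $s,u\geq 1$, $t\geq ns$, $v\geq nu$. In particular $(B_{s,t}\cdot B_{s,t})=s(2t-ns)>0$ and $(B_{u,v}\cdot B_{u,v})>0$, so Theorem \ref{thm:5}(iv) gives $G_{B_{s,t}}\cong\mathbb{Z}/b\mathbb{Z}$ and $G_{B_{u,v}}\cong\mathbb{Z}/c\mathbb{Z}$, each cyclic and generated by a purely non-symplectic automorphism. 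A direct computation gives $(B_{s,t}\cdot B_{u,v})=sv+u(t-ns)\geq nsu\geq 1$, so $B_{s,t}$ meets $B_{u,v}$, the preimages meet, and $G_{B_{s,t}}\cap G_{B_{u,v}}=\{\mathrm{id}_{X}\}$. Hence $G=G_{B_{s,t}}\oplus G_{B_{u,v}}$. Because $b,c$ are even, the $2$-torsion of $G$ is $(\mathbb{Z}/2\mathbb{Z})^{\oplus 2}$; its distinguished elements $\sigma_{s,t}\in G_{B_{s,t}}$ and $\sigma_{u,v}\in G_{B_{u,v}}$ are non-symplectic, while their product is symplectic, so $G$ has exactly two non-symplectic involutions.

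I next harvest the non-symplectic involutions forced by the other components, using the principle that the curve parts of the fixed loci of distinct non-symplectic involutions are disjoint. Compute $(C\cdot B_{0,1})=1$, $(B_{0,1}\cdot B_{s,t})=s\geq 1$, $(B_{0,1}\cdot B_{u,v})=u\geq 1$. If $d\geq 2$, the involution $\sigma_F\in G_{B_{0,1}}$ is non-symplectic and distinct from both $\sigma_{s,t}$ and $\sigma_{u,v}$ (since $B_{0,1}$ meets both $B_{s,t}$ and $B_{u,v}$), giving a third non-symplectic involution; contradiction. Hence $d=0$. If in addition $a=0$, the divisor reduces to $B=bB_{s,t}+cB_{u,v}$, and Lemma \ref{thm:22} forces $(B_{s,t}\cdot B_{u,v})=8$; combining this with the canonical relation of Theorem \ref{thm:6} rules out every admissible $(s,t,u,v,n)$ not already treated in Section 3.1.

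The remaining subcase is $a\geq 2$, $d=0$. Now $G_{C}$ contributes a non-symplectic involution $\sigma_{C}$; distinctness from $\sigma_{s,t},\sigma_{u,v}$ would again produce a third non-symplectic involution, so $\sigma_{C}$ must coincide with one of them. After relabeling suppose $\sigma_{C}=\sigma_{s,t}$, which occurs iff $C\cap B_{s,t}=\emptyset$, i.e.\ $t=ns$. Then $B_{s,t}\in|s(C+nF)|$ must be irreducible, and the only way this can happen is $s=1$ and $B_{s,t}=C+nF$. Substituting $s=1,t=n$ into the two equations extracted from Theorem \ref{thm:6},
\[\tfrac{a-1}{a}+\tfrac{b-1}{b}+\tfrac{c-1}{c}\,u=2\quad\text{and}\quad \tfrac{b-1}{b}\,n+\tfrac{c-1}{c}\,v=n+2,\]
together with the bounds $a,b,c$ even, $u\geq 1$, $v\geq nu$, and $n\leq 12$ from Proposition \ref{thm:8}, leaves only finitely many tuples $(a,b,c,n,u,v)$.

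The main obstacle is this final enumeration. Each surviving tuple must be shown to contradict either another intersection/involution count or Theorem \ref{thm:10} applied to the abelian cover $p^{-1}(C)\to C\cong\mathbb{P}^{1}$, whose induced branch divisor (from the components meeting $C$) must satisfy the Fenchel constraints on $\mathbb{P}^{1}$. Since this last step is combinatorial but routine, I expect it to be the only delicate portion of the proof; the involution-counting mechanism above disposes cleanly of the bulk of cases, in the style of Lemmas \ref{thm:36} and \ref{thm:37}.
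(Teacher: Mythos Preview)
Your first paragraph and the $d\geq 2$ half of your second paragraph are exactly the paper's proof: from $(B_{s,t}\cdot B_{u,v})\neq 0$ one gets $G=G_{B_{s,t}}\oplus G_{B_{u,v}}$ with precisely two non-symplectic involutions, and then the fiber component $B_{0,1}$, which meets both $B_{s,t}$ and $B_{u,v}$, forces a third non-symplectic involution in $G$, contradiction. That is the entire content of the paper's argument, and it is already complete.

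Everything you wrote after ``Hence $d=0$'' is an unnecessary digression. In the paper's usage, $B_{0,1}$ is explicitly listed as an irreducible component of the branch divisor, so $d\geq 2$ always; the case $d=0$ is not part of the claim (it is covered separately by Lemmas~\ref{thm:22} and~\ref{thm:41}), and indeed every numerical class to which Lemma~\ref{thm:39} is applied in Section~3.2 contains a fiber component with even coefficient. Your attempt to treat $d=0$ also introduces a genuine error: you assert that an irreducible curve in $|s(C+nF)|$ on $\mathbb{F}_n$ forces $s=1$, but this is false---for any $s\geq 1$ the linear system $|s(C+nF)|$ is base-point-free and a general member is smooth and irreducible (for instance, on $\mathbb{F}_1$ the class $2C+2F$ is the pullback of a conic and has smooth irreducible representatives). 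The subsequent ``routine enumeration'' you defer is therefore built on a wrong reduction and would not close the argument. Simply stop after deriving the contradiction for $d\geq 2$.
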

\begin{proof}
We assume that $B=aC+bB_{s,t}+cB_{u,v}+dB_{0,1}$ where $a,d\geq0$ are  even integers, $a=0$ or $a\geq2$, and, $b,c>0$ are even integers.
By Theorem \ref{thm:5} and $(B_{s,t}\cdot B_{u,v})\not=0$, we get that $G=G_{s,t}\oplus G_{u,v}$. 
Then the number of non-symplectic involution of $G$ is 2. 
Since $(B_{s,t}\cdot B_{0,1})\not=0$ and $(B_{u,v}\cdot B_{0,1})\not=0$, $G$ must have at least 3 non-symplectic involutions. 
This is a contradiction.
\end{proof}
By Lemma \ref{thm:39}, the numerical class of $B$ is not one of 
(\ref{100},\ref{101},\ref{102},\ref{103},\ref{118},\ref{107},\\
\ref{129},\ref{128},\ref{127},\ref{195},\ref{194},\ref{199},\ref{198},\ref{224},\ref{246}) of the list in section 6.
\begin{lem}\label{thm:40}
For the branch divisor $B=\sum_{i=1}^{k}b_{i}B_{i}$, we get that $\frac{|G|}{b^{2}_{i}}(B_{i}\cdot B_{i})$ is an even integer for $1\leq i\leq k$.
\end{lem}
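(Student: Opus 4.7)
The plan is to pull the intersection $B_i \cdot B_i$ back to $X$ via $p$ and exploit the fact that on a K3 surface every smooth curve has even self-intersection.

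More concretely, write $p^{\ast}B_i = b_i\sum_{j=1}^{l_i}C_{i,j}$, where the $C_{i,j}$ are irreducible (and smooth, since the support of $p^{\ast}B$ is simple normal crossing by the remark at the start of Section 3.1). Because $G$ is abelian, Theorem \ref{thm:5} (iii) guarantees that the curves $C_{i,1},\dots,C_{i,l_i}$ are pairwise disjoint, so the cross terms in $(p^{\ast}B_i)^{2}$ vanish and we obtain
\[
|G|\,(B_i\cdot B_i)\;=\;(p^{\ast}B_i\cdot p^{\ast}B_i)\;=\;b_i^{2}\sum_{j=1}^{l_i}(C_{i,j}\cdot C_{i,j}).
\]
Dividing by $b_i^{2}$ reduces the claim to showing that $\sum_{j}(C_{i,j}\cdot C_{i,j})$ is even.

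This is where the K3 hypothesis enters. Since $K_X=0$, the adjunction formula applied to the smooth curve $C_{i,j}$ gives
\[
(C_{i,j}\cdot C_{i,j})\;=\;2g(C_{i,j})-2,
\]
which is an even integer. Summing over $j$ keeps the parity, so $\sum_{j}(C_{i,j}\cdot C_{i,j})$ is even and hence $\frac{|G|}{b_i^{2}}(B_i\cdot B_i)$ is an even integer, as asserted.

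There is no real obstacle here; the only subtlety is checking that the $C_{i,j}$ are smooth and mutually disjoint, both of which follow from the standing hypotheses that $X/G$ is smooth and $G$ is abelian, via Theorem \ref{thm:5} (iii) and the normal-crossing property recorded at the start of Section 3.1.
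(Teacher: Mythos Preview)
Your proof is correct and follows essentially the same argument as the paper: pull back $B_i$ to $X$, use Theorem~\ref{thm:5}(iii) to see that the components $C_{i,j}$ are pairwise disjoint so that $\frac{|G|}{b_i^{2}}(B_i\cdot B_i)=\sum_j(C_{i,j}\cdot C_{i,j})$, and then observe that each self-intersection is even because $X$ is a $K3$ surface. The only cosmetic difference is that you spell out the adjunction formula explicitly, whereas the paper simply says ``since $X$ is a $K3$ surface, $(C_j\cdot C_j)$ is an even integer.''
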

\begin{proof}
For $i=1,\ldots,k$, we put $p^{\ast}B_{i}=\sum_{j=1}^{l}b_{i}C_{j}$ where $C_{j}$ is a smooth curve for $j=1,\ldots,l$. 
By Theorem \ref{thm:5}, $C_{1},\ldots,C_{l}$ are pairwise disjoint.
Then we get that $\frac{|G|}{b^{2}_{i}}(B_{i}\cdot B_{i})=\sum_{j=1}^{l}(C_{j}\cdot C_{j})$.
Since $X$ is a $K3$ surface, $(C_{j}\cdot C_{j})$ is an even integer, and hence $\frac{|G|}{b^{2}_{i}}(B_{i}\cdot B_{i})$ is an even integer.
\end{proof}
By Lemma \ref{thm:40} the numerical class of $B$ is not one of (\ref{149},\ref{154},\ref{119},\ref{109},\ref{126},\ref{123},\ref{193},\\
\ref{192},\ref{223}) of the list in section 6.
\\

We assume that the numerical class of $B$ is (\ref{155}) of the list in section 6.
We denote $B$ by $2B_{1,2}+4B^{1}_{1,1}+4B^{2}_{1,1}+2B_{0,1}$. 
By Theorem \ref{thm:5}, $G_{1,2}\cong\mathbb Z/2\mathbb Z$, and $G^{i}_{1,1}\cong\mathbb Z/4\mathbb Z$ for $i=1,2$
Since $(B_{1,2}\cdot B^{i}_{1,1})\not=0$ for $i=1,2$, by Theorem \ref{thm:5}, $G=G_{1,2}\oplus G^{1}_{1,1}\oplus G^{2}_{1,1}$, and hence $|G|=36$.
Let $s\in G_{1,2}$ and $t\in G^{1}_{1,1}$ be generators. 
Then $s\circ t$ is a non-symplectic automorphism of order 4 and $p^{-1}(B_{1,2})\cap p^{-1}(B^{1}_{1,1})\subset{\rm Fix}(s\circ t)$.
Since $G=G_{1,2}\oplus G^{1}_{1,1}\oplus G^{2}_{1,1}$ and $B=2B_{1,2}+4B^{1}_{1,1}+4B^{2}_{1,1}+2B_{0,1}$, by Theorem \ref{thm:5}, Fix$(s\circ t)$ does not contain a curve. 
By [\ref{bio:9},\ {\rm Proposition 1}], the number of isolated points of Fix$(s\circ t)$ is 4.  
Since  $(B_{1,2}\cdot B^{1}_{1,1})=2$, we get that $|p^{-1}(B_{1,2})\cdot p^{-1}(B^{1}_{1,1})|\geq8$. 
This is a contradiction.\\

We assume that the numerical class of $B$ is (\ref{156}) of the list in section 6.
We denote $B$ by $2B_{2,3}+2B^{1}_{1,1}+2B^{2}_{1,1}+2B_{0,1}$. 
By Theorem \ref{thm:5}, $G_{2,3}\cong G^{1}_{1,1}\cong G^{2}_{1,1}\cong\mathbb Z/2\mathbb Z$.
Since an intersection of two of $B_{2,3},B^{1}_{1,1},B^{2}_{1,1:}$ is not an empty set,
by Theorem \ref{thm:5}, $G=G_{2,3}\oplus G^{1}_{1,1}\oplus G^{2}_{1,1}$, and hence $|G|=8$.  
Let $s\in G_{2,3}$ and $t\in G_{0,1}$ be generators. 
Since $s$ and $t$ are non-symplectic involutions, Fix$(s)$ and Fix$(t)$ are a set of curves and Fix$(s\circ t)$ is a set of 8 isolated points. 
Since $(B_{2,3}\cdot B_{0,1})=2$, $|p^{-1}(B_{2,3})\cap p^{-1}(B^{1}_{1,1})|=4$.
Since Fix$(s\circ t)\supset p^{-1}(B_{2,3})\cap p^{-1}(B_{0,1})$, $X/(G_{2,3}\oplus G_{0,1})$ has 2 singular points, however,
since the branch divisor of the double cover $X/(G_{2,3}\oplus G_{0,1})\rightarrow X/G$ is $2B^{1}_{1,1}+2B^{2}_{1,1}$ and $(B^{1}_{1,1}\cdot B^{2}_{1,1})=1$,  the number of singular points of $X/(G_{2,3}\oplus G_{0,1})$ must be 1. 
This is a contradiction.  

As for the case of (\ref{156}), the numerical class of $B$ is not one of (\ref{159},\ref{158}) of the list in section 6.
\\

We assume that the numerical class of $B$ is (\ref{157}) of the list in section 6.
We denote $B$ by $2B_{2,2}+2B_{1,2}+2B_{1,1}+2B_{0,1}$. 
By Theorem \ref{thm:5}, $G_{2,2}\cong G_{1,2}\cong G_{1,1}\cong\mathbb Z/2\mathbb Z$.
Since an intersection of two of $B_{2,2},B_{1,2},B_{1,1}$ is not an empty set,
by Theorem \ref{thm:5}, $G=G_{2,2}\oplus G_{1,2}\oplus G_{1,1}$. 
Since $(B_{2,2}\cdot B_{1,2})=4$,  $X/(G_{2,2}\oplus G_{1,2})$ is smooth. 
Then there is a double cover $X/G_{2,2}\oplus G_{1,2}\rightarrow X/G\cong{\mathbb F}_{1}$ whose branch divisor is $2B_{1,1}+2B_{0,1}$. 
Since $\frac{B_{1,1}+B_{0,1}}{2}\not\in$Pic$({\mathbb F}_{1})$, by Theorem \ref{thm:12}, this is a contradiction.\\

We assume that the numerical class of $B$ is (\ref{164}) of the list in section 6.
We denote $B$ by $2B^{1}_{1,2}+2B^{2}_{1,2}+2B^{1}_{1,1}+2B^{2}_{1,1}$. 
By Theorem \ref{thm:5}, $G^{i}_{1,2}\cong G^{i}_{1,1}\cong\mathbb Z/2\mathbb Z$ for $i=1,2$.
Since an intersection of two of $B^{1}_{1,2},B^{2}_{1,2},B^{1}_{1,1},B^{2}_{1,1}$ is not an empty set,
by Theorem \ref{thm:5}, $G=G^{1}_{1,2}\oplus G^{2}_{1,2}\oplus G^{1}_{1,1}\oplus G^{2}_{1,1}$ or $G=G^{1}_{1,2}\oplus G^{2}_{1,2}\oplus G^{1}_{1,1}$. 
We assume that $G=G^{1}_{1,2}\oplus G^{2}_{1,2}\oplus G^{1}_{1,1}\oplus G^{2}_{1,1}$. 
Since $|G|=16$ and $(B^{1}_{1,2}\cdot B^{2}_{1,2})=3$, $|p^{-1}(B^{1}_{1,2}\cap B^{2}_{1,2})|\geq12$.
Since the number of isolated points of symplectic involution is 8, this is a contradiction. 
Therefore,  $G=G^{1}_{1,2}\oplus G^{2}_{1,2}\oplus G^{1}_{1,1}$.

By Theorem \ref{thm:12}, there are the Galois covers $p_{1}:Y_{1}\rightarrow{\mathbb F}_{1}$ and $p_{2}:Y_{2}\rightarrow{\mathbb F}_{1}$
such that the branch divisor of $p_{1}$ is $2B^{1}_{1,2}+2B^{2}_{1,2}$ and that of $p_{2}$ is $2B^{1}_{1,1}+2B^{2}_{1,1}$.
Let $X':=Y_{1}\times_{{\mathbb F}_{1}}Y_{2}$. 
Then there is the Galois cover $q:X'\rightarrow{\mathbb F}_{1}$ whose branch divisor is $2B^{1}_{1,2}+2B^{2}_{1,2}+2B^{1}_{1,1}+2B^{2}_{1,1}$ and Galois group is isomorphic to ${\mathbb Z}/2{\mathbb Z}^{\oplus 2}$ as a group.
By Theorem \ref{thm:3}, there is a symplectic automorphism of order 2 $s\in G$ such that  $X'=X/\langle s\rangle$. 
Since $s$ is symplectic, the minimal resolution $f:X'_{m}\rightarrow X'$ is a $K3$ surface. 
Let $e_{1},\ldots,e_{8}$ be the exceptional divisors of $f$. 
We set $\{p_{1},p_{2},p_{3}\}:=B^{1}_{1,2}\cap B^{2}_{1,2}$, and $\{p_{4}\}:=B^{1}_{1,1}\cap B^{2}_{1,1}$.
Let $\pi:{\rm Blow}_{\{p_{1},p_{2},p_{3},p_{4}\}}{\mathbb F}_{1}\rightarrow{\mathbb F}_{1}$ be the blow-up of ${\mathbb P}^{1}\times{\mathbb P}^{1}$ at points $p_{1},p_{2},p_{3},p_{4}$, and $E_{i}:=\pi^{-1}(p_{i})$ be an exceptional divisor of $\pi$ for $i=1,2,3,4$. 
Since the support of $B$ is simple normal crossing, in the same way of Proposition \ref{pro:5}, 
there is a Galois cover $q:X'_{m}\rightarrow {\rm Blow}_{\{p_{1},p_{2},p_{3},p_{4}\}}{\mathbb F}_{1}$ whose branch divisor is  $2C^{1}_{1,2}+2C^{2}_{1,2}+2C^{1}_{1,1}+2C^{2}_{1,1}$ and Galois group is isomorphic to ${\mathbb Z}/2{\mathbb Z}^{\oplus 2}$ as a group, 
where $C^{1}_{1,2},C^{2}_{1,2},C^{1}_{1,1},C^{2}_{1,1}$ are proper transform of $B^{1}_{1,2},B^{2}_{1,2},B^{1}_{1,1},B^{2}_{1,1}$ by the birational map $\pi^{-1}$ in order. 
Notice that $q^{\ast}(\sum_{i=1}^{4}E_{i})=\sum_{j=1}^{8}e_{j}$ and there is the commutative diagram:
$$
\xymatrix{
	X' \ar[r] &{\mathbb F}_{1} \\
	X'_{m} \ar[u]^{f}\ar[r]_{q} &{\rm Blow}_{\{p_{1},p_{2},p_{3},p_{4}\}}{\mathbb F}_{1} \ar[u]^{\pi}.
}
$$
Furthermore, we put $\{x_{1},\ldots,x_{8}\}:=$Fix$(s)$. Then ${\rm Blow}_{\{x_{1},\ldots,x_{8\}}}X/\langle s\rangle=X'_{m}$, the branch divisor of the double cover ${\rm Blow}_{\{x_{1},\ldots,x_{8}\}}X\rightarrow X'_{m}$ is $\sum_{j=1}^{8}e_{j}$, and there is the commutative diagram:
$$
\xymatrix{
	X \ar[r] &X' \\
	{\rm Blow}_{\{x_{1},\ldots,x_{8}\}}X \ar[u]\ar[r] &X'_{m} \ar[u].
}
$$
In the same way of Proposition \ref{pro:5},  we get that
\[ \sum_{i=1}^{4}q^{\ast}E_{i}=2(\pi\circ q)^{\ast}(C+\frac{3}{2}F)-2C^{1}_{1,2}-2C^{1}_{1,1}\ {\rm in\ Pic}(X'_{m}).\]
Since ${\rm Blow}_{\{x_{1},\ldots,x_{8}\}}X$ and $X'_{m}$ are smooth, and  $q^{\ast}(\sum_{i=1}^{4}E_{i})=\sum_{j=1}^{8}e_{j}$, 
we get that $\frac{\sum_{i=1}^{4}q^{\ast}E_{i}}{2}\in$Pix$(X'_{m})$, and hence  $\frac{F}{2}\in$Pic$(X'_{m})$.

Since $C^{1}_{1,2}\cap C^{2}_{1,2}$ is an empty set and $\frac{C^{1}_{1,2}+C^{2}_{1,2}}{2}\in$Pic$({\rm Blow}_{\{p_{1},p_{2},p_{3},p_{4}\}}{\mathbb F}_{1})$, by Theorem \ref{thm:12},
there is the Galois cover $g:Z\rightarrow{\rm Blow}_{\{p_{1},p_{2},p_{3},p_{4}\}}{\mathbb F}_{1}$ such that $Z$ is smooth,  the branch divisor is $2C^{1}_{1,2}+2C^{2}_{1,2}$, and the Galois group is isomorphic to ${\mathbb Z}/2{\mathbb Z}$ as a group.
By Theorem \ref{thm:3}, there is a non-symplectic automorphism of order 2 $\rho$ of $X'_{m}$ such that $X'_{m}/\langle \rho\rangle=Z$. 
Let $h:X'_{m}\rightarrow Z$ be the quotient map. 
Then $q=g\circ h$, and hence $\frac{F}{2}\in$Pic$(X'_{m})^{\rho}$.
Since the degree of $g$ is 2 and $(C^{1}_{1,2}\cdot \frac{F}{2})=\frac{1}{2}$ and $\frac{g^{\ast}C^{1}_{1,2}}{2}\in$Pic$(Z)$, we get that $g^{\ast}\frac{F}{2}\not\in$Pic$(Z)$.
Recall that $C^{i}_{1,1}=C+F-e_{4}$ in Pic$({\rm Blow}_{\{p_{1},p_{2},p_{3},p_{4}\}}{\mathbb F}_{1})$ for $i=1,2$.
Since the branch divisor of $h$ is $2g^{\ast}C^{1}_{1,1}+2g^{\ast}C^{2}_{1,1}$, we get that $q^{\ast}(\frac{1}{2}C+\frac{1}{2}F-e_{4})\in$Pic$(X'_{m})$.
By [\ref{bio:9}], Pic$(X'_{m})^{\rho}$ is generated by $h^{\ast}{\rm Pic}(Z)$ and $q^{\ast}(\frac{1}{2}C+\frac{1}{2}F-e_{4})$ over ${\mathbb Z}$.
Since $g^{\ast}\frac{F}{2}\not\in$Pic$(Z)$ and $2q^{\ast}(\frac{1}{2}C+\frac{1}{2}F-e_{4})\in h^{\ast}{\rm Pic}(Z)$, we may assume that there is $\alpha\in {\rm Pic}(Z)$ such that
\[ q^{\ast}\frac{F}{2}=h^{\ast}\alpha+q^{\ast}(\frac{1}{2}C+\frac{1}{2}F-e_{4}). \]
Then $g^{\ast}(\frac{-1}{2}C+e_{4})\in$Pic$(Z)$. 
Since the degree of $g$ is 2 and $(C^{1}_{1,2}\cdot \frac{-1}{2}C+e_{4})=\frac{3}{2}$ and $\frac{g^{\ast}C^{1}_{1,2}}{2}\in$Pic$(Z)$,
we get that  $(\frac{g^{\ast}C^{1}_{1,2}}{2}\cdot g^{\ast}(\frac{-1}{2}C+e_{4}))=\frac{3}{2}$. 
By the assumption that $\frac{g^{\ast}C^{1}_{1,2}}{2}\in$Pic$(Z)$ and $g^{\ast}(\frac{-1}{2}C+e_{4})\in$Pic$(Z)$, this is a contradiction.
Therefore, the numerical class of $B$ is not (\ref{164}) of the list in section 6.
\\

We assume that the numerical class of $B$ is (\ref{165}) of the list in section 6.
We denote $B$ by $2B_{1,2}+2B^{1}_{1,1}+2B^{2}_{1,1}+2B^{3}_{1,1}+2B_{0,1}$. 
By Theorem \ref{thm:5}, $G_{1,2}\cong G^{i}_{1,1}\cong G_{0,1}\cong\mathbb Z/2\mathbb Z$ where $i=1,2,3$.
Since an intersection of two of $B_{1,2},B^{1}_{1,1},B^{2}_{1,1},B^{3}_{1,1},B_{0,1}$ is not an empty set,
by Theorem \ref{thm:5}, $G=G_{1,2}\oplus G^{1}_{1,1}\oplus G^{2}_{1,1}\oplus G^{3}_{1,1}$.
Let $G_{s}$ be the subgroup of $G$ which consists of symplectic automorphisms of $G$. 
Then $G_{s}\cong\mathbb Z/2\mathbb Z^{\oplus 3}$.
By [\ref{bio:17}], the number of singular points of $X/G_{s}$ is 14, however,
since the branch divisor of the double cover $X/G_{s}\rightarrow X/G$ is $B=2B_{1,2}+2B^{1}_{1,1}+2B^{2}_{1,1}+2B^{3}_{1,1}+2B_{0,1}$ and the support of $B$ is simple normal crossing,  
the number of singular points of $X/G_{s}$ is 13. 
This is a contradiction.
Therefore, the numerical class of $B$ is not (\ref{165}) of the list in section 6.
\begin{lem}\label{thm:41}
If $X/G\cong{\mathbb F}_{n}$ where $n\geq1$, then $B\not=aC+bB_{s,t}+cB_{u,v}$ where $a,b,c>0$ are even integers, and
$(C\cdot B_{s,t})\not=0$ and $(C\cdot B_{u,v})\not=0$, i.e. $s\not=t$ or $u\not=v$.
\end{lem}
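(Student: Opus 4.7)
The plan is to argue by contradiction along the lines of Lemmas \ref{thm:39} and \ref{thm:40}, using the count of non-symplectic involutions forced by the branch divisor.

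Assume such a $B$ exists. By Theorem \ref{thm:5}, the subgroups $G_C$, $G_{s,t}$, $G_{u,v}$ are cyclic of orders $a$, $b$, $c$, each generated by a purely non-symplectic automorphism; and since $\mathbb{F}_n$ is simply connected, Theorem \ref{thm:5}(i) with $I=\emptyset$ shows that these three subgroups generate $G$. The hypotheses $(C\cdot B_{s,t})\neq 0$ and $(C\cdot B_{u,v})\neq 0$ make $p^{-1}(C)\cap p^{-1}(B_{s,t})$ and $p^{-1}(C)\cap p^{-1}(B_{u,v})$ nonempty; since the fixed locus of any nonidentity automorphism of $X$ is a pairwise disjoint union of points and smooth curves, no nonidentity automorphism can fix two smooth curves meeting transversally, so $G_C\cap G_{s,t}=G_C\cap G_{u,v}=\{\mathrm{id}_X\}$.

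Let $\sigma_1\in G_C$, $\sigma_2\in G_{s,t}$, $\sigma_3\in G_{u,v}$ denote the unique involutions (which exist because $a,b,c$ are even). The trivial intersections just established make these three non-symplectic involutions pairwise distinct, so each product $\sigma_i\sigma_j$ is a nontrivial symplectic involution. Each symplectic involution on a K3 surface has exactly eight isolated fixed points, and the fixed locus of $\sigma_1\sigma_2$ contains $p^{-1}(C\cap B_{s,t})$; the simple-normal-crossing hypothesis on $p^{-1}(B)$ forces $|p^{-1}(C\cap B_{s,t})|=|G|(C\cdot B_{s,t})/(ab)$, yielding $|G|(C\cdot B_{s,t})\leq 8ab$ and, by the analogous argument for $\sigma_1\sigma_3$, $|G|(C\cdot B_{u,v})\leq 8ac$. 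Moreover the fourth non-symplectic involution $\sigma_1\sigma_2\sigma_3$ must either act freely (so that $X/\langle\sigma_1\sigma_2\sigma_3\rangle$ is an Enriques surface factoring the rational quotient, which is ruled out under our hypotheses) or have a fixed curve descending along $p$ to one of $C$, $B_{s,t}$, $B_{u,v}$; the latter would place $\sigma_1\sigma_2\sigma_3$ in one of $G_C$, $G_{s,t}$, $G_{u,v}$ and collapse to one of the forbidden equalities $\sigma_i=\sigma_j$.

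I would finish by invoking Proposition \ref{thm:8}: writing $B_{s,t}=sC+tF$ and $B_{u,v}=uC+vF$, the relation $\tfrac{a-1}{a}+\tfrac{b-1}{b}s+\tfrac{c-1}{c}u=2$ together with $a,b,c\geq 2$ even restricts $(s,u)$ to $\{(1,1),(1,2),(2,1)\}$ and leaves only finitely many admissible sextuples $(a,b,c,n,t,v)$, all corresponding to entries in the numerical list of Section 6. Running through this short list, the two fixed-point inequalities, combined with the lower bound $|G|\geq\mathrm{lcm}(a,b,c)$ and the constraint on $\sigma_1\sigma_2\sigma_3$, eliminate every case. The main obstacle is precisely this final case-by-case numerical check: for each admissible class one must confirm that the inequalities force $|G|$ below $\mathrm{lcm}(a,b,c)$ or force some $G_i\cap G_j$ nontrivial. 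I expect it to be routine and to parallel the arithmetic in Lemmas \ref{thm:35} and \ref{thm:39}.
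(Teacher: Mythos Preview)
Your approach misses the one observation that makes the paper's proof essentially a two--line argument, and as a result you end up with an incomplete case check instead of an immediate contradiction.

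The key point you overlook is that ${\mathbb F}_n\setminus C$ is simply connected for $n\geq 1$ (it is the total space of a line bundle over ${\mathbb P}^1$). Applying Theorem~\ref{thm:5}(i) with $I=\{C\}$ therefore shows that $G$ is generated by $G_{s,t}$ and $G_{u,v}$ \emph{alone}. Since $(B_{s,t}\cdot B_{u,v})>0$ (both curves have positive self-intersection on ${\mathbb F}_n$ with $n\geq 1$), one gets $G_{s,t}\cap G_{u,v}=\{\mathrm{id}_X\}$ and hence $G=G_{s,t}\oplus G_{u,v}\cong{\mathbb Z}/b{\mathbb Z}\oplus{\mathbb Z}/c{\mathbb Z}$. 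Such a group has exactly three involutions, of which precisely two are non-symplectic (namely your $\sigma_2$ and $\sigma_3$; their product is symplectic). But $G_C$ must contribute a non-symplectic involution distinct from both $\sigma_2$ and $\sigma_3$ because $(C\cdot B_{s,t})\neq 0$ and $(C\cdot B_{u,v})\neq 0$. That is the whole proof.

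Because you use $I=\emptyset$ instead, you never pin down the group structure, and several subsequent steps become shaky. You only verify $G_C\cap G_{s,t}=G_C\cap G_{u,v}=\{\mathrm{id}_X\}$, which does not give $\sigma_2\neq\sigma_3$; your claim that the $\sigma_i$ are pairwise distinct is therefore unproved. The treatment of $\sigma_1\sigma_2\sigma_3$ is also incomplete: the free case is dismissed as ``ruled out under our hypotheses'' without justification (an Enriques intermediate quotient is not obviously forbidden here). Finally, the fixed-point inequalities and the proposed numerical case check, even if carried out, are unnecessary once $G=G_{s,t}\oplus G_{u,v}$ is known. The fix is simply to choose the right $I$ in Theorem~\ref{thm:5}(i).
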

\begin{proof}
We assume that $B=aC+bB_{s,t}+cB_{u,v}$ where $a,b,c>0$ are even integers, and
$(C\cdot B_{s,t})\not=0$ and $(C\cdot B_{u,v})\not=0$
By Theorem \ref{thm:5} and $(B_{s,t}\cdot B_{u,v})\not=0$, $G=G_{s,t}\oplus G_{u,v}$. 
Then the number of non-symplectic involutions of $G$ is 2. 
Since $(C\cdot B_{s,t})\not=0$ and $(C\cdot B_{u,v})\not=0$, $G$ must have at least 3 non-symplectic involutions. 
This is a contradiction.
\end{proof}
By Lemma \ref{thm:41}, the numerical class of $B$ is not one of (\ref{117},\ref{125},\ref{124},\ref{197}) of the list in section 6.
\\

We assume that the numerical class of $B$ is (\ref{142}) of the list in section 6.
We denote $B$ by $2B_{1,0}+2B_{1,4}+2B^{1}_{1,1}+2B^{2}_{1,1}$.
By Theorem \ref{thm:5}, $G_{1,0}\cong G_{1,4}\cong G^{i}_{1,1}\cong\mathbb Z/2\mathbb Z$ where $i=1,2$. 
Since $(B_{1,4}\cdot B^{i}_{1,1})\not=0$ for $i=1,2$, by Theorem \ref{thm:5}, $G=G_{1,4}\oplus G^{1}_{1,1}\oplus G^{2}_{1,1}$. 
Let $s\in G^{1}_{1,1}$ and $t\in G^{2}_{1,1}$ be generators. 
Since the number of non-symplectic automorphisms of order 2  of $G$ is 4 and Theorem \ref{thm:5}, 
we may assume that Fix$(s)$ is the support of $p^{\ast}B^{1}_{1,1}$.
Since the support of $B$ is simple normal crossing and $(B_{1,4}\cdot B^{1}_{1,1})=4$, $X/(G_{1,4}\oplus G^{1}_{1,1})$ is smooth.
Then there is the Galois cover $X/G_{1,4}\oplus G^{1}_{1,1}\rightarrow{\mathbb F}_{1}$ such that the branch divisor is $2B_{1,0}+2B^{2}_{1,1}$ and the Galois group is isomorphic to ${\mathbb Z}/2{\mathbb Z}$ as a group. 
Since $\frac{B_{1,0}+B^{2}_{1,1}}{2}\not\in$Pic$({\mathbb F}_{1})$, this is a contradiction.

As for the case of (\ref{142}), the numerical class of $B$ does not (\ref{143}) of the list in section 6.
\\

We assume that the numerical class of $B$ is (\ref{178}) of the list in section 6.
We denote $B$ by $3B_{1,0}+3B_{1,2}+3B_{1,4}$. 
By Theorem \ref{thm:5} and $(B_{1,2}\cdot B_{1,4})\not=0$, $G=G_{1,2}\oplus G_{1,4}$.
Let $s\in G_{1,4}$ be a generator of $G_{1,4}$. 
Then the only curve of Fix$(s)$ is $C_{1,4}$.
Since $(B_{1,4}\cdot B_{1,4})=6$, the genus of $C_{1,4}$ is 4. 
By [\ref{bio:2},\ref{bio:7}], Fix$(s)$ does not have isolated points, and hence $X/G_{1,4}$ is smooth.
Let $q:X/G_{1,4}\rightarrow X/G$ be the quotient map. 
Then the degree of $q$ is 3, and the branch divisor of $q$ is $3B_{1,0}+3B_{1,2}$. 
Since the degree of $q$ is 3 and $X/G_{1,4}$ is smooth, $\frac{3}{3^{2}}(B_{1,0}\cdot B_{1,0})$ is an integer.
Since $(B_{1,0}\cdot B_{1,0})=-2$, $\frac{3}{3^{2}}(B_{1,0}\cdot B_{1,0})=-\frac{2}{3}$.
This is a contradiction. \\ 

We assume that the numerical class of $B$ is (\ref{179}) of the list in section 6.
We denote $B$ by $3B_{1,0}+3B_{1,2}+3B_{1,3}+3B_{0,1}$. 
By Theorem \ref{thm:5}, $G_{1,0}\cong G_{1,2}\cong G_{1,3}\cong G_{0,1}\cong\mathbb Z/3\mathbb Z$. 
Since $(B_{1,2}\cdot B_{1,3})\not=0$, by Theorem \ref{thm:5}, $G=G_{1,2}\oplus G_{1,3}$.
Let $s,t\in G$ be generators of $G_{1,2}$ and $G_{1,3}$ respectively such that  $s\circ t$ is a non-symplectic automorphism of order 3.
Since $G=G_{1,2}\oplus G_{1,3}$, the number of subgroups of $G$ which are generated by a non-symplectic automorphism of order 3 is 3. 
Since $(B_{1,2}\cdot B_{0,1})\not=0$ and $(B_{1,3}\cdot B_{0,1})\not=0$, we get that $p^{-1}(B_{0,1})$ is contained in Fix$(s\circ t)$, 
and hence $p^{-1}B_{1,0}$ is contained in Fix$(s)$.
Since $|G|=9$,  there is an elliptic curve $C_{0,1}$ on $X$ such that $p^{\ast}B_{0,1}=3C_{0,1}$. 
By [\ref{bio:2},\ref{bio:7}], the number of isolated points of Fix$(s\circ t)$ is 3. 
Since $(B_{1,0}\cdot B_{1,3})=1$ and $(B_{1,2}\cdot B_{1,3})=3$, we have $|p^{-1}(B_{1,0}\cup B_{1,2})\cap p^{-1}(B_{1,3})|=4$. 
Since $p^{-1}(B_{1,0}\cup B_{1,2})\subset$Fix$(s)$ and $p^{-1}(B_{1,3})\subset$Fix$(t)$, 
we get that $p^{-1}(B_{1,0}\cup B_{1,2})\cap p^{-1}(B_{1,3})\subset$Fix$(s \circ t)$. 
By the fact that  the number of isolated points of Fix$(s\circ t)$ is 3, this is a contradiction.  
\\

We assume that the numerical class of $B$ is (\ref{196}) of the list in section 6.
We denote $B$ by $2B_{1,0}+2B_{1,4}+2B_{2,4}$. 
By Theorem \ref{thm:5}, $G=G_{1,4}\oplus G_{2,4}$.
Let $s\in G$ be a generator of $G_{1,4}$. 
Since $(B_{1,0}\cdot B_{1,4})\not=0$ and $(B_{1,4}\cdot B_{2,4})\not=0$,  the only curve of Fix$(s)$ is $C_{1,4}$.
Since the fixed locus of a non-symplectic involution does not have isolated points, $X/G_{1,4}$ is smooth. 
Let $q:X/G_{1,4}\rightarrow X/G\cong{\mathbb F}_{2}$ be the quotient map. 
The degree of $q$ is 2 and the branch divisor of $q$ is $2B_{1,0}+2B_{2,2}$. 
Since  $\frac{B_{1,0}+B_{2,2}}{2}\not\in$Pic$({\mathbb F}_{2})$, by Theorem \ref{thm:12}, this is a contradiction. \\ 

We assume that the numerical class of $B$ is (\ref{210}) of the list in section 6.
We denote $B$ by $2B_{1,0}+2B^{1}_{1,3}+2B^{2}_{1,3}+2B_{1,2}$. 
By Theorem \ref{thm:5}, $G^{i}_{1,3}\cong G_{1,2}\cong\mathbb Z/2\mathbb Z$ where $i=1,2$. 
Since an intersection of two of $B^{1}_{1,3},B^{2}_{1,3},B_{1,2}$ is not an empty set, $
G=G^{1}_{1,3}\oplus G^{2}_{1,3}\oplus G_{1,2}$. 
Since $|G|=8$ and $(B^{1}_{1,3}\cdot B^{2}_{1,3})=4$, $Y:=X/(G^{1}_{1,3}\oplus G^{2}_{1,3})$ is smooth.
Then there is the Galois cover $q:Y\rightarrow X/G$ such that the branch divisor is $2B_{1,0}+2B_{1,2}$, 
and the Galois group is ${\mathbb Z}/2{\mathbb Z}$ as a group.
Since the fixed locus of a non-symplectic automorphism of order 2 does not have isolated points,
$X/G^{1}_{1,3}$ is smooth, and  there is the Galois cover $q'':X/G^{1}_{1,3}\rightarrow Y$ such that the branch divisor of $q''$ is $2q^{\ast}B^{1}_{1,3}$ and the Galois group of $q''$ is ${\mathbb Z}/2{\mathbb Z}$ as a group.
Since $Y$ and $X/G^{1}_{1,3}$ are smooth, and the degree of $q''$ is two, we get that $\frac{q^{\ast}B^{1}_{1,3}}{2}\in$Pic$(Y)$.	
Recall that the branch divisor of $q$ is $2B_{1,0}+2B_{1,2}$, and the degree of $q$ is two.
Since $\frac{q^{\ast}B_{1,2}}{2}\in$Pic$(Y)$, we get that $\frac{q^{\ast}F}{2}=\frac{q^{\ast}B^{1}_{1,3}}{2}-\frac{q^{\ast}B_{1,2}}{2}\in$Pic$(Y)$.
Since $(B_{1,0}\cdot F)=1$, we get that $(\frac{q^{\ast}B_{1,0}}{2}\cdot \frac{q^{\ast}F}{2})=\frac{1}{2}$.
Since  $\frac{q^{\ast}B_{1,0}}{2}\in$Pic$(Y)$ and $\frac{q^{\ast}F}{2}\in$Pic$(Y)$, this is a contradiction.
Therefore, the numerical class of $B$ is not (\ref{210}).\\

We assume that the numerical class of $B$ is (\ref{211}) of the list in section 6.
We denote $B$ by $2B_{1,0}+2B_{1,3}+2B^{1}_{1,2}+2B^{2}_{1,2}+2B_{0,1}$. 
By Theorem \ref{thm:5}, $G_{1,3}\cong G^{i}_{1,2}\cong\mathbb Z/2\mathbb Z$ where $i=1,2$.
Since an intersection of two of $B_{1,3},B^{1}_{1,2},B^{2}_{1,2}$ is not an empty set, by Theorem \ref{thm:5}, 
$G=G_{1,3}\oplus G^{1}_{1,2}\oplus G^{2}_{1,2}$. 
Let $s\in G^{1}_{1,2}$ be a generator. 
Since the number of non-symplectic automorphisms of order 2 of $G$ is 4 and Theorem \ref{thm:5}, we may assume that
$p^{-1}(B^{1}_{1,3})$ and $p^{-1}(B_{1,0})$ are contained in Fix$(s)$.
Since the support of $B$ is simple normal crossing and $(B_{1,3}\cdot B_{1,0}+B^{1}_{1,2})=4$, 
$X/(G_{1,3}\oplus G^{1}_{1,2})$ is smooth and there is the Galois cover $X/(G_{1,3}\oplus G^{1}_{1,2})\rightarrow{\mathbb F}_{2}$ such that the branch divisor is $2B^{2}_{1,2}+2B_{0,1}$ and the Galois group is ${\mathbb Z}/2{\mathbb Z}$ as a group. 
Since $\frac{B^{2}_{1,2}+B_{0,1}}{2}\not\in$Pic$({\mathbb F}_{2})$, this is a contradiction.\\

We assume that the numerical class of $B$ is (\ref{241}) of the list in section 6. 
We denote $B$ by $2B_{1,0}+3B^{1}_{1,4}+6B^{2}_{1,4}$. 
By Theorem \ref{thm:5}, $G_{1,0}\cong\mathbb Z/2\mathbb Z$, $G^{1}_{1,4}\cong \mathbb Z/3\mathbb Z$, $G^{2}_{1,4}\cong\mathbb Z/6\mathbb Z$, and $G=G^{1}_{1,4}\oplus G^{2}_{1,4}$.
Let $s$ be a generator of $G^{1}_{1,4}$.
Since $(B^{1}_{1,4}\cdot B^{1}_{1,4})=4$, the genus of $C^{1}_{1,4}$ is $5$ where $p^{\ast}B^{1}_{1,4}=3C^{1}_{1,4}$.
Since $G_{1,0}\cong\mathbb Z/2\mathbb Z$ and $(B^{1}_{1,4}\cdot B^{2}_{1,4})\not=0$, the only curve of Fix$(s)$ is $C^{1}_{1,4}$. 
By [\ref{bio:2},\ref{bio:7}], this is a contradiction.  
\\

We assume that the numerical class of $B$ is (\ref{248}) of the list in section 6.
We denote $B$ by $2B_{1,0}+4B^{1}_{1,4}+4B^{2}_{1,4}$. 
By Theorem \ref{thm:5}, $G^{i}_{1,4}\cong\mathbb Z/4\mathbb Z$ for $i=1,2$. 
Since $(B^{1}_{1,4}\cdot B^{2}_{1,4})\not=0$, by Theorem \ref{thm:5}, $G=G^{1}_{1,4}\oplus G^{2}_{1,4}$.
Let $s\in G^{1}_{1,4}$ and $t\in G^{2}_{1,4}$ be generators. 
Then non-symplectic involutions of $G$ are $s^{2}$ and $t^{2}$. 
By Theorem \ref{thm:5}, we may assume that Fix$(s^{2})=p^{-1}(B_{1,0})\cup p^{-1}(B^{1}_{1,4})$ and Fix$(t^{2})=p^{-1}(B^{2}_{1,4})$. 
For a symplectic involution $s^{2}\circ t^{2}$, since $X/G$ is smooth, Fix$(s^{2}\circ t^{2})\subset{\rm Fix}(s^{2})\cap{\rm Fix}(t^{2})$.
Since $(C\cdot B^{i}_{1,4})=0$ and $(B^{1}_{1,4}\cdot B^{1}_{1,4})=4$, we get that $p^{-1}(B_{1,0}\cup B^{1}_{1,4})\cap p^{-1}(B^{2}_{1,4})$ are 4 points.
By the fact that the fixed locus of a symplectic involution of a $K3$ surface are 8 isolated points, this is a contradiction.
\\

We assume that the numerical class of $B$ is (\ref{260}) of the list in section 6.
We denote $B$ by $3B_{1,0}+2B^{1}_{1,6}+6B^{2}_{1,6}$. 
By Theorem \ref{thm:5} and $(B^{1}_{1,6}\cdot B^{2}_{1,6})\not=0$, $G=G^{1}_{1,6}\oplus G^{2}_{1,6}$. 
Let $\rho_{1},\rho_{2}\in G$ be generators of $G_{B^{1}_{1,6}}$ and $G_{B^{2}_{1,6}}$ respectively. 
Then $\rho^{2}_{2}$ is a non-symplectic automorphism of order 3 and a generator of $G_{1,0}$. 
Since $(C\cdot C)=-6$ and $|G|=12$, we get that $p^{\ast}C=\sum_{j=1}^{4}3C_{j}$ where $C_{j}$ is a smooth rational curve. 
Then $C_{1},\ldots,C_{4},C^{2}_{1,6}\subset$Fix$(\rho^{2}_{2})$ where $p^{\ast}B^{2}_{1,6}=6C^{2}_{1,6}$.
By [\ref{bio:2},\ref{bio:7}], this is a contradiction. 
\\

We assume that the type of $B$ is (\ref{41}) of the list in section 6.
We denote $B$ by $4B^{1}_{1,0}+4B^{2}_{1,0}+2B_{1,3}+2B_{0,1}$. 
We take the Galois cover $q:{\mathbb P}^{1}\times{\mathbb P}^{1}\rightarrow{\mathbb P}^{1}\times{\mathbb P}^{1}$ whose branch divisor is $4B^{1}_{1,0}+4B^{2}_{1,0}$. 
Since the support of $B$ is simple normal crossing, $q^{\ast}(2B_{1,3}+2B_{0,1})=2B_{4,3}+2B_{0,1}$. 
By Theorem \ref{thm:4}, there is the Galois morphism $g:X\rightarrow{\mathbb P}^{1}\times{\mathbb P}^{1}$ such that the branch divisor is $2B_{4,3}+2B_{0,1}$ and the Galois group is abelian.
Since the numerical class of $2B_{4,3}+2B_{0,1}$ is (\ref{8}), this is a contradiction.

As for the case of (\ref{41}), the numerical class of $B$ is not one of 
(\ref{42},\ref{47},\ref{49},\ref{55},\ref{56},\ref{312},\ref{313},\\
\ref{61},\ref{65},\ref{130},\ref{131},
\ref{160},\ref{161},\ref{132},\ref{162},\ref{133},\ref{134},\ref{137},\ref{138},\ref{135},\ref{92},\ref{87},\ref{94},\ref{202},
\ref{205},\ref{203},\ref{233},\\
\ref{234}) of the list in section 6
by (\ref{8},\ref{7},\ref{10},\ref{8},\ref{25},\ref{22},\ref{28},\ref{22},\ref{22},\ref{200},\ref{184},\ref{213},\ref{214},\ref{201},\ref{215},\\
\ref{241},\ref{241},\ref{248},\ref{248},\ref{260},
\ref{178},
\ref{175},\ref{196},\ref{241},\ref{248},\ref{257},\ref{260},\ref{261}) in order.\\

Therefore, we get Theorem \ref{thm:2}.
\section{Abelian groups of K3 surfaces with smooth quotient}
In this section, first of all, we will show Theorem \ref{thm:42} and \ref{thm:45}. 
Next, we will show Theorem \ref{thm:1}. 
By Section 3, we had that if $X/G$ is ${\mathbb P}^{2}$ or ${\mathbb F}_{n}$, then $G$ is one of ${\mathcal AG}$ as a group. 
\begin{pro}\label{thm:17}
Let $X$ be a $K3$ surface and $G$ be a finite subgroup of Aut$(X)$ such that $X/G$ is a smooth rational surface. 
For a birational morphism $f:X/G\rightarrow {\mathbb F}_{n}$, we get that  $0\leq n\leq 12$. 
\end{pro}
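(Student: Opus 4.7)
The plan is to reduce Proposition~\ref{thm:17} to Proposition~\ref{thm:8} by pushing the canonical class relation forward along $f$. First, let $p\colon X\to X/G$ be the quotient map and $B=\sum_{i} b_{i}B_{i}$ its branch divisor ($b_{i}\ge 2$). Since $K_{X}\sim 0$, Theorem~\ref{thm:6} gives
\[
K_{X/G}+\sum_{i}\frac{b_{i}-1}{b_{i}}B_{i}=0\quad\text{in }{\rm Pic}_{\mathbb Q}(X/G).
\]

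Next, because $f\colon X/G\to\mathbb F_{n}$ is a birational morphism of smooth projective surfaces, it factors as a sequence of blow-downs of $(-1)$-curves, so the push-forward $f_{\ast}\colon{\rm Pic}_{\mathbb Q}(X/G)\to{\rm Pic}_{\mathbb Q}(\mathbb F_{n})$ satisfies $f_{\ast}K_{X/G}=K_{\mathbb F_{n}}$. For each irreducible component $B_{i}$: if $B_{i}$ is contracted by $f$ then $f_{\ast}B_{i}=0$; otherwise $f(B_{i})$ is an irreducible curve on $\mathbb F_{n}$ and $f|_{B_{i}}\colon B_{i}\to f(B_{i})$ is birational (since $f$ is an isomorphism over a dense open), so $f_{\ast}B_{i}=f(B_{i})$ with multiplicity one; distinct non-exceptional $B_{i}$ and $B_{j}$ also have distinct images for the same reason. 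Applying $f_{\ast}$ to the displayed relation therefore yields
\[
K_{\mathbb F_{n}}+\sum_{i\in I}\frac{b_{i}-1}{b_{i}}f(B_{i})=0\quad\text{in }{\rm Pic}_{\mathbb Q}(\mathbb F_{n}),
\]
where $I:=\{i:B_{i}\text{ is not }f\text{-exceptional}\}$ and the curves $f(B_{i})$, $i\in I$, are pairwise distinct.

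This final equation has exactly the form handled in the proof of Proposition~\ref{thm:8}: an effective $\mathbb Q$-linear combination of distinct irreducible curves on $\mathbb F_{n}$ with each coefficient $\tfrac{b_{i}-1}{b_{i}}\in[\tfrac12,1)$ for some integer $b_{i}\ge 2$, realizing $-K_{\mathbb F_{n}}=2C+(n+2)F$. I would then apply verbatim the case analysis of Proposition~\ref{thm:8} — first bounding $\sum c_{i}$, where $c_{i}$ is the coefficient of $C$ in the expansion of $f(B_{i})$ in ${\rm Pic}(\mathbb F_{n})={\mathbb Z}C\oplus{\mathbb Z}F$, and then splitting on whether the negative section $C$ itself appears among the $f(B_{i})$ — to conclude $0\le n\le 12$.

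The main obstacle is verifying that the push-forward preserves the key structural features of the relation: irreducibility of each summand, pairwise distinctness of the summands, and the precise coefficients $\tfrac{b_{i}-1}{b_{i}}$. Each of these reduces to a standard property of birational morphisms between smooth projective surfaces (factorization into blow-downs, invariance of the canonical class under $f_{\ast}$, and birationality of $f$ restricted to non-exceptional curves), so once these are in place no new numerical calculation beyond Proposition~\ref{thm:8} is required.
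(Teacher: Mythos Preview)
Your proof is correct and takes essentially the same approach as the paper: the paper writes $K_{X/G}=f^{\ast}K_{\mathbb F_n}+\sum a_i e_i$ and $B_i=f^{\ast}(c_iC+d_iF)-\sum g^i_j e_j$, then reads off the $f^{\ast}{\rm Pic}(\mathbb F_n)$-component of the canonical bundle relation, which is exactly your push-forward step $f_{\ast}$. Both routes yield the same equation $-K_{\mathbb F_n}=\sum_{i\in I}\tfrac{b_i-1}{b_i}f(B_i)$ on $\mathbb F_n$ and then invoke the numerical analysis of Proposition~\ref{thm:8} verbatim.
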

\begin{proof}
Let $f:X/G\rightarrow {\mathbb F}_{n}$ be a birational morphism, $e_{i}$ be the exceptional divisors for $i=1,\ldots,m$, and $B=\sum_{i=1}^{k}b_{i}B_{i}$ be the branch divisor.
Since $X/G$ and ${\mathbb F}_{n}$ are smooth and $f$ is a birational morphism, we get ${\rm Pic}(X/G)=f^{\ast}{\rm Pic}({\mathbb F}_{n})\bigoplus_{i=1}^{m}{\mathbb Z}e_{i}$ and  there are positive integers $a_{i}$ for $i=1,\ldots,m$ such that $K_{X/G}=f^{\ast}K_{{\mathbb F}_{n}}+\sum_{i=1}^{m}a_{i}e_{i}$. 
By Theorem \ref{thm:6}, 
\[ 0=f^{\ast}K_{{\mathbb F}_{n}}+\sum_{i=1}^{m}a_{i}e_{i}+\sum_{i=1}^{k}\frac{b_{i}-1}{b_{i}}B_{i}. \]
Since ${\rm Pic}(X/G)=f^{\ast}{\rm Pic}({\mathbb F}_{n})\bigoplus_{i=1}^{m}{\mathbb Z}e_{i}$, 
at least one of $B_{1},\ldots ,B_{k}$ is not an exceptional divisor of $f$. 
By rearranging if necessary, we assume that $B_{i}$ is not an exceptional divisor of $f$ for $1\leq i\leq u$, and  $B_j$ is an exceptional divisor of $f$ for $u+1\leq j\leq k$.
Then $f_{\ast}B_{i}$ is an irreducible curve on ${\mathbb F}_{n}$ for $1\leq i\leq u$.
Therefore, for $1\leq i\leq u$, there are non-negative integers $c_{i},d_{i},g^{i}_{j}$ such that
\[ B_{i}=f^{\ast}(c_{i}C+d_{i}F)-\sum_{j=1}^mg^i_je_j\ {\rm in\ Pic}(X/G)\]
where $(c_{i},d_{i})=(1,0)$, $(0,1)$, or $d_{i}\geq c_{i}n>0$. 
Since $K_{{\mathbb F}_{n}}=-2C-(n+2)F$ in Pic$({\mathbb F}_{n})$, by Theorem \ref{thm:6}, we get that $2=\sum_{i}\frac{b_{i}-1}{b_{i}}c_{i}$ and $n+2=\sum_{i}\frac{b_{i}-1}{b_{i}}d_{i}$.
In the same way as Theorem \ref{thm:8}, we get this proposition.
\end{proof}
Let $X$ be a $K3$ surface, $G$ be a finite subgroup of Aut$(X)$ such that $X/G$ is smooth, and $f:X/G\rightarrow{\mathbb F}_{n}$ be a birational morphism.
By Proposition \ref{thm:17}, we get $0\leq n\leq 12$. 
By the proof of Proposition \ref{thm:17}, the numerical class of $f_{\ast}B$ is one of the list on Section 3.
Let $B=\sum_{i=1}^{k}b_{i}B_{i}+\sum_{j=k+1}^{l}b_{j}B_{j}$ where $B_{i}$ is not an exceptional divisor of $f$ for $i=1,\ldots,k$ and $B_{j}$ is an exceptional divisor of $f$ for $j=k+1,\ldots,l$. 
Since $(X/G)\backslash \bigcup_{j=k+1}^{l}B_{j}$ is isomorphic to ${\mathbb F}_{n}\backslash \bigcup_{j=k+1}^{l}f(B_{j})$ and $f(B_{j})$ is a point for $j=k+1,\ldots l$,  $(X/G)\backslash \bigcup_{j=k+1}^{l}B_{j}$ is simply connected.
By Theorem \ref{thm:5}, $G$ is generated by $G_{1},\ldots, G_{k}$.
Therefore, as for the case of Hirzebruch surface, we  will guess $G$ from the numerical class of $f_{\ast}B$.
Recall that if $G$ is abelian, then $G_{i}$ is a cyclic group, which is generated by a purely non-symplectic automorphism of order $b_{i}$.
If $f_{\ast}B_{1}=C$, or $F$, then $G$ is generated by $G_{2},\ldots, G_{k}$, and 
If $(f_{\ast}B_{1},f_{\ast}B_{2})=(C,F)$, then $G$ is generated by $G_{3},\ldots, G_{k}$.

Recall that since $X/G$ is a smooth rational, $X/G$ is given by blowups of ${\mathbb F}_{n}$.
Next, we will investigate the relationship between a branch divisor and exceptional divisors of blow-ups.
\begin{lem}\label{thm:9,0}
Let $X$ be a $K3$ surface, and $G\subset{\rm Aut}(X)$ a finite subgroup such that $X/G$ is a smooth rational surface, and $B$ be the branch divisor of the quotient map $p:X\rightarrow X/G$.
For a birational morphism $h:X/G\rightarrow T$ where $T$ is a smooth projective surface,
let $e_{i}$ be the exceptional divisor of $h$ for $i=1,\ldots,m$.
Then for $i=1,\ldots,m$ we have that $h(e_{i})\in{\rm Supp}(h_{\ast}B)$.
\end{lem}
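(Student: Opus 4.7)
The plan is to combine the canonical bundle formula (Theorem~\ref{thm:6}) with the discrepancy formula for $h$. Since $X$ is a $K3$ surface we have $K_X = 0$, so Theorem~\ref{thm:6} yields
\[
K_{X/G} \;+\; \sum_i \tfrac{b_i-1}{b_i}\,B_i \;\equiv\; 0 \quad\text{in } \mathrm{Pic}_{\mathbb{Q}}(X/G),
\]
where $B = \sum_i b_i B_i$. Because $h$ is a birational morphism between smooth projective surfaces, it is a composition of blow-ups at smooth points, so
\[
\mathrm{Pic}_{\mathbb{Q}}(X/G) \;=\; h^{\ast}\mathrm{Pic}_{\mathbb{Q}}(T) \;\oplus\; \bigoplus_{j=1}^{m} \mathbb{Q}\,e_j,
\]
and $K_{X/G} = h^{\ast}K_T + \sum_j a_j e_j$ with every discrepancy $a_j \geq 1$.

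Next I would split the components of $B$ into those not contracted by $h$, say $B_1,\ldots,B_s$ with $D_i := h_{\ast}B_i$ an irreducible curve on $T$, and the remaining components $e_{\alpha_k}$ (with multiplicity $c_k$) that coincide with certain exceptional divisors. For $1 \leq i \leq s$ one has $B_i = h^{\ast}D_i - \sum_j m_{ij}\,e_j$ with $m_{ij} \geq 0$, and the crucial local observation is
\[
m_{ij} > 0 \;\;\Longleftrightarrow\;\; h(e_j) \in D_i,
\]
proved by working in an affine chart near the point $h(e_j) \in T$: if $D_i$ avoids $h(e_j)$ then $D_i$ has a local unit as defining equation at $h(e_j)$, and so no exceptional divisor lying over $h(e_j)$ appears in $h^{\ast}D_i$.

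Substituting the expressions for $K_{X/G}$ and the $B_i$ into the canonical relation, and reading off the coefficient of each $e_j$ in the direct-sum decomposition, I obtain
\[
a_j \;+\; \delta_j\,\tfrac{c_j-1}{c_j} \;=\; \sum_{i=1}^{s}\tfrac{b_i-1}{b_i}\,m_{ij},
\]
where $\delta_j \in \{0,1\}$ records whether $e_j$ itself occurs in $B$ (with multiplicity $c_j$ in that case). The left-hand side is strictly positive because $a_j \geq 1$, forcing some $m_{ij} > 0$; by the local characterization this gives $h(e_j) \in D_i \subseteq \mathrm{Supp}(h_{\ast}B)$, which is the desired conclusion.

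The only substantive point is the local equivalence $m_{ij} > 0 \iff h(e_j) \in D_i$; the remainder is routine manipulation with the ramification formula and the standard Picard-group decomposition of a tower of smooth blow-ups, so I do not anticipate any serious obstacle.
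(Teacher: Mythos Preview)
Your proof is correct and follows essentially the same approach as the paper: both combine the ramification formula $K_{X/G}+\sum_i\frac{b_i-1}{b_i}B_i=0$ with the discrepancy decomposition $K_{X/G}=h^\ast K_T+\sum_j a_j e_j$ (with $a_j\geq 1$) and read off the coefficient of each $e_j$ in the direct-sum splitting of $\mathrm{Pic}(X/G)$. The only cosmetic difference is that the paper phrases the last step as a proof by contradiction (assume $h(e_1)\notin\mathrm{Supp}(h_\ast B)$ so that no $e_1$-term appears in the pullbacks $h^\ast C_j$, forcing the $e_1$-coefficient to be the strictly positive $a_1+l_1$), whereas you argue directly via the identity $a_j+\delta_j\frac{c_j-1}{c_j}=\sum_i\frac{b_i-1}{b_i}m_{ij}$.
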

\begin{proof}
Let $e_{1},\ldots,e_{m}$ be the exceptional divisors of $h$. 
Since $X/G$ and $T$ are smooth and $h$ is birational, 
${\rm Pic}(X/G)=h^{\ast}{\rm Pic}(T)\bigoplus_{j=1}^{m}{\mathbb Z}e_{j}$ and there are positive integers $a_{i}$ such that 
\[ K_{X/G}=h^{\ast}K_{T}+\sum_{i=1}^{m}a_{i}e_{i}. \] 
We assume that  $h(e_{i})\not\in{\rm Supp}(h_{\ast}B)$ for some $1\leq i\leq m$. 
For simply, we assume that $i=1$, i.e. $h(e_{1})\not\in{\rm Supp}(h_{\ast}B)$.
Let $B_{1},\ldots,B_{k}$ be irreducible components of $B$ such that $B_{j}$ is not an exceptional divisor of $h$ for $j=1,\ldots,k$. 
Since $h(e_{1})\not\in{\rm Supp}(h_{\ast}B)$, there are integers $c_{j,s}$ such that $B_{j}=h^{\ast}C_{j}+\sum_{s=2}^{m}c_{j,s}e_{s}$, where $C_{j}$ is an irreducible curve in $T$.
By Theorem \ref{thm:6}, we get that 
\[ 0=(h^{\ast}K_{T}+\sum_{i=1}^{m}a_{i}e_{i})+\sum_{j=1}^{k}\frac{b_{j}-1}{b_{j}}(h^{\ast}C_{j}+\sum_{s=2}^{m}c_{j,s}e_{s})+\sum_{j=1}^{m}l_{j}e_{j}\ {\rm in\ Pic}(X/G),\]
 where $l_{j}=0$ or $l_{j}=\frac{d_{j}-1}{d_{j}}$ for some an integer $d_{j}\geq 2$.  Since $a_{i}\geq1$, $c_{j,1}=0$, $l_{j}\geq0$, and ${\rm Pic}(X/G)=h^{\ast}{\rm Pic}(T)\bigoplus_{j=1}^{m}{\mathbb Z}e_{j}$, this is a contradiction.   
\end{proof}
\begin{pro}\label{thm:9}
Let $X$ be a $K3$ surface, $G\subset{\rm Aut}(X)$ a finite subgroup such that the quotient space $X/G$ is smooth, and $B$ be the branch divisor of the quotient morphism $p:X\rightarrow X/G$.
	Let $f:X/G\rightarrow T$ be a birational morphism where $T$ is a smooth surface, 
	$e_1,\ldots,e_m$ be the exceptional divisors of $f$, and $f_{\ast}B:=\sum_{i=1}^ub_i\widetilde{B_i}$ where 
	$\widetilde{B_i}$ is an irreducible curves on $U$ for $i=1,\ldots, u$.
If $\widetilde{B_i}$ is smooth for each $1\leq i\leq u$, then for $1\leq j\leq m$ there are $1\leq s< t\leq u$ such that $f(e_j)\in \widetilde{B_s}\cap \widetilde{B_t}$.
\end{pro}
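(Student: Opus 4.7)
The plan is to extract a single linear relation at each exceptional divisor $e_j$ from the Fenchel-type equation on $X/G$, and then to contradict it under the assumption that only one $\widetilde{B_i}$ passes through $f(e_j)$.

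First I would decompose $B$ into its non-exceptional components $B_i^{\mathrm{non}}$ (with $f_\ast B_i^{\mathrm{non}}=\widetilde{B_i}$) and any components supported on the $e_j$'s. Using
\[ \mathrm{Pic}(X/G)=f^{\ast}\mathrm{Pic}(T)\oplus\bigoplus_{j=1}^{m}\mathbb{Z}\,e_{j},\qquad K_{X/G}=f^{\ast}K_{T}+\sum_{j=1}^{m}a_{j}\,e_{j}\ \ (a_{j}\geq 1), \]
together with the pullback identity $f^{\ast}\widetilde{B_{i}}=B_{i}^{\mathrm{non}}+\sum_{j}m_{i,j}\,e_{j}$ (where $m_{i,j}>0$ precisely when $f(e_j)\in\widetilde{B_i}$), and substituting into the ramification equation $K_{X/G}+\sum_{\alpha}\frac{b_{\alpha}-1}{b_{\alpha}}B_{\alpha}=0$ which holds by Theorem \ref{thm:6} combined with the triviality of $K_{X}$, the comparison of coefficients of $e_{j}$ on the two sides yields
\[ \sum_{i\,:\,f(e_j)\in\widetilde{B_i}}\frac{b_{i}-1}{b_{i}}\,m_{i,j}=a_{j}+\delta_{j}\cdot\frac{b_{e_{j}}-1}{b_{e_{j}}}, \]
where $\delta_{j}=1$ if $e_{j}$ itself is a component of $B$ (with coefficient $b_{e_j}$), and $\delta_{j}=0$ otherwise.

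Assuming for contradiction that only one $\widetilde{B_{s}}$ contains $f(e_{j})$ (Lemma \ref{thm:9,0} provides at least one), the displayed equation reduces to $\frac{b_{s}-1}{b_{s}}\,m_{s,j}=a_{j}+\delta_{j}\frac{b_{e_{j}}-1}{b_{e_{j}}}\geq a_{j}\geq 1$, and since $\frac{b_{s}-1}{b_{s}}<1$ this forces the strict inequality $m_{s,j}>a_{j}$.

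The main obstacle, and the place where smoothness of $\widetilde{B_{s}}$ enters essentially, is the companion bound $m_{s,j}\leq a_{j}$. I would establish this as a self-contained lemma for any birational morphism $f\colon S\to T$ of smooth surfaces and any smooth curve $\widetilde{B_{s}}\subset T$, by induction on the number of blow-ups in a factorization of $f$. Writing $f=f'\circ\pi$ with $\pi\colon X/G\to U$ the contraction of a $(-1)$-curve $e$ to a point $q\in U$, and using that the proper transform $\widetilde{B_{s}}^{U}$ of $\widetilde{B_{s}}$ under $f'$ is again smooth, the standard blow-up recursions
\[ m_{s,e}=\mathrm{mult}_{q}(\widetilde{B_{s}}^{U})+\sum_{k}m'_{s,k}\,\mathrm{mult}_{q}(e'_{k}),\qquad a_{e}=1+\sum_{k}a'_{k}\,\mathrm{mult}_{q}(e'_{k}), \]
combined with $\mathrm{mult}_{q}(\widetilde{B_{s}}^{U})\leq 1$ and the inductive estimate $m'_{s,k}\leq a'_{k}$, yield $m_{s,e}\leq a_{e}$; for the strict transforms of the older exceptional divisors the inequality transfers directly from the inductive hypothesis. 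Combining $m_{s,j}\leq a_{j}$ with the strict inequality $m_{s,j}>a_{j}$ produced above gives the required contradiction, and hence at least two of the $\widetilde{B_i}$'s must pass through $f(e_j)$.
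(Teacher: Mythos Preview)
Your argument is correct, but it follows a different route from the paper's. The paper avoids your inductive lemma $m_{s,j}\le a_j$ altogether by observing that the statement only concerns the \emph{image points} $f(e_j)\in T$. Setting $Z=\{f(e_1),\dots,f(e_m)\}$ and factoring $f$ through the single blow--up $q\colon\mathrm{Blow}_ZT\to T$, every point $f(e_j)$ is the center of one of the first--level exceptional curves $E_1,\dots,E_v$ of $q$. For these, both relevant quantities are forced to equal $1$: the discrepancy of $E_l$ is exactly $1$, and for a smooth $\widetilde{B_s}$ passing through $z_l$ the coefficient of $g^\ast E_l$ in $f^\ast\widetilde{B_s}$ is exactly $1$. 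Comparing the coefficient of $g^\ast E_l$ in the ramification identity then gives $1\;(+\text{ a nonnegative term if }e_l\subset\operatorname{Supp}B)=\tfrac{b_s-1}{b_s}<1$, a contradiction. In effect the paper reads off your inequality $m_{s,j}\le a_j$ only in the trivial case $a_j=1$, where smoothness makes it an equality.

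What your approach buys is a clean, reusable inequality valid at \emph{every} exceptional divisor, not just the first--level ones; this is essentially the statement that the pair $(T,\widetilde{B_s})$ is canonical when $\widetilde{B_s}$ is smooth, and it explains Remark~\ref{thm:16} uniformly. The paper's approach is shorter for the proposition at hand because it sidesteps the induction by reducing to a single blow--up.
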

\begin{proof}
	We set $B=\sum_{i=1}^ub_iB_i+\sum_{j=u+1}^kb_jB_j$ where $B_i$ is not an exceptional divisor of $f$ for $i=1,\ldots,u$, and $B_j$ is  an exceptional divisor of $f$ for $j=u+1,\ldots,k$.
	Then $f_{\ast}B=\sum_{i=1}^ub_if_{\ast}B_i$.
	We assume that $f_{\ast}B_i$ is a smooth curve for $i=1,\ldots ,u$.
By Lemma \ref{thm:9,0}, $f(e_i)\in{\rm supp}(f_{\ast}B)$ for $i=1,\ldots, m$.
%
Let $S:=X/G$, $Z:=\{f(e_{1}),\ldots,f(e_{m})\}:=\{z_1,\ldots,z_v\}\subset T$ where $v:=|\{f(e_{1}),\ldots,f(e_{m})\}|$, $q:{\rm Blow}_ZT\rightarrow T$ be the blow-up, and $E_{i}:=q^{-1}(z_i)$ be the exceptional divisor of $q$ for $1\leq i \leq v$.
	Then there is a birational morphism $g:S\rightarrow {\rm Blow}_ZT$ such that $f=q\circ g$, i.e. the following diagram is commutative:
	$$
	\xymatrix{
		{\rm Blow}_ZT\ar[r]^{q}&T \\
		S\ar[u]^{g}.\ar[ur]_{f}& 
	}
	$$
	By changing the number if necessary,
	we assume that $g(e_i)=E_i$ for $1\leq i\leq v$.
	Then the exceptional divisors of $g$ are $e_{v+1},\ldots,e_m$.
	Since ${\rm Pic}({\rm Blow}_ZT)=q^{\ast}{\rm Pic}(T)\bigoplus_{j=1}^v{\mathbb Z}E_j$ and $f=q\circ g$,
	\[ {\rm Pic}(S)=g^{\ast}{\rm Pic}({\rm Blow}_ZT)\bigoplus_{j=v+1}^{m}{\mathbb Z}e_{j}=\Bigl(f^{\ast}{\rm Pic}(T)\bigoplus_{i=1}^v{\mathbb Z}g^{\ast}E_{i}\Bigr)\bigoplus_{j=v+1}^{m}{\mathbb Z}e_{j}.\]
	Since $K_{{\rm Blow}_ZT}=q^{\ast}K_T+\sum_{j=1}^vE_j$, 
\[ K_S=g^{\ast}K_{{\rm Blow}_ZT}+\sum_{i=v+1}^{m}a'_ie_i=\Bigl(f^{\ast}K_T+\sum_{j=1}^vg^{\ast}E_{i}\Bigr)+\sum_{i=v+1}^{m}a'_ie_i\]
	where $a'_i$ is a positive integer for $i=v+1,\ldots,m$. 
	
	We assume that for some $1\leq i\leq m$, $f(e_i)\not \in f_{\ast}B_s\cap f_{\ast}B_t$ for each $1\leq s<t\leq u$.  
	Since $Z=\{f(e_{1}),\ldots,f(e_v)\}$, we assume that $1\leq i\leq v$.
	for simplicity, we assume that $i=1$.
	In addition, 
	since $f(e_j)\in{\rm supp}(f_{\ast}B)$ for $j=1,\ldots, m$, 
	by changing the number if necessary, we assume that $f(e_1)\in {\rm supp}(f_{\ast}B_1)$, and $f(e_1)\not\in {\rm supp}(f_{\ast}B_j)$ for $2\leq j\leq u$.
	Recall that the exceptional divisors of $q$ are $E_1,\ldots,E_v$, the exceptional divisors of $g$ are $e_{v+1},\ldots,e_m$, and $g(e_i)=E_i$ for $1\leq i\leq v$.
	Since $f=q\circ g$, for $j=1,\ldots,u$
	there are non-negative integers $c_{j,s},c'_{j,t}$ such that
	\[B_j=f^{\ast}f_{\ast}B_{j}-\sum_{s=1}^vc_{j,s}g^{\ast}E_{s}-\sum_{t=v+1}^{m}c'_{j,t}e_{t}\ \ {\rm in\ Pic}(S).\]
	Since $f(e_1)\not \in f_{\ast}B_j$ for $2\leq j\leq u$, we get that $c_{j,1}=0$ for $2\leq j\leq u$.
	Since $f_{\ast}B_1$ is smooth, $c_{1,1}=1$.
	Since $K_S=f^{\ast}K_T+\sum_{j=1}^vg^{\ast}E_i+\sum_{i=v+1}^{m}a'_ie_i$ and $0=K_S+\sum_{i=1}^k\frac{b_i-1}{b_i}B_i$ in Pic$(S)$,
	\begin{equation*}
		\begin{split}
			0=&(f^{\ast}K_T+\sum_{j=1}^vg^{\ast}E_i+\sum_{i=v+1}^{m}a'_ie_i)\\
			&+\sum_{i=1}^u\frac{b_i-1}{b_i}(f^{\ast}f_{\ast}B_{j}-\sum_{s=1}^vc_{j,s}g^{\ast}E_{s}-\sum_{t=v+1}^{m}c'_{j,t}e_{t})\\
			&+\sum_{j=u+1}^k\frac{b_j-1}{b_j}B_j\ \ {\rm in\ Pic}(S).
		\end{split}
	\end{equation*}
	From the coefficient of $g^{\ast}E_1$, we get that $1=\frac{b_{1}-1}{b_{1}}$
	Since $b_1\geq 2$, this is a contradiction.
\end{proof}
Let $X$ be a $K3$ surface, $G$ be a finite subgroup of Aut$(X)$ such that $X/G$ is a smooth rational surface, and $B$ be the branch divisor of the quotient map $p:X\rightarrow X/G$. 
Let $h:X/G\rightarrow T$ be a birational morphism where $T$ is a smooth projective surface, and $e_{1},\ldots,e_{m}$ be the exceptional divisors of $h$. 
We set $h_{\ast}B:=\sum_{j=1}^{l} b_{j}B'_{j}$. 
We write $B=\sum_{i=1}^{l}b_{i}B_{i}+\sum_{j=l+1}^{k}b_{j}B_{j}$ such that $h_{\ast}B_{i}=B'_{i}$ for $i=1,\ldots,l$. 
Then $B_j$ is one of the exceptional divisor of $h$ for $j=l+1,\ldots,k$, and for $i=1,\ldots,l$ there are non-negative integers $c_{i,1},\ldots,c_{i,m}$ such that $B_{i}=h^{-1}_{\ast}B'_{i}-\sum_{t=1}^{m}c_{i,t}e_{t}$.
\begin{mar}
In the above situation,  for $e_{u}$ and $e_{v}$ where $1\leq u<v\leq m$ and $h(e_{u})=h(e_{v})$, we get that $c_{i,u}=0$ if and only if $c_{i,v}=0$.
\end{mar}
\begin{mar}\label{thm:16}
In the situation of Proposition \ref{thm:9}, we assume that $T={\mathbb F}_{n}$.
Then there are positive integers $a_{1},\ldots,a_{m}$ such that $K_{X/G}=h^{\ast}K_{{\mathbb F}_{n}}+\sum_{i=1}^{m}a_{i}e_{i}$.
By the proof of Proposition \ref{thm:9}, we get that $a_{1}=\cdots =a_{u}=1$ and 
\[ 1+\frac{\beta_{i}-1}{\beta_{i}}=\sum_{j=1}^{k}\frac{b_{j}-1}{b_{j}}c_{i,j}\ for\ i=1,\ldots,u,\]
where $\beta_{i}=1$ if $e_{i}$ is not an irreducible component of $B$, and $\beta_{i}$ is the ramification index at $e_{i}$ if $e_{i}$ is an irreducible component of $B$.

Furthermore, we assume that $X/G\not={\rm Blow}_{\{h(e_{1}),\ldots,h(e_{u})\}}{\mathbb F}_{n}$. 
For the birational morphism $g:X/G\rightarrow {\rm Blow}_{\{h(e_{1}),\ldots,h(e_{u})\}}{\mathbb F}_{n}$ in the proof of Proposition \ref{thm:9},
we rearrange the order so that $\{g(e_{u+1}),\ldots,g(e_{u+v})\}=\{g(e_{u+1}),\ldots,g(e_{m})\}$, 
where $v:=|\{g(e_{u+1}),\ldots,g(e_{m})\}|$.
Like the proof of Proposition \ref{thm:9}, by considering the blow-up of ${\rm Blow}_{\{h(e_{1}),\ldots,h(e_{u})\}}{\mathbb F}_{n}$ at $\{g(e_{u+1}),\ldots,g(e_{u+v})\}$, we get that $a_{u+1}=\cdots =a_{u+v}=2$ and 
\[ 2+\frac{\beta_{i}-1}{\beta_{i}}=\sum_{j=1}^{k}\frac{b_{j}-1}{b_{j}}c_{i,j}\ for\ i=u+1,\ldots,u+v,\]
where $\beta_{i}=1$ if $e_{i}$ is not an irreducible component of $B$, and $\beta_{i}$ is the ramification index at $e_{i}$ if $e_{i}$ is an irreducible component of $B$.
\end{mar}
Recall that by Theorem \ref{thm:5}, $G_{B_{i}}$ is generated by a non-symplectic automorphism of order $b_{i}$.
As a corollary of Theorem \ref{thm:5} and Proposition \ref{thm:9}, we get the following Theorem \ref{thm:43}.
\begin{thm}\label{thm:43}
Let $X$ be a $K3$ surface, $G$ be a finite subgroup of Aut$(X)$ such that $X/G$ is smooth, and $B$ be the branch divisor of the quotient map $p:X\rightarrow X/G$.
Let $f:X\rightarrow S$ be the birational morphism where $S$ minimal rational surface. 
We put $f_{\ast}B:=\sum_{i=1}^{k}b_{i}B_{i}$ where $B_{i}$ is an irreducible curve for $i=1,\ldots,k$.
We denote by $G_{s}$ the subgroup of $G$, which consists of symplectic automorphisms of $G$, and $b$  the least common multiple of $b_{1},\ldots,b_{k}$.
Then there is a purely non-symplectic automorphism $g\in G$ of order $b$ such that
$G$ is the semidirect product $G_{s}\rtimes\langle g \rangle$
of $G_{s}$ and $\langle g \rangle$. 
\end{thm}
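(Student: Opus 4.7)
The plan is to obtain Theorem \ref{thm:43} as a direct consequence of Theorem \ref{thm:45} by pinning down the order of the image of the natural non-symplectic character $\varphi\colon G\to\mathbb C^{\ast}$, $g\mapsto \xi_g$, defined by $g^{\ast}\omega=\xi_g\omega$. Recall that the image $\varphi(G)$ is cyclic and its kernel is $G_s$, so once we identify $|\varphi(G)|=b$, Theorem \ref{thm:45} will supply a purely non-symplectic $g\in G$ with $G=G_s\rtimes\langle g\rangle$, and the isomorphism $\langle g\rangle\xrightarrow{\sim}\varphi(G)$ forces $g$ to have order exactly $b$.

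First I would identify a convenient generating family for $G$. Writing $B=\sum_{i=1}^{k}b_iB_i+\sum_{j=k+1}^{r}b_jB_j$ with the $B_i$ ($i\leq k$) the components mapping birationally to the $B_i'$ in $f_{\ast}B=\sum b_iB_i'$ and the $B_j$ ($j>k$) the exceptional divisors of $f$ contained in $\operatorname{Supp}(B)$, the open set $(X/G)\setminus\bigcup_{j>k}B_j$ is isomorphic to $S\setminus\{f(e_1),\ldots,f(e_m)\}$, where $e_1,\ldots,e_m$ are all exceptional divisors of $f$. Since $S$ is either $\mathbb P^2$ or a Hirzebruch surface, and removing finitely many points from a smooth simply connected complex surface does not affect $\pi_1$ (real codimension four), this open set is simply connected. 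By Theorem \ref{thm:5}\,(i), $G$ is therefore generated by the subgroups $G_1,\ldots,G_k$ associated to the non-exceptional components, hence by all the cyclic groups $G_{C_{i,j}}$ as $(i,j)$ ranges over irreducible components of $p^{\ast}B_i$ for $i\leq k$.

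Now I would apply $\varphi$. By Theorem \ref{thm:5}\,(ii), each $G_{C_{i,j}}$ is cyclic of order $b_i$ and is generated by a purely non-symplectic automorphism, so $\varphi(G_{C_{i,j}})$ is the full group $\mu_{b_i}$ of $b_i$-th roots of unity in $\mathbb C^{\ast}$. Since $\varphi(G)$ is generated by the $\varphi(G_{C_{i,j}})$ over all $i\leq k$ and all components $C_{i,j}$, it contains $\mu_{b_i}$ for every $i$ and is contained in $\mu_{\operatorname{lcm}(b_i)}$; hence $\varphi(G)=\mu_b$ with $b=\operatorname{lcm}(b_1,\ldots,b_k)$. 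The short exact sequence $1\to G_s\to G\to\mu_b\to1$ splits by Theorem \ref{thm:45}, which produces a purely non-symplectic $g\in G$ with $G=G_s\rtimes\langle g\rangle$; because $\langle g\rangle$ maps isomorphically onto $\mu_b$ under $\varphi$, $g$ has order $b$.

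The only real content beyond Theorem \ref{thm:45} is the identification $|\varphi(G)|=b$, so the main (minor) obstacle is the generation step: verifying that discarding the exceptional components of $B$ still leaves a simply connected open set, so that Theorem \ref{thm:5}\,(i) applies and lets one read off $\varphi(G)$ from the generators of the groups $G_{C_{i,j}}$ coming only from the components of $f_{\ast}B$. Proposition \ref{thm:9} supports this picture by constraining where blow-ups can occur relative to $f_{\ast}B$, but the essential input is really the simply-connectedness combined with Theorem \ref{thm:5}.
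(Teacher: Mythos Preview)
Your reduction has a circularity problem. In this paper, Theorem~\ref{thm:43} \emph{is} the proof of Theorem~\ref{thm:45}: the latter is only announced in the introduction, and Section~4 establishes it precisely by proving the stronger statement Theorem~\ref{thm:43}. So when you write ``the short exact sequence $1\to G_s\to G\to\mu_b\to1$ splits by Theorem~\ref{thm:45}'', you are assuming what you are trying to prove. Your argument correctly identifies $|\varphi(G)|=b$ (via simple connectedness of the complement of the exceptional locus and Theorem~\ref{thm:5}), but this is the easy half; the real content of Theorem~\ref{thm:43} is producing a \emph{lift} of a generator of $\mu_b$ to a purely non-symplectic element of $G$ of order exactly $b$, and that is precisely the assertion of Theorem~\ref{thm:45}.

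The paper's proof does this lifting by hand. After the same reduction (it suffices to exhibit a purely non-symplectic $g\in G$ of order $b$), it runs a case analysis on the numerical class of $f_\ast B$. In almost all cases $b=b_i$ for some $i$, so Theorem~\ref{thm:5}(ii) already supplies such a $g$. In the remaining cases one has $b=\operatorname{lcm}(b_i,b_j)$ for two branch components $B_i,B_j$ meeting transversally at a point $y$ not lying on any other component; the stabiliser $G_x$ of a preimage $x\in p^{-1}(y)$ is then $\mathbb Z/b_i\mathbb Z\oplus\mathbb Z/b_j\mathbb Z$, from which one extracts a purely non-symplectic element of order $b$. A few stubborn numerical classes are excluded by counting singular points of $X/G_s$ against Xiao's classification, and when $X/G$ is not itself minimal the constraints of Remark~\ref{thm:16} on blow-up centres reduce to the minimal case. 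None of this relies on knowing the sequence splits in advance.
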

\begin{proof}
Since $G_{s}$ is a normal subgroup of $G$ and $G/G_{s}$ is a cyclic group, in order to show Theorem \ref{thm:43}, we only show that there is a purely non-symplectic automorphism $g\in G$ of order $b$. 

First of all, we assume that $X/G\cong{\mathbb P}^{2}$. 
We put $B:=\sum_{i=1}^{k}b_{i}B_{i}$ where $B_{i}$ is an irreducible curve for $i=1,\ldots,k$.
By Theorem \ref{thm:6}, $0=\sum_{i=1}^k\frac{b_{i}-1}{b_i}{\rm deg}\,B_{i}+{\rm deg}\,K_{\mathbb P^2}$, in which $K_{\mathbb P^{2}}$ is the canonical line bundle of $\mathbb P^{2}$.
Since the degree of $K_{{\mathbb P}^{2}}$ is $-3$ and $\frac{1}{2}\leq\frac{l-1}{l}<1$ for any positive integer $l$, we get that $4\leq\sum_{i=1}^{k}{\rm deg}B_{i}\leq6$.
If $\sum_{i=1}^{k}{\rm deg}B_{i}=6$, then $b_{1}=\cdots=b_{k}=2$.
By Theorem \ref{thm:5}, in this case the statement of theorem is established. 
We assume that $\sum_{i=1}^{k}{\rm deg}B_{i}\leq5$.
By [\ref{bio:1},\ {\rm Theorem 2}], $b=b_{i}$ for some $1\leq i\leq k$ or $b=l.c.m(b_{i},b_{j})$ for $i<j$.
By Theorem \ref{thm:5}, in the former case, we get this theorem. 

For the latter, i.e. if $b\not=b_{i}$ for $1\leq i\leq k$, then 
$B$ is one of (i)
$3L_{1}+3L_{2}+3L_{3}+2L_{4}+2L_{5}$ 
where $L_{3}$ pass through the points $L_{1}\cap L_{2}$ and $L_{4}\cap L_{5}$ (see [\ref{bio:1},\ pp.\ 408]),
(ii) $3L_{1}+3L_{2}+3L_{3}+2Q$ 
where $L_{1},L_{2}$ are the tangent to $Q$ and $L_{3}$ is in general position
with respect to $L_{1}\cup L_{2}\cup Q$ (see [\ref{bio:1},\ pp.\ 408]), and 
(iii) $2L_{1}+2L_{2}+3L_{3}+3Q$ 
where $L_{1},L_{2},L_{3}$ are three distinct tangent lines to $Q$ (see [\ref{bio:1},\ pp.\ 410]).
Here, $L_{i}$ and $Q$ are smooth curves on ${\mathbb P}^{2}$ with deg\,$L_{i}=1$ and deg\,$Q=2$ for $i=1,\ldots,5$.
Then there are $1\leq i<j\leq k$ such that $b=l.c.m(b_{i},b_{j})$, $B_{i}+B_{j}$ is simple normal crossing, and $(B_{i}\cap B_{j})\backslash\cup_{s\not=i,j}B_{s}$ is not an empty set.
For clarity, we may assume that $i=1$, $j=2$. 
We take one point $y\in (B_{1}\cap B_{2})\backslash\cup_{i=3}^{k}B_{i}$.
Let $x\in p^{-1}(y)$. By the assumption for $y$ and Theorem \ref{thm:3}, there are open subset $V\subset{\mathbb P}^{2}$ and $U\subset X$ 
such that $y\in V$, $x\in U$,  
$p_{|U}:U\rightarrow V$ is isomorphic to $\{z\in{\mathbb C}^{2}:\ |z|<1\}\ni (z_{1},z_{2})\mapsto(z^{b_{1}}_{1},z^{b_{2}}_{2})\in\{z\in{\mathbb C}^{2}:\ |z|<1\}$, and hence $G_{x}:=\{g\in G\,|\,g(x)=x\}\cong\mathbb Z/b_{1}\mathbb Z\oplus\mathbb Z/b_{2}\mathbb Z$.
Since $b=l.c.m(b_{1},b_{2})$, there is a purely non-symplectic automorphism $g\in G$ with order $b$.\\

Next, we assume that $X/G\cong{\mathbb F}_n$. 
By the list of the numerical class of $B$ in Section 6, 
if the numerical class of $B$ is not one of 
(\ref{312},\ref{67,1},\ref{68,1},\ref{72},\ref{80},\ref{145},\ref{130},\ref{86},\ref{268},\ref{267},\ref{133},
\ref{279},\ref{92},\ref{284},\ref{273},\ref{292},\ref{217},\ref{222},\ref{220}),
then $b=b_{i}$ for some $1\leq i\leq k$. 
Therefore, by Theorem \ref{thm:5}, we get this theorem.
If the numerical class of $B$ is one of 
(\ref{312},\ref{67,1},\ref{68,1},\ref{72},\ref{145},\ref{86},\ref{268},\ref{267},\ref{133},\ref{279}, \ref{92},\ref{284},\ref{273},\ref{292},\ref{217},\ref{222},
\ref{220}), then there are $1\leq i<j\leq k$ such that $b=l.c.m(b_{i},b_{j})$, $B_{i}+B_{j}$ is simple normal crossing, and $(B_{i}\cap B_{j})\backslash\cup_{s\not=i,j}B_{s}$ is not an empty set.
As for the case of $\mathbb P^{2}$, we get this theorem.

We assume that the numerical class of $B$ is (\ref{80}).
We write $B=3B_{3,3}+2B_{0,1}^1+2B_{0,1}^2$.
Since $B^{1}_{0,1}\cap B^2_{0,1}$ is an empty set, if $B_{3,3}\cap B^1_{0,1}$ is not one point, then by $(B_{3,3}\cdot B^1_{0,1})=3$, there is a point $y\in B_{3,3}\cap B^{1}_{0,1}$ such that the support of $B$ is simple normal crossing at $y$. 
Since $b=6$, by Theorem \ref{thm:5}, we get this theorem.
Therefore, we assume that $B_{3,3}\cap B^{1}_{0,1}$ and $B_{3,3}\cap B^2_{0,1}$ are one point.
Let $q:X/G_{s}\rightarrow X/G$ be the quotient map.
Then the singular locus of $X/G_s$ is $q^-1(B_{3,3}\cap B^{1}_{0,1})\cup q^-1(B_{3,3}\cap B^2_{0,1})$.
Since the Galois group of $q$ is $G/G_{s}\cong{\mathbb Z}/6{\mathbb Z}$, the branch divisor of $q$ is $B$, and $B_{3,3}\cap B^{1}_{0,1}$ and $B_{3,3}\cap B^2_{0,1}$ are one point, $X/G_{s}$ has just two singular point.
By [\ref{bio:17}, {\rm Theorem}\ 3], this is a contradiction.
Therefore, if the numerical class of $B$ is (\ref{80}), then we get this theorem.
As for the case of (\ref{80}), we get this theorem for (\ref{130}).\\

Finally,  we assume that $X/G$ is not $\mathbb P^2$ or $\mathbb F_n$.
We take a birational morphism $f:X/G\rightarrow{\mathbb F}^{n}$ where $0\leq n$. 
Let $e_{1},\ldots,e_{m}$ be the exceptional divisors of $f$.
In the same way of the case where $X/G\cong\mathbb P^2$ or $\mathbb F_n$, we only consider the case that 
the numerical class of $f_{\ast}B$ is one of  
(\ref{312},\ref{67,1},\ref{68,1},\ref{72},\ref{80},\ref{145},
\ref{130},\ref{86},\ref{268},\ref{267},\ref{133},
\ref{279},
\ref{92},\ref{284},\ref{273},\ref{292},\ref{217},\ref{222},
\ref{220}).

We assume that the numerical class of $f_{\ast}B$ is (\ref{312}).
By Remark \ref{thm:16}, there are positive integers $a_{1},\ldots,a_{5},b$ such that 
\[ 1+\frac{b-1}{b}=\frac{2}{3}a_{1}+\frac{5}{6}a_{2}+\frac{1}{2}a_{3}+\frac{3}{4}a_{4}+\frac{3}{4}a_{5}. \]
Since the numerical class of $f_{\ast}B$ is (\ref{312}), we may assume that $a_{1}$ or $a_{2}$ is 0, and either $a_{4}$ or $a_{5}$ is 0.
However, there are not such positive integers. 
Therefore, the numerical class of $f_{\ast}B$ is not (\ref{312}).
As for the case of (\ref{312}), the numerical class of $B$ is not one of 
(\ref{68,1},\ref{72},\ref{268},\ref{267},\ref{273},\ref{292},\ref{217},\ref{220}).\\

We assume that the numerical class of $f_{\ast}B$ is (\ref{67,1}).
By Remark \ref{thm:16}, there are positive integers $a_{1},\ldots,a_{6},b$ such that 
\[ 1+\frac{b-1}{b}=\frac{1}{2}a_{1}+\frac{2}{3}a_{2}+\frac{5}{6}a_{3}+\frac{1}{2}a_{4}+\frac{3}{4}a_{5}+\frac{3}{4}a_{6}. \]
Since the numerical class of $f_{\ast}B$ is (\ref{67,1}), we may assume that
two of $a_{1}$, $a_{2}$ and $a_{3}$ are 0, and two of $a_{4}$, $a_{5}$ and $a_{6}$ are 0.
The integers satisfying the above conditions is only $(a_{1},\ldots,a_{6},b)=(1,0,0,1,0,0,12)$.
Therefore, for $B:=\sum_{j=1}^{l}B_{j}B_{j}$, $b_{i}=12$ for some $1\leq i\leq l$.
By Theorem \ref{thm:5}, if the numerical class of $f_{\ast}B$ is (\ref{312}), then we get this theorem.
As for the case of (\ref{67,1}), if the numerical class of $B$ is one of (\ref{133},\ref{279}), then we get this theorem.\\

We assume that the numerical class of $B$ is (\ref{80}).
By Remark \ref{thm:16}, there are positive integers $a_{1},\ldots,a_{6},b$ such that 
\[ 1+\frac{b-1}{b}=\frac{2}{3}a_{1}+\frac{1}{2}a_{2}+\frac{1}{2}a_{3}. \]
Since the numerical class of $f_{\ast}B$ is (\ref{80}), we may assume that either $a_{2}$ or $a_{3}$ is $0$.
The integers satisfying the above conditions is  $(a_{1},a_{2},a_{3},b)=(2,1,0,6)$ or $(2,0,1,6)$.
Therefore, we get this of theorem.
As for the case of (\ref{80}), if the numerical class of $B$ is one of (\ref{145},\ref{130},\ref{86},\ref{92},\ref{284},\ref{222}), then we get this theorem.
\end{proof}
\begin{thm}\label{thm:44}
Let $X$ be a $K3$ surface and $G$ be a finite subgroup of Aut$(X)$ such that $X/G$ is smooth.
For a birational morphism $f:X/G\rightarrow{\mathbb F}_{n}$ where $0\leq n$, 
we get that $n$ is not one of $5,7,9,10,11$. 
\end{thm}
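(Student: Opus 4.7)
The approach is to argue by contradiction, extending the numerical analysis of Proposition~\ref{thm:8} to allow exceptional divisors for $f$, and then invoking the blowup constraints of Proposition~\ref{thm:9} and Remark~\ref{thm:16}. Writing $B = \sum_{i=1}^{u} b_i B_i + \sum_{j=u+1}^{k} b_j B_j$ with the first group non-exceptional and $f_{\ast}B_i = c_i C + d_i F$ in $\mathrm{Pic}(\mathbb F_n)$, Proposition~\ref{thm:17} together with the triviality of $K_X$ gives
\[
2 = \sum_{i=1}^{u} \tfrac{b_i-1}{b_i}\, c_i, \qquad n+2 = \sum_{i=1}^{u} \tfrac{b_i-1}{b_i}\, d_i,
\]
and Lemma~\ref{thm:7} restricts each $(c_i,d_i)$ to $(1,0)$, $(0,1)$, or $c_i \geq 1,\ d_i \geq nc_i$. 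First I would observe that $C$ must appear among the $f_{\ast}B_i$: otherwise $d_i \geq nc_i$ everywhere forces $n+2 \geq 2n$, contradicting $n \geq 5$. Letting $b_C$ be its multiplicity, the tuple enumeration in the proof of Proposition~\ref{thm:8} restricts $b_C \in \{2,3,4,6\}$, and the inequality $n/b_C \leq 2 - \sum_\ell \tfrac{b'_\ell-1}{b'_\ell}$ obtained by plugging $d_i \geq nc_i$ back into the second equation leaves $b_C = 6$ for $n \in \{9,10,11\}$, $b_C \in \{4,6\}$ for $n=7$, and $b_C \in \{3,4,6\}$ for $n=5$.

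For each admissible pair $(n,b_C)$ I would enumerate the non-negative integer solutions of the residual slack equation
\[
\sum_{i\neq C} \tfrac{b_i-1}{b_i}(d_i - nc_i) + \sum_\ell \tfrac{b'_\ell-1}{b'_\ell} \;=\; (n+2) - n\cdot\tfrac{b_C+1}{b_C}.
\]
For $n=10$ and $n=11$ (where the right-hand sides are $1/3$ and $1/6$) this equation admits no solutions at all, giving the contradiction directly. For $n \in \{5,7,9\}$ only a short list of numerical classes survives: a unique one on $\mathbb F_7$ with a pure fibre of multiplicity $6$, two on $\mathbb F_9$ (namely $6C + 2B_{1,10} + 3B_{1,9}$ and $6C + 2B_{1,9} + 3B_{1,9} + 2F$), and a handful on $\mathbb F_5$.

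For each survivor I would pass to the blowup analysis. By Proposition~\ref{thm:9} every exceptional divisor $e_i$ of $f$ sits above an intersection point $q = f_{\ast}B_s \cap f_{\ast}B_t$, and at such a first-level blowup Remark~\ref{thm:16} requires $1 + \tfrac{\beta_i-1}{\beta_i} = \tfrac{b_s-1}{b_s} + \tfrac{b_t-1}{b_t}$ for an integer ramification index $\beta_i \geq 1$. In most of these candidates the right-hand side at every intersection point fails to lie in the admissible set $\{1, 3/2, 5/3, 7/4, 9/5, 11/6,\ldots\}$, so $f$ can have no exceptional divisor, $X/G \cong \mathbb F_n$, and this is excluded by Theorem~\ref{thm:2} together with the remark following Proposition~\ref{thm:8}. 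The main obstacle lies in the few configurations where the Remark~\ref{thm:16} constraint does admit an integer $\beta$ -- for example at $C \cap F$ on $\mathbb F_7$ when $b_C = b_F = 6$ the value is $5/3$, matching $\beta = 3$, and at $B_{1,9}^{(1)} \cap F$ in Option B on $\mathbb F_9$ the value is $1$, matching $\beta = 1$: for these I would iterate Remark~\ref{thm:16} to higher-level exceptional divisors (with the $1$ on the left replaced by $2$) and combine this with the parity condition of Lemma~\ref{thm:40}, $\tfrac{|G|}{b_i^2}(B_i\cdot B_i) \in 2\mathbb Z$, applied to the strict transforms, together with the K3 genus bounds on the preimage curves to rule out each remaining case.
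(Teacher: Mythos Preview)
Your overall strategy---enumerate numerical classes of $f_{\ast}B$ and then apply the blowup constraints of Remark~\ref{thm:16}---matches the paper's framework, and your treatment of $n=10,11$ via the slack equation is correct. However, there is a genuine gap in how you dispose of the case $X/G\cong\mathbb F_n$ (no exceptional divisors). You invoke Theorem~\ref{thm:2}, but that result assumes $G$ is \emph{abelian}, whereas Theorem~\ref{thm:44} is stated for an arbitrary finite subgroup $G\subset\mathrm{Aut}(X)$. Likewise, Lemma~\ref{thm:40} (your proposed parity tool) relies on Theorem~\ref{thm:5}\,(iii), which again needs $G$ abelian to conclude that the components of $p^{\ast}B_i$ are pairwise disjoint. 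So neither citation closes the isomorphism case for general $G$, and this case is not subsumed by anything proved earlier in the paper.

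The paper fills this gap with a direct intersection-theoretic count on the $K3$ side that works for any finite $G$: writing $p^{\ast}C=b_C\sum_{i=1}^m C_i$ with each $C_i$ a $(-2)$-curve (all $G$-conjugate, hence with equal self-intersection), one obtains
\[
\tfrac{|G|}{b_C^{2}}(C\cdot C)=\sum_i(C_i\cdot C_i)+2\sum_{i<j}(C_i\cdot C_j)\ \geq\ -2m,
\]
giving a lower bound $m\geq \tfrac{n|G|}{2b_C^{2}}$, while the intersection with another branch component meeting $C$ in a single point gives an incompatible upper bound $m\leq\tfrac{|G|}{b_C b'}$. This is the workhorse that excludes each surviving numerical class on $\mathbb F_5,\mathbb F_7,\mathbb F_9$, both when $f$ is an isomorphism and (after the Remark~\ref{thm:16} analysis localises the blowup centre) when $X/G$ is a single-point blowup. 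Your proposal to ``iterate Remark~\ref{thm:16} and use K3 genus bounds'' for the admissible-$\beta$ cases is not fleshed out enough to substitute for this, and in several of those cases (e.g.\ (\ref{254}) and (\ref{309})) the second-level equation $2+\tfrac{\beta-1}{\beta}=\cdots$ indeed has no solution, forcing $X/G$ to be a one-point blowup of $\mathbb F_n$---but you then still need the counting argument above to finish.
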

\begin{proof}
Let $p:X\rightarrow X/G$ be the quotient map, and $B:=\sum_{i=1}^{k}b_{i}B_{i}$ be the branch divisor of $p$.
Let $f:X/G\rightarrow{\mathbb F}_{n}$ be a birational morphism where $0\leq n$, and $e_{1},\ldots,e_{m}$ be the exceptional divisors of $f$.

First we will show this theorem for the cases where $f$ is an isomorphism, i.e. $X/G\cong{\mathbb F}_{n}$.
By Theorem \ref{thm:6},  $n=0,1,2,3,4,5,6,7,8,9$, or $12$.
We assume that $n=5,7,$ or $9$.
Then the numerical class of $B$ is one of 
(\ref{255},\ref{307},\ref{254},\ref{252},\ref{253},
\ref{297},\ref{309},\ref{265},\ref{311}) of the list in section 6.

We assume that the numerical class of $B$ is (\ref{255}).
We denote $B$ by $4B_{1,0}+2B_{1,5}+4B_{1,6}$.
Let $p^{\ast}B_{1,0}=\sum_{i=1}^{m}4C_{i}$ where $C_{i}$ is a smooth curve for $i=1,\ldots,m$.
Since $(B_{1,0}\cdot B_{1,0})<0$, $(C_{i}\cdot C_{i})<0$.
Since $X$ is a $K3$ surface, and $C_{i}$ is irreducible, we get that $(C_{i}\cdot C_{i})=-2$.
Since the degree of $p$ is $|G|$, and $(B_{1,0}\cdot B_{1,0})=-5$, 
we get that $\frac{-5|G|}{16}=-2m+2\sum_{1\leq i<j\leq m}(C_{i}\cdot C_{j})$,  
and hence $\frac{5|G|}{32}\leq m$.
Let $p^{\ast}B_{1,6}=\sum_{j=1}^{l}4C'_{j}$ where $C'_{j}$ is a smooth curve for $j=1,\ldots,l$.
Since $(B_{1,0}\cdot B_{1,6})=1$, $\frac{|G|}{16}=m(C_{1}\cdot\sum_{j=1}^{l}C'_{j})$.
Since $(C_{1}\cdot\sum_{j=1}^{l}C'_{j})\geq1$, we get that $m\leq \frac{|G|}{16}$.
By $\frac{5|G|}{32}\leq m$ and $m\leq \frac{|G|}{16}$, we get that  the numerical class of $B$ is not (\ref{255}).
As for the case of (\ref{255}), the numerical class of $f_{\ast}B$ is not one of (\ref{307},\ref{254},\ref{252},\ref{253},
\ref{297},\ref{309},\ref{265},\ref{311}).
Therefore, if $X/G\cong{\mathbb F}_{n}$, then $n\not=5,7,9,10,11$.

Next, we assume that $f$ is not an isomorphism, i.e. $X/G$ is not a Hirzebruch surface $\mathbb F_{n}$.
By Theorem \ref{thm:17}, $n=0,1,2,3,4,5,6,7,8,9$, or $12$. 
We assume that $n=5,7,$ or $9$.
The numerical class of $f_{\ast}B$ is one of 
(\ref{255},\ref{307},\ref{254},\ref{252},\ref{253},
\ref{297},\ref{309},\ref{265},\ref{311}). 

We assume that the numerical class of $f_{\ast}B$ is (\ref{255}).
Let $p^{\ast}B_{1,0}=4\sum_{i=1}^{m}C_{i}$ where $C_{i}$ is a smooth curve for $i=1,\ldots,m$.
Since the degree of $p$ is $|G|$, by $(C\cdot F)=1$, we get that $|G|=4m(C_{1}\cdot p^{\ast}f^{\ast}F)$, 
and hence $|G|$ is a multiple of $4m$.
Since $f_{\ast}B_{1,0}=C$, $(B_{1,0},B_{1,0})\leq(C\cdot C)=-5$.
By $\frac{|G|}{16}(B_{1,0}\cdot B_{1,0})=-2m+2\sum_{1\leq i<j\leq m}(C_{i}\cdot C_{j})$,
we get that $m=\frac{|G|}{4}$.
Since the numerical class of $f_{\ast}B$ is (\ref{255}), there must be positive integers $a_{1},a_{2},a_{3},b$ such that 
\[ 1+\frac{b-1}{b}=\frac{3}{4}a_{1}+\frac{1}{2}a_{2}+\frac{3}{4}a_{3}, \]
and either $a_{1}$ or $a_{2}$ is 0.
The integers satisfying the above conditions is only $(a_{1},a_{2},a_{3},b)=(1,0,1,2)$, 
and hence $f(e_{i})\in f_{\ast}B_{1,5}\cap f_{\ast}B_{1,6}$ for each $i=1,\ldots,l$. 
Since $(f_{\ast}B_{1,5}\cdot f_{\ast}B_{1,6})=1$,  $f_{\ast}B_{1,5}\cap f_{\ast}B_{1,6}$ is one point.
We put $x:=f_{\ast}B_{1,5}\cap f_{\ast}B_{1,6}$.
Let $q:{\rm Blow}_{x}{\mathbb F}_{5}\rightarrow{\mathbb F}_{5}$ be the blow-up of ${\mathbb F}_{5}$ at $x$.
Then there is a birational morphism $g:X/G\rightarrow {\rm Blow}_{x}{\mathbb F}_{5}$ such that $f=q\circ g$.
Let $C':=g_{\ast}B_{1,0}$. 
Let $E$ be the exceptional divisor of $q$.
Since $f(e_{i})=x$ for each $i=1,\ldots,l$, $g(e_{i})\in E$ for each $i=1,\ldots,l$.
Since $g_{\ast}B=4C'+2g_{\ast}B_{1,5}+4g_{\ast}B_{1,6}+2E$, if $g$ is not an isomorphism, then
there must be  integers $a_{1},a_{2},a_{3},a_{4},b$ such that 
\[ 2+\frac{b-1}{b}=\frac{3}{4}a_{1}+\frac{1}{2}a_{2}+\frac{3}{4}a_{3}+\frac{1}{2}a_{4}, \]
and if $a_{1}$ is not 0, then either $a_{2}=a_{3}=0$.
However, there are not such positive integers. 
Therefore, $g$ is an isomorphism, i.e. $X/G={\rm Blow}_{x}{\mathbb F}_{5}$, and hence
$B=4B_{1,0}+2B_{1,5}+4B_{1,6}+2E$ and $(B_{1,0}\cdot E)=1$.
We put $p^{\ast}E=2\sum_{j=1}^{u}C'_{j}$ where $C'_{j}$ is a smooth curve for $j=1,\ldots,u$.
Since $m=\frac{|G|}{den}$, $\frac{|G|}{2}=|G|(C_{1}\cdot\sum_{j=1}^{u}C'_{j})$. 
This is a contradiction.
Therefore, the numerical class of $B$ is not (\ref{255}).
As for the case of (\ref{255}), the numerical class of $B$ is not one of (\ref{309},\ref{265}).\\

We assume that the numerical class of $f_{\ast}B$ is (\ref{307}).
Then there must be integers $a_{1},a_{2},a_{3},a_{4},b$ such that 
\[ 1+\frac{b-1}{b}=\frac{3}{4}a_{1}+\frac{1}{2}a_{2}+\frac{3}{4}a_{3}+\frac{3}{4}a_{4}, \]
and if $a_{1}$ is not zero, then $a_{2}=a_{3}=0$.
The integers satisfying the above condition is $(a_{1},a_{2},a_{3},a_{4},b)=(1,0,0,1,2)$ or $(0,0,1,1,2)$.
Therefore, for each $i=1,\ldots,l$, we get that  $f(e_{i})\in f_{\ast}B_{1,0}\cap f_{\ast}B_{0,1}$ or $f(e_{i})\in f_{\ast}B^{2}_{1,5}\cap f_{\ast}B_{0,1}$.
If $f(e_{i})\in f_{\ast}B^{2}_{1,5}\cap f_{\ast}B_{0,1}$ for all $i=1,\ldots,l$, then $(B_{1,0}\cdot B_{1,0})=-5$ and $(B_{1,0}\cdot B_{0,1})=1$.
However, as for the case of $X/G\cong {\mathbb F}_{n}$, we can see that such things can not happen.
Therefore, $f(e_{i})\in f_{\ast}B_{1,0}\cap f_{\ast}B_{0,1}$ for some $i=1,\ldots,l$. 
By using the blow-up of ${\mathbb F}_{5}$ at $x:=f_{\ast}B_{1,0}\cap f_{\ast}B_{0,1}$, 
as for the case of (\ref{255}), this is a contradiction.
Therefore, the numerical class of $B$ is not (\ref{307}).
As for the case of (\ref{307}), the numerical class of $B$ is not (\ref{311}).\\

We assume that the numerical class of $f_{\ast}B$ is (\ref{254}).
Then there must be integers $a_{1},a_{2},a_{3},b$ such that 
\[ 1+\frac{b-1}{b}=\frac{5}{6}a_{1}+\frac{1}{2}a_{2}+\frac{2}{3}a_{3}. \]
The integers satisfying the above condition is only $(a_{1},a_{2},a_{3},b)=(1,0,1,2)$, and hence $f(e_{i})\in f_{\ast}B_{1,5}\cap f_{\ast}B_{1,6}$ for each $i=1,\ldots,l$. 
Since $(f_{\ast}B_{1,5}\cdot f_{\ast}B_{1,6})=1$,  $f_{\ast}B_{1,5}\cap f_{\ast}B_{1,6}$ is one point.
We put $x:=f_{\ast}B_{1,5}\cap f_{\ast}B_{1,6}$.
Let $q:{\rm Blow}_{x}{\mathbb F}_{5}\rightarrow{\mathbb F}_{5}$ be the blow-up of ${\mathbb F}_{5}$ at $x$.
As for the case of (\ref{255}), since  there are no integers $a_{1},a_{2},a_{3},a_{4},b$ such that 
\[ 2+\frac{b-1}{b}=\frac{3}{4}a_{1}+\frac{1}{2}a_{2}+\frac{3}{4}a_{3}+\frac{1}{2}a_{4}, \]
we get that $X/G={\rm Blow}_{x}{\mathbb F}_{5}$, and hence
$B=6B_{1,0}+2B_{1,6}+3B_{1,6}+2E$, and $(B_{1,0}\cdot E)=1$.
We put $p^{\ast}E=2\sum_{j=1}^{u}C'_{j}$ where $C'_{j}$ is a smooth curve for $j=1,\ldots,u$.
Since $(E\cdot E)=-1$, we get that $u=\frac{|G|}{4}+\sum_{1\leq i<j\leq u}(C'_{i}\cdot C'_{j})$, and hence $u\geq\frac{|G|}{4}$.
Since $(B_{1,0}\cdot E)=1$, $\frac{|G|}{12}$ is a multiple of $u$. 
This is a contradiction.
Therefore, the numerical class of $B$ is not (\ref{254}).\\

We assume that the numerical class of $f_{\ast}B$ is (\ref{252}).
Then there must be positive integers $a_{1},a_{2},a_{3},a_{4},b$ such taht 
\[ 1+\frac{b-1}{b}=\frac{5}{6}a_{1}+\frac{1}{2}a_{2}+\frac{2}{3}a_{3}+\frac{2}{3}a_{4}, \]
and $a_{1}a_{3}=0$.
The integers satisfying the above conditions is $(a_{1},a_{2},a_{3},a_{4},b)=(1,0,0,1,2)$ or $(0,1,1,1,6)$.
Therefore, for each $i=1,\ldots,l$, we get that  $f(e_{i})\in f_{\ast}B_{1,0}\cap f_{\ast}B_{0,1}$ or $f(e_{i})\in f_{\ast}B_{1,6}\cap f_{\ast}B_{1,5}\cap f_{\ast}B_{0,1}$.
If $f(e_{i})\in f_{\ast}B_{1,6}\cap f_{\ast}B_{1,5}\cap f_{\ast}B_{0,1}$ for all $i=1,\ldots,l$, then $(B_{1,0}\cdot B_{1,0})=-5$ and $(B_{1,0}\cdot B_{0,1})=1$.
We get that this is not established in the same way as in the case of $X/G\cong {\mathbb F}_{n}$.
By using the blow-up of ${\mathbb F}_{5}$ at $x:=f_{\ast}B_{1,0}\cap f_{\ast}B_{0,1}$, 
as for the case of (\ref{254}), we get that there is no case where $f(e_{i})\in f_{\ast}B_{1,0}\cap f_{\ast}B_{0,1}$ for some $i=1,\ldots,l$.
Therefore, the numerical class of $B$ is not (\ref{252}).
As for the case of (\ref{252}), the numerical class of $B$ is not one of (\ref{253},\ref{297}).
\end{proof}
\begin{cro}\label{thm:46}
Let $X$ be a $K3$ surface and $G$ be a finite subgroup of Aut$(X)$ such that $X/G$ is smooth.
If there is a birational morphism $f:X/G\rightarrow{\mathbb F}_{n}$ from the quotient space $X/G$ to a Hirzebruch surface ${\mathbb F}_{n}$ where $n=6,8$, or $12$, then $f$ is an isomorphism, i.e. $X/G$ is a Hirzebruch surface.
\end{cro}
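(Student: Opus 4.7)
The plan is to argue by contradiction. I would suppose that $f\colon X/G\to\mathbb{F}_n$ is a birational morphism which is not an isomorphism, for $n\in\{6,8,12\}$, and derive a contradiction by showing no exceptional divisor of $f$ is consistent with the existing constraints.

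First I would push the ramification identity of Theorem~\ref{thm:6} forward along $f$. Since $f$ is birational between smooth surfaces, $f_\ast K_{X/G}=K_{\mathbb{F}_n}$, so the numerical class of $f_\ast B$ on $\mathbb{F}_n$ satisfies
\[
K_{\mathbb{F}_n}+\sum_{i}\tfrac{b_i-1}{b_i}\widetilde{B_i}=0 \quad \text{in}\ \mathrm{Pic}(\mathbb{F}_n)
\]
and hence appears on the Section 6 list. Rerunning the coefficient comparison from the proof of Proposition~\ref{thm:8}, the negative section $C$ (with $C^2=-n$) must appear as a component of $f_\ast B$ with coefficient at least $3,4,6$ for $n=6,8,12$ respectively, and every other non-fiber component has numerical class $(c,nc)$ and is therefore disjoint from $C$. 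A direct enumeration leaves only the classes $3C+3B_{2,12}$ and $3C+3B^1_{1,6}+3B^2_{1,6}$ on $\mathbb{F}_6$, $4C+4B^1_{1,8}+2B^2_{1,8}$ on $\mathbb{F}_8$, and $6C+2B^1_{1,12}+3B^2_{1,12}$ on $\mathbb{F}_{12}$, up to classes involving fiber components that I would handle separately.

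Next I would pick any exceptional divisor $e$ of $f$ and apply Proposition~\ref{thm:9}: its image $f(e)$ lies in a transverse intersection $\widetilde{B_s}\cap \widetilde{B_t}$ of two smooth components. Since $C$ is disjoint from every other non-fiber component, $\{s,t\}$ cannot include the index of $C$ in a pair with a non-fiber component. Remark~\ref{thm:16} applied to a first-blowup $e$ (so $a=1$, $c_{e,s}=c_{e,t}=1$) then yields
\[
1+\tfrac{\beta-1}{\beta}\;=\;\tfrac{b_s-1}{b_s}+\tfrac{b_t-1}{b_t},
\]
for an integer $\beta\geq 1$, with $\beta=1$ when $e$ is not a branch component. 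For the three classes above the only admissible intersection $B^1\cap B^2$ gives right-hand sides $\tfrac{4}{3}$, $\tfrac{5}{4}$, $\tfrac{7}{6}$ respectively, forcing $\tfrac{\beta-1}{\beta}\in\{\tfrac{1}{3},\tfrac{1}{4},\tfrac{1}{6}\}$; none of these is of the form $\tfrac{k-1}{k}$ for a positive integer $k$, which is the contradiction. For the class $3C+3B_{2,12}$ on $\mathbb{F}_6$ the curves $C$ and $B_{2,12}$ are already numerically disjoint, so no intersection of two components exists and Proposition~\ref{thm:9} forbids any exceptional divisor outright.

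The hard part will be excluding the residual numerical classes on $\mathbb{F}_6$ that involve fiber components, for instance $6C+2B^1_{1,6}+3B^2_{1,6}+2F_1+2F_2$, since an intersection of type $B^1\cap F_j$ gives right-hand side exactly $1$ and so formally admits $\beta=1$. I plan to rule these out by a non-abelian extension of the Lemma~\ref{thm:11} argument: by Theorem~\ref{thm:10} the restriction of the quotient map to a component of $p^{-1}(C)$ is a finite Galois cover of $C\cong\mathbb{P}^1$ whose branch divisor is concentrated at the points $C\cap F_j$, which forces either zero or at least two fiber components with equal multiplicities, and a careful analysis of the induced group action together with Theorem~\ref{thm:5} will exclude the remaining options. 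This book-keeping is the main technical obstacle, but it follows the template already used in Section~3.2 and in the proof of Theorem~\ref{thm:44}.
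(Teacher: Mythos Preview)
Your approach is workable in spirit but, as written, has a genuine gap, and it is also much more laborious than the paper's argument.

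The paper's proof is a one-line reduction via elementary transformations of Hirzebruch surfaces: for any point $x\in\mathbb F_n$ one has ${\rm Blow}_x\mathbb F_n\cong{\rm Blow}_y\mathbb F_{n+1}$ if $x$ lies on the negative section and ${\rm Blow}_x\mathbb F_n\cong{\rm Blow}_y\mathbb F_{n-1}$ otherwise. Hence if $f:X/G\to\mathbb F_n$ with $n\in\{6,8,12\}$ is not an isomorphism, composing with this re-contraction produces a birational morphism from $X/G$ to $\mathbb F_{n\pm1}$, i.e.\ to $\mathbb F_5,\mathbb F_7,\mathbb F_9,\mathbb F_{11}$ or $\mathbb F_{13}$, all of which are forbidden by Theorem~\ref{thm:44} (and Proposition~\ref{thm:17} for $n=13$). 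No enumeration of branch classes is needed.

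The gap in your argument is the claim that ``every other non-fiber component has numerical class $(c,nc)$ and is therefore disjoint from $C$.'' The proof of Proposition~\ref{thm:8} only gives $d_i\ge nc_i$, not equality, so components meeting $C$ do occur. Concretely, on $\mathbb F_6$ you omit the classes $4C+2(C+7F)+4(C+6F)$, $3C+2(C+6F)+6(C+6F)$, $6C+2(C+8F)+3(C+6F)$, and on $\mathbb F_8$ you omit $6C+2(C+8F)+3(C+9F)$, none of which contain fiber components and several of which have a non-fiber component with $(C\cdot B_i)>0$. For these the intersection pattern feeding into Remark~\ref{thm:16} is different from the one you analyzed, so your single Diophantine check does not cover them. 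Each of these \emph{can} be excluded by the same style of computation carried out in the proof of Theorem~\ref{thm:44} for $n=5,7,9$, but you have not done so; and once you commit to that route, the ``hard part'' you flagged (the fiber-component classes such as $6C+2(C+6F)+3(C+6F)+2F+2F$) also multiplies. The paper's elementary-transformation trick sidesteps all of this bookkeeping entirely.
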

\begin{proof}
Let $n\geq1$ and $C_{-n}\subset{\mathbb F}_{n}$ be the unique irreducible curve such that $(C_{-n}\cdot C_{-n})=-n$.
Since for $x\in {\mathbb F}_{n}$,
if $x\in C_{-n}$, then ${\rm Blow}_{x}{\mathbb F}_{n}={\rm Blow}_{y}{\mathbb F}_{n+1}$ where $y\in{\mathbb F}_{n+1}\backslash C_{-(n+1)}$, and 
if $x\not\in C_{-n}$, then ${\rm Blow}_{x}{\mathbb F}_{n}={\rm Blow}_{y}{\mathbb F}_{n-1}$ where $y\in C_{-(n-1)}$,
by Theorem \ref{thm:44}, we get this corollary.
\end{proof}
\begin{thm}\label{thm:66}
Let $X$ be a $K3$ surface and $G$ be a finite abelian subgroup of ${\rm Aut}(X)$. 
If $X/G$ is smooth, then $G$ is isomorphic to one of ${\mathcal AG}$ as groups.
\end{thm}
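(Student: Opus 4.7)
The plan is to proceed by case analysis on the birational type of $X/G$. By Proposition~\ref{thm:17}, either $X/G \cong \mathbb{P}^2$ or there is a birational morphism $f \colon X/G \rightarrow \mathbb{F}_n$ for some $n \geq 0$. The case $X/G \cong \mathbb{P}^2$ follows immediately from Theorem~\ref{thm:111}, which places $G$ in $\mathcal{AG}_\infty \subseteq \mathcal{AG}$. When $f$ is an isomorphism, Theorem~\ref{thm:2} yields $G \in \mathcal{AG}_n \subseteq \mathcal{AG}$. By Corollary~\ref{thm:46} and Theorem~\ref{thm:44}, the only remaining case is $n \in \{0,1,2,3,4\}$ with $f$ not an isomorphism, so the real content of the theorem is to control $G$ when $X/G$ is a nontrivial blow-up of one of these five Hirzebruch surfaces.

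For that case, let $B = \sum_i b_i B_i$ denote the branch divisor of $p \colon X \rightarrow X/G$ and let $e_1, \ldots, e_m$ be the exceptional curves of $f$. Since $K_X$ is trivial, Theorem~\ref{thm:6} forces the numerical class of $f_{\ast}B$ on $\mathbb{F}_n$ to satisfy $K_{\mathbb{F}_n} + \sum_i \frac{b_i-1}{b_i} f_{\ast}B_i = 0$ in $\mathrm{Pic}_{\mathbb{Q}}(\mathbb{F}_n)$, so $f_{\ast}B$ belongs to the finite catalogue of numerical classes assembled in Section~6. I would then loop over those classes and, for each, determine which configurations of blow-ups actually arise from an honest $K3$ quotient.

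The decisive constraints come from Proposition~\ref{thm:9} and Remark~\ref{thm:16}: each centre $f(e_j)$ must lie in an intersection of at least two components of $f_{\ast}B$, and writing $\beta_j$ for the ramification index at $e_j$ (with $\beta_j = 1$ if $e_j \not\subset \mathrm{Supp}\, B$) we have numerical identities of the form
\[
1 + \frac{\beta_j - 1}{\beta_j} = \sum_i \frac{b_i - 1}{b_i}\, c_{j,i} \qquad \text{or} \qquad 2 + \frac{\beta_j - 1}{\beta_j} = \sum_i \frac{b_i - 1}{b_i}\, c_{j,i},
\]
according to the order at which $e_j$ appears in the tower of blow-ups. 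These two equations are extremely restrictive in the ranges of $b_i$ permitted by the list of Section~6, and for most classes they either exclude any exceptional curve from $B$ (reducing to the minimal case already covered by Theorem~\ref{thm:2}) or force $f(e_j)$ into a specific node whose blow-up converts $f_{\ast}B$ into a class already treated over a different $\mathbb{F}_m$. Once the surviving configurations are enumerated, the group $G$ is reconstructed via Theorem~\ref{thm:5}, which identifies each $G_{B_i}$ as cyclic of order $b_i$ generated by a purely non-symplectic automorphism, together with Theorem~\ref{thm:43}, which writes $G$ as a semidirect product $G_s \rtimes \langle g \rangle$ that collapses to a direct product because $G$ is abelian.

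The main obstacle I anticipate is the sheer length of the bookkeeping: the list of numerical classes in Section~6 is long, and several classes admit multiple inequivalent blow-up patterns to check. The genuinely hard step is verifying, for each surviving configuration, that the abelian group extracted from Theorem~\ref{thm:5} and Theorem~\ref{thm:43} belongs to $\mathcal{AG}$ rather than to some strictly larger abelian group; this relies on the fixed-locus information for non-symplectic automorphisms recalled from [\ref{bio:2},\ref{bio:9},\ref{bio:7}] together with the intersection-number and self-intersection constraints already exploited in the proof of Theorem~\ref{thm:2}. Once that verification is carried out, the theorem follows by combining Cases 1--3 above.
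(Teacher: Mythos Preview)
Your outline is essentially the paper's own strategy: reduce to the Enriques, $\mathbb{P}^2$, $\mathbb{F}_n$, and blown-up $\mathbb{F}_n$ cases, then in the last case read off the numerical class of $f_{\ast}B$ from the Section~6 list and use Theorem~\ref{thm:5} (together with Remark~\ref{thm:16} for the stubborn classes) to bound the generators of $G$. Two small points are worth flagging. First, you omitted the Enriques case entirely: $X/G$ smooth does not force $X/G$ rational, and Proposition~\ref{thm:17} presupposes rationality; the missing case is disposed of in one line since then $G\cong\mathbb{Z}/2\mathbb{Z}$. Second, the paper does not actually enumerate blow-up configurations for most classes; for the vast majority it simply observes that $G$ is generated by the cyclic groups $G_{B_i}$ attached to the non-exceptional components (Theorem~\ref{thm:5}), and the orders $b_i$ alone already pin $G$ down to a group in $\mathcal{AG}$. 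The Remark~\ref{thm:16} machinery is invoked only for a short list of exceptional classes such as (\ref{312}), (\ref{67,1}), (\ref{68,1}), (\ref{71}), where an extra argument is needed. In one of those cases, (\ref{68}) and its relatives, the paper also appeals to Nikulin's classification of abelian symplectic groups to exclude $G_s\cong(\mathbb{Z}/2)^2\oplus\mathbb{Z}/4$, an ingredient your outline does not mention.
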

\begin{proof}
Since $X/G$ is smooth, the quotient space $X/G$ is an Enriques surface or a rational surface. 
If $X/G$ is Enriques, then $G\cong{\mathbb Z}/2{\mathbb Z}$ as a group and ${\mathbb Z}/2{\mathbb Z}\in{\mathcal AG}$.
By Section 3, if $X/G\cong{\mathbb F}_{n}$, then $G$ is isomorphic to one of ${\mathcal AG}$ as a group.
By [\ref{bio:1}], if $X/G\cong{\mathbb P}^{2}$, then $G$ is isomorphic to one of ${\mathcal AG}$ as a group.
Therefore, we assume that $X/G$ is rational, and $X/G\not={\mathbb P}^{2}$ or ${\mathbb F}_{n}$.

Let $f:X/G\rightarrow{\mathbb F}_{n}$ be a birational morphism where $0\leq n\leq12$, and $B$ be the branch divisor of $G$.
By Theorem \ref{thm:44} and Corollary \ref{thm:46}, we may assume that $0\leq n \leq 4$.
By the proof of Theorem \ref{thm:17}, the numerical class of $f_{\ast}B$ is one of the list in section 6.\\

We assume that the numerical class of $f_{\ast}B$ is one of (\ref{2},\ref{16},\ref{53},\ref{33},\ref{66},\ref{63},\ref{3},\ref{34},\ref{64},\ref{31},\ref{32},\\
\ref{8},\ref{9},\ref{10},\ref{11},\ref{20},\ref{23},\ref{24},\ref{25},\ref{26},\ref{29},\ref{30},\ref{42},\ref{43},\ref{44},\ref{49},\ref{54},\ref{55},\ref{56},\ref{74},\ref{88},\ref{144},\ref{163},\ref{81},\ref{78},\ref{93},\ref{89},\ref{96},\ref{148},\\
\ref{166},\ref{147},\ref{82},\ref{99},\ref{101},\ref{102},\ref{132},\ref{98},\ref{103},\ref{151},\ref{152},\ref{156},\ref{153},\ref{157},\ref{162},\ref{164},\ref{165},\ref{85},\ref{85,1},\ref{87},\ref{126},\\
\ref{125},\ref{124},\ref{123},\ref{129},\ref{128},\ref{94},\ref{127},\ref{142},\ref{143},\ref{159},\ref{158},\ref{169},\ref{176},\ref{177},\ref{()},\ref{216},\ref{171},\ref{183},\ref{212},\ref{201},\\
\ref{215},\ref{175},\ref{174},\ref{196},\ref{197},\ref{199},\ref{198},\ref{210},\ref{211},\ref{219},\ref{301},\ref{231},\ref{232},\ref{302},\ref{235},\ref{303},\ref{236},\ref{239},
\ref{250}) of\\
the list in section 6.
By Theorem \ref{thm:5}, $G$ is generated by automorphisms $g_{1},\ldots,g_{m}$ where $1\leq m\leq5$ and the order of $g_{i}$ is two for $i=1,\ldots,m$.
Therefore, $G$ is ${\mathbb Z}/2{\mathbb Z}^{\oplus a}$ where $1\leq a \leq5$ as a group.\\
	
We assume that the numerical class of $f_{\ast}B$ is one of
(\ref{1},\ref{14},\ref{50},\ref{12},\ref{13},\ref{5},\ref{6},\ref{7},\ref{27},\ref{46},\\
\ref{47},\ref{167},\ref{209},\ref{170},\ref{180},\ref{182},\ref{200},\ref{168},\ref{173},\ref{178},\ref{179},\ref{238},\ref{304},\ref{247},\ref{306}) of the list in section 6.
By Theorem \ref{thm:5}, $G$ is generated by automorphisms $g_{1},\ldots,g_{m}$ where $1\leq m\leq3$ and the order of $g_{i}$ is 3 for $i=1,\ldots,m$.
Therefore, $G$ is ${\mathbb Z}/3{\mathbb Z}^{\oplus b}$ where $1\leq b \leq3$ as a group.\\

We assume that the numerical class of $f_{\ast}B$ is one of	
(\ref{17},\ref{22},\ref{28},\ref{40},\ref{37},\ref{38},\ref{45},\ref{48},\ref{58},\ref{59},\\
\ref{60},\ref{313},\ref{61},\ref{65},\ref{67},\ref{67,2},\ref{72},\ref{80},\ref{77},\ref{145},\ref{91},\ref{130},\ref{149},\ref{154},\ref{160},\ref{86},\ref{268},\ref{106},\ref{104},\ref{105},\ref{270},\ref{134},\ref{110},\\
\ref{108},\ref{109},\ref{107},\ref{135},\ref{272},\ref{92},\ref{284},\ref{113},\ref{116},\ref{114},\ref{136},\ref{111},\ref{115},\ref{112},\ref{278},\ref{277},\ref{285},\ref{286},\ref{287},\ref{181},\\
\ref{213},\ref{288},\ref{202},\ref{187},\ref{185},\ref{186},\ref{203},\ref{289},\ref{293},\ref{191},\ref{189},\ref{190},\ref{188},\ref{204},\ref{291},\ref{217},\ref{222},\ref{221},\ref{299},\ref{223},\\
\ref{224},\ref{233},\ref{227},\ref{226},\ref{234},\ref{225},\ref{294},\ref{240},\ref{241},\ref{305},\ref{242},\ref{244},\ref{243},\ref{296}) of the list in section 6.
By Theorem \ref{thm:5}, $G$ is generated by automorphisms $g_{i},\ldots,g_{m}$,
$h_{1},\ldots h_{n}$, where $1\leq m\leq3$, $1\leq n\leq 2$, the order of $g_{i}$ is 2 for $i=1,\ldots,m$, and the order of $h_{j}$ is 3 for $j=1,\ldots,n$. 
Therefore, $G$ is ${\mathbb Z}/2{\mathbb Z}^{\oplus d}\oplus {\mathbb Z}/3{\mathbb Z}^{\oplus e}$ where (d,e)=(1,1),(1,2),(1,3),(2,1),(2,2),
(3,1),(3,2) as a group.\\

We assume that the numerical class of $f_{\ast}B$ is one of
(\ref{15},\ref{51},\ref{52},\ref{62},\ref{4},\ref{18},\ref{19},\ref{21},\ref{39},\ref{41},\ref{35},\\
\ref{36},\ref{57},\ref{79},\ref{95},
\ref{146},\ref{75},\ref{76},\ref{97},\ref{100},\ref{131},\ref{150},\ref{155},\ref{83},\ref{84},\ref{117},\ref{119},\ref{118},\ref{137},\ref{122},\ref{120},\ref{121},\ref{139},\\
\ref{140},\ref{282},\ref{172},\ref{208},\ref{206},\ref{207},
\ref{184},\ref{214},\ref{205},\ref{193},\ref{195},\ref{192},\ref{194},
\ref{298},\ref{300},\ref{230},\ref{228},\ref{229},\ref{295},\ref{237},\\
\ref{245},\ref{249},\ref{248},\ref{246}) of the list in section 6.
By Theorem \ref{thm:5}, $G$ is generated by automorphisms $g_{i},\ldots,g_{m}$,
$h_{1},\ldots h_{n}$ where the order of $g_{i}$ is 2 for $i=1,\ldots,m$,  the order of $h_{j}$ is 4 for $j=1,\ldots,n$, and $(n,m)$ is one of (0,1),(0,2),
(0,3),
(1,1),(1,2),
(2,1),(3,1).
Therefore, $G$ is ${\mathbb Z}/2{\mathbb Z}^{\oplus f}\oplus {\mathbb Z}/4{\mathbb Z}^{\oplus g}$ where 
(f,g)=(0,1),(0,2),
(0,3),(1,1),
(1,2),(2,1),(3,1) as a group.\\

We assume that the numerical class of $f_{\ast}B$ is (\ref{312}) of the list in section 6.
We denote $B$ by $3B^{1}_{1,0}+6B^{2}_{1,0}+2B_{1,1}+4B^{1}_{0,1}+4B^{2}_{0,1}+\sum_{j=1}^{l}b'_{i}B'_{i}$ where $f_{\ast}B^{i}_{1,0}=(1,0)$, $f_{\ast}B^{i}_{0,1}=(0,1)$ in Pic$({\mathbb P}^{1}\times{\mathbb P}^{1})$, and $B'_{j}$ is an exceptional divisor of $f$ for $j=1,\ldots,l$.
By Theorem \ref{thm:5}, $G\cong{\mathbb Z}/2{\mathbb Z}^{\oplus i}\oplus{\mathbb Z}/3{\mathbb Z}\oplus{\mathbb Z}/4{\mathbb Z}$ where $i=$0 or 1. 
If $G\cong{\mathbb Z}/2{\mathbb Z}\oplus{\mathbb Z}/3{\mathbb Z}\oplus{\mathbb Z}/4{\mathbb Z}$, then $G$ is one of ${\mathcal AG}$ as a group.
We assume that $G\cong{\mathbb Z}/3{\mathbb Z}\oplus{\mathbb Z}/4{\mathbb Z}$.
By Remark \ref{thm:16},  there are integers $\beta,a_{j}\geq0$ such that 
\[ 1+\frac{\beta-1}{\beta}=\frac{5}{6}a_{1}+\frac{1}{2}a_{2}+\frac{2}{3}a_{3}+\frac{11}{12}a_{4}+\frac{11}{12}a_{5}.\]
Since $G\cong{\mathbb Z}/3{\mathbb Z}\oplus{\mathbb Z}/4{\mathbb Z}$,  $\beta$=1,2,3,4,6, or 12. 
Since $f_{\ast}B=3(1,0)+6(1,0)+2(1,1)+4(0,1)+4(0,1)$, the support of $f_{\ast}B$ is simple normal crossing.
Since each irreducible component of $f_{\ast}B$ is smooth,  $a_{j}=0$ or $1$ for each $1\leq j\leq 5$. 
Since $f_{\ast}B=3(1,0)+6(1,0)+2(1,1)+4(0,1)+4(0,1)$,
the non-zero element of $\{a_{1},a_{2}\}$ is just one, and the non-zero element of $\{a_{4},a_{5}\}$ is just one.
The integers which satisfy the above condition are $(\beta,a_{1},a_{2},a_{3})=(12,1,0,1)$ and $(a_{4},a_{5})=(1,0)$ or (0,1). 
Therefore, $f(e_{i})\not\in f_{\ast}B^{2}_{1,0}$ for $i=1,\ldots,l$. 
By the fact that $f_{\ast}B^{2}_{1,0}=(1,0)$ and $f_{\ast}B_{1,1}=(1,1)$ in Pic$({\mathbb P}^{1}\times{\mathbb P}^{1})$ and the fact that 
$f(e_{i})\not\in f_{\ast}B^{2}_{1,0}$ for $i=1,\ldots,l$, 
we get that $B^{2}_{1,0}\cap B_{1,1}$ is not an empty set, and hence
$p^{-1}(B^{2}_{1,0})\cap p^{-1}(B_{1,1})$ is an empty set.
Since $G\cong{\mathbb Z}/3{\mathbb Z}\oplus{\mathbb Z}/4{\mathbb Z}$,  
the number of subgroup of $G$ which is generated by a non-symplectic automorphism of order 2 is one. Since each ramification index of $B^{2}_{1,0}$ and $B_{1,1}$ is divided by 2, by Theorem \ref{thm:5}, 
there is a non-symplectic automorphism $g$ of order 2 such that Fix$(g)\supset f^{-1}B^{2}_{1,0}$ and Fix$(g)\supset f^{-1}B_{1,1}$. Since $p^{-1}(B^{2}_{1,0})\cap p^{-1}(B_{1,1})\not=\emptyset$, this is a contradiction.
Therefore, if the numerical class of $f_{\ast}B$ is (\ref{312}), then $G$ is one of ${\mathcal AG}$ as a group.

As for the case of (\ref{312}), if the numerical class of $f_{\ast}B$ is one of (\ref{90},\ref{133},\ref{271},\ref{281},
\ref{292},\ref{290},\ref{220}) of the list in section 6, then $G$ is  one of ${\mathcal AG}$ as a group.\\

We assume that the numerical class of $f_{\ast}B$ is (\ref{67,1}) of the list in section 6.
We denote $B$ by $2B^{1}_{1,0}+3B^{2}_{1,0}+6B^{3}_{1,0}+2B^{1}_{0,1}+4B^{2}_{0,1}+4B^{3}_{0,1}+\sum_{j=1}^{l}b'_{i}B'_{i}$ where $f_{\ast}B^{i}_{1,0}=(1,0)$, $f_{\ast}B^{i}_{0,1}=(0,1)$, and $B'_{j}$ is an exceptional divisor of $f$ for $j=1,\ldots,l$.
By Theorem \ref{thm:5}, $G\cong{\mathbb Z}/2{\mathbb Z}^{\oplus i}\oplus{\mathbb Z}/3{\mathbb Z}\oplus{\mathbb Z}/4{\mathbb Z}$ where $i=0,1$,or 2. 
There are some integers $\beta,a_{j}$ such that 
\[1+\frac{\beta-1}{\beta}=\frac{1}{2}a_{1}+\frac{2}{3}a_{2}+\frac{5}{6}a_{3}+\frac{1}{2}a_{4}+\frac{3}{4}a_{5}+\frac{3}{4}a_{6}.\]
Since $G\cong{\mathbb Z}/2{\mathbb Z}^{\oplus i}\oplus{\mathbb Z}/3{\mathbb Z}\oplus{\mathbb Z}/4{\mathbb Z}$ where $i=0,1$, or 2,
we get $\beta$=1,2,3,4,6, or 12.  
Since $f_{\ast}B=2(1,0)+3(1,0)+6(1,0)+2(0,1)+4(0,1)+4(0,1)$, the support of $f_{\ast}B$ is simple normal crossing. 
Since each irreducible component of $f_{\ast}B$ is smooth,
$a_{j}=0$ or $1$ for each $1\leq j\leq 6$, and by Theorem \ref{thm:9} the non-zero element of $\{a_{1},a_{2},a_{3}\}$ is just one, and the non-zero element of $\{a_{4},a_{5},a_{6}\}$ is just one.
From the above, $(\beta,a_{1},a_{2},a_{3},b_{1},b_{2},b_{3})=(1,1,0,0,1,0,0)$.
Therefore, $f(e_{j})\in f_{\ast}(B^{1}_{1,0})\cap f_{\ast}(B^{1}_{0,1})$ for $j=1,\ldots,l$.
Since $((1,0)\cdot(1,0))=0$, 
we get that $(p^{\ast}B^{i}_{1,0}\cdot p^{\ast}B^{i}_{1,0})=0$ for $i=2,3$. 
Since $X$ is a $K3$ surface, 
the support of $p^{\ast}B^{i}_{1,0}$ is a union of elliptic curves for $i=2,3$.
Since $G\cong{\mathbb Z}/2{\mathbb Z}^{\oplus i}\oplus{\mathbb Z}/3{\mathbb Z}\oplus{\mathbb Z}/4{\mathbb Z}$ where $i=0,1$, or 2,
the number of subgroups of $G$ which are generated by a non-symplectic automorphism of order 3 is one, 
and hence there is a non-symplectic automorphism $g$ of order 3 such that Fix$(g)$ has at least two elliptic curves. 
By [\ref{bio:2},\ref{bio:7}], this is a contradiction.
Therefore, the numerical class of $f_{\ast}B$ is not (\ref{67,1}).

As for the case of (\ref{67,1}),  the numerical class of $f_{\ast}B$ is not one of (\ref{70},\ref{279}) of the list in section 6.
\\

If the numerical class of $f_{\ast}B$ is (\ref{68}) of the list in section 6,
then  by Theorem \ref{thm:5}, $G\cong{\mathbb Z}/2{\mathbb Z}^{\oplus i}\oplus{\mathbb Z}/4{\mathbb Z}^{\oplus j}$ where $(i,j)$ is one of 
(0,1), (0,2), (1,1), (1,2), (2,1), (2,2), (3,1).
We assume that $G\cong{\mathbb Z}/2{\mathbb Z}^{\oplus 2}\oplus{\mathbb Z}/4{\mathbb Z}^{\oplus 2}$. Since $G$ is generated by non-symplectic automorphism of order $2$ and $4$, $G_{s}:=\{g\in G:\ g\ {\rm is\ symplectic}\}\cong{\mathbb Z}/2{\mathbb Z}^{\oplus 2}\oplus{\mathbb Z}/4{\mathbb Z}$. 
By the classification of finite symplectic groups ([\ref{bio:12},\ref{bio:13},\ref{bio:17}]),
we see that there is no $G_{s}$ where $G_{s}\cong{\mathbb Z}/2{\mathbb Z}^{\oplus 2}\oplus{\mathbb Z}/4{\mathbb Z}$. 
Therefore, $G\cong{\mathbb Z}/2{\mathbb Z}^{\oplus i}\oplus{\mathbb Z}/4{\mathbb Z}^{\oplus j}$ where $(i,j)$ is one of (0,1), (0,2), (1,1), (1,2), (2,1), (3,1), and if the numerical class of $f_{\ast}B$ is (\ref{68}), then $G$ is one of ${\mathcal AG}$ as a group.

As for the case of (\ref{68}), if the numerical class of  $f_{\ast}B$ one of (\ref{69},\ref{73},\ref{161},\ref{138}) of the list in section 6, then $G$ is one of ${\mathcal AG}$ as a group.\\

We assume that the numerical class of $f_{\ast}B$  is (\ref{68,1}) of the list in section 6.
We denote $B$ by $2B^{1}_{1,0}+4B^{2}_{1,0}+4B^{3}_{1,0}+3B^{1}_{0,1}+3B^{2}_{0,1}+3B^{3}_{0,1}+\sum_{j=1}^{l}b'_{i}B'_{i}$ 
By Theorem \ref{thm:5}, $G\cong{\mathbb Z}/2{\mathbb Z}^{\oplus i}\oplus{\mathbb Z}/3{\mathbb Z}\oplus{\mathbb Z}/4{\mathbb Z}$ where $i=0,1$, or 2. 
As for the case of (\ref{65}),  there are integers $\beta,a_{j}$ such that 
\[ 1+\frac{\beta-1}{\beta}=\frac{1}{2}a_{1}+\frac{3}{4}a_{2}+\frac{3}{4}a_{3}+\frac{2}{3}a_{4}+\frac{2}{3}a_{5}+\frac{2}{3}a_{6},\]
and $a_{j}=0$ or $1$ for each $1\leq j\leq 6$, $\beta=1,2,3,4,6$, or 12, the non-zero element of $\{a_{1},a_{2},a_{3}\}$ is only one, and the non-zero element of $\{a_{4},a_{5},a_{6}\}$ is only one, however, integers which satisfy the above condition do not exist.
Therefore, the numerical class of $f_{\ast}B$ is not (\ref{68,1}).

As for the case of (\ref{68,1}),  the numerical class of $f_{\ast}B$ is not one of 
(\ref{141},\ref{269},\ref{267},\\
\ref{283},\ref{280},\ref{273},\ref{274},\ref{275},\ref{276}) of the list in section 6.
\\

We assume that the numerical class of $f_{\ast}B$ is (\ref{71}) of the list in section 6.
We denote $B$ by $2B^{1}_{1,0}+4B^{2}_{1,0}+4B^{3}_{1,0}+2B^{1}_{0,1}+2B^{2}_{0,1}+2B^{3}_{0,1}+2B^{4}_{0,1}+\sum_{i=1}^{n}b'_{i}B'_{i}$, where $f_{\ast}B^{i}_{1,0}=(1,0)$, $f_{\ast}B^{i}_{0,1}=(0,1)$ in Pic$({\mathbb P}^{1}\times{\mathbb P}^{1})$, and $f_{\ast}B'_{i}=0$. 
By Theorem \ref{thm:5}, $G\cong{\mathbb Z}/2{\mathbb Z}^{\oplus i}\oplus{\mathbb Z}/4{\mathbb Z}$, where $i=0,1,2,3,$ or 4.
We assume that  $G\cong{\mathbb Z}/2{\mathbb Z}^{\oplus 4}\oplus{\mathbb Z}/4{\mathbb Z}$. 
By Theorem \ref{thm:5}, $G=G^{1}_{1,0}\oplus G^{2}_{1,0}\oplus G^{1}_{0,1}\oplus G^{2}_{0,1}\oplus G^{3}_{0,1}$.
As for the case of $(\ref{67,1})$, 
we get that  $f(e_{i})\in B^{1}_{1,0}\cap B^{j}_{0,1}$ for each $i=1,\ldots,m$ where $j=1,2,3,4$.
Therefore, we get $(B^{2}_{1,0}\cdot B^{j}_{0,1})=1$.
Let $s\in G^{2}_{1,0}$ be a generator.
Since $G=G^{1}_{1,0}\oplus G^{2}_{1,0}\oplus G^{1}_{0,1}\oplus G^{2}_{0,1}\oplus G^{3}_{0,1}$, by Theorem \ref{thm:5}, there is a non-symplectic automorphism  $t\in G^{j}_{0,1}$ for some $j=1,2,3$ such that Fix$(t\circ s)$ does not have a curve. 
Since $(B^{2}_{1,0}\cdot B^{j}_{0,1})=1$ and $|G|=2^{3}4$, we get that $|p^{-1}(B^{2}_{1,0})\cap p^{-1}(B^{j}_{0,1})|=8$.
By [\ref{bio:9},\ {\rm Proposition 1}], the number of isolated points of Fix$(t\circ s)$ is 4. 
This is a contradiction.  
Therefore, if the numerical class of $f_{\ast}B$ is (\ref{71}), then $G\cong{\mathbb Z}/2{\mathbb Z}^{\oplus i}\oplus{\mathbb Z}/4{\mathbb Z}$ where $i=0,1,2$, or 3, and hence $G$ is one of ${\mathcal AG}$ as a group.\\

We assume that the numerical class of $f_{\ast}B$ is (\ref{271}) of the list in section 6.
We denote $B$ by $3B_{1,0}+2B^{1}_{1,1}+6B^{2}_{1,1}+4B^{1}_{0,1}+12B^{2}_{0,1}+\sum_{j=1}^{l}b'_{i}B'_{i}$ where $f_{\ast}B^{i}_{s,t}=sC+tF$, in Pic$({\mathbb F}_{1})$, and $B'_{j}$ is an exceptional divisor of $f$ for $j=1,\ldots,l$.
By Theorem \ref{thm:5}, $G\cong{\mathbb Z}/2{\mathbb Z}^{\oplus i}\oplus{\mathbb Z}/3{\mathbb Z}\oplus{\mathbb Z}/4{\mathbb Z}$ where $i=0$ or $1$. 
Then the number of subgroup of $G$ which is generated by a non-symplectic automorphism of order 3 is one.
By the above, for $e_{i}$, there are integers $\beta,a_{j}\geq0$ such that 
\[ 1+\frac{\beta-1}{\beta}=\frac{2}{3}a_{1}+\frac{1}{2}a_{2}+\frac{5}{6}a_{3}+\frac{3}{4}a_{4}+\frac{11}{12}a_{5}.\]
Since $G\cong{\mathbb Z}/3{\mathbb Z}\oplus{\mathbb Z}/4{\mathbb Z}$,  $\beta$=1,2,3,4,6, or 12. 
Since $f_{\ast}B=3(1,0)+6(1,0)+2(1,1)+4(0,1)+4(0,1)$, the support of $f_{\ast}B$ is simple normal crossing.
Since each irreducible component of $f_{\ast}B$ is smooth,  $a_{j}=0$ or $1$ for each $1\leq j\leq 5$. 
The integers which satisfy the above condition are $(\beta,a_{1},a_{2},a_{3},a_{4},a_{5})=(4,0,0,1,0,1)$. 
Therefore, $f(e_{i})\not\in f_{\ast}B_{1,0}\cap f_{\ast}B^{2}_{0,1}$ for $i=1,\ldots,l$, and hence
$p^{-1}(B_{1,0})\cap p^{-1}(B^{2}_{0,1})$ is not an empty set.
Since $G_{1,0}\cong{\mathbb Z}/3{\mathbb Z}$, $G^{2}_{0,1}\cong{\mathbb Z}/12{\mathbb Z}$, and $p^{-1}(B_{1,0})\cap p^{-1}(B^{2}_{0,1})$ is not an empty set, we get that 
the number of subgroup of $G$ which is generated by a non-symplectic automorphism of order 3 is at least two. 
This is a contradiction.
Therefore, the numerical class of $f_{\ast}B$ is not (\ref{271}).

As for the case of (\ref{271}), the numerical class of $f_{\ast}B$ is not (\ref{281}) of the list in section 6.
\end{proof}
\section{Abelian groups of Enriques surfaces with smooth quotient}
Let $E$ be an Enriques surface and $H$ be a finite abelian subgroup of Aut$(E)$ such that $E/H$ is smooth.
Let $X$ be the $K3$-cover of $E$, and $G:=\{ s\in{\rm Aut}(X):s\ {\rm is\ a\ lift\ of\ some}\ h\in H\}$. 
Then $G$ is a finite abelian group, $G$ has a non-symplectic involution whose fixed locus is empty, $X/G=E/H$, and the branch divisor of $G$ is that of $H$.
\begin{thm}
Let $E$ be an Enriques surface and $H$ be a finite subgroup of Aut$(E)$. 
We assume that the quotient space $E/H$ is smooth and there is a birational morphism from $E/H$ to a Hirzebruch surface ${\mathbb F}_{n}$, where $0\leq n$. Then $0\leq n\leq 4$. 	
\end{thm}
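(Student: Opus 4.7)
The plan is to lift the problem to the K3-cover and invoke the bounds already established for smooth K3 quotients. Let $X$ be the K3-cover of $E$ with Enriques involution $\iota$, and let $G := \{s \in \mathrm{Aut}(X) : s \text{ is a lift of some } h \in H\}$ as in the paragraph preceding the theorem. Then $G$ is a finite subgroup of $\mathrm{Aut}(X)$ with $X/G \cong E/H$ smooth, and $\iota \in G$ is a non-symplectic involution whose fixed locus on $X$ is empty. Composing the given birational morphism $E/H \to \mathbb{F}_n$ with this isomorphism yields a birational morphism $f \colon X/G \to \mathbb{F}_n$, so the problem reduces to bounding $n$ for a smooth K3 quotient under the added constraint that $G$ contains a non-symplectic fixed-point-free involution.

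I would then chain the three results already established for K3 quotients: Theorem~\ref{thm:42} gives $0 \le n \le 12$; Theorem~\ref{thm:44} eliminates $n \in \{5, 7, 9, 10, 11\}$; and Corollary~\ref{thm:46} forces $f$ to be an isomorphism whenever $n \in \{6, 8, 12\}$, so $X/G \cong \mathbb{F}_n$ in those remaining cases. It therefore suffices to rule out $X/G \cong \mathbb{F}_n$ for $n = 6, 8, 12$. By Theorem~\ref{thm:2}, the only abelian candidates are $\mathbb{Z}/3\mathbb{Z}$, $\mathbb{Z}/3\mathbb{Z}^{\oplus 2}$ and $\mathbb{Z}/2\mathbb{Z}^{\oplus 2} \oplus \mathbb{Z}/3\mathbb{Z}$ for $n = 6$, $\mathbb{Z}/2\mathbb{Z} \oplus \mathbb{Z}/4\mathbb{Z}$ for $n = 8$, and $\mathbb{Z}/2\mathbb{Z} \oplus \mathbb{Z}/3\mathbb{Z}$ for $n = 12$; in the non-abelian case the analogous list from Section~6 would be used.

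Since $\iota \in G$ has order $2$, the first two candidates for $n = 6$ (of odd order) are excluded immediately. For each of the three remaining candidates, I would invoke the explicit construction of $G$ from its branch divisor (Propositions~\ref{pro:8}, \ref{pro:9}, and Corollary~\ref{thm:48}) together with part ii) of Theorem~\ref{thm:5} to show that every involution of $G$ is either a power of some $G_{B_i}$-generator -- and hence fixes the preimage $p^{-1}(B_i)$ pointwise -- or a symplectic involution, which necessarily has exactly $8$ isolated fixed points on a K3 surface. Either way the fixed locus is non-empty, contradicting the fact that $\iota$ is fixed-point-free. The main obstacle is precisely this last verification: for each of the three candidate groups one must trace the construction of $G$ from its branch divisor and identify how every involution sits inside the decomposition into stabilizer subgroups $G_{B_i}$, so that its fixed locus can be confirmed non-empty.
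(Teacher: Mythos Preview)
Your approach has a genuine gap: the theorem is stated for an arbitrary finite subgroup $H \subset \mathrm{Aut}(E)$, not just an abelian one, so the lifted group $G$ on the K3-cover need not be abelian. Your reduction via Theorem~\ref{thm:2} to the explicit candidates $\mathbb{Z}/2\mathbb{Z}^{\oplus 2} \oplus \mathbb{Z}/3\mathbb{Z}$, $\mathbb{Z}/2\mathbb{Z} \oplus \mathbb{Z}/4\mathbb{Z}$, $\mathbb{Z}/2\mathbb{Z} \oplus \mathbb{Z}/3\mathbb{Z}$, and the subsequent fixed-locus check via the explicit abelian decompositions in Propositions~\ref{pro:8}, \ref{pro:9} and Corollary~\ref{thm:48}, all presuppose that $G$ is abelian. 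The remark that ``in the non-abelian case the analogous list from Section~6 would be used'' does not close this: Section~6 tabulates numerical classes of branch divisors, not group structures, and provides no mechanism for deciding whether every involution in a non-abelian $G$ has non-empty fixed locus.

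The paper's argument is both simpler and uniform in $G$. After passing to the K3-cover, the key input is a structural result of Mukai--Ohashi [\ref{bio:10}, Proposition~4.5]: because $G$ contains the Enriques involution, $G$ has no non-symplectic automorphism of odd order. By Theorem~\ref{thm:5}~ii), each ramification index $b_i$ of the branch divisor is the order of a purely non-symplectic element of $G$, and hence every $b_i$ is even. Since the numerical class of $f_\ast B$ on $\mathbb{F}_n$ must appear in the list of Section~6 (by the same argument as in Proposition~\ref{thm:17}), this parity constraint is then read off against the list to force $n \le 4$, with no case-by-case analysis of $G$ required. What your approach gains in explicitness for the abelian case it loses in generality; the parity constraint is the missing idea that handles all $G$ at once.
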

\begin{proof}
Let $f:E/H\rightarrow{\mathbb F}_{n}$ be a birational morphism, and $B:=\sum_{i=1}^{k}b_{i}B_{i}$ be the branch divisor of the quotient map $E\rightarrow E/H$. 
Since the canonical line bundle of an Enriques surface is numerically trivial, by Theorem \ref{thm:6}, the numerical class of $f_{\ast}B$ is one of  Section 3.
By [\ref{bio:10}, Proposition 4.5], $G$ does not have a non-symplectic automorphism whose order is odd.
Therefore,  $b_{i}$ is even number for each $i=1,\ldots,k$ by Theorem \ref{thm:5}.
By the list of the numerical class of  Section 3, we get the claim.
\end{proof}
\begin{thm}\label{thm:25}
For each numerical classes (\ref{53},\ref{51},\ref{52},\ref{66},\ref{63},\ref{62},\ref{64},\ref{89},\ref{95},\ref{96},\ref{148},\ref{166},\ref{146},\ref{147},
\ref{141},\ref{183},\ref{212},\ref{206},\ref{207},\ref{249}) of the list in section 6, 
there are an Enriques surface $E$ and a finite abelian subgroup $H$ of Aut$(E)$ such that $E/H$ is a Hirzebruch surface ${\mathbb F}_{n}$, and 
the numerical class of the branch divisor $B$ of the quotient map $E\rightarrow E/H$ is (\ref{53},\ref{51},\ref{52},\ref{66},\ref{63},\ref{62},\ref{64},\ref{89},\ref{95},\ref{96},\ref{148},\ref{166},\ref{146},\ref{147},\ref{141},\ref{183},\ref{212},\ref{206},\ref{207},\ref{249}).

Furthermore, for a pair $(E,H)$ of an Enriques surface $E$ and a finite abelian subgroup $H$ of Aut$(E)$, if $E/H\cong{\mathbb F}_{n}$ and the numerical class of the branch divisor $B$ of the quotient map $E\rightarrow E/H$ is 
(\ref{53},\ref{51},\ref{52},\ref{66},\ref{63},\ref{62},\ref{64},\ref{89},\ref{95},\ref{96},\ref{148},\ref{166},\ref{146},\ref{147},\ref{141},\ref{183},
\ref{212},
\ref{206},
\ref{207},
\ref{249}),
then $H$ is 
${\mathbb Z}/2{\mathbb Z}^{\oplus 2}$, ${\mathbb Z}/4{\mathbb Z}^{\oplus 2}$, 
${\mathbb Z}/2{\mathbb Z}\oplus{\mathbb Z}/4{\mathbb Z}$, 
${\mathbb Z}/2{\mathbb Z}^{\oplus 4}$, ${\mathbb Z}/2{\mathbb Z}^{\oplus 3}$, 
${\mathbb Z}/2{\mathbb Z}^{\oplus 2}\oplus{\mathbb Z}/4{\mathbb Z}$, ${\mathbb Z}/2{\mathbb Z}^{\oplus 3}$, 
${\mathbb Z}/2{\mathbb Z}^{\oplus 2}$, ${\mathbb Z}/2{\mathbb Z}\oplus{\mathbb Z}/4{\mathbb Z}$, 
${\mathbb Z}/2{\mathbb Z}^{\oplus 3}$, ${\mathbb Z}/2{\mathbb Z}^{\oplus 3}$, ${\mathbb Z}/2{\mathbb Z}^{\oplus 3}$, 
${\mathbb Z}/2{\mathbb Z}^{\oplus 2}\oplus{\mathbb Z}/4{\mathbb Z}$, ${\mathbb Z}/2{\mathbb Z}^{\oplus 4}$, ${\mathbb Z}/4{\mathbb Z}\oplus{\mathbb Z}/8{\mathbb Z}$, 
${\mathbb Z}/2{\mathbb Z}^{\oplus 2}$, ${\mathbb Z}/2{\mathbb Z}^{\oplus 3}$, ${\mathbb Z}/4{\mathbb Z}^{\oplus 2}$, 
${\mathbb Z}/2{\mathbb Z}^{\oplus 2}\oplus{\mathbb Z}/4{\mathbb Z}$, 
${\mathbb Z}/4{\mathbb Z}\oplus{\mathbb Z}/8{\mathbb Z}$, in order, as a group. 
\end{thm}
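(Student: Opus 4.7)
The plan is to reduce everything to the $K3$ classification already obtained in Theorem \ref{thm:2} by means of the $K3$--cover correspondence set up at the start of this section. Concretely, given a pair $(E,H)$ as in the statement, let $\pi:X\to E$ be the $K3$--cover and let $G:=\{s\in{\rm Aut}(X):s\text{ is a lift of some }h\in H\}$. Then $G$ is a finite abelian subgroup of ${\rm Aut}(X)$, the covering involution $\iota$ lies in $G$ as a fixed-point-free non-symplectic involution, $X/G\cong E/H\cong\mathbb F_n$, and the branch divisor of $X\to X/G$ equals the branch divisor of $E\to E/H$. Conversely, given a pair $(X,G)$ as in the $K3$ setting and a fixed-point-free non-symplectic involution $\iota\in G$, the quotient $X/\langle\iota\rangle$ is an Enriques surface $E$, the image $H:=G/\langle\iota\rangle$ is a finite abelian subgroup of ${\rm Aut}(E)$, and $E/H\cong X/G$ with identical branch divisor.

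For the forward direction (existence of $(E,H)$) I would proceed numerical-class by numerical-class. For each of the twenty numerical classes in the statement I take the explicit pair $(X,G)$ constructed in Section~3.1 (Propositions \ref{pro:1}--\ref{pro:8} and their corollaries) and exhibit inside $G$ an involution $\iota$ which is non-symplectic and whose fixed locus on $X$ is empty. Since each of the constructions in Section~3.1 is obtained by iterated cyclic covers and fibre products starting from explicit defining equations, the candidate involutions are readily read off from the factorisation of $G$ through the cyclic cover tower, and for each one it suffices to check fixedness on $X$ using Theorem~\ref{thm:5} together with the fact (recalled in Section~2) that the preimage of the branch locus equals the union of fixed loci of non-trivial elements of $G$. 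Once $\iota$ is produced, $H:=G/\langle\iota\rangle$ is an abelian group which one computes directly, and comparison with the lists for the $K3$ case in Section~3.1 shows that $H$ is indeed the group named in the theorem.

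For the converse direction, suppose $(E,H)$ is given with $E/H\cong\mathbb F_n$ and with branch divisor $B$ having one of the listed numerical classes. Passing to the $K3$--cover produces a pair $(X,G)$ of a $K3$ surface and a finite abelian subgroup of ${\rm Aut}(X)$ with $X/G\cong\mathbb F_n$ and the same branch divisor. By Theorem~\ref{thm:2} together with the uniqueness clause of Theorem~\ref{thm:3}, the pair $(X,G)$ is isomorphic to the $K3$ pair produced in Section~3.1 for that numerical class, so $G$ is determined as a group. Moreover, the covering involution $\iota$ is recovered inside $G$ as the unique (up to the choice produced in the forward step) fixed-point-free non-symplectic involution, and $H\cong G/\langle\iota\rangle$ is therefore the group claimed.

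The only genuinely non-routine step is the verification, for each of the twenty numerical classes, that the group $G$ attached to it in Section~3.1 really does contain a fixed-point-free involution, and that different choices of such an involution yield isomorphic quotient groups. For the classes (\ref{53},\ref{66},\ref{64},\ref{89},\ref{96},\ref{148},\ref{166},\ref{147},\ref{183},\ref{207}), where $G$ is a $2$-elementary group, this reduces to counting the non-symplectic involutions and, via Theorem~\ref{thm:5}, identifying which of them have branch-divisor support equal to zero; the Euler-characteristic and lattice computations underlying [\ref{bio:17}] suffice. For the remaining classes, where $G$ contains a $\mathbb Z/4\mathbb Z$ or $\mathbb Z/8\mathbb Z$ summand, I would use the iterated fibre-product description of Section~3.1 to isolate the involution coming from the base change by the covering involution on one $\mathbb P^1$ factor, and check directly from the defining equations of $B$ that this involution is fixed-point-free on $X$. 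The remaining computations of $H=G/\langle\iota\rangle$ are routine once $\iota$ has been identified.
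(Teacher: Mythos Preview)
Your overall strategy---lift to the $K3$-cover, invoke the Section~3.1 classification to pin down $G$, then locate a fixed-point-free non-symplectic involution $\iota\in G$ and set $H=G/\langle\iota\rangle$---is exactly the paper's approach. The difference is that the paper actually carries out the case-by-case verification you defer, and that verification is where the substance lies.

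The point you underweight is that it is not enough to know \emph{some} fixed-point-free involution exists; one must know \emph{which} element of $G$ it is, because for groups with mixed cyclic factors the quotient $G/\langle\iota\rangle$ depends on the position of $\iota$. For instance, in case~(\ref{51}) one has $G=G^{1}_{1,0}\oplus G_{1,1}\oplus G^{1}_{0,1}\cong\mathbb Z/4\oplus\mathbb Z/2\oplus\mathbb Z/4$ with generators $s,u,t$. The four non-symplectic involutions are $s^{2},t^{2},u,s^{2}t^{2}u$, and the quotients of $G$ by these are $(\mathbb Z/2)^{2}\oplus\mathbb Z/4$, $(\mathbb Z/2)^{2}\oplus\mathbb Z/4$, $(\mathbb Z/4)^{2}$, $(\mathbb Z/4)^{2}$ respectively. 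To get the claimed $H\cong(\mathbb Z/4)^{2}$ one must rule out $s^{2}$ and $t^{2}$; the paper does this by showing that ${\rm Fix}(s^{2})=p^{-1}(B^{1}_{1,0})\cup p^{-1}(B^{2}_{1,0})$ (not just $p^{-1}(B^{1}_{1,0})$), via an argument with $G^{2}_{1,0}$. Similar precise determinations of the full fixed locus of each candidate involution are needed throughout, and your suggested tools (Euler characteristics, lattice data from~[\ref{bio:17}], reading off equations) are not what the paper uses: it argues instead via Theorem~\ref{thm:5}, Theorem~\ref{thm:4}, and divisibility in ${\rm Pic}(\mathbb F_{n})$ to force each involution's fixed locus to be exactly a prescribed union of branch components.

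One further remark: the paper's proof also handles the \emph{negative} cases---it shows that for the remaining candidate classes (\ref{33}, \ref{34}, \ref{31}, \ref{32}, \ref{144}, \ref{163}, \ref{78}, \ref{93}, \ref{76}, \ref{()}, \ref{216}, \ref{208}, \ref{250}) no fixed-point-free involution exists in $G$. This is not strictly part of the statement of Theorem~\ref{thm:25}, but it is what allows the paper to deduce Theorem~\ref{thm:24} immediately afterward. Your proposal does not address this.
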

\begin{proof}
Let $X$ be the $K3$-cover of $E$, $G:=\{ s\in{\rm Aut}(X):s\ {\rm is\ a\ lift\ of\ some}\ h\in H\}$, and $p:X\rightarrow X/G$ be the quotient map.
Then $G$ is a finite abelian group, $X/G\cong {\mathbb F}_{n}$, and the branch divisor of $p$ is $B$. 
Since $b_{i}$ is power of two for each $i=1,\ldots,k$, $G\cong{\mathbb Z}/2{\mathbb Z}^{\oplus s}\oplus{\mathbb Z}/4{\mathbb Z}^{\oplus t}\oplus {\mathbb Z}/8{\mathbb Z}^{\oplus u}$ where $s,t,u\geq0$.
By Theorem \ref{thm:5}, and the assumption that $G$ has a non-symplectic automorphism of order 2 such that whose fixed locus is an empty set, we get $s+t+u\geq3$, and hence
 the numerical class of $B$ is one of 
(\ref{53},\ref{51},\ref{52},\ref{33},\ref{66},\ref{63},\ref{62},\ref{34},\ref{64},\ref{31},\ref{32},\ref{144},\ref{163},\ref{78},\ref{93},\ref{89},\ref{95},\ref{96},\ref{148},\ref{166},\ref{146},\ref{147},\ref{76},\ref{141},\ref{()},\\
\ref{216},\ref{183},\ref{212},\ref{208},\ref{206},\ref{207},\ref{249},\ref{250}) of the list in section 6.
\\

We assume that the numerical class of $B$ is (\ref{53}).
We denote $B$ by $2B^{1}_{1,0}+2B^{2}_{1,0}+2B_{2,2}+2B^{1}_{0,1}+2B^{2}_{0,1}$.
By Proposition \ref{pro:2}, $G=G^{1}_{1,0}\oplus G_{2,2}\oplus G^{1}_{0,1}\cong\mathbb Z/2\mathbb Z^{\oplus 3}$.
Let $s,t,u,\in G$ be generators of $G^{1}_{1,0}$, $G^{1}_{0,1}$, and $G_{2,2}$ respectively.
Then the non-symplectic automorphisms of $G$ are $s$, $t$, $u$, and $s\circ t\circ u$. 

From here, we will show that Fix$(s\circ t\circ u)$ is an empty set.
We assume that the curves of Fix$(s)$ are only $p^{-1}(B^{1}_{1,0})$. 
Since $s$ is a non-symplectic automorphism of order 2, the quotient space $X/\langle s\rangle$ is a smooth rational surface.
The quotient map $q:X/\langle s\rangle\rightarrow X/G\cong{\mathbb P}^{1}\times{\mathbb P}^{1}$ is the Galois cover such that the branch divisor is $2B^{2}_{0,1}+2B_{2,2}+2B^{1}_{0,1}+2B^{2}_{0,1}$, 
and the Galois group is isomorphic to ${\mathbb Z}/2{\mathbb Z}^{\oplus 2}$ as a group.
By Theorem \ref{thm:12}, there is the Galois cover $g:Y\rightarrow X/G$  whose branch divisor is $2B_{2,2}+2B^{1}_{0,1}+2B^{2}_{0,1}$ and Galois group is isomorphic to ${\mathbb Z}/2{\mathbb Z}^{\oplus 2}$ as a group. 
Since Fix$(s)$ is not an empty set and the order of $s$ is 2,  $X/\langle s\rangle$ is a smooth rational surface.
By Theorem \ref{thm:4}, there is the Galois cover $h:X/\langle s\rangle\rightarrow Y$ such that $q=g\circ h$. 
Since the degree of $q$ is 4 and that of $g$ is 4, $h$ is an isomorphism. 
Since the branch divisor of $q$ is not that of $g$, this is a contradiction. 
Therefore, Fix$(s)$ is $p^{-1}(B^{1}_{1,0})\cup p^{-1}(B^{2}_{1,0})$.
In the same way, Fix$(t)$ is $p^{-1}(B^{1}_{0,1})\cup p^{-1}(B^{2}_{0,1})$.
Therefore, by Theorem \ref{thm:5}, Fix$(s\circ t\circ u)$ is an empty set, 
and hence $E:=X/\langle s\circ t\circ u\rangle$ is an Enriques surface.
Let $H:=G/\langle s\circ t\circ u\rangle$. 
Then $E/H\cong{\mathbb P}^{1}\times{\mathbb P}^{1}$, $H\cong{\mathbb Z}/2{\mathbb Z}^{\oplus 2}$, and the branch divisor of $H$ is $B$.
It is easy to show that for an Enriques surface $E$ and a finite abelian subgroup $H$ of Aut$(E)$ such that $E/H\cong{\mathbb P}^{1}\times{\mathbb P}^{1}$ if the numerical class of $H$ is (\ref{53}), then $H\cong{\mathbb Z}/2{\mathbb Z}^{\oplus 2}$. 

As for the case of (\ref{53}), the claim is established for (\ref{89}).\\

We assume that the numerical class of $B$ is (\ref{51}).
We denote $B$ by $4B^{1}_{1,0}+4B^{2}_{1,0}+2B_{1,1}+4B^{1}_{0,1}+4B^{2}_{0,1}$.
By Proposition \ref{pro:2}, $G=G^{1}_{1,0}\oplus G_{1,1}\oplus G^{1}_{0,1}\cong\mathbb Z/2\mathbb Z\oplus\mathbb Z/4\mathbb Z^{\oplus 2}$.
Let $s,t,u,\in G$ be generators of $G^{1}_{1,0}$, $G^{1}_{0,1}$, and $G_{1,1}$ respectively.
By Theorem \ref{thm:5}, $s$ and $t$ are non-symplectic automorphism of order 4 and $u$ is a non-symplectic automorphism of order 2.
By Theorem \ref{thm:5}, $G^{2}_{1,0}$ is generated by $s\circ t^{2x}\circ u^{y}$ where $x,y=0$ or 2.
Since $(s\circ t^{2x}\circ u^{y})^{2}=s^{2}$ for $x,y=0$ or 2, we get that  Fix$(s^{2})$ is $p^{-1}(B^{1}_{1,0})\cup p^{-1}(B^{2}_{1,0})$.
As for the case of (\ref{53}), we get the claim for (\ref{51}).

As for the case of (\ref{51}), the claim is established for (\ref{141}).\\

We assume that the numerical class of $B$ is (\ref{52}).
We denote $B$ by $4B^{1}_{1,0}+4B^{2}_{1,0}+2B_{1,2}+2B^{1}_{0,1}+2B^{2}_{0,1}$.
By Proposition \ref{pro:2}, $G=G^{1}_{1,0}\oplus G_{1,2}\oplus G^{1}_{0,1}\cong\mathbb Z/2\mathbb Z^{\oplus 2}\oplus\mathbb Z/4\mathbb Z$.
Let $s,t,u\in G$ be generators of $G^{1}_{1,0}$, $G^{1}_{0,1}$, and $G_{1,2}$ respectively.
As for the case of (\ref{53}), Fix$(t)$ is $p^{-1}(B^{1}_{0,1})\cup p^{-1}(B^{2}_{0,1})$.
As for the case of (\ref{51}), Fix$(s)$ is the support of $p^{-1}(B^{1}_{1,0})\cup p^{-1}(B^{2}_{1,0})$.
As for the case of (\ref{53}), we get the claim for (\ref{141}).\\

We assume that the numerical class of $B$ is (\ref{33}).
We denote $B$ by $2B^{1}_{1,0}+2B^{2}_{1,0}+2B^{3}_{1,0}+2B_{1,4}$.
Let $s_{1},s_{2},t\in G$ be generators of $G^{1}_{1,0}$, $G^{2}_{1,0}$, and $G_{1,4}$ respectively.
By Proposition \ref{pro:2}, $G=G^{1}_{1,0}\oplus G^{2}_{1,0}\oplus G_{1,4}\cong{\mathbb Z}/2{\mathbb Z}^{\oplus 3}$.
Then the non-symplectic involutions of $G$ are $s_{1},s_{2},t$ and $s_{1}\circ s_{2}\circ t$.

We assume that Fix$(s_{1})$ is $p^{-1}(B^{1}_{1,0})\cup p^{-1}(B^{3}_{1,0})$.
Then $X/\langle s_{1}\rangle$ is a smooth rational surface, and 
the quotient map $q:X/\langle s_{1}\rangle\rightarrow X/G\cong{\mathbb P}^{1}\times{\mathbb P}^{1}$ is the Galois cover such that the branch divisor is $2B^{2}_{0,1}+2B_{1,4}$, 
and the Galois group is isomorphic to ${\mathbb Z}/2{\mathbb Z}^{\oplus 2}$ as a group.
Since ${\mathbb P}^{1}\times{\mathbb P}^{1}\backslash B^{2}_{1,0}$ is simply connected, in the same way of the proof of Theorem \ref{thm:5}, this is a contradiction.
Therefore, Fix$(s_{i})$ is $p^{-1}(B^{i}_{1,0})$ for $i=1,2$, and hence Fix$(s_{1}\circ s_{2}\circ t)$ is $p^{-1}(B^{3}_{1,0})$.
There are not an Enriques surface $E$ and a finite abelian subgroup $H$ of Aut$(E)$ such that $E/H\cong{\mathbb P}^{1}\times{\mathbb P}^{1}$ and the numerical class of the branch divisor of $H$ is (\ref{33}).

As for the case of (\ref{33}), we get the claim for (\ref{78},\ref{76}).\\

We assume that the numerical class of $B$ is (\ref{66}).
We denote $B$ by $2B^{1}_{1,0}+2B^{2}_{1,0}+2B^{3}_{1,0}+2B_{1,1}+2B^{1}_{0,1}+2B^{2}_{0,1}+2B^{3}_{0,1}$.
By Proposition \ref{pro:2}, $G=\oplus_{i=1}^{2}G^{i}_{1,0}\oplus G_{1,1}\oplus _{i=1}^{2}G^{i}_{0,1}$, and hence the number of non-symplectic automorphisms of order 2 of $G$ is 16. 
By Theorem \ref{thm:5}, $G$ has a non-symplectic automorphism of order 2 whose fixed locus is an empty set.
Furthermore, it is easy to show that for an Enriques surface $E$ and a finite abelian subgroup $H$ of Aut$(E)$ such that $E/H\cong{\mathbb P}^{1}\times{\mathbb P}^{1}$ if the numerical class of $H$ is (\ref{66}), then $H\cong{\mathbb Z}/2{\mathbb Z}^{\oplus 4}$. 
	
As for the case of (\ref{66}), the claim is established for (\ref{63},\ref{62},\ref{64},\ref{96},\ref{148},\ref{166},\ref{146},\ref{147},\ref{212},\\
\ref{207}).\\
	
We assume that the numerical class of $B$ is (\ref{34}).
We denote $B$ by $2B^{1}_{1,0}+2B^{2}_{1,0}+2B^{1}_{1,2}+2B^{2}_{1,2}$.
By Proposition \ref{pro:3}, $G=G^{1}_{1,0}\oplus G^{1}_{1,2}\oplus G^{2}_{1,2}$.
Let $s,t,u\in G$ be generators of $G^{1}_{1,0}$, $G^{1}_{1,2}$, and $G^{2}_{1,2}$ respectively.
Then the non-symplectic automorphisms of order 2 of $G$ are $s$, $t$, $u$, and $s\circ t\circ u$.
We assume that Fix$(s\circ t\circ u)$ is an empty set.
Since $(B^{i}_{1,0}\cdot B^{j}_{1,2})\not=0$ for $i,j=1,2$, 
Fix$(s)$ is $p^{-1}(B^{1}_{1,0})\cup p^{-1}(B^{2}_{1,0})$. 
Since $(B^{1}_{1,0}+B^{2}_{1,0}\cdot B^{1}_{1,2})=4$, $X/(G^{1}_{1,0}\oplus G^{1}_{1,2})$ is smooth.
Since $G=G^{1}_{1,0}\oplus G^{1}_{1,2}\oplus G^{2}_{1,2}$,
the branch divisor of the quotient map $X/(G^{1}_{1,0}\oplus G^{1}_{1,2})\rightarrow X/G\cong{\mathbb F}_{2}$ is $2B^{2}_{1,0}$ and its degree is 2. 
Since $\frac{B^{2}_{1,0}}{2}\not\in$Pic$({\mathbb P}^{1}\times{\mathbb P}^{1})$ and $X/(G^{1}_{1,0}\oplus G^{1}_{1,2})$ is smooth,
this is a contradiction .
Therefore, there are not an Enriques surface $E$ and a finite abelian subgroup $H$ of Aut$(E)$ such that $E/H\cong{\mathbb F}_{1}$ and the numerical class of branch divisor of $H$ is (\ref{34}).

As for the case of (\ref{34}), we get that there are not an Enriques surface $E$ and a finite abelian subgroup $H$ of Aut$(E)$ such that $E/H\cong{\mathbb F}_{n}$ and the numerical class of the branch divisor of $H$ is (\ref{93}).\\
	
We assume that the numerical class of $B$ is (\ref{31}).
We denote $B$ by $2B^{1}_{1,1}+2B^{2}_{1,1}+2B^{3}_{1,1}+2B^{4}_{1,1}$.
By Proposition \ref{pro:5}, $G=G^{1}_{1,1}\oplus G^{2}_{1,1}\oplus G^{3}_{1,1}$. 
Let $s_{i}\in G^{i}_{1,1}$ be a generator of $G^{i}_{1,1}$ for $i=1,2,3,4$.
By Theorem \ref{thm:5}, Fix$(s_{i})$ is not an empty set for $i=1,2,3,4$.
Since $G\cong {\mathbb Z}/2{\mathbb Z}^{\oplus 3}$, 
 $s_{4}=s_{1}\circ s_{2}\circ s_{3}$, and hence $G$ does not have a non-symplectic automorphism of order 2 whose fixed locus is an empty set.
Therefore, there are not an Enriques surface $E$ and a finite abelian subgroup $H$ of Aut$(E)$ such that $E/H\cong{\mathbb F}_{1}$ and the numerical class of the branch divisor of $H$ is (\ref{31}).

As for the case of (\ref{31}), we get that there are not an Enriques surface $E$ and a finite abelian subgroup $H$ of Aut$(E)$ such that $E/H\cong{\mathbb F}_{n}$ and the numerical class of the branch divisor of $H$ is (\ref{31},\ref{32},\ref{144},\ref{163},\ref{216}).\\	

We assume that the numerical class of $B$ is (\ref{95}).
We denote $B$ by $2B_{1,0}+2B_{1,1}+2B_{2,2}+4B^{1}_{0,1}+4B^{2}_{0,1}$.
By Corollary \ref{pro:11}, $G=G_{1,1}\oplus G_{2,2}\oplus G^{1}_{0,1}$. 
Let $q:X/\langle G_{1,0},G_{1,1},G_{2,2}\rangle\rightarrow X/G\cong{\mathbb F}_{1}$ be the quotient map. 
Then the branch divisor of $q$ is  $4B^{1}_{0,1}+4B^{2}_{0,1}$.
By Theorem \ref{thm:4}, $X/\langle G_{1,0},G_{1,1},G_{2,2}\rangle\cong{\mathbb F}_{4}$, and the branch divisor of $\langle G_{1,0},G_{1,1},G_{2,2}\rangle$ is $2B_{1,0}+2q^{\ast}B_{1,1}+2q^{\ast}B_{2,2}$. 
Let $s,t,u\in G$ be generators of $G_{1,1}$, $G_{2,2}$, and $G^{1}_{0,1}$ respectively.
Then Fix$(s)$ is the support of $p^{\ast}B_{1,0}$ and that of $p^{\ast}B_{1,1}$. 
Then as for the case of (\ref{53}), we get the claim.

As for the case of (\ref{95}), the claim is established for (\ref{183},\ref{206},\ref{249}).\\

We assume that the numerical class of $B$ is (\ref{()}).
We denote $B$ by $2B_{1,0}+2B_{1,4}+2B^{1}_{1,2}+2B^{2}_{1,2}$.
By Corollary \ref{pro:12}, $G=G_{1,4}\oplus G^{1}_{1,2}\oplus G^{2}_{1,2}$.
Let $s,t,u\in G$ be generators of $G_{1,4}$, $G^{1}_{1,2}$, and $G^{2}_{1,2}$ respectively.
Then the non-symplectic automorphisms of $G$ are $s$, $t$, $u$, and $s\circ t\circ u$.
Since each fixed locus of $s$, $t$, and  $u$ is not an empty set, by Theorem \ref{thm:5}, if $G$ has a non-symplectic automorphism of order 2 whose fixed locus is an empty set, then that is $s\circ t\circ u$.
We assume that Fix$(s\circ t\circ u)$ is an empty set.
Then we may assume that  Fix$(t)$ is $p^{-1}(B_{1,0})\cup p^{-1}(B^{1}_{1,2})$. 
Since $(B_{1,0}+B^{1}_{1,2}\cdot B_{1,4})=6$, we get $|p^{-1}(B_{1,0}\cup B^{1}_{1,2})\cap p^{-1}(B_{1,4})|=12$.
Since $s\circ t$ is a symplectic automorphism of order 2 and $p^{-1}(B_{1,0}\cup B^{1}_{1,2})\cap p^{-1}(B_{1,4})$ is contained in Fix$(s\circ t)$,
this is a contradiction.  
Therefore, there are not an Enriques surface $E$ and a finite abelian subgroup $H$ of Aut$(E)$ such that $E/H\cong{\mathbb F}_{1}$ and the numerical class of the branch divisor of $H$ is (\ref{()}).\\

We assume that the numerical class of $B$ is (\ref{208}).
We denote $B$ by $4B_{1,0}+2B_{1,3}+4B_{1,2}+2B^{1}_{0,1}+2B^{2}_{0,1}$.
By Proposition \ref{pro:7}, $G=G_{1,3}\oplus G_{1,2}\oplus G^{1}_{0,1}$.
Let $s,t,u\in G$ be generators of $G_{1,3}$, $t\in G_{1,2}$, and $u\in G^{1}_{0,1}$ respectively.
Then the non-symplectic automorphisms of $G$ are $s$, $t^{2}$, $u$, and $s\circ t^{2}\circ u$.
Since each fixed locus of $s$, $t^{2}$, and  $u$ is not an empty set by Theorem \ref{thm:5}, if $G$ has a non-symplectic automorphism of order 2 whose fixed locus is an empty set, then that is $s\circ t^{2}\circ u$.

We assume that Fix$(s\circ t^{2}\circ u)$ is an empty set.
Then Fix$(t^{2})$ is $p^{-1}(B_{1,0})\cup p^{-1}(B_{1,2})$, and  Fix$(u)$ is $p^{-1}(B^{1}_{1,0})\cup p^{-1}(B^{2}_{1,0})$.
Since $(B_{1,3}\cdot B^{1}_{0,1}+B^{2}_{0,1})=4$, we get that $X/(G_{1,3}\oplus G^{1}_{0,1})$ is smooth, 
and the branch divisor of the quotient map $f:X/(G_{1,3}\oplus G^{1}_{0,1})\rightarrow X/G\cong{\mathbb F}_{2}$ is $4B_{1,0}+4B_{1,2}$, and the Galois group is ${\mathbb Z}/4{\mathbb Z}$, which is induced by $t$.
Furthermore, since $(B_{1,3}\cdot B_{1,0}+B_{1,2})=4$ and $(B_{1,3}\cdot B^{1}_{0,1}+B^{2}_{0,1})=4$, $G/\langle s,t^{2},u\rangle$ is smooth, and the branch divisor of the quotient map $g:X/\langle s,t^{2},u\rangle\rightarrow X/G\cong{\mathbb F}_{2}$ is $2B_{1,0}+2B_{1,2}$, and the Galois group is isomorphic to ${\mathbb Z}/2{\mathbb Z}$ as a group.
Let $E_{1,0}$ and $E_{1,2}$ be the support of $g^{\ast}B_{1,0}$ and $g^{\ast}B_{1,2}$ respectively. 
Then $g^{\ast}B_{1,0}=2E_{1,0}$ and $g^{\ast}B_{1,2}=2E_{1,2}$.
Moreover, by Theorem \ref{thm:12},
there is the double cover $h:X/(G_{1,3}\oplus G^{1}_{0,1})\rightarrow X/\langle s,t^{2},u\rangle$ such that $f=g\circ h$ and the branch divisor is $2E_{1,0}+2E_{1,2}$.
Since $X/(G_{1,3}\oplus G^{1}_{0,1})$ and $X/\langle s,t^{2},u\rangle$ are smooth, we get $\frac{E_{1,0}+E_{1,2}}{2}\in$Pic$(X/\langle s,t^{2},u\rangle)$. 
Since $g^{\ast}B_{1,2}=g^{\ast}B_{1,0}+2g^{\ast}F$ in Pic$(X/\langle s,t^{2},u\rangle)$,
\[ 2E_{1,2}=2E_{1,0}+2g^{\ast}F\  {\rm in\ Pic}(X/\langle s,t^{2},u\rangle). \]
Since $X/\langle s,t^{2},u\rangle$ is a smooth rational surface, Pic$(X/(G_{1,3}\oplus G^{1}_{0,1}))$ is torsion free.
Therefore, we get 
\[ E_{1,2}=E_{1,0}+g^{\ast}F\ {\rm in\ Pic}(X/\langle s,t^{2},u\rangle), \]
and hence  
\[ E_{1,2}+E_{1,0}=2E_{1,0}+g^{\ast}F\ {\rm in\ Pic}(X/\langle s,t^{2},u\rangle).\]
Since $\frac{E_{1,2}+E_{1,0}}{2}\in$Pic$(X/\langle s,t^{2},u\rangle)$, we get 
\[ \frac{g^{\ast}F}{2}\in{\rm Pic}(X/\langle s,t^{2},u\rangle).\]
Since  $(B_{1,0}\cdot F)=1$, the degree of $g$ is two, and $\frac{g^{\ast}B_{1,0}}{2}$ and $\frac{g^{\ast}F}{2}$ are elements of Pic$(X/\langle s,t^{2},u\rangle)$, this is a contradiction.
Therefore, there are not an Enriques surface $E$ and a finite abelian subgroup $H$ of Aut$(E)$ such that $E/H\cong{\mathbb F}_{1}$ and the numerical class of the branch divisor of $H$ is (\ref{208}).\\

We assume that the numerical class of $B$ is (\ref{250}).
We denote $B$ by $2B_{1,0}+2B^{1}_{1,4}+2B^{2}_{1,4}+2B^{3}_{1,4}$.
By Corollary \ref{pro:12}, $G=\oplus_{i=1}^{3}G^{i}_{1,4}$,
Let $s_{i}\in G^{i}_{1,4}$ be a generator for $i=1,2,3$. 
Then the non-symplectic automorphisms of $G$ are $s_{i}$ and $s_{1}\circ s_{2}\circ s_{3}$ where $i=1,2,3$.
Since each fixed locus of $s_{i}$ is not an empty set for each $i=1,2,3$ by Theorem \ref{thm:5}, if $G$ has a non-symplectic automorphism of order 2 whose fixed locus is an empty set, then that is $s_{1}\circ s_{2}\circ s_{3}$.
We assume that Fix$(s_{1}\circ s_{2}\circ s_{3})$ is an empty set.
Then we may assume that Fix$(s_{1})$ is $p^{-1}(B_{1,0})\cup p^{-1}(B^{1}_{1,4})$. 
Since $(B_{1,0}+B^{1}_{1,4}\cdot B_{1,4})=4$, we get that $X/(G^{1}_{1,4}\oplus G^{2}_{1,4})$ is smooth, and 
the branch divisor of the quotient map $X/(G^{1}_{1,4}\oplus G^{1}_{1,4})\rightarrow X/G\cong{\mathbb F}_{4}$ is $2B^{3}_{1,4}$. 
This is a contradiction as the degree of the quotient map is 2.  
Therefore, there are not an Enriques surface $E$ and a finite abelian subgroup $H$ of Aut$(E)$ such that $E/H\cong{\mathbb F}_{4}$ and the numerical class of the branch divisor of $H$ is (\ref{250}).
\end{proof}
By Theorem \ref{thm:25}, we get Theorem \ref{thm:24}.
\begin{thm}\label{thm:26}
Let $E$ be an Enriques surface and $H$ be a finite abelian subgroup of ${\rm Aut}(E)$.
If $E/H$ is smooth, then $H$ is isomorphic to one of ${\mathcal AG}(E)$ as a group.
\end{thm}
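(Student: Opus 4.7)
The natural plan is to reduce to the K3 case via the canonical double cover. Let $\pi\colon X\to E$ be the $K3$-cover of $E$ with covering involution $\tau\in\mathrm{Aut}(X)$, and let
\[G:=\{s\in\mathrm{Aut}(X):s\text{ is a lift of some }h\in H\}.\]
As noted in the paragraph preceding Theorem \ref{thm:21}, $G$ is a finite abelian subgroup of $\mathrm{Aut}(X)$ containing $\tau$, one has $X/G=E/H$ (hence smooth by hypothesis), and the canonical short exact sequence gives $H\cong G/\langle\tau\rangle$. Moreover, by construction, $\tau$ is a non-symplectic involution with $\mathrm{Fix}(\tau)=\emptyset$.

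The first step is to apply Theorem \ref{thm:1} to the pair $(X,G)$: since $X/G$ is smooth, $G$ must be isomorphic to one of the groups in $\mathcal{AG}$. Write $G\cong \mathbb{Z}/2\mathbb{Z}^{\oplus a}\oplus\mathbb{Z}/3\mathbb{Z}^{\oplus b}\oplus\mathbb{Z}/4\mathbb{Z}^{\oplus c}\oplus\mathbb{Z}/8\mathbb{Z}^{\oplus d}$ according to the entry of $\mathcal{AG}$ it belongs to. Since $\tau$ is an element of order $2$ in $G$ and $G/\langle\tau\rangle$ must match $H$, we now have to classify, for each such $G$, the possibilities for quotients by a fixed-point-free non-symplectic involution, and check that the answer lies in $\mathcal{AG}(E)$.

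The second and main step is this case analysis. For each $G\in\mathcal{AG}$ one inspects which involutions of $G$ can occur as $\tau$. By Theorem \ref{thm:5}(ii), the non-symplectic involutions of $G$ correspond to the $2$-torsion components of the character $\varphi\colon G\to\mathbb{C}^{\ast}$ determined by the action on $H^{2,0}(X)$, and the fixed-point-freeness is detected by the branch divisor data classified in Section 3. Concretely, using the birational morphism from $X/G$ to a minimal rational model (here $\mathbb{P}^2$ or $\mathbb{F}_n$ with $0\le n\le12$), Proposition \ref{thm:9} together with Remark \ref{thm:16} forces every exceptional divisor over a point not lying on the branch locus to be resolved; combining this with Theorem \ref{thm:5}(iii) and the tabulated numerical classes of Section 6, one reads off exactly which $(G,\tau)$-pairs admit a fixed-point-free non-symplectic involution. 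A direct computation of $G/\langle\tau\rangle$ in each surviving case will produce a group lying in
\[\mathcal{AG}(E)=\{\,\mathbb{Z}/2\mathbb{Z}^{\oplus a},\mathbb{Z}/4\mathbb{Z}^{\oplus 2},\mathbb{Z}/2\mathbb{Z}^{\oplus f}\oplus\mathbb{Z}/4\mathbb{Z},\mathbb{Z}/4\mathbb{Z}\oplus\mathbb{Z}/8\mathbb{Z}\,\},\]
and in particular it will automatically exclude all groups of $\mathcal{AG}$ containing a factor $\mathbb{Z}/3\mathbb{Z}$, consistent with \cite[Proposition 4.5]{bio:10} which says that Enriques surfaces admit no non-symplectic automorphism of odd order in this K3-cover context.

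The main obstacle is the bookkeeping in the second step: each $G\in\mathcal{AG}$ may contain several conjugacy classes of non-symplectic involutions (for instance $\mathbb{Z}/2\mathbb{Z}\oplus\mathbb{Z}/4\mathbb{Z}\oplus\mathbb{Z}/8\mathbb{Z}$ has many involutions), and distinguishing those with empty fixed locus from those whose fixed locus contains a curve requires matching against the branch-divisor analysis from Section 3 and the fixed-locus tables of \cite{bio:2, bio:9, bio:7}. Once this is organised as in the proof of Theorem \ref{thm:25} (compare the $(53), (51), (95)$ analyses there, where the fixed-locus of each non-symplectic involution is pinned down from the intersection pattern of the branch components), the surviving quotients $G/\langle\tau\rangle$ can be enumerated and will be seen to exhaust $\mathcal{AG}(E)$, at which point the existence direction is supplied by Theorem \ref{thm:24} applied to each $H\in\mathcal{AG}(E)$ on the appropriate Hirzebruch base.
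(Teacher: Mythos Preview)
Your setup via the $K3$-cover and the application of Theorem \ref{thm:1} to $G$ match the paper exactly. The divergence is in how you handle the case analysis. The paper does \emph{not} run through all of $\mathcal{AG}$; instead it uses \cite[Proposition 4.5]{bio:10} as an \emph{input}, not merely a consistency check: since $G$ contains the fixed-point-free non-symplectic involution $\tau$, $G$ admits no non-symplectic automorphism of odd order, hence by Theorem \ref{thm:5}(ii) every ramification index $b_i$ is a power of $2$ and $G$ is already a $2$-group in $\mathcal{AG}$. Then, because $G$ is generated by non-symplectic automorphisms whose fixed loci contain curves while $\tau$ has empty fixed locus, one needs at least three cyclic summands, i.e.\ $s+t+u\ge 3$. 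For all but two of the resulting groups, any quotient by an order-$2$ subgroup lands in $\mathcal{AG}(E)$ by inspection. Only $G\cong(\mathbb{Z}/4\mathbb{Z})^{\oplus 3}$ and $G\cong\mathbb{Z}/2\mathbb{Z}\oplus\mathbb{Z}/4\mathbb{Z}\oplus\mathbb{Z}/8\mathbb{Z}$ require separate arguments: the first is excluded because the only candidate free involution would force a fixed-point-free non-symplectic automorphism of order $4$, contradicting \cite{bio:9}; the second is shown (via the proof of Theorem \ref{thm:66}) to force $X/G\cong\mathbb{F}_1$, which is already covered by Theorem \ref{thm:25}.

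Your alternative of deriving the exclusion of $\mathbb{Z}/3\mathbb{Z}$-factors from the branch-divisor tables is feasible in principle, but you do not carry it out, and it amounts to reproving the Mukai--Ohashi result case by case. Using that result directly, as the paper does, collapses your proposed large case analysis to the short argument above.
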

\begin{proof}
Let $X$ be the $K3$-cover of $E$, $G:=\{ s\in{\rm Aut}(X):s\ {\rm is\ a\ lift\ of\ some}\ h\in H\}$, and $p:X\rightarrow X/G$ be the quotient map.
Then $G$ is a finite abelian group, $X/G=E/H$, and the branch divisor of $p$ is $B$. 
We classified $H$ for the case of $E/H\cong{\mathbb F}_{n}$ in Theorem \ref{thm:25}. 
From here, we assume that $E/H$ is smooth and $E/H\not\cong{\mathbb F}_{n}$ or ${\mathbb P}^{2}$.
Since $G$ does not have a non-symplectic automorphism whose order is odd ([\ref{bio:10}]), 
by Theorem \ref{thm:5} and \ref{thm:1}, 
$G\cong{\mathbb Z}/2{\mathbb Z}^{\oplus s}\oplus{\mathbb Z}/4{\mathbb Z}^{\oplus t}\oplus {\mathbb Z}/8{\mathbb Z}^{\oplus u}$ where $s,t,u\geq0$.
By the assumption that $G$ has a non-symplectic automorphism of order 2 such that whose fixed locus is an empty set, and the fact that $G$ is generated by non-symplectic automorphisms whose fixed locus have a curve, we get $s+t+u\geq3$.
Therefore,  $G$ is  one of the following as a group:
\[\{{\mathbb Z}/2{\mathbb Z}^{\oplus a},\ {\mathbb Z}/4{\mathbb Z}^{\oplus 3},\ 
\ {\mathbb Z}/2{\mathbb Z}^{\oplus f}\oplus{\mathbb Z}/4{\mathbb Z}^{\oplus g},\ {\mathbb Z}/2{\mathbb Z}\oplus{\mathbb Z}/4{\mathbb Z}\oplus{\mathbb Z}/8{\mathbb Z}:\]
\[ 3\leq a\leq 5,\ (f,g)=(1,2),(2,1),(3,1) \}.\]
If $G$ is one of 
 \[\{{\mathbb Z}/2{\mathbb Z}^{\oplus a},\ 
\ {\mathbb Z}/2{\mathbb Z}^{\oplus f}\oplus{\mathbb Z}/4{\mathbb Z}^{\oplus g}:3\leq a\leq 5,\ (f,g)=(1,2),(2,1),(3,1) \}\]
as a group, 
then quotient group $G/K$ of $G$ by a subgroup $K\cong{\mathbb Z}/2{\mathbb Z}$ is one of 
\[\{{\mathbb Z}/2{\mathbb Z}^{\oplus a},\ {\mathbb Z}/4{\mathbb Z}^{\oplus 2},\ {\mathbb Z}/2{\mathbb Z}^{\oplus f}\oplus{\mathbb Z}/4{\mathbb Z}: a=2,3,4\ f=1,2 \}\subset{\mathcal AG}(E)\]
as a group.
Let $f:X/G\rightarrow {\mathbb F}_{n}$ be the birational morphism.
We assume that $G\cong{\mathbb Z}/4{\mathbb Z}^{\oplus 3}$. 
By the assumption that $G\cong{\mathbb Z}/4{\mathbb Z}^{\oplus 3}$ and Theorem \ref{thm:5}, 
the numerical class of $f_{\ast}B$ is only (\ref{137}).
We denote $B$ by $2B_{1,0}+4B^{1}_{1,4}+4B^{2}_{1,4}+4B^{1}_{0,1}+4B^{2}_{0,1}+\sum_{i=1}^{n}b'_{i}B'_{i}$ where $f_{\ast}B_{1,0}=C$, $f_{\ast}B^{i}_{1,4}=C+4F$, $f_{\ast}B^{i}_{0,1}=F$, and $f_{\ast}B'_{i}=0$ in Pic$({\mathbb F}_{4})$.
Since $G\cong{\mathbb Z}/4{\mathbb Z}^{\oplus 3}$, by Theorem \ref{thm:5}, 
we get that $G=G^{1}_{1,4}\oplus G^{2}_{1,4}\oplus G^{1}_{0,1}$.
Let $s\in G^{1}_{1,4}$, $t\in G^{2}_{1,4}$, and $u\in G^{1}_{0,1}$ be generators respectively.
The non-symplectic involutions of $G$ are $s^{2}$, $t^{2}$, $u^{2}$, and $s^{2}\circ t^{2}\circ u^{2}$.
Since each fixed locus of $s^{2}$, $t^{2}$, and  $u^{2}$ is not an empty set, 
if $G$ has a non-symplectic automorphism of order 2 whose fixed locus is an empty set, then that is $s^{2}\circ t^{2}\circ u^{2}$.
If the fixed locus of $s^{2}\circ t^{2}\circ u^{2}$ is an empty set, then the fixed locus of $s\circ t\circ u$ is an empty set.
By [\ref{bio:9}], this is a contradiction.
Therefore, $G$ is not ${\mathbb Z}/4{\mathbb Z}^{\oplus 3}$ as a group.

We assume that $G\cong{\mathbb Z}/2{\mathbb Z}\oplus {\mathbb Z}/4{\mathbb Z}\oplus{\mathbb Z}/8{\mathbb Z}$.
By Theorem \ref{thm:5}, the numerical class of $f_{\ast}B$ is only (\ref{141}).
By the proof of Theorem \ref{thm:66}, $f$ is an isomorphism, i.e. $X/G\cong{\mathbb F}_{1}$. 
By Theorem \ref{thm:25}, we get the claim.
\end{proof}
By Theorem \ref{thm:25} and \ref{thm:26}, we get Theorem \ref{thm:23}.
\section{Non-Abelian case}
Let $S$ be a smooth rational surface such which is neither $\mathbb P^2$ nor an Enriques surface.
In Theorem \ref{thm:47}, we will describe the existence of a $K3$ surface $X'$ and a finite subgroup $G'$ of Aut$(X')$ such that $X'/G'\cong S$ by the result of $[\ref{bio:5}$
$,\,{\rm Proposition}\,5.1$
$\,{\rm and\ Theorem}\,5.2]$.
It does not assume that $G'$ is an abelian group, but it does not elucidate the group structure of $G'$.
First, we prepare a little.
Next, we state Theorem \ref{thm:47}.
Let 
\[{\mathcal AS}(K3):=\left\{
\begin{aligned}
&{\mathbb Z}/n{\mathbb Z},\ {\mathbb Z}/m{\mathbb Z}^{\oplus 2},\ {\mathbb Z}/2{\mathbb Z}\oplus{\mathbb Z}/k{\mathbb Z},\ {\mathbb Z}/2{\mathbb Z}^{\oplus l}\\
&n=2,\ldots,8,\ m=2,3,4,\ k=4,6,\ l=3,4 
\end{aligned}
\right\}
\]
\begin{thm}$([\ref{bio:12}])$
Let $X$ be a $K3$ surface, and $G$ be a finite abelian subgroup of Aut$(X)$.
If $G$ is symplectic, then $G\in {\mathcal AS}(K3)$ as a group.
Conversely,
for each an abelian group $G'\in {\mathcal AS}(K3)$, there is a $K3$ surface $X'$ such that $G'$ acts faithfully on $X'$ as symplectic.
\end{thm}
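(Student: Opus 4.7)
The plan is to establish both directions of the classification via Nikulin's lattice-theoretic framework for symplectic actions on $K3$ surfaces. For the forward direction, I would begin from the observation that a non-trivial symplectic automorphism $g$ of a $K3$ surface acts trivially on $H^{2,0}(X)$, and therefore its fixed locus $\mathrm{Fix}(g)$ must be a finite set of isolated points. Applying the holomorphic Lefschetz fixed point formula to $g$ of order $n$ and comparing with the topological Euler characteristic $\chi(X)=24$ yields an explicit count of $|\mathrm{Fix}(g)|$ depending only on $n$, and this combinatorics forces $n\leq 8$ with the pairs $(n,|\mathrm{Fix}(g)|)$ being $(2,8),(3,6),(4,4),(5,4),(6,2),(7,3),(8,2)$. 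In particular, every element of an abelian symplectic $G$ has order in $\{1,2,\ldots,8\}$.

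Next I would study $G$ through its action on the $K3$ lattice $\Lambda_{K3}=U^{\oplus 3}\oplus E_{8}(-1)^{\oplus 2}$. Let $\Omega_{G}:=(\Lambda_{K3}^{G})^{\perp}$ denote the coinvariant sublattice. Since $G$ is symplectic, $\Omega_{G}$ is negative definite, contains no $(-2)$-vectors, and has rank at most $19$; moreover the Hodge structure forces $T_{X}\subseteq \Lambda_{K3}^{G}$ so that $\Omega_{G}\subseteq\mathrm{NS}(X)$. The discriminant form of $\Omega_{G}$ is rigidly constrained by the local action of each element at its fixed points, and the overall Euler characteristic identity $\chi(X)=\sum_{g\in G}|\mathrm{Fix}(g)|/|G|$ combined with a careful analysis of the cyclic subgroups controls $|G|$. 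Enumerating abelian groups whose exponents lie in $\{1,\ldots,8\}$ and whose element-by-element fixed-point contributions are mutually compatible reduces the list to the candidates appearing in $\mathcal{AS}(K3)$.

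The main obstacle will be ruling out the borderline abelian groups that pass the pointwise order test but fail the global lattice embedding, such as $(\mathbb{Z}/2\mathbb{Z})^{\oplus 5}$, $(\mathbb{Z}/3\mathbb{Z})^{\oplus 3}$, $\mathbb{Z}/2\mathbb{Z}\oplus\mathbb{Z}/8\mathbb{Z}$, or $\mathbb{Z}/4\mathbb{Z}\oplus\mathbb{Z}/8\mathbb{Z}$. These exclusions require the full strength of Mukai's embedding of every finite symplectic group into the Mathieu group $M_{23}$ (acting on a $24$-element set with at least $5$ orbits), together with an explicit rank and signature computation for the associated $\Omega_{G}$; in several cases the bound $\mathrm{rank}\,\Omega_{G}\leq 19$ or the absence of $(-2)$-vectors is the precise obstruction, while in others one must compare the discriminant group of $\Omega_{G}$ with the finite list of Niemeier-type negative definite lattices admitting the prescribed $G$-action.

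For the converse, I would realize each $G\in\mathcal{AS}(K3)$ by an explicit construction. Cyclic groups of order $\leq 8$ arise on Kummer surfaces and on elliptic $K3$ surfaces with suitable Mordell--Weil torsion; the groups $(\mathbb{Z}/2\mathbb{Z})^{\oplus l}$ for $l=3,4$ come from Kummer quotients of abelian surfaces with rich translation structure; and the remaining groups $(\mathbb{Z}/3\mathbb{Z})^{\oplus 2}$, $(\mathbb{Z}/4\mathbb{Z})^{\oplus 2}$, $\mathbb{Z}/2\mathbb{Z}\oplus\mathbb{Z}/4\mathbb{Z}$, $\mathbb{Z}/2\mathbb{Z}\oplus\mathbb{Z}/6\mathbb{Z}$ are realized on explicit lattice-polarized families whose existence follows from the surjectivity of the period map applied to the Niemeier-style lattice $\Omega_{G}\oplus\Lambda_{K3}^{G}$. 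Once each abstract $G$ is matched to at least one concrete $K3$, the theorem is complete.
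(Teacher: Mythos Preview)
The paper does not prove this statement at all: it is quoted as Nikulin's theorem with the citation $[\ref{bio:12}]$ and used as a black box. There is therefore no proof in the paper against which to compare your proposal.

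Your outline is a reasonable sketch of how the result is established in the literature, blending Nikulin's original lattice-theoretic argument with Mukai's later Mathieu-group refinement. The fixed-point counts $(n,|\mathrm{Fix}(g)|)$ you list are correct, and the strategy of bounding $\mathrm{rank}\,\Omega_G$ and excluding borderline groups via discriminant-form and embedding obstructions is the standard one. One historical remark: Nikulin's original proof predates Mukai's $M_{23}$ embedding and does not rely on it; the exclusions of groups like $(\mathbb{Z}/2\mathbb{Z})^{\oplus 5}$ or $(\mathbb{Z}/3\mathbb{Z})^{\oplus 3}$ were obtained directly from the rank bound $\mathrm{rank}\,\Omega_G\leq 19$ together with explicit rank formulas (e.g.\ $\mathrm{rank}\,\Omega_{(\mathbb{Z}/2\mathbb{Z})^{\oplus k}}$ grows too fast), so invoking Mukai is convenient but not strictly necessary for the abelian case. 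For the converse direction your examples are fine, though Nikulin's own argument is again lattice-theoretic via the surjectivity of the period map rather than case-by-case construction.
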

\begin{thm}$([\ref{bio:5},\,{\rm Proposition}\,5.1])$
Let $X$ be a $K3$ surface, and $G$ be a finite abelian subgroup of Aut$(X)$ such that $G$ acts symplectically on $X$.
Let $Y$ be a $K3$ surface such that $Y$ is a minimal resolution of the quotient space $X/G$, and $E_G$ be the lattices spanned by the curves on $Y$ arising from the resolution of the singularities of
$X/G$.

Then $E_G$ is one of the following root
lattices:
\begin{table}[h]
\label{table:data_type}
\centering
\renewcommand{\arraystretch}{1.3}
\begin{tabular}{|c||c|c|c|c|c|c|c|}
\hline
$G$&${\mathbb Z}/2{\mathbb Z}$&${\mathbb Z}/3{\mathbb Z}$&${\mathbb Z}/4{\mathbb Z}$&${\mathbb Z}/5{\mathbb Z}$&${\mathbb Z}/6{\mathbb Z}$&${\mathbb Z}/7{\mathbb Z}$&${\mathbb Z}/8{\mathbb Z}$\\ \hline 
$E_G$&$A^8_1$&$A^6_2$&$A_3^4\oplus A_1^2$&$A_4^4$&$A_5^2\oplus A_2^2\oplus A_1^2$&$A_6^3$&$A_7^2\oplus A_3\oplus A_1$
\\ \hline
\end{tabular}
\label{table:data_type}
\centering
\renewcommand{\arraystretch}{1.3}
\begin{tabular}{|c||c|c|c|c|}
\hline
$G$&${\mathbb Z}/2{\mathbb Z}^{\oplus 2}$&${\mathbb Z}/2{\mathbb Z}^{\oplus 3}$&${\mathbb Z}/2{\mathbb Z}^{\oplus 4}$&${\mathbb Z}/2{\mathbb Z}\oplus \mathbb Z/4\mathbb Z$\\ \hline 
$E_G$&$A^{12}_1$&$A^{14}_1$&$A_1^{15}$&$A_3^4\oplus A_1^4$
\\ \hline
\end{tabular}
\label{table:data_type}
	\centering
	\renewcommand{\arraystretch}{1.3}
	\begin{tabular}{|c||c|c|c|}
		\hline
		$G$&${\mathbb Z}/2{\mathbb Z}\oplus {\mathbb Z}/6\mathbb Z$&${\mathbb Z}/3{\mathbb Z}^{\oplus 2}$&${\mathbb Z}/4{\mathbb Z}^{\oplus 2}$\\ \hline 
		$E_G$&$A^{3}_5\oplus A_1^3$&$A^8_2$&$A_3^6$\\ \hline
	\end{tabular}
\end{table}

Let $M_G$ be the minimal primitive sublattice of ${\rm NS}(Y)$ which contains $E_G$. 
Then $M_G$ is an overlattice of finite index ${\rm r}_G$ of $E_G$ and its properties are the
followings
\begin{table}[h]
	\label{table:data_type}
	\centering
	\renewcommand{\arraystretch}{1.3}
	\begin{tabular}{|c||c|c|c|c|c|}
\hline
$G$&${\mathbb Z}/2{\mathbb Z}$&${\mathbb Z}/3{\mathbb Z}$&${\mathbb Z}/4{\mathbb Z}$&${\mathbb Z}/5{\mathbb Z}$&${\mathbb Z}/6{\mathbb Z}$\\ \hline 
${\rm r}_G$&$2$&$3$&$4$&$5$&$6$
\\ \hline
${\rm rank}M_G$&$8$&$12$&$14$&$16$&$16$
\\ \hline
$M_G^{\vee}/M_G$&${\mathbb Z}/2{\mathbb Z}^{\oplus 6}$&${\mathbb Z}/3{\mathbb Z}^{\oplus 4}$&$({\mathbb Z}/2{\mathbb Z}\oplus \mathbb Z/4\mathbb Z)^{\oplus 2}$&${\mathbb Z}/5{\mathbb Z}^{\oplus 2}$&${\mathbb Z}/6{\mathbb Z}^{\oplus 2}$
\\ \hline
\end{tabular}
	\label{table:data_type}
	\centering
	\renewcommand{\arraystretch}{1.3}
	\begin{tabular}{|c||c|c|c|c|c|}
		\hline
		$G$&${\mathbb Z}/7{\mathbb Z}$&${\mathbb Z}/8{\mathbb Z}$&${\mathbb Z}/2{\mathbb Z}^{\oplus 2}$&${\mathbb Z}/2{\mathbb Z}^{\oplus 3}$&${\mathbb Z}/2{\mathbb Z}^{\oplus 4}$\\ \hline 
		${\rm r}_G$&$7$&$8$&$2^2$&$2^3$&$2^4$
		\\ \hline
		${\rm rank}M_G$&$18$&$18$&$12$&$14$&$15$
		\\ \hline
		$M_G^{\vee}/M_G$&${\mathbb Z}/7{\mathbb Z}$&${\mathbb Z}/2{\mathbb Z}\oplus {\mathbb Z}/4{\mathbb Z}$&${\mathbb Z}/2{\mathbb Z}^{\oplus 8}$&${\mathbb Z}/2{\mathbb Z}^{\oplus 8}$&${\mathbb Z}/2{\mathbb Z}^{\oplus 7}$
		\\ \hline
	\end{tabular}
	\label{table:data_type}
	\centering
	\renewcommand{\arraystretch}{1.3}
	\begin{tabular}{|c||c|c|c|c|}
		\hline
		$G$&${\mathbb Z}/2{\mathbb Z}\oplus \mathbb Z/4\mathbb Z$&${\mathbb Z}/2{\mathbb Z}\oplus {\mathbb Z}/6\mathbb Z$&${\mathbb Z}/3{\mathbb Z}^{\oplus 2}$&${\mathbb Z}/4{\mathbb Z}^{\oplus 2}$\\ \hline 
		${\rm r}_G$&$8$&$12$&$3^2$&$4^2$
		\\ \hline
		${\rm rank}M_G$&$16$&$18$&$16$&$18$
		\\ \hline
		$M_G^{\vee}/M_G$&$({\mathbb Z}/2{\mathbb Z}\oplus {\mathbb Z}/6{\mathbb Z})^{\oplus 2}$&${\mathbb Z}/2{\mathbb Z}\oplus {\mathbb Z}/6{\mathbb Z}$&${\mathbb Z}/3{\mathbb Z}^{\oplus 4}$&${\mathbb Z}/4{\mathbb Z}^{\oplus 2}$
		\\ \hline
	\end{tabular}
\end{table}
 \end{thm}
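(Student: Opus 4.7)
The plan is to analyze the fixed point structure of $G$ acting on $X$ and translate it into the lattice data $(E_G, M_G)$. Since every non-trivial $g \in G$ is symplectic of finite order, its fixed locus consists of isolated points (any fixed curve would force a non-trivial action on $H^{2,0}$). At each fixed point $p$ the stabilizer $G_p \subseteq G$ embeds into $\mathrm{SL}(2,\mathbb C)$ via its action on $T_pX$; being a finite abelian subgroup of $\mathrm{SL}(2,\mathbb C)$, $G_p$ must be cyclic, say of order $n_p$, and the corresponding singularity on $X/G$ is an $A_{n_p-1}$ rational double point whose resolution contributes an $A_{n_p-1}$ summand to $E_G$.

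The first step is to determine, for each $G$ in the list, the multiset of stabilizer orders realized and the number of orbits of each type. The key input is the classical fact that a single symplectic automorphism of prime order $p$ on a K3 surface has a fixed-point number depending only on $p$ (namely $8,6,4,3$ for $p=2,3,5,7$) and that this extends to orders $4,6,8$ by known tables. Combining these for each cyclic subgroup of $G$ and applying inclusion--exclusion across subgroups to separate points by their full stabilizer yields the multiset of singularity types, which assembles into the root lattice $E_G$ given in the first table. For instance, with $G = \mathbb Z/6\mathbb Z$ the generator has $2$ fixed points (stabilizer of order $6$), the order-$3$ elements contribute $6-2\cdot 2 = $ further points giving $2$ orbits with stabilizer $\mathbb Z/3\mathbb Z$, and the involution contributes $8-2 = 6$ extra points giving $3$ orbits with stabilizer $\mathbb Z/2\mathbb Z$; arranging these yields $A_5^{\oplus 2} \oplus A_2^{\oplus 2} \oplus A_1^{\oplus 3}$, matching the table up to an explicit check.

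For the overlattice $M_G$, the guiding identity is
\[ \mathrm{rank}\,M_G \;=\; 22 - \mathrm{rank}\,H^2(X,\mathbb Z)^G, \]
since the transcendental lattice of $Y$ coincides with $T_X^G$ pulled back, and $M_G$ is its orthogonal complement in $\mathrm{NS}(Y)$. The $G$-invariant rank is read off from Mukai's lattice-theoretic analysis of symplectic $G$-actions on $H^2(X,\mathbb Z)$, producing the ranks listed. To identify the discriminant form, use that $M_G/E_G$ is isotropic in $E_G^{\vee}/E_G$ of order $r_G$, so
\[ |M_G^{\vee}/M_G| \;=\; |E_G^{\vee}/E_G|\,/\,r_G^2, \]
and the requirement that $M_G$ embeds primitively into the K3 lattice $\Lambda_{K3}$ of signature $(3,19)$ pins down the discriminant form uniquely via Nikulin's existence and uniqueness criterion for lattice embeddings.

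The main obstacle will be constructing the isotropic embedding $M_G/E_G \hookrightarrow E_G^{\vee}/E_G$ explicitly and justifying that the index is exactly $r_G$ (rather than a proper divisor). This amounts to tracing, for each singular orbit, which fractional combinations of exceptional components lift to integral classes pulled back from $X$. Concretely, for each $g \in G$ of order $n$ and each orbit of fixed points with stabilizer $\langle g\rangle$, the cyclic cover $X \to X/\langle g\rangle$ near that orbit provides a distinguished integral class $\tfrac{1}{n}\sum_i i\,E_i \in M_G$ inside the $A_{n-1}$ chain; verifying that these classes, together, generate $M_G/E_G$ and are mutually compatible across orbits is the delicate case-by-case computation that completes the table.
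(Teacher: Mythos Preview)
The paper does not prove this theorem; it is quoted verbatim from Garbagnati's paper (reference~[\ref{bio:5}], Proposition~5.1), which in turn rests on Nikulin's foundational work on symplectic automorphisms. So there is no proof in the paper to compare your proposal against. Your overall strategy---analyze stabilizers at fixed points to read off the $A_{n-1}$ configurations, then identify $M_G$ via the coinvariant lattice---is exactly the standard route taken in those sources.

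That said, your worked example for $G=\mathbb Z/6\mathbb Z$ contains an arithmetic slip. The six points with stabilizer exactly $\mathbb Z/2\mathbb Z$ lie in orbits of size $|G|/2=3$, hence there are $6/3=2$ orbits, not $3$; the correct contribution is $A_1^{\oplus 2}$, in agreement with the table, and your claimed $A_1^{\oplus 3}$ would already push the rank to $17>16=\mathrm{rank}\,M_G$. More substantively, your description of $M_G$ as ``the orthogonal complement of $T_Y$ in $\mathrm{NS}(Y)$'' is not what is being asserted: $M_G$ is by definition the primitive closure of $E_G$ in $\mathrm{NS}(Y)$, and the non-trivial content is Nikulin's isometry $M_G \cong \Omega_G := (H^2(X,\mathbb Z)^G)^{\perp}$ between the resolution lattice on $Y$ and the coinvariant lattice on $X$. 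Your rank formula and discriminant computation only follow once this isometry is in hand; establishing it (and computing $\Omega_G$ case by case) is precisely the work done in the cited references, and your sketch does not supply an independent argument for it.
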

\begin{thm}$([\ref{bio:5},\,{\rm Theorem}\,5.2])$
For $G\in{\mathcal AS}(K3)$ and a $K3$ surface $X$ such that $G$ acts faithfully on $X$ as symplectic,
  	a $K3$ surface $Y$ is a minimal resolution of the quotient space $X/G$ if and only if $M_{G}$ is primitively embedded in ${\rm NS}(Y)$.
  \end{thm}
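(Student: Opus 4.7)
The plan is to split the biconditional into two directions, one essentially tautological and one requiring real work. For the forward direction, suppose $Y$ is the minimal resolution of $X/G$. The exceptional divisors over the singular points of $X/G$ are disjoint unions of ADE configurations of smooth rational $(-2)$-curves on $Y$, whose classes span a sublattice of $\mathrm{NS}(Y)$ isomorphic to the root lattice $E_G$ from Proposition 5.1. Since $M_G$ was defined as the minimal primitive sublattice of $\mathrm{NS}(Y)$ containing $E_G$ in this construction, $M_G$ is automatically primitively embedded in $\mathrm{NS}(Y)$; the index $r_G$ and discriminant data in the table simply record the known overlattice of $E_G$ produced by classes of the form $\frac{1}{|H|}\sum_{e\in H}e$ where $H$ is an appropriate subconfiguration. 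Thus the "only if" direction reduces to a computation already encoded in Proposition 5.1.

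For the converse, assume $Y$ is a K3 surface with a primitive embedding $M_G\hookrightarrow\mathrm{NS}(Y)$. First I would recover the geometric configuration: the root system of $(-2)$-classes in $E_G\subset M_G$ is represented by an actual disjoint union of smooth rational curves on $Y$, after possibly applying reflections in $W(\mathrm{NS}(Y))$ to move into the ample cone. Contracting these curves produces a normal surface $Y'$ with the prescribed RDP configuration, which is a candidate for $X/G$. To reconstruct $X$, I would argue lattice-theoretically: compute the transcendental lattice $T_Y = \mathrm{NS}(Y)^{\perp}\subset H^2(Y,\mathbb Z)$, and use the discriminant form calculation for $M_G^{\vee}/M_G$ in Proposition 5.1 together with Nikulin's gluing formalism to produce a K3 lattice $\Lambda$ containing $(M_G)^{\perp}_{\mathrm{NS}(Y)}\oplus T_Y$ as the $G$-invariant part, with $G$ acting on the orthogonal complement via the natural action on $E_G$. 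The surjectivity of the period map together with the global Torelli theorem then yields a K3 surface $X$ equipped with a faithful symplectic $G$-action whose Hodge structure matches, and uniqueness up to isomorphism.

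The final step is to identify the minimal resolution of $X/G$ with $Y$. One computes the invariant cohomology $H^2(X,\mathbb Z)^G$, matches it with the lattice built above, and uses the equivariant Torelli theorem (in the form appropriate for symplectic automorphisms, as in Nikulin) to conclude that the quotient $X/G$ has the prescribed singularity type and that its minimal resolution has Néron--Severi lattice containing $M_G$ as the primitive hull of the exceptional classes. Since the transcendental part was arranged to agree with $T_Y$, this resolution is isomorphic to $Y$ itself.

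The main obstacle is the middle step: constructing the K3 surface $X$ from the lattice data and verifying that the $G$-action is genuinely induced by automorphisms. This is where Nikulin's theory of discriminant forms and primitive embeddings does all the heavy lifting—one must show that the orthogonal complement $(M_G^{\perp})_{H^2(Y,\mathbb Z)}$ has exactly the right discriminant form to glue with the $G$-module obtained by "unfolding" the exceptional configuration into a genuine K3 lattice of rank $22$, and that the resulting primitive embedding realizing the $G$-action is unique in its genus. Once that lattice-theoretic matching is done, the Torelli and surjectivity theorems supply $X$ and the action, and the rest is verification.
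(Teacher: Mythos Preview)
The paper does not prove this theorem at all: it is stated with the attribution $[\ref{bio:5},\,\text{Theorem}\,5.2]$ and is quoted as a black-box input from Garbagnati's paper. There is therefore no proof in the present paper to compare your proposal against.

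Your sketch is a plausible outline of how the cited result is actually established in $[\ref{bio:5}]$---the forward direction is essentially the content of Proposition~5.1 there, and the converse is done via Nikulin's lattice embedding theory together with the Torelli theorem and surjectivity of the period map, much as you describe. One point to be careful about in your converse: you need not only that the $(-2)$-classes in $E_G$ are represented by effective curves after Weyl reflection, but that the specific overlattice $M_G/E_G$ is realized by the divisibility relations among those curves, so that contracting them and passing to the cover genuinely produces a $G$-quotient rather than a quotient by some other group with the same singularity type. In Garbagnati's argument this is handled by showing that the isomorphism class of $M_G$ (not just $E_G$) determines $G$ among the groups in $\mathcal{AS}(K3)$, which is why the tables record $M_G^\vee/M_G$ and not merely the root type.
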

In what follows, we explain Theorem \ref{thm:47}.
Let $X$ be a $K3$ surface, and $G$ be a finite subgroup of Aut$(X)$ such that $X/G$ is smooth.
If $X/G$ is neither $\mathbb P^2$ nor an Enriques surface, then $X/G$ is a Hirzebruch surface or its blow-ups.
Let $p:X\rightarrow X/G$ be the quotient morphism, and $B:=\sum_{i=1}^kb_iB_i$ be the branch divisor of $p:X\rightarrow X/G$.
Since a Hirzebruch surface $\mathbb F_n$ has the unique fibre structure,
there is the fibre structure $f:X/G\rightarrow \mathbb P^1$ induced by a Hirzebruch surface.
Then the general fibre of $f\circ p:X\rightarrow \mathbb P^1$ is a disjoint union of elliptic curves.

Let $G_s$ be the symplectic subgroup of $G$ generated by symplectic automorphisms of $G$.
We set $G_h:=\{g\in G_s\,|\,g\ \ {\rm does\ not\ replace\ irreducible\ components\ of\ }(f\circ p)^{-1}(u)\ \ {\rm for\ any\ }u\in\mathbb P^1.\}$.
\begin{lem}
The group $G_h$ is an abelia group.
\end{lem}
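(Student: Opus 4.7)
The plan is to show that for any two elements $g_1,g_2\in G_h$ the commutator $[g_1,g_2]$ restricts to the identity on a Zariski dense subset of $X$, forcing $[g_1,g_2]=\mathrm{id}_X$. That $G_h$ is closed under composition and inverses is immediate from its defining condition, so it is at least a subgroup of $G_s$.

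First I would observe that every $g\in G_h$ acts trivially on the base of $\pi:=f\circ p$. Indeed, if $g$ preserves each irreducible component $C$ of every fibre $\pi^{-1}(u)$, then $\pi(g(C))=\pi(C)=\{u\}$, so the induced automorphism $\bar g$ of $\mathbb P^1$ fixes every point and equals the identity. Hence each $g\in G_h$ is a relative automorphism of $\pi$, and for a general fibre $F=E_1\sqcup\cdots\sqcup E_k$, which by hypothesis is a disjoint union of smooth elliptic curves, $g$ restricts to an automorphism $g|_{E_i}\in\mathrm{Aut}(E_i)$ for each $i$.

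Next, I would use that $g\in G_h\subset G_s$ is symplectic to show that $g|_{E_i}$ is a translation of $E_i$. Let $\omega$ be a nowhere vanishing holomorphic $2$-form on $X$. Near a point of $E_i$ choose local analytic coordinates $(t,z)$ with $\pi$ given by $t$; then $\omega=dt\wedge\eta$ for a local holomorphic $1$-form $\eta$ whose restriction $\eta_i:=\eta|_{E_i}$ is a nowhere vanishing holomorphic $1$-form on $E_i$ (this is the standard residue/adjunction description of the canonical form on a smooth fibre of an elliptic fibration on a K3 surface, and is well defined up to a global scalar). Since $g$ preserves $\pi$ we have $g^{*}t=t$, and $g^{*}\omega=\omega$ then forces $g^{*}\eta\equiv\eta\pmod{dt}$; restricting to $E_i$, where $dt$ vanishes, yields $(g|_{E_i})^{*}\eta_i=\eta_i$. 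An automorphism of an elliptic curve preserving its holomorphic $1$-form is a translation, so $g|_{E_i}$ is a translation.

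Finally, translations of an elliptic curve commute, so $[g_1,g_2]|_{E_i}=\mathrm{id}_{E_i}$ for every component $E_i$ of every smooth fibre. The union of these components over the Zariski open set of $u\in\mathbb P^1$ with smooth $\pi^{-1}(u)$ is Zariski dense in $X$, so the fixed locus of $[g_1,g_2]$, being a closed subscheme of $X$ containing this dense set, is all of $X$. Hence $[g_1,g_2]=\mathrm{id}_X$ and $G_h$ is abelian. The only delicate step is the second paragraph: verifying that the global symplectic condition $g^{*}\omega=\omega$ descends cleanly to the fibrewise equality $(g|_{E_i})^{*}\eta_i=\eta_i$, which is the essential input turning fibre-preserving symplectic automorphisms into fibrewise translations.
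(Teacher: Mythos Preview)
Your argument is correct, but it is more elaborate than the paper's. The paper reaches the same endpoint---that each $g\in G_h$ restricts to a translation on the elliptic components of a general fibre---by a shorter route: since $G_h\subset G_s$ is symplectic, the union $\bigcup_{g\in G_h}\mathrm{Fix}(g)$ is a finite set of points, so one can choose $u\in\mathbb P^1$ with $G_h$ acting \emph{freely} on $(f\circ p)^{-1}(u)$. A free automorphism of an elliptic curve is automatically a translation (any non-translation in $\mathrm{Aut}(E)=E\rtimes\mathrm{Aut}(E,0)$ has a fixed point), and freeness also forces $G_h$ to inject into the product of translation groups of the components, which is abelian.

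Your approach instead transports the symplectic condition $g^{*}\omega=\omega$ to the fibre via the residue $1$-form and deduces that $g|_{E_i}$ preserves $\eta_i$, hence is a translation. This is perfectly valid and arguably more conceptual, but it requires the local adjunction computation you flag as ``delicate,'' whereas the paper's argument needs only the elementary fact that symplectic automorphisms of a K3 surface have isolated fixed points. Both approaches then finish the same way, by observing that an automorphism trivial on a dense union of fibre components is the identity.
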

\begin{proof}
Since $G_h$ is a finite symplectic group, $\bigcup_{g\in G_h}{\rm Fix}(g)$ is a finite set.
There is a point $u\in\mathbb P^1$ such that $G_s$ acts freely on $(f\circ p)^{-1}(u)$.
Since the general fibre of $f\circ p:X\rightarrow \mathbb P^1$ is a disjoint union of elliptic curves, we get that $G_h$ is an abelian group. 
\end{proof}
Let $X_s:=X/G_s$ be the quotient space, $p_1:X_s\rightarrow X/G=X_s/(G/G_s)$ be the quotient map, and $f_1:=f\circ p_1:X_s\rightarrow \mathbb P^1$ be the fibre structure.
Then $p_1:X_s\rightarrow X/G=X_s/(G/G_s)$ is a cyclic cover whose branch divisor is $B$.
Let $X_h:=X/G_h$ be the quotient space, $p_2:X_h\rightarrow X_s=X_h/(G_s/G_h)$ be the quotient map.

For $f\circ p_1\circ p_2:X_h\rightarrow\mathbb P^1$,
by the Stein factorization, there are the fibre structure $h:X_h\rightarrow \mathbb P^1$ and a finite map $k:\mathbb P^1\rightarrow \mathbb P^1$ such that
$f\circ p_1\circ p_2=k\circ h$, i.e. the following diagram is commutative:
$$
\xymatrix{
	X_s\ar[r]^{f\circ p_1}&\mathbb P^1 \\
	X_h\ar[u]^{p_2}\ar[r]_{h}&\mathbb P^1\ar[u]_{k}.
}
$$
Then $X_h$ is the normalization of the fibre product of $f\circ p_1:X_s\rightarrow \mathbb P^1$ and $k:\mathbb P^1\rightarrow \mathbb P^1$.
By the definition of $G_h$, we get that $k:\mathbb P^1\rightarrow \mathbb P^1$ is a Galois cover whose Galois group is isomorphic to $G_s/G_h$ as a group.
Recall that $G_h$ is an abelian group, and by Theorem \ref{thm:43} $p_1:X_s\rightarrow X/G$ is a cyclic cover of order $b$ such that the branch divisor is $B$ and the Galois group is $G/G_s$ where $b$ is the least common multiple of $b_{1},\ldots,b_{k}$.
By the above, we have the following theorem. 
\begin{thm}\label{thm:47}
Let $S$ be a smooth rational surface which is neither $\mathbb P^2$ nor an Enriques surface.
We fix a fibre structure $f:S\rightarrow \mathbb P^1$, and an effective divisor $B:=\sum_{i=1}^kb_iB_i$ on $S$.
Let $b$ be the least common multiple of $b_{1},\ldots,b_{k}$.

Then there are a $K3$ surface $X$ and a finite subgroup $G$ of Aut$(X)$ such that $X/G=S$ and the branch divisor of the quotient morphism $p:X\rightarrow X/G$ is $B$ if and only if there are a Galois cover $k:\mathbb P^1\rightarrow\mathbb P^1$ and a cyclic cover $q:T\rightarrow S$ of degree $b$ whose the branch divisor is $B$
such that the minimal singular resolution $X'$ of the normalization of the fibre product of $f\circ q:T\rightarrow \mathbb P^1$ and $k:\mathbb P^1\rightarrow \mathbb P^1$ is a $K3$ surface, and $M_G'$ is primitively embedded in ${\rm NS}(X')$ for some $G'\in {\mathcal AS}(K3)$.
\end{thm}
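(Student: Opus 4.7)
The plan is to prove Theorem \ref{thm:47} by retracing, and then reversing, the tower $X \to X_h \to X_s \to X/G = S$ already described in the paragraphs preceding the theorem.

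For the forward implication, suppose $(X,G)$ is given. Define $G_s\subset G$ the symplectic subgroup, $X_s := X/G_s$, and $G_h$ as in the preamble, with $X_h := X/G_h$. First I would set $T := X_s$ and let $q : T\to S$ be the quotient map induced by $G/G_s$. By Theorem \ref{thm:43}, $G/G_s = \langle g\rangle$ with $g$ purely non-symplectic of order equal to the least common multiple $b$ of the $b_i$'s, so $q$ is a cyclic cover of degree $b$. The fact that $q$ has branch divisor exactly $B$ follows because $G_s$ is symplectic and therefore has finite fixed locus, so the divisorial part of the branch locus of $p : X\to S$ is entirely contributed by the non-symplectic factor $\langle g\rangle$ acting on $X_s$. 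Next, by the very definition of $G_h$, the quotient $G_s/G_h$ permutes the fibres of $f\circ p_1 : X_s\to \mathbb P^1$, inducing an action on $\mathbb P^1$; applying the Stein factorization to $X_h \to \mathbb P^1$ produces the required Galois cover $k : \mathbb P^1\to \mathbb P^1$, and $X_h$ is identified with the normalization of $T \times_{\mathbb P^1}\mathbb P^1$. Since $G_h\subset G_s$ acts symplectically and $X/G_h$ has only rational double points, its minimal resolution $X'$ is a K3 surface; the primitive embedding of $M_{G_h}$ in $\mathrm{NS}(X')$ then follows directly from the cited result $[\ref{bio:5},\,\mathrm{Theorem}\,5.2]$ with $G' := G_h$.

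For the converse, I would start from $(k,q)$ satisfying the hypotheses, form the normalization $N$ of $T\times_{\mathbb P^1}\mathbb P^1$, and let $X'$ be its minimal resolution, which is a K3 surface by assumption. The hypothesis that $M_{G'}$ embeds primitively in $\mathrm{NS}(X')$ allows me to apply $[\ref{bio:5},\,\mathrm{Theorem}\,5.2]$ in the other direction: there exist a K3 surface $X$ and a faithful symplectic action of $G'$ on $X$ such that $X'$ is the minimal resolution of $X/G'$. I would then assemble $G$ on $X$ from three commuting (or semidirect) layers: the symplectic subgroup $G'$ recovered from the K3 cover $X\to X/G'$, a further symplectic extension by $\mathrm{Gal}(k)$ obtained by lifting the base action through the fibration, and finally a cyclic extension of order $b$ coming from the cyclic cover $q : T\to S$. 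Gluing these through the successive quotients $X \to X/G' \to X_s \to S$, and using Theorem \ref{thm:3} (which guarantees uniqueness of a Galois cover with prescribed branch divisor over a simply connected target) at each step, I would verify $X/G \cong S$ and that the branch divisor of $X\to S$ equals $B$.

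The main obstacle will be the converse direction, specifically the compatibility of the three layers of group action. The delicate point is that $G'$ is determined only abstractly as a symplectic subgroup, whereas the Galois action of $k$ and the cyclic action of degree $b$ must be realized as honest automorphisms of $X$ that commute correctly with $G'$ and descend to the right quotient. The key leverage is that the universal property of Theorem \ref{thm:3}, applied at each level of the tower, forces the putative automorphism group on $X$ to coincide with the Galois group of $X \to S$ once one has produced a branched cover of $S$ with branch divisor $B$; I would use this repeatedly to avoid an explicit construction of generators. A secondary subtlety is that when $q$ or the fibre product has isolated singularities, one must track them through minimal resolution and check that no spurious divisorial branching is introduced, which is guaranteed because only rational double points appear along the way.
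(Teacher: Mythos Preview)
Your forward direction is exactly the paper's argument: the entire ``proof'' in the paper is the discussion preceding the theorem statement, which sets $T=X_s=X/G_s$, identifies $q$ as the cyclic cover of degree $b$ via Theorem~\ref{thm:43}, obtains $k$ by Stein factorization, recognizes $X_h$ as the normalization of the fibre product, and then invokes $[\ref{bio:5},\,\text{Theorem }5.2]$ with $G'=G_h$. You have reproduced this faithfully.

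For the converse you go further than the paper, which simply writes ``By the above, we have the following theorem'' and gives no separate argument. Your sketch is in the right spirit, but there is a genuine gap you should be aware of. The hypothesis is only that $M_{G'}$ admits \emph{some} primitive embedding into $\mathrm{NS}(X')$; the cited $[\ref{bio:5},\,\text{Theorem }5.2]$ then produces a K3 surface $X$ with a symplectic $G'$-action whose quotient has minimal resolution $X'$, but it does not guarantee that the curves contracted by $X'\to X/G'$ are the exceptional curves of $X'\to N$. Unless the embedding is the geometric one coming from those exceptional curves, you have no map $X\to N$, and hence no composite $X\to N\to T\to S$ to which Theorem~\ref{thm:3} could be applied. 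Your plan to ``assemble $G$ from three layers'' presupposes exactly this identification. A second, smaller issue: you cannot literally apply Theorem~\ref{thm:3} ``at each level of the tower,'' since the intermediate quotients $X_h$ and $X_s$ are not simply connected; it only applies once, to the final map $X\to S$, and only after you have already produced that map by other means.
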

\section{the list of a numerical class}
Here, we will give the list of a numerical class of an effective divisor $B=\sum_{i=1}^{k}b_{i}B_{i}$ on ${\mathbb F}_{n}$ such that $B_{i}$ is a smooth curve for each $i=1,\ldots,k$ and $K_{{\mathbb F}_{n}}+\sum_{i=1}^{k}\frac{b_{i}-1}{b_{i}}B_{i}=0$ in Pic$({\mathbb F}_{n})$. 

If there are a $K3$ surface $X$ and a finite subgroup $G$ of Aut$(X)$ such that $X/G={\mathbb F}_{0}\cong{\mathbb P}^{1}\times{\mathbb P}^{1}$, then by Theorem \ref{thm:6} the numerical class of $B$ is one of the following:
\begin{flalign}
	\label{1} 3(3C+3F)\ \ {\mathbb Z}/3{\mathbb Z}\\
	\label{14} 3C+3C+3(C+3F)\ \ {\mathbb Z}/3{\mathbb Z}^{\oplus 2}\\	
	\label{50} 3C+3C+3(C+F)+3F+3F\ \ {\mathbb Z}/3{\mathbb Z}^{\oplus 3}\\
	\label{2} 2(4C+4F)\ \ {\mathbb Z}/2{\mathbb Z}\\
	\label{16} 2C+2C+2(2C+4F)\ \ {\mathbb Z}/2{\mathbb Z}^{\oplus 2}\\
	\label{53} 2C+2C+2(2C+2F)+2F+2F\ \ {\mathbb Z}/2{\mathbb Z}^{\oplus 3}\\
	\label{15} 4C+4C+2(C+4F)\ \ {\mathbb Z}/2{\mathbb Z}\oplus {\mathbb Z}/4{\mathbb Z}
\end{flalign}
\begin{flalign}
	\label{51} 4C+4C+2(C+F)+4F+4F\ \ {\mathbb Z}/2{\mathbb Z}\oplus{\mathbb Z}/4{\mathbb Z}^{\oplus 2}\\
	\label{52} 4C+4C+2(C+2F)+2F+2F\ \ {\mathbb Z}/2{\mathbb Z}^{\oplus 2}\oplus{\mathbb Z}/4{\mathbb Z}\\
	\label{33} 2C+2C+2C+2(C+4F)\ \ {\mathbb Z}/2{\mathbb Z}^{\oplus 3}\\
	\label{66} 2C+2C+2C+2(C+F)+2F+2F+2F\ \ {\mathbb Z}/2{\mathbb Z}^{\oplus 5}\\
	\label{63} 2C+2C+2C+2(C+2F)+2F+2F\ \ {\mathbb Z}/2{\mathbb Z}^{\oplus 4}\\
	\label{62} 2C+2C+2C+2(C+F)+4F+4F\ \ {\mathbb Z}/2{\mathbb Z}^{\oplus 3}\oplus{\mathbb Z}/4{\mathbb Z}\\
	\label{3} 2(2C+2F)+2(2C+2F)\ \ {\mathbb Z}/2{\mathbb Z}^{\oplus 2}\\
	\label{34} 2C+2C+2(C+2F)+2(C+2F)\ \ {\mathbb Z}/2{\mathbb Z}^{\oplus 3}\\
	\label{64} 2C+2C+2(C+F)+2(C+F)+2F+2F\ \ {\mathbb Z}/2{\mathbb Z}^{\oplus 4}\\
	\label{12} 3(C+F)+3(C+F)+3(C+F)\ \ {\mathbb Z}/3{\mathbb Z}^{\oplus 2}\\  
	\label{13} 3C+3(C+F)+3(C+2F)\ \ {\mathbb Z}/3{\mathbb Z}^{\oplus 2}\\
	\label{31} 2(C+F)+2(C+F)+2(C+F)+2(C+F)\ \ {\mathbb Z}/2{\mathbb Z}^{\oplus 3}\\
	\label{32} 2C+2(C+F)+2(C+F)+2(C+2F)\ \ {\mathbb Z}/2{\mathbb Z}^{\oplus 3}\\
	\label{4} 2(C+F)+4(2C+2F)\\
	\label{5} 3(C+F)+3(2C+2F)\\
	\label{6} 3(C+2F)+3(2C+F)\\
	\label{7} 3C+3(2C+3F)\\
	\label{8} 2C+2(3C+4F)\\
	\label{9} 2(C+F)+2(3C+3F)\\
	\label{10} 2(C+2F)+2(3C+2F)\\
	\label{11} 2(C+3F)+2(3C+F)\\
	\label{17} 2(C+F)+3(C+F)+6(C+F)\\ 
	\label{18} 2(C+F)+4(C+F)+4(C+F)\\
	\label{19} 2C+4(2C+2F)+2F\\
	\label{20} 4C+2(2C+2F)+4F\\
	\label{23} 2C+2(3C+3F)+2F\\
	\label{22} 3C+6C+2(C+4F)\\
	\label{21} 4C+2(C+F)+4(C+2F)\\
	\label{24} 2C+2(C+F)+2(2C+3F)\\
	\label{25} 2C+2(C+2F)+2(2C+2F)\\
	\label{26} 2C+2(C+3F)+2(2C+F)\\
	\label{27} 3C+3(2C+2F)+3F\\
	\label{28} 2C+6C+3(C+3F)\\
	\label{29} 2(C+F)+2(C+F)+2(2C+2F)\\
	\label{30} 2(C+2F)+2(C+F)+2(2C+F)\\
	\label{39} 2C+4C+4(C+2F)+2F\\
	\label{40}  3C+6C+2(C+3F)+2F\\
	\label{41}  4C+4C+2(C+3F)+2F\\
	\label{42}  2C+2C+2(2C+3F)+2F
\end{flalign}
\begin{flalign}	
	\label{35} 2C+4(C+F)+4(C+F)+2F\\
	\label{36} 4C+2(C+F)+4(C+F)+4F\\
	\label{37} 2C+3(C+F)+6(C+F)+2F\\
	\label{38} 6C+2(C+F)+3(C+F)+6F\\
	\label{43} 2C+2(C+F)+2(2C+2F)+2F\\
	\label{44} 2C+2(C+2F)+2(2C+F)+2F\\
	\label{45} 3C+2(C+F)+6(C+F)+3F\\
	\label{46} 3C+3(C+F)+3(C+F)+3F\\
	\label{47} 3C+3C+3(C+2F)+3F\\
	\label{48} 2C+6C+3(C+2F)+3F\\
	\label{49} 2C+2C+2(C+F)+2(C+3F)\\
	\label{54} 2C+2(C+F)+2(C+F)+2(C+F)+2F\\
	\label{55} 2C+2C+2C+2(C+3F)+2F\\
	\label{56} 2C+2C+2(C+F)+2(C+2F)+2F\\
	\label{57} 2C+4C+4(C+F)+2F+4F\\
	\label{58} 2C+3C+6(C+F)+2F+3F\\
	\label{59} 2C+6C+3(C+F)+2F+6F\\
	\label{60} 3C+6C+2(C+F)+3F+6F\\
	\label{312} 3C+6C+2(C+F)+4F+4F\\
	\label{313} 2C+6C+3(C+F)+3F+3F\\
	\label{61} 3C+6C+2(C+2F)+2F+2F\\
	\label{65} 3C+6C+2(C+F)+2F+2F+2F\\
	\label{67} 2C+3C+6C+2F+3F+6F\\
	\label{67,1} 2C+3C+6C+2F+4F+4F\\
	\label{67,2} 2C+3C+6C+3F+3F+3F\\
	\label{68} 2C+4C+4C+2F+4F+4F\\
	\label{68,1} 2C+4C+4C+3F+3F+3F\\
	\label{69} 3C+3C+3C+3F+3F+3F\\
	\label{70} 2C+3C+6C+2F+2F+2F+2F\\
	\label{71} 2C+4C+4C+2F+2F+2F+2F\\
	\label{72} 3C+3C+3C+2F+2F+2F+2F\\
	\label{73} 2C+2C+2C+2C+2F+2F+2F+2F
\end{flalign}
If there are a $K3$ surface $X$ and a finite subgroup $G$ of Aut$(X)$ such that $X/G\cong{\mathbb F}_{1}$, then by Theorem \ref{thm:6} the numerical class of $B$ is one of the following:
\begin{flalign}
	\label{74} 2(4C+6F)\ \ {\mathbb Z}/2{\mathbb Z}\\
	\label{88} 2(2C+4F)+2(2C+2F)\ \ {\mathbb Z}/2{\mathbb Z}^{\oplus 2}
\end{flalign}
\begin{flalign}	
	\label{144} 2C+2(C+2F)+2(C+2F)+2(C+2F)\ \ {\mathbb Z}/2{\mathbb Z}^{\oplus 3}\\
	\label{163} 2(C+3F)+2(C+F)+2(C+F)+2(C+F)\ \ {\mathbb Z}/2{\mathbb Z}^{\oplus 3}\\
	\label{80} 3(3C+3F)+2F+2F\ \ {\mathbb Z}/2{\mathbb Z}\oplus{\mathbb Z}/3{\mathbb Z}\\
	\label{77} 3C+3(2C+2F)+6F+6F\ \ {\mathbb Z}/2{\mathbb Z}\oplus{\mathbb Z}/3{\mathbb Z}^{\oplus 2}\\
	\label{81} 2(4C+4F)+2F+2F\ \ {\mathbb Z}/2{\mathbb Z}^{\oplus 2}\\
	\label{79} 2C+2(3C+3F)+4F+4F\ \ {\mathbb Z}/2{\mathbb Z}\oplus {\mathbb Z}/4{\mathbb Z}\\
	\label{78} 2C+2(3C+3F)+2F+2F+2F\ \ {\mathbb Z}/2{\mathbb Z}^{\oplus 3}\\ 
	\label{93} 2C+2(C+F)+2(2C+3F)+2F+2F\ \ {\mathbb Z}/2{\mathbb Z}^{\oplus 3}\\ 
	\label{89} 2(2C+2F)+2(2C+2F)+2F+2F\ \ {\mathbb Z}/2{\mathbb Z}^{\oplus 3}\\ 
	\label{95} 2C+2(C+F)+2(2C+2F)+4F+4F\ \ {\mathbb Z}/2{\mathbb Z}^{\oplus 2}\oplus{\mathbb Z}/4{\mathbb Z}\\ 
	\label{96} 2C+2(C+F)+2(2C+2F)+2F+2F+2F\ \ {\mathbb Z}/2{\mathbb Z}^{\oplus 4}\\
	\label{145} 3(C+F)+3(C+F)+3(C+F)+2F+2F\ \ {\mathbb Z}/2{\mathbb Z}\oplus{\mathbb Z}/3{\mathbb Z}^{\oplus 2}\\
	\label{91} 3C+3(C+F)+3(C+F)+6F+6F\ \ {\mathbb Z}/2{\mathbb Z}\oplus{\mathbb Z}3{\mathbb Z}^{\oplus 3}\\
	\label{148} 2C+2(C+2F)+2(C+F)+2(C+F)+2F+2F\ \ {\mathbb Z}/2{\mathbb Z}^{\oplus 4}\\ 
	\label{90} 6C+2(C+F)+3(C+F)+12F+12F\ \ {\mathbb Z}/2{\mathbb Z}\oplus {\mathbb Z}/3{\mathbb Z}^{\oplus 2}\oplus{\mathbb Z}/4{\mathbb Z}\\
	\label{166} 2(C+F)+2(C+F)+2(C+F)+2(C+F)+2F+2F\ \ {\mathbb Z}/2{\mathbb Z}^{\oplus 4}\\
	\label{146} 2C+2(C+F)+2(C+F)+2(C+F)+4F+4F\ \ {\mathbb Z}/2{\mathbb Z}^{\oplus 3}\oplus{\mathbb Z}/4{\mathbb Z}\\
	\label{147} 2C+2(C+F)+2(C+F)+2(C+F)+2F+2F+2F\ \ {\mathbb Z}/2{\mathbb Z}^{\oplus 5}\\
	\label{75} 2C+4(2C+2F)+4F+4F\ \ {\mathbb Z}/4{\mathbb Z}^{\oplus 2}\\
	\label{76} 2C+4(2C+2F)+2F+2F+2F\ \ {\mathbb Z}/2{\mathbb Z}^{\oplus 2}\oplus{\mathbb Z}/4{\mathbb Z}\\
	\label{141} 4C+2(C+F)+4(C+F)+8F+8F\ \ {\mathbb Z}/2{\mathbb Z}\oplus{\mathbb Z}/4{\mathbb Z}\oplus{\mathbb Z}/8{\mathbb Z}\\
	\label{130} 3(2C+2F)+3(C+F)+2F+2F\\
	\label{97} 4(2C+2F)+2(C+3F)\\
	\label{100} 4(2C+2F)+2(C+2F)+2F\\
	\label{131} 4(2C+2F)+2(C+F)+2F+2F\\
	\label{149} 2(C+3F)+3(C+F)+6(C+F)\\
	\label{154} 2(C+2F)+3(C+F)+6(C+F)+2F\\  
	\label{160} 2(C+F)+3(C+F)+6(C+F)+2F+2F\\
	\label{150} 2(C+3F)+4(C+F)+4(C+F)\\ 
	\label{155} 2(C+2F)+4(C+F)+4(C+F)+2F\\
	\label{161} 2(C+F)+4(C+F)+4(C+F)+2F+2F\\
	\label{82} 2(4C+5F)+2F\\
	\label{99} 2(3C+aF)+2(C+(6-a)F), a\geq3\\
	\label{101} 2(3C+4F)+2(C+F)+2F\\	
	\label{102} 2(3C+3F)+2(C+2F)+2F\\
	\label{132} 2(3C+3F)+2(C+F)+2F+2F\\	
	\label{98} 2(2C+3F)+2(2C+3F)
\end{flalign}
\begin{flalign}	
	\label{103} 2(2C+3F)+2(2C+2F)+2F\\
	\label{151} 2(2C+4F)+2(C+F)+2(C+F)\\  
	\label{152} 2(2C+3F)+2(C+2F)+2(C+F)\\
	\label{156} 2(2C+3F)+2(C+F)+2(C+F)+2F\\	
	\label{153} 2(2C+2F)+2(C+2F)+2(C+2F)\\
	\label{157} 2(2C+2F)+2(C+2F)+2(C+F)+2F\\ 
	\label{162} 2(2C+2F)+2(C+F)+2(C+F)+2F+2F\\
	\label{164} 2(C+2F)+2(C+2F)+2(C+F)+2(C+F)\\
	\label{165} 2(C+2F)+2(C+F)+2(C+F)+2(C+F)+2F\\
	\label{86} 3C+3(2C+3F)+2F+2F\\ 
	\label{268} 3C+3(2C+2F)+2F+2F+3F\\
	\label{269} 3C+3(2C+2F)+4F+12F\\
	\label{83} 2C+4(2C+4F)\\
	\label{84} 2C+4(2C+3F)+4F\\
	\label{267} 2C+4(2C+2F)+3F+6F\\
	\label{106} 2C+3(C+2F)+6(C+2F)\\
	\label{104} 2C+3(C+2F)+6(C+F)+6F\\
	\label{105} 2C+3(C+F)+6(C+2F)+3F\\
	\label{133} 2C+3(C+F)+6(C+F)+4F+4F\\
	\label{270} 2C+3(C+F)+6(C+F)+3F+6F\\
	\label{134} 2C+3(C+F)+6(C+F)+2F+2F+2F\\
	\label{117} 2C+4(C+2F)+4(C+2F)\\
	\label{119} 2C+4(C+F)+4(C+3F)\\ 
	\label{118} 2C+4(C+F)+4(C+2F)+4F\\ 
	\label{137} 2C+4(C+F)+4(C+F)+4F+4F\\ 
	\label{279} 2C+4(C+F)+4(C+F)+3F+6F\\
	\label{138} 2C+4(C+F)+4(C+F)+2F+2F+2F\\
	\label{110} 3C+2(C+3F)+6(C+F)+3F\\
	\label{108} 3C+2(C+2F)+6(C+F)+2F+3F\\
	\label{109} 3C+2(C+F)+6(C+3F)\\ 
	\label{107} 3C+2(C+F)+6(C+2F)+6F\\ 
	\label{135} 3C+2(C+F)+6(C+F)+6F+6F\\
	\label{271} 3C+2(C+F)+6(C+F)+4F+12F\\
	\label{272} 3C+2(C+F)+6(C+F)+2F+2F+3F\\
	\label{283} 3C+3(C+F)+3(C+F)+4F+12F\\
	\label{92} 3C+3(C+2F)+3(C+F)+2F+2F\\
	\label{284} 3C+3(C+F)+3(C+F)+2F+2F+3F
\end{flalign}
\begin{flalign}
	\label{122} 4C+2(C+3F)+4(C+2F)\\
	\label{120} 4C+2(C+3F)+4(C+F)+4F\\
	\label{121} 4C+2(C+2F)+4(C+2F)+2F\\
	\label{139} 4C+2(C+2F)+4(C+F)+2F+4F\\
	\label{281} 4C+2(C+F)+4(C+F)+6F+12F\\
	\label{280} 4C+2(C+F)+4(C+F)+5F+20F\\
	\label{140} 4C+2(C+F)+4(C+2F)+2F+2F\\
	\label{282} 4C+2(C+F)+4(C+F)+2F+2F+4F\\
	\label{113} 6C+2(C+3F)+3(C+F)+6F\\
	\label{116} 6C+2(C+2F)+3(C+3F)\\
	\label{114} 6C+2(C+2F)+3(C+2F)+3F\\
	\label{136} 6C+2(C+2F)+3(C+F)+3F+3F\\
	\label{111} 6C+2(C+2F)+3(C+F)+2F+6F\\
	\label{115} 6C+2(C+F)+3(C+3F)+2F\\
	\label{112} 6C+2(C+F)+3(C+2F)+2F+3F\\
	\label{273} 6C+2(C+F)+3(C+F)+10F+15F\\
	\label{274} 6C+2(C+F)+3(C+F)+9F+18F\\
	\label{275} 6C+2(C+F)+3(C+F)+8F+24F\\
	\label{276} 6C+2(C+F)+3(C+F)+7F+42F\\
	\label{278} 6C+2(C+F)+3(C+F)+2F+3F+3F\\
	\label{277} 6C+2(C+F)+3(C+F)+2F+2F+6F\\
	\label{85} 2C+2(3C+6F)\\
	\label{85,1} 2C+2(3C+5F)+2F\\
	\label{87} 2C+2(3C+4F)+2F+2F\\
	\label{285} 2C+2(3C+3F)+3F+6F\\
	\label{126} 2C+2(C+4F)+2(2C+2F)\\
	\label{125} 2C+2(C+3F)+2(2C+3F)\\
	\label{124} 2C+2(C+2F)+2(2C+4F)\\
	\label{123} 2C+2(C+F)+2(2C+5F)\\
	\label{129} 2C+2(C+3F)+2(2C+2F)+2F\\
	\label{128} 2C+2(C+2F)+2(2C+3F)+2F\\ 
	\label{94} 2C+2(C+2F)+2(2C+2F)+2F+2F\\
	\label{127} 2C+2(C+F)+2(2C+4F)+2F\\ 
	\label{286} 2C+2(C+F)+2(2C+2F)+3F+6F\\
	\label{142} 2C+2(C+4F)+2(C+F)+2(C+F)\\
	\label{143} 2C+2(C+3F)+2(C+2F)+2(C+F)\\
	\label{159} 2C+2(C+3F)+2(C+F)+2(C+F)+2F\\ 
	\label{158} 2C+2(C+2F)+2(C+2F)+2(C+F)+2F\\ 
	\label{287} 2C+2(C+F)+2(C+F)+2(C+F)+3F+6F
\end{flalign}
If there are a $K3$ surface $X$ and a finite subgroup $G$ of Aut$(X)$ such that $X/G\cong{\mathbb F}_{2}$, then by Theorem \ref{thm:6} the numerical class of $B$ is one of the following:
\begin{flalign}
	\label{167} 3(3C+6F)\ \ {\mathbb Z}/3{\mathbb Z}\\
	\label{169} 2(4C+8F)\ \ {\mathbb Z}/2{\mathbb Z}\\
	\label{176} 2(2C+4F)+2(2C+4F)\ \ {\mathbb Z}/2{\mathbb Z}^{\oplus 2}\\
	\label{177} 2C+2(C+2F)+2(2C+6F)\ \ {\mathbb Z}/2{\mathbb Z}^{\oplus 2}\\
	\label{209} 3(C+2F)+3(C+2F)+3(C+2F)\ \ {\mathbb Z}/3{\mathbb Z}^{\oplus 2}\\ 
	\label{()} 2C+2(C+4F)+2(C+2F)+2(C+2F)\ \ {\mathbb Z}/2{\mathbb Z}^{\oplus 3}\\
	\label{216} 2(C+2F)+2(C+2F)+2(C+2F)+2(C+2F)\ \ {\mathbb Z}/2{\mathbb Z}^{\oplus 3}\\
	\label{170} 3C+3(2C+4F)+3F+3F\ \ {\mathbb Z}/3{\mathbb Z}^{\oplus 2}\\
	\label{171} 2C+2(3C+6F)+2F+2F\ \ {\mathbb Z}/2{\mathbb Z}^{\oplus 2}\\
	\label{183} 2C+2(C+2F)+2(2C+4F)+2F+2F\ \ {\mathbb Z}/2{\mathbb Z}^{\oplus 3}\\
	\label{180} 3C+3(C+3F)+3(C+3F)\ \ {\mathbb Z}/3{\mathbb Z}^{\oplus 2}\\
	\label{182} 3C+3(C+2F)+3(C+2F)+3F+3F\ \ {\mathbb Z}/3{\mathbb Z}^{\oplus 3}\\
	\label{212} 2C+2(C+2F)+2(C+2F)+2(C+2F)+2F+2F\ \ {\mathbb Z}/2{\mathbb Z}^{\oplus 4}\\
	\label{172} 2C+4(2C+4F)+2F+2F\ \ {\mathbb Z}/2{\mathbb Z}\oplus{\mathbb Z}/4{\mathbb Z}\\ 
	\label{208} 4C+2(C+3F)+4(C+2F)+2F+2F\ \ {\mathbb Z}/2{\mathbb Z}^{\oplus 2}\oplus{\mathbb Z}/4{\mathbb Z}\\
	\label{206} 4C+2(C+2F)+4(C+2F)+4F+4F\ \ {\mathbb Z}/2{\mathbb Z}\oplus{\mathbb Z}/4{\mathbb Z}^{\oplus 2}\\	
	\label{207} 4C+2(C+2F)+4(C+2F)+2F+2F+2F\ \ {\mathbb Z}/2{\mathbb Z}^{\oplus 3}\oplus{\mathbb Z}/4{\mathbb Z}\\
	\label{181} 6C+2(C+2F)+3(C+2F)+6F+6F\ \ {\mathbb Z}/2{\mathbb Z}^{\oplus 2}\oplus{\mathbb Z}/3{\mathbb Z}^{\oplus 2}\\
	\label{200} 3(C+2F)+3(2C+4F)\\
	\label{184} 2(C+2F)+4(2C+4F)\\
	\label{213} 2(C+2F)+3(C+2F)+6(C+2F)\\
	\label{214} 2(C+2F)+4(C+2F)+4(C+2F)\\ 
	\label{201} 2(3C+6F)+2(C+2F)\\ 
	\label{215} 2(2C+4F)+2(C+2F)+2(C+2F)\\
	\label{168} 3C+3(2C+6F)\\ 
	\label{173} 3C+3(2C+5F)+3F\\
	\label{288} 3C+3(2C+4F)+2F+6F\\
	\label{202} 2C+3(C+2F)+6(C+2F)+2F+2F\\
	\label{205} 2C+4(C+2F)+4(C+2F)+2F+2F\\
	\label{187} 3C+2(C+3F)+6(C+3F)\\
	\label{185} 3C+2(C+3F)+6(C+2F)+6F\\ 
	\label{186} 3C+2(C+2F)+6(C+3F)+2F\\
	\label{203} 3C+2(C+2F)+6(C+2F)+3F+3F\\ 
	\label{289} 3C+2(C+2F)+6(C+2F)+2F+6F\\
	\label{178} 3C+3(C+2F)+3(C+4F)
\end{flalign}
\begin{flalign}
	\label{179} 3C+3(C+2F)+3(C+3F)+3F\\
	\label{293} 3C+3(C+2F)+3(C+2F)+2F+6F\\
	\label{193} 4C+2(C+5F)+4(C+2F)\\  
	\label{195} 4C+2(C+4F)+4(C+2F)+2F\\ 
	\label{192} 4C+2(C+2F)+4(C+4F)\\ 
	\label{194} 4C+2(C+2F)+4(C+3F)+4F\\ 
	\label{292} 4C+2(C+2F)+4(C+2F)+3F+6F\\
	\label{191} 6C+2(C+4F)+3(C+3F)\\ 
	\label{189} 6C+2(C+4F)+3(C+2F)+3F\\ 
	\label{190} 6C+2(C+3F)+3(C+3F)+2F\\
	\label{188} 6C+2(C+3F)+3(C+2F)+2F+3F\\ 
	\label{204} 6C+2(C+2F)+3(C+3F)+2F+2F\\
	\label{175} 2C+2(3C+8F)\\
	\label{174} 2C+2(3C+7F)+2F\\
	\label{196} 2C+2(C+4F)+2(2C+4F)\\
	\label{197} 2C+2(C+3F)+2(2C+5F)\\
	\label{199} 2C+2(C+3F)+2(2C+4F)+2F\\
	\label{198} 2C+2(C+2F)+2(2C+5F)+2F\\
	\label{290} 6C+2(C+2F)+3(C+2F)+4F+12F\\
	\label{291} 6C+2(C+2F)+3(C+2F)+2F+2F+3F\\
	\label{210} 2C+2(C+3F)+2(C+3F)+2(C+2F)\\
	\label{211} 2C+2(C+3F)+2(C+2F)+2(C+2F)+2F
\end{flalign}
If there are a $K3$ surface $X$ and a finite subgroup $G$ of Aut$(X)$ such that $X/G\cong{\mathbb F}_{3}$, then by Theorem \ref{thm:6} the numerical class of $B$ is one of the following:
\begin{flalign}
	\label{217} 3C+3(2C+6F)+2F+2F\ \ {\mathbb Z}/2{\mathbb Z}\oplus {\mathbb Z}/3{\mathbb Z}\\
	\label{222} 3C+3(C+3F)+3(C+3F)+2F+2F\ \ {\mathbb Z}/2{\mathbb Z}\oplus{\mathbb Z}/3{\mathbb Z}^{\oplus 2}\\
	\label{220} 6C+2(C+3F)+3(C+3F)+4F+4F\ \ {\mathbb Z}/2{\mathbb Z}\oplus{\mathbb Z}/3{\mathbb Z}\oplus {\mathbb Z}/4{\mathbb Z}\\ 
	\label{221} 6C+2(C+3F)+3(C+3F)+2F+2F+2F\ \ {\mathbb Z}/2{\mathbb Z}^{\oplus 3}\oplus{\mathbb Z}/3{\mathbb Z}\\
	\label{298} 2C+4(2C+6F)+2F\\
	\label{299} 2C+3(C+3F)+6(C+3F)+2F\\
	\label{300} 2C+4(C+3F)+4(C+3F)+2F\\
	\label{223} 3C+2(C+5F)+6(C+3F)\\
	\label{224} 3C+2(C+4F)+6(C+3F)+2F\\
	\label{233} 3C+2(C+3F)+6(C+3F)+2F+2F\\
	\label{230} 4C+2(C+4F)+4(C+4F)
\end{flalign}
\begin{flalign}
	\label{228} 4C+2(C+4F)+4(C+3F)+4F\\
	\label{229} 4C+2(C+3F)+4(C+4F)+2F\\
	\label{295} 4C+2(C+3F)+4(C+3F)+2F+4F\\
	\label{227} 6C+2(C+6F)+3(C+3F)\\
	\label{226} 6C+2(C+5F)+3(C+3F)+2F\\
	\label{234} 6C+2(C+4F)+3(C+3F)+2F+2F\\
	\label{225} 6C+2(C+3F)+3(C+4F)+6F\\  
	\label{294} 6C+2(C+3F)+3(C+3F)+3F+6F\\
	\label{219} 2C+2(3C+10F)\\ 
	\label{301} 2C+2(3C+9F)+2F\\
	\label{231} 2C+2(C+4F)+2(2C+6F)\\
	\label{232} 2C+2(C+3F)+2(2C+7F)\\
	\label{302} 2C+2(C+3F)+2(2C+6F)+2F\\
	\label{235} 2C+2(C+4F)+2(C+3F)+2(C+3F)\\
	\label{303} 2C+2(C+3F)+2(C+3F)+2(C+3F)+2F
\end{flalign}
If there are a $K3$ surface $X$ and a finite subgroup $G$ of Aut$(X)$ such that $X/G\cong{\mathbb F}_{4}$, then by Theorem \ref{thm:6} the numerical class of $B$ is one of the following:
\begin{flalign}
	\label{236} 2C+2(3C+12F)\ \ {\mathbb Z}/2{\mathbb Z}\\
	\label{237} 2C+4(2C+8F)\ \ {\mathbb Z}/4{\mathbb Z}\\
	\label{239} 2C+2(C+4F)+2(2C+8F)\ \ {\mathbb Z}/2{\mathbb Z}^{\oplus 2}\\
	\label{245} 4C+2(C+6F)+4(C+4F)\ \ {\mathbb Z}/2{\mathbb Z}\oplus{\mathbb Z}/4{\mathbb Z}\\
	\label{249} 4C+2(C+4F)+4(C+4F)+2F+2F\ \ {\mathbb Z}/2{\mathbb Z}^{\oplus 2}\oplus{\mathbb Z}/4{\mathbb Z}
\end{flalign}
\begin{flalign}	
	\label{250} 2C+2(C+4F)+2(C+4F)+2(C+4F)\ \ {\mathbb Z}/2{\mathbb Z}^{\oplus 3}\\
	\label{240} 6C+2(C+4F)+3(C+4F)+3F+3F\ \ {\mathbb Z}/2{\mathbb Z}\oplus{\mathbb Z}/3{\mathbb Z}^{\oplus 2}\\
	\label{238} 3C+3(2C+9F)\\ 
	\label{304} 3C+3(2C+8F)+3F\\
	\label{241} 2C+3(C+4F)+6(C+4F)\\ 
	\label{248} 2C+4(C+4F)+4(C+4F)\\
	\label{305} 3C+2(C+4F)+6(C+4F)+3F\\
	\label{247} 3C+3(C+4F)+3(C+5)\\
	\label{306} 3C+3(C+4F)+3(C+4F)+3F\\
	\label{246} 4C+2(C+5F)+4(C+4F)+2F\\
	\label{242} 6C+2(C+5F)+3(C+4F)+6F\\  
	\label{244} 6C+2(C+4F)+3(C+6F)\\
	\label{243} 6C+2(C+4F)+3(C+5F)+3F\\   
	\label{296} 6C+2(C+4F)+3(C+4F)+2F+6F
\end{flalign}
If there are a $K3$ surface $X$ and a finite subgroup $G$ of Aut$(X)$ such that $X/G\cong{\mathbb F}_{5}$, then by Theorem \ref{thm:6} the numerical class of $B$ is one of the following:
\begin{flalign}
	\label{255} 4C+2(C+5F)+4(C+6F)\\
	\label{307} 4C+2(C+5F)+4(C+5F)+4F\\
	\label{254} 6C+2(C+6F)+3(C+6F)\\
	\label{252} 6C+2(C+6F)+3(C+5F)+3F\\
	\label{253} 6C+2(C+5F)+3(C+6F)+2F\\
	\label{297} 6C+2(C+5F)+3(C+5F)+2F+3F
\end{flalign}
If there are a $K3$ surface $X$ and a finite subgroup $G$ of Aut$(X)$ such that $X/G\cong{\mathbb F}_{6}$, then by Theorem \ref{thm:6} the numerical class of $B$ is one of the following:
\begin{flalign}
	\label{256} 3C+3(2C+12F)\ \ {\mathbb Z}/3{\mathbb Z}\\ 
	\label{257} 3C+3(C+6F)+3(C+6F)\ \ {\mathbb Z}/3{\mathbb Z}^{\oplus 2}\\
	\label{258} 6C+2(C+6F)+3(C+6F)+2F+2F\ \ {\mathbb Z}/2{\mathbb Z}^{\oplus 2}\oplus {\mathbb Z}/3{\mathbb Z}\\
	\label{260} 3C+2(C+6F)+6(C+6F)\\  
	\label{259} 4C+2(C+7F)+4(C+6F)\\
	\label{308} 4C+2(C+6F)+4(C+6F)+2F\\
	\label{261} 6C+2(C+8F)+3(C+6F)\\
	\label{262} 6C+2(C+7F)+3(C+6F)+2F
\end{flalign}
If there are a $K3$ surface $X$ and a finite subgroup $G$ of Aut$(X)$ such that $X/G\cong{\mathbb F}_{7}$, then by Theorem \ref{thm:6} the numerical class of $B$ is one of the following:
\begin{flalign}
	\label{309} 6C+2(C+7F)+3(C+7F)+6F
\end{flalign}
If there are a $K3$ surface $X$ and a finite subgroup $G$ of Aut$(X)$ such that $X/G\cong{\mathbb F}_{8}$, then by Theorem \ref{thm:6} the numerical class of $B$ is one of the following:
\begin{flalign}
	\label{263} 4C+2(C+8F)+4(C+8F)\ \ {\mathbb Z}/2{\mathbb Z}\oplus{\mathbb Z}/4{\mathbb Z}\\
	\label{264} 6C+2(C+8F)+3(C+9F)\\
	\label{310} 6C+2(C+8F)+3(C+8F)+3F
\end{flalign}
If there are a $K3$ surface $X$ and a finite subgroup $G$ of Aut$(X)$ such that $X/G\cong{\mathbb F}_{9}$, then by Theorem \ref{thm:6} the numerical class of $B$ is one of the following:
\begin{flalign}
	\label{265} 6C+2(C+10F)+3(C+9F)\\
	\label{311} 6C+2(C+9F)+3(C+9F)+2F 	
\end{flalign}
By Theorem \ref{thm:6} there are not a $K3$ surface $X$ and a finite subgroup $G$ of Aut$(X)$ such that $X/G\cong{\mathbb F}_{l}$ for $l=10,11$. 

If there are a $K3$ surface $X$ and a finite subgroup $G$ of Aut$(X)$ such that $X/G\cong{\mathbb F}_{12}$, then by Theorem \ref{thm:6} the numerical class of $B$ is the following:
\begin{flalign}
	\label{266} 6C+2(C+12F)+3(C+12F)\ \ {\mathbb Z}/2{\mathbb Z}\oplus {\mathbb Z}/3{\mathbb Z}
\end{flalign}

\end{document}